\numberwithin{equation}{section}
\newcommand{\Titre}{Titre}
{\theoremstyle{definition}
\newtheorem{definition}{D\'efinition}[section]
\newtheorem{defimc}[definition]{\Titre}

\newtheorem{notation}[definition]{Notation}

\newtheorem{remark}[definition]{Remarque}
\newtheorem{remarks}[definition]{Remarques}

\newtheorem{examples}[definition]{Exemples}}
\newtheorem{theomc}[definition]{\Titre}
\newtheorem{rappels}[definition]{Rappels}
\newtheorem{notations}[definition]{Notations}
\newtheorem{proposition}[definition]{Proposition}
\newtheorem{proposition-definition}[definition]{Proposition-D\'efinition}
\newtheorem{notation-definition}[definition]{Notation-d\'efinition}
\newtheorem{lemme}[definition]{Lemme}
\newtheorem{lemme-notations}[definition]{Lemme-Notations}
\newtheorem{lemme-notation}[definition]{Lemme-Notation}
\newtheorem{theorem}[definition]{Th\'eor\`eme}
\newtheorem{corollary}[definition]{Corollaire}
\newcommand{\R}{\mathbb{R}}
\newcommand{\cG}{\mathcal{G}}
\newcommand{\cE}{\mathcal{E}}
\newcommand{\cL}{\mathcal{L}}
\newcommand{\cK}{\mathcal{K}}
\newcommand{\C}{\mathbb{C}}
\newcommand{\cC}{\mathcal{C}}
\newcommand{\id}{{\hbox{id}}}
\newcommand{\ie}{{\it i.e.}\/ }
\newcommand{\lc}{l.c.\/ }
\newcommand{\mq}{m.q.\/ }
\newcommand{\cf}{cf.\/ }
\newcommand{\reltens}[4]{{#1}{\,_{#2}\otimes_{#3}\,}{#4}}
\newcommand{\fprod}[4]{{#1}{\,_{#2}\ast_{#3}\,}{#4}}
\newcommand{\restr}[2]{{#1}\!\!\restriction_{#2}}
\renewcommand\theenumii{\roman{enumii}}
\renewcommand\labelenumii{({\theenumii})}
\begin{document}

\everymath{\displaystyle}

\newcommand{\Sp}{{\rm Sp}}      
\newcommand{\M}{{\cal M}}
 \newcommand{\Mor}{{\rm Mor}}
\renewcommand{\Re}{{\rm Re}}
\newcommand{\End}{{\rm End}}

\newcommand{\bic}{''}
\newcommand{\dom}{{\rm dom}}  
\newcommand{\im}{{\rm im}}

\begin{center}
{\large{\bf \'EQUIVALENCE MONO\"IDALE DE GROUPES QUANTIQUES ET $K$-TH\'EORIE BIVARIANTE}}
\end{center}

\begin{center}
par
\end{center}

\begin{center}  

{\sc Saad Baaj et Jonathan Crespo  }

\end{center}

\bigskip

{\footnotesize
Laboratoire de Math\'ematiques, UMR 6620 - CNRS
\vskip-4pt Universit\'e Blaise Pascal, Campus des C\'ezeaux
\vskip-4pt 3, place Vasarely
\vskip-4pt BP 80026
\vskip-4pt 63171 Aubi\`ere cedex, France
\medbreak
\vskip-4pt {\it Adresses \'electroniques}
\vskip-4pt \texttt{saad.baaj@math.univ-bpclermont.fr}
\vskip-4pt \texttt{jonathan.crespo@math.univ-bpclermont.fr}

\bigbreak

\bigskip
\bigskip

\centerline{\bf R\'esum\'e}

\bigskip
Dans cet article, nous g\'en\'eralisons au cas localement compact et r\'egulier, deux r\'esultats fondamentaux \cite{RV} \cite{V1}  portant sur les actions des groupes quantiques compacts.  Soient  $G_1$ et $G_2$ deux groupes quantiques localement compacts mono\"idalement \'equivalents \cite{DeC1, DeC2}  au sens  de De Commer,  et  r\'eguliers. Par un proc\'ed\'e d'induction que nous introduisons, nous \'etablissons une \'equivalence des cat\'egories  $A^{G_1}$ et  $A^{G_2}$  form\'ees par les actions des groupes $G_1$ et $G_2$ dans les C*-alg\`ebres. Comme application de ce r\'esultat, nous d\'eduisons  l'\'equivalence des cat\'egories $KK^{G_1}$ et  $KK^{G_2}$. La preuve   s'appuie  sur  une  version de la dualit\'e de Takesaki-Takai  pour les actions continues dans les C*-alg\`ebres  d'un groupo\"ide mesur\'e quantique de base finie.

\bigskip
{\bf Mots-cl\'es} 
: groupes quantiques localement compacts, \'equivalence mono\"idale, K-th\'eorie bivariante.
\bigskip

\bigbreak

\centerline{\bf Abstract}

\bigskip
In this article, we generalize to the case of regular locally compact quantum groups, two important results concerning actions of compact quantum groups (see \cite{RV} and \cite{V1}). Let $G_1$ and $G_2$ be two mono\"idally equivalent regular locally compact quantum groups in the sense of De Commer (see \cite{DeC1,DeC2}). We introduce an induction procedure and we build an equivalence of the categories $A^{G_1}$ and $A^{G_2}$ consisting of continuous actions of $G_1$ and $G_2$ on C*-algebras. As an application of this result, we derive a canonical equivalence of the categories $KK^{G_1}$ and $KK^{G_2}$. We introduce and investigate a notion of actions on C*-algebras of measured quantum groupoids (see \cite{E}) on a finite basis. The proof of the equivalence between $KK^{G_1}$ and
$KK^{G_2}$ relies on a version of the Takesaki-Takai duality theorem for continuous actions on C*-algebras of measured quantum groupoids on a finite basis. 

\bigskip
\textbf{Keywords}$:$ locally compact quantum groups, monoidal equivalence, bivariant K-theory.
\bigskip

\newpage

\tableofcontents

\section*{Introduction}
\addcontentsline{toc}{section}{Introduction}

L'\'equivalence mono\"idale  des groupes quantiques compacts  a \'et\'e d\'evelopp\'ee par Bichon, De Rijdt et Vaes (\cf\cite{BRV}). Deux groupes quantiques compacts $G_1$ et $G_2$ sont dits mono\"idalement \'equivalents si les  cat\'egories des repr\'esentations  de $G_1$ et $G_2$ sont \'equivalentes comme C*-cat\'egories mono\"idales.
\hfill\break
Dans \cite{BRV}, les auteurs ont montr\'e que $G_1$ et $G_2$ sont mono\"idalement \'equivalents si et seulement s'il existe une C*-alg\`ebre unitale $B$, munie d'une action continue \`a gauche de  $G_1$  et d'une  action continue \`a droite de  $G_2$, qui commutent et sont ergodiques de multiplicit\'e pleine.
\hfill\break
Plusieurs r\'esultats importants de la th\'eorie g\'eom\'etrique des groupes quantiques discrets libres (marches al\'eatoires et fronti\`eres associ\'ees, propri\'et\'e de Haagerup, moyennabilit\'e faible, $K$-moyennabilit\'e...),  reposent sur l'\'equivalence mono\"idale de leurs duaux compacts. Parmi les applications de l'\'equivalence mono\"idale \`a cette th\'eorie, citons les suivantes :
\hfill\break
$\bullet$ Dans \cite{VV}, en  exploitant l'\'equivalence mono\"idale \cite{BRV} de ${\rm A}_{\rm o}(F)$ avec un ${\rm SU}_q(2)$ convenable (et les r\'esultats de \cite{I}, \cite{VVe}),  Vaes et Vander Vennet ont calcul\'e les fronti\`eres de Poisson et de Martin pour les duaux des groupes orthogonaux libres ${\rm A}_{\rm o}(F)$ ;

$\bullet$ Dans \cite{RV}, De Rijdt et Vander  Vennet ont \'etabli une correspondance bijective entre les actions continues de deux  groupes quantiques compacts mono\"idalement \'equivalents. De plus, cette correspondance \'echangent les fronti\`eres de Poisson ou de Martin des duaux discrets de ces  deux groupes. Il en r\'esulte que si on connait la fronti\`ere de Poisson ou de Martin pour un groupe quantique discret $\widehat G$, on peut d\'eduire celles des groupes dont les duaux compacts sont mono\"idalement \'equivalents \`a $G$. Ce principe a permis aux auteurs de calculer les fronti\`eres de Poisson des duaux des groupes quantiques d'automorphismes ;

$\bullet$ Dans \cite{DFY}, et en utilisant le principe pr\'ec\'edent, les auteurs ont \'etabli la propri\'et\'e CCAP et celle de Haagerup pour le dual de n'importe quel ${\rm A}_{\rm o}(F)$.  Gr\^ace \`a    la compatibilit\'e de l'\'equivalence mono\"idale avec certaines op\'erations, ils ont \'etendu ces propri\'et\'es pour les groupes quantiques  discrets libres ;

$\bullet$ Dans \cite{V1}, Voigt a montr\'e que les cat\'egories $KK^{G_1}$ et $KK^{G_2}$ de deux groupes quantiques compacts  mono\"idalement \'equivalents,   sont \'equivalentes. Ce r\'esultat entra\^ine l'invariance par \'equivalence mono\"idale de la conjecture de Baum-Connes pour les duaux. En \'etablissant cette conjecture pour le dual d'un ${\rm SU}_q(2)$ convenable, Voigt a prouv\'e cette conjecture pour les duaux des groupes orthogonaux libres ${\rm A}_{\rm o}(F)$, ainsi que la $K$-moyennabilit\'e de ces groupes. Ce r\'esultat a permis \`a Vergnioux et 
Voigt \cite{VV1}  d'\'etablir cette conjecture pour  les duaux des groupes unitaires  libres ${\rm A}_{\rm u}(F)$. 

Dans sa th\`ese \cite{DeC2}, De  Commer a \'etendu la notion  d'\'equivalence mono\"idale  au cas localement compact. Deux  groupes quantiques localement compacts $G_1$ et $G_2$  (au sens de Kustermans et Vaes \cite{KustV}), sont dits mono\"idalement \'equivalents s'il existe une action galoisienne \`a gauche $\gamma$  de $G_1$,   et
une action galoisienne \`a droite   $\alpha$ de $G_2$, dans la m\^eme alg\`ebre de von Neumann $N$,  qui commutent.    
Il a montr\'e que cette notion se d\'ecrit par un groupo\"ide mesur\'e quantique (au sens \cite{Le} et \cite{E}), de base $\C^2$, dont $G_1$ et $G_2$ sont des ''sous-groupes''. Un tel groupo\"ide est appel\'e un groupo\"ide de co-liaison.

Les groupo\"ides mesur\'es quantiques ont \'et\'e introduits et \'etudi\'es par Lesieur et Enock (\cf\cite{Le}, \cite{E}).
Un groupo\"ide mesur\'e  quantique au sens de  \cite{E}, est un octuplet ${\cal    G} = (N, M,  \alpha, \beta, \Gamma, T, T', \nu)$, o\`u $N$ et $M$ sont des alg\`ebres de von Neumann ($N$ est la base   et $M$ est l'alg\`ebre   du groupo\"ide; elles  correspondent respectivement  \`a l'espace  des unit\'es  et \`a l'espace total d'un groupo\"ide classique), $\alpha$ et $\beta$  sont des morphismes  normaux et fid\`eles de $N$ et $N^{\rm o}$ (l'alg\`ebre oppos\'ee de $N$) dans $M$ dont les images commutent, $\Gamma$ est le coproduit, $\nu$ est un poids normal semi-fini sur $N$, et $T$ , $T'$ sont des poids op\'eratoriels de $M$ dans $N$. Ces objets  v\'erifient une liste d'axiomes.\hfill\break 
Dans le cas o\`u la base $N$ est de dimension finie, les axiomes liants les objets d'un tel octuplet ${\cal   G}$, ont \'et\'e simplifi\'es par De Commer (\cf\cite{DeC1}, \cite{DeC2}) et nous utiliserons  cette version dans la suite. Plus pr\'ecis\'ement,  on peut prendre pour  poids $\nu$, la trace de Markov (non normalis\'ee)    de   la C*-alg\`ebre $N = \oplus_{l=1}^k M_{n_l}$.
Le produit tensoriel relatif d'espaces de Hilbert (resp. le produit fibr\'e d'alg\`ebres de von Neumann) est  remplac\'e   par le produit tensoriel ordinaire d'espaces de Hilbert (resp. d'alg\`ebres de von Neumann). Le coproduit $\Gamma$ est \`a valeurs dans $M \otimes M$, mais n'est pas unital.
\hfill\break
Dans cet article, nous introduisons la notion d'action continue d'un groupo\"ide mesur\'e quantique  ${\cal G}$ de  base une C*-alg\`ebre $N$ de dimension finie,  dans une C*-alg\`ebre $A$. Nous \'etendons la construction du produit crois\'e et d'action duale. Dans le cas o\`u ${\cal G}$ est r\'egulier, nous \'etendons la dualit\'e de Takesaki-Takai \cite{BaSka2} \`a ce cadre.\hfill\break
Si un  groupo\"ide de co-liaison    ${\cal G}$,  associ\'e  \`a l'\'equivalence mono\"idale de deux groupes quantiques localement compacts $G_1$ et $G_2$, agit continument   dans une   C*-alg\`ebre $A$, alors $A$ est une somme directe $A = A_1 \oplus A_2$  et      par restriction de l'action de ${\cal G}$, les groupes quantiques $G_1$ et $G_2$ agissent continument dans $A_1$ et $A_2$ respectivement.  R\'eciproquement, si $G_1$ et $G_2$ sont r\'eguliers, nous associons canoniquement \`a    toute action continue de $G_1$ dans une C*-alg\`ebre $A_1$, une C*-alg\`ebre $A_2$  munie d'une action continue de $G_2$. Comme cons\'equences importantes de cette construction, nous \'etablissons : \hfill\break
 -  une correspondance  fonctorielle    bijective,  entre les  actions continues des groupes quantiques $G_1$ et $G_2$ qui g\'en\'eralise le cas compact \cite{RV}, ainsi que le cas des d\'eformations \cite{NT}  par un $2$-cocycle ;\hfill\break
 -   une \'equivalence de Morita  des produits crois\'es $A_1 \rtimes G_1$ et $A_2 \rtimes G_2$ ;\hfill\break
 -  une description compl\`ete des actions continues d'un groupo\"ide de co-liaison ; \hfill\break
 -  l'\'equivalence des cat\'egories $KK^{G_1}$ et  $KK^{G_2}$ de  Kasparov,  g\'en\'eralisant au cas localement compact et r\'egulier, un r\'esultat \cite{V1} de Voigt.\hfill\break
Les preuves des r\'esultats ci-dessus utilisent de fa\c con cruciale  la r\'egularit\'e des   groupes quantiques $G_1$ et $G_2$. Nous montrons que la r\'egularit\'e  (au sens de \cite{Ec}, voir aussi \cite{Tim1,Tim2}),  d'un  groupo\"ide de co-liaison    ${\cal G}$,  associ\'e  \`a l'\'equivalence mono\"idale de deux groupes quantiques localement compacts $G_1$ et $G_2$, est \'equivalente  \`a la r\'egularit\'e des groupes quantiques $G_1$ et $G_2$. Ce r\'esultat r\'esout aussi quelques questions pos\'ees dans \cite{NT} dans le cas des d\'eformations par un $2$-cocycle, qui a \'et\'e \'etudi\'e aussi dans \cite{Kasprzak}.

L'organisation de ce travail est la suivante : 

  - Au  premier  paragraphe, nous rappelons la notion de groupe quantique \cite{KustV}  localement compact (l.c.)\index{la@l.c.}, ainsi que la d\'efinition  d'une action continue  \cite{BaSkaV} d'un tel objet, dans une C*-alg\`ebre ;
\hfill\break
 - Au deuxi\`eme paragraphe, nous  montrons que les groupo\"ides quantiques mesur\'es \cite{E} v\'erifient une propri\'et\'e  d'irr\'eductibilit\'e.  Ce r\'esultat \'etend au cas des groupo\"ides quantiques mesur\'es, l'irr\'eductibilt\'e de la repr\'esentation r\'eguli\`ere \cite{BaSka2}  d'un groupe quantique l.c.  Nous en d\'eduisons l'\'equivalence entre la r\'egularit\'e d'un groupo\"ide de co-liaison associ\'e \`a deux groupes quantiques mono\"idalement \'equivalents $G_1$ et $G_2$, et celle des deux groupes ;
\hfill\break
- Au troisi\`eme paragraphe, nous  introduisons la notion d'action continue d'un groupo\"ide  quantique de base de dimension finie,  dans une C*-alg\`ebre. Nous montrons que le produit crois\'e admet une action continue du groupo\"ide dual et nous g\'en\'eralisons, dans le cas r\'egulier, la dualit\'e \cite{BaSka2} de Takesaki-Takai  pour le double produit crois\'e. Nous d\'ecrivons aussi le double produit crois\'e par l'action d'un groupo\"ide de co-liaison ;\hfill\break
- Au quatri\`eme paragraphe,  nous \'etablissons l'\'equivalence des actions continues de deux groupes quantiques \lc $G_1$ et $G_2$ mono\"idalement \'equivalents et  r\'eguliers ;\hfill\break
 - Au cinqui\`eme paragraphe, nous construisons une \'equivalence des cat\'egories $KK^{G_1}$ et  $KK^{G_2}$ associ\'ees \`a deux groupes quantiques $G_1$ et $G_2$ mono\"idalement \'equivalents et  r\'eguliers ;
\hfill\break
- En appendice, nous avons regroup\'e quelques r\'esultats utilis\'es dans l'article.

{ \it  Le premier auteur remercie G. Skandalis pour d'utiles discussions sur divers points de cet article.}

\section{Rappels et notations}

\subsection{Notations pr\'eliminaires}

Nous introduisons quelques notations et conventions utilis\'ees dans l'article.

$\bullet$ Pour tout sous-ensemble $X$ d'un espace de Banach $E$, on note $[X]$ le sous-espace vectoriel ferm\'e  de $E$ engendr\'e par $X$.

$\bullet$ Tous les produits tensoriels de C*-alg\`ebres sont suppos\'es munis de la norme spatiale (produits tensoriels \og 
min \fg).

$\bullet$  Si $A$ est une C*-alg\`ebre, on note $M(A)$ la C*-alg\`ebre des multiplicateurs de $A$.

$\bullet$  Soient $H$, $K$ deux espaces de Hilbert, on note $\Sigma_{H \otimes K}$, ou plus simplement 
$\Sigma$, la volte, c'est-\`a-dire l'op\'erateur unitaire $H \otimes K \rightarrow K \otimes H : \xi \otimes \eta  \mapsto \eta \otimes \xi$.

Dans cet article, nous utiliserons la notion de C*-module hilbertien sur une C*-alg\`ebre, ainsi que leurs produits tensoriels (interne et externe). 
Les d\'efinitions et conventions utilis\'ees sont celles de \cite{K1}. En particulier, soient ${\cal E}$ et ${\cal F}$ deux 
C*-modules hilbertiens sur une C*-alg\`ebre $A$.

$\bullet$ On note $\cL({\cal E}, {\cal F})$ l'espace de Banach  form\'e par les   op\'erateurs 
$T : {\cal E} \rightarrow  {\cal F}$ qui ont un adjoint et   $\cL({\cal E})$ la C*-alg\`ebre  $\cL({\cal E}, {\cal E})$. 
\hfill\break
$\bullet$ Si  $\xi \in {\cal F}$ et 
$ \eta \in {\cal E}$, on note 
$\theta_{\xi, \eta}$ l'op\'erateur  $\zeta \mapsto \xi \langle \eta , \zeta\rangle_A$ et on pose $\cK({\cal E}, {\cal F})  = [ \theta_{\xi, \eta} \,| \, \xi \in {\cal F}\, , \,
 \eta \in {\cal E}]$ et  $\cK({\cal E}) = \cK({\cal E}, {\cal E})$ la sous-C*-alg\`ebre des op\'erateurs compacts engendr\'ee par les op\'erateurs $\theta_{\xi, \eta}$ dans $\cL({\cal E})$. On rappelle \cite{K1} qu'on a  $\cL({\cal E}) = M(\cK({\cal E}))$.

\subsection{Poids sur une alg\`ebre de von Neumann \texorpdfstring{\cite{Co}}{}}  \noindent
Soit $\varphi$ un poids normal semi-fini et fid\`ele (nsff)\index{nsff} sur une alg\`ebre de von Neumann $M$. On note $\mathfrak N_\varphi$, $H_\varphi$, $\Lambda_\varphi$, $J_\varphi$, $\Delta_\varphi$, $(\sigma_t^\varphi)$ 
  les objets canoniques de la th\'eorie de Tomita-Takesaki, associ\'es au poids $\varphi$. Rappelons simplement qu'on a : 
\hfill\break
 -  $\mathfrak N_\varphi := \{x \in M \,\,|\,\, \varphi(x^*x) < \infty\}$  et l'espace de Hilbert $H_\varphi$ est le  compl\'et\'e de
$\mathfrak N_\varphi$ pour le produit scalaire $\langle x , y\rangle := \varphi(x^*y)$, pour $x,y\in\mathfrak{N}_{\varphi}$ ;
\hfill\break
 - $\Lambda_\varphi : \mathfrak N_\varphi  \rightarrow H_\varphi$ est l'injection de $\mathfrak N_\varphi$ dans son compl\'et\'e ;
\hfill\break
 - $\pi_\varphi : M \rightarrow \cL(H_\varphi)$ est la repr\'esentation d\'efinie par 
$\pi_\varphi(a) \Lambda_\varphi x :=   \Lambda_\varphi (a x)$.
\hfill\break
Le triplet $(H_\varphi , \pi_\varphi , \Lambda_\varphi)$ est par d\'efinition la repr\'esentation GNS du poids $\varphi$. 

\subsection{Groupes quantiques localement compacts  \texorpdfstring{\cite{{KustV},{BaSka2}}}{}} 
\begin{definition}\cite{KustV}
Un groupe quantique localement compact (l.c.)\index{la@l.c.} est un couple  $G = (M , \delta)$ tel que : 
\begin{enumerate}
\item $M$ est une alg\`ebre de von Neumann et $\delta : M \rightarrow M \otimes M$ est un *-morphisme unital,  injectif,   normal et coassociatif, \ie $(\delta \otimes \id_M) \delta= (\id_M \otimes \delta) \delta$ ;
\item Il existe des poids nsff $\varphi$ et  $\psi$ sur  $M$ v\'erifiant :  
\hfill\break
  -  $\varphi$  est  invariant \`a  gauche, \ie $\varphi((\omega \otimes {\rm id}_M)\delta(x)) = \omega(1) \varphi(x)$ pour tout $x\in \mathfrak M_\varphi^+$ et tout $\omega\in M_\ast^+$,
\hfill\break
  -  $\psi$ est  invariant \`a  droite, \ie $\psi(({\rm id}_M  \otimes \omega)\delta(x)) = \omega(1) \psi(x)$ pour tout $x\in \mathfrak M_\psi^+$ et tout $\omega\in M_\ast^+$.
\end{enumerate}
\end{definition}
\noindent
Pour un groupe quantique \lc $(M, \delta)$,  un poids nsff sur $M$  invariant \`a gauche (resp. invariant \`a droite) est unique \cite{KustV} \`a une constante strictement positive pr\`es.
\hfill\break
Soit $G =(M, \delta)$ un groupe quantique l.c. Fixons un poids nsff sur $M$  invariant \`a gauche (une  mesure de Haar \`a gauche) $\varphi$. La repr\'esentation GNS  de $\varphi$ est not\'ee 
$(L^2(G) ,  \pi , \Lambda)$.  La repr\'esentation r\'eguli\`ere gauche du groupe quantique $G$ est l'unitaire multiplicatif \cite{BaSka2} $W\in B(L^2(G) \otimes L^2(G))$ d\'efini par
$$W^*(\Lambda x \otimes \Lambda y) = (\Lambda \otimes \Lambda)(\delta(y)(x \otimes 1)), \quad x , y \in  \mathfrak N_\varphi.$$
\noindent
En identifiant $L^\infty(G) := M$ \`a son image par $\pi$, on obtient :
\hfill\break
$\bullet$ $M$ est l'adh\'erence forte de l'alg\`ebre $\{({\rm id} \otimes \omega)(W) \,|\, \omega \in B(L^2(G))_\ast\}$ ;
\hfill\break
$\bullet$ $\delta(x) = W^*(1 \otimes x) W$ pour tout $x\in M$.
\hfill\break
L'alg\`ebre de Hopf-von Neumann $(M , \delta)$ admet  \cite{KustV} une antipode unitaire $R_M : M \rightarrow M$ et on peut prendre $\psi := \varphi \circ R_M$ pour mesure de Haar  \`a droite. Le cocycle  de Connes \cite{{Co},{V}}
   de $\psi$  relativement \`a $\varphi$  est de la forme :
$$(D\psi : D\varphi)_t  =  \nu^{i t^2\!/ 2} d^{\,it},\quad t\in\R \quad ;\quad \nu >0,\,\,d \eta M.$$
\noindent
Posons $\mathfrak N_\varphi^{d} = \{x\in M \,\,/\,\, x d^{1/ 2} \,\,\hbox{born\'e et } \,\overline{xd^{1/ 2}} \in \mathfrak N_\varphi\}$. La repr\'esentation GNS \cite{V} de $\psi$ est donn\'ee par $(L^2(G) , \id , \Lambda_\psi)$, o\`u 
$\Lambda_\psi$ est  la fermeture pour les topologies ultraforte/normique de l'application 
$\mathfrak N_\varphi^{d} \rightarrow H :  x \mapsto\Lambda  \overline{xd^{1/ 2}}$. Avec ce choix de repr\'esentation GNS pour $\psi$, on a $J_\psi = \nu^{i/ 4} J$.
\hfill\break
La repr\'esentation r\'eguli\`ere droite de $G$ est l'unitaire multiplicatif 
$V\in B(L^2(G) \otimes L^2(G))$ d\'efini par
$$V(\Lambda_\psi x \otimes \Lambda_\psi  y) = (\Lambda_\psi  \otimes \Lambda_\psi )(\delta(x)(1  \otimes y)), \quad x , y \in  \mathfrak N_\psi.$$
\noindent
Le groupe quantique  $\widehat G$ dual de $G$ est d\'efini par l'alg\`ebre de Hopf-von Neumann
$(L^\infty(\widehat G) , \widehat \delta)$, o\`u 
$L^\infty(\widehat G)$ est la  fermeture forte de l'alg\`ebre $\{(\id \otimes \omega)(V) \, | \, \omega  \in B(L^2(G))_\ast\}$ et le coproduit $\widehat \delta : L^\infty(\widehat G)  \rightarrow     L^\infty(\widehat G)\otimes  L^\infty(\widehat G)$ est défini par $\widehat \delta(x) = V ^*(1 \otimes x )V$ pour tout $x\in L^\infty(\widehat G)$.
\hfill\break
Le groupe quantique $\widehat G$ admet des mesures de Haar \cite{KustV},  dont on peut prendre $L^2(G)$  pour espace de Hilbert des  repr\'esentations GNS, on note dans la suite  $\widehat J$ l'op\'erateur de Tomita de la mesure de Haar \`a gauche  de $\widehat G$.

\subsubsection{C*-alg\`ebres de Hopf associ\'ees \`a un groupe quantique}

Au groupe quantique $G$, on associe  \cite{BaSka2, KustV} deux C*-alg\`ebres de Hopf $(S, \delta)$ et 
$(\widehat S, \widehat \delta)$ d\'efinies par : 
\hfill\break
$\bullet$ $S$ (resp. $\widehat S$)\index{sa@$S$, $\widehat{S}$} est la fermeture normique de l'alg\`ebre 
$$\{(\omega \otimes \id)(V) \, | \, \omega  \in B(L^2(G))_\ast\}\quad  \quad   ( {\rm resp.} \,\,
 \{(\id \otimes \omega)(V) \, | \, \omega  \in B(L^2(G))_\ast\}) \, ;$$
\noindent
$\bullet$ $ \delta : S  \rightarrow     M(S \otimes  S) : x  \mapsto V(x \otimes 1)V^*  ,\quad 
 \widehat \delta : \widehat S  \rightarrow     M(\widehat  S\otimes  \widehat S) : x  \mapsto V^*(1 \otimes x)V$.

Posons $U:= \widehat J J = \nu^{i/ 4} J \widehat J$\index{u@$U$}.  Les C*-alg\`ebres $S$ et $\widehat S$ admettent les repr\'esentations :
\hfill\break
 -  $L : S \rightarrow B(L^2(G)) : x \mapsto L(x) = x  , \quad 
R : S \rightarrow B(L^2(G)) : x \mapsto  UL(x)U^*$ ;
\hfill\break
 -  $\rho  : \widehat S \rightarrow B(L^2(G)) : x \mapsto \rho(x) = x , \quad 
\lambda : \widehat S \rightarrow B(L^2(G)) : x \mapsto  U\rho(x)U^*$.\index{lb@$L$, $R$, $\rho$, $\lambda$}
\hfill\break
On d\'eduit de (\cite{KustV}, Proposition 2.15) qu'on a 
$$W =\Sigma (U \otimes 1) V (U^* \otimes 1)\Sigma  \quad \text{et} \quad [W_{12} , V_{23}] = 0.$$
\noindent
La repr\'esentation r\'eguli\`ere droite de $\widehat G$ est l'unitaire multiplicatif  $\widetilde V = 
\Sigma (1 \otimes U) V (1 \otimes U^*) \Sigma$.
\begin{notations}
\begin{enumerate}
\item $\cK \subset B(L^2(G))$ d\'esigne la C*-alg\`ebre des op\'erateurs compacts.
\item $\cC(V)$\index{c@$\cC(-)$} (resp. $\cC(W)$) est la fermeture normique de l'alg\`ebre \cite{BaSka2} 
$\{({\rm id} \otimes \omega)(\Sigma  V) \, | \, \omega  \in B(L^2(G))_\ast\}$ (resp.
$\{({\rm id} \otimes \omega)(\Sigma W) \, | \, \omega  \in B(L^2(G))_\ast\}$).
\end{enumerate}
\end{notations}
\begin{definition}\cite{BaSka2, BaSkaV}
Le groupe quantique $G$ est dit r\'egulier (resp. semi-r\'egulier) si $\cK = \cC(V)$ (resp. 
$\cK \subset \cC(V))$.
\end{definition}
Notons que $G$ est   r\'egulier (resp. semi-r\'egulier) si et seulement si $\cK = \cC(W)$ (resp. 
$\cK \subset \cC(W))$ (\cf \cite{BaSkaV}).

\subsubsection{Actions continues de groupes quantiques l.c. Produits crois\'es }
 
Conservons les notations pr\'ec\'edentes. Soit $A$ une C*-alg\`ebre. Une coaction de $(S,\delta)$ dans $A$ est un *-morphisme $\delta_A  :  A  \rightarrow M(A \otimes S)$ non d\'eg\'en\'er\'e v\'erifiant $(\delta_A \otimes \id_S) \delta_A = (\id_A \otimes \delta) \delta_A$.  
\begin{definition}
Une action fortement (resp. faiblement) continue de $G$ dans $A$ est  une coaction $\delta_A$  de $(S,\delta)$ dans $A$ v\'erifiant 
$$[\delta_A(A)(1_A \otimes S)] = A \otimes S \quad (\text{resp. } A = [(\id_A \otimes \omega)\delta_A(A)\,|\,  \omega\in B(L^ 2(G))_\ast]).$$
Une $G$-alg\`ebre est un couple $(A, \delta_A)$, o\`u $A$ est une C*-alg\`ebre et $\delta_A  :  A  \rightarrow M(A \otimes S)$ est une action fortement continue  de $G$ dans $A$.
\end{definition}

Si $G$ est r\'egulier, toute action de $G$ faiblement continue est fortement continue (\cf\cite{BaSkaV}). Dans ce cas, nous dirons que $\delta_A$ est une action continue de $G$ dans $A$.

\begin{definition}
 Soit $\delta_A  :  A  \rightarrow M(A \otimes S)$ (resp. $\delta_A  :  A  \rightarrow M(A \otimes \widehat S)$) une action  fortement continue de $G$ (resp. $\widehat G$) dans une C*-alg\`ebre $A$.  Notons $\pi_L$ (resp. $\widehat\pi_\lambda$) la repr\'esentation de $A$ dans le C*-module $A \otimes L^2(G))$ d\'efinie par 
$\pi_L := (\id_A \otimes L) \delta_A$ (resp. $\widehat\pi_\lambda := (\id_A \otimes \lambda) \delta_A$).\index{pa@$\pi_L$, $\widehat{\pi}_{\lambda}$}
\hfill\break
On appelle produit crois\'e  (r\'eduit) de $A$ par $G$ (resp. $\widehat G$), la  sous-C*-alg\`ebre not\'ee $A \rtimes G$ (resp. $A \rtimes \widehat G$)\index{a@$A \rtimes G$, $A \rtimes \widehat G$} de $\cL(A \otimes L^2(G))$ engendr\'ee par les produits 
$\pi_L(a)(1_A \otimes \rho(x))$, $a\in A$ et $x\in\widehat S$ (resp. $\widehat\pi_\lambda(a) (1_A \otimes L(x))$, $a\in A$ et $x\in S$).
\end{definition}

Le produit crois\'e $A \rtimes G$ (resp. $A \rtimes \widehat G$),  admet une action fortement continue de $\widehat G$ (resp. $G$). Dans le cas o\`u $G$ est r\'egulier, la dualit\'e de Takesaki-Takai s'\'etend \`a ce cadre, \cf\cite{BaSka2}.

\begin{notation} On note $A^G$\index{ab@$A^G$} la cat\'egorie dont les objets sont les $G$-alg\`ebres et les morphismes sont les *-morphismes 
$f : A \rightarrow M(B)$ non d\'eg\'en\'er\'es et $G$-\'equivariants, \ie $(f \otimes \id)\delta_A  = \delta_B \circ f$.
\end{notation}

\subsubsection{Bimodules hilbertiens \'equivariants}\label{dbimod}

Dans ce qui suit, nous rappelons bri\`evement la bidualit\'e pour les produits crois\'es de bimodules hilbertiens \'equivariants, \cf\cite{BaSka1, BaSka2}.

\'Etant donné un groupe quantique \lc r\'egulier $G$, soient $(A, \delta_{A})$ et $(B, \delta_{B})$ deux $G$-alg\`ebres et $({\cal E}, \delta_{{\cal E}})$ un $A-B$ bimodule hilbertien $G$-\'equivariant, \ie ${\cal E}$ est un $A-B$ bimodule hilbertien et la C*-alg\`ebre de liaison $\cK({\cal E} \oplus B)$ est munie d'une action continue 
$$\delta_{\cK({\cal E} \oplus B)} : \cK({\cal E} \oplus B) \rightarrow M(\cK({\cal E} \oplus B) \otimes S)$$
\noindent
compatible avec les coactions $\delta_A$ et $\delta_B$ au sens suivant :
\hfill\break
 - le *-morphisme \'evident $B \rightarrow \cK({\cal E} \oplus B)$ est suppos\'e $G$-\'equivariant ;
\hfill\break
 - l'action \`a gauche $A \rightarrow \cL({\cal E})$ est suppos\'ee $G$-\'equivariante.
\hfill\break
On note $\delta_{{\cal E}}$ la restriction de $\delta_{\cK({\cal E} \oplus B)}$ \`a ${\cal E}$ (\cf\cite{BaSka1}).

On d\'eduit alors de (\cite{BaSka2}, paragraphe 7) : 

1) Le $A \rtimes G \rtimes \widehat {G}- B \rtimes G \rtimes \widehat {G}$ bimodule hilbertien $G$-\'equivariant ${\cal E} \rtimes G \rtimes \widehat {G}$ s'identifie au $A \otimes {\cal K}(L^2(G)) - B \otimes {\cal K}(L^2(G))$ bimodule  hilbertien $G$-\'equivariant ${\cal E} \otimes {\cal K}(L^2(G))$.
\hfill\break
2) Les $G$-alg\`ebres $A  \otimes {\cal K}(L^2(G))$ et $B  \otimes {\cal K}(L^2(G))$ sont munies de l'action biduale de $G$. L'action de $G$ dans le $B \otimes {\cal K}(L^2(G))$-module  hilbertien   ${\cal E} \otimes {\cal K}(L^2(G))$
est donn\'ee  par la coaction $\delta_{{\cal E} \otimes {\cal K}(L^2(G))}$  d\'efinie par :
$$\delta_{{\cal E} \otimes {\cal K}(L^2(G))}(\xi)  = V_{23} ({\rm id}_{\cE} \otimes \sigma)(\delta_{{\cal E}} \otimes {\rm id}_{{\cal K}(L^2(G))})(\xi) V_{23}^*  , \quad \xi \in {\cal E} \otimes {\cal K}(L^2(G)),$$
\noindent
o\`u $\sigma : S \otimes {\cal K}(L^2(G)) \rightarrow {\cal K}(L^2(G)) \otimes S : x \otimes y \mapsto y\otimes x$ et $V\in M({\cal K}(L^2(G)) \otimes S)$ est la repr\'esentation r\'eguli\`ere droite du groupe quantique $G$.

\section{Compl\'ements sur les groupo\"ides mesur\'es quantiques }

Dans ce paragraphe, nous \'etablissons un r\'esultat d'irr\'eductbilit\'e valable pour les groupo\"ides mesur\'es quantiques. Dans le cas o\`u la base du groupo\"ide est de dimension finie,      ce r\'esultat   joue un r\^ole crucial pour \'etablir la dualit\'e de Takesaki-Takai. Dans le cas d'un groupo\"ide de co-liaison \cite{{DeC1}, {DeC2}}  il permet de relier la r\'egularit\'e de ce groupo\"ide \`a celle des deux groupes quantiques \lc mono\"idalement \'equivalents sous-jacents  \`a ce groupo\"ide.

Nous commen\c cons par rappeler les d\'efinitions et r\'esultats portant sur les  groupo\"ides mesur\'es quantiques dont  on a besoin.    On se r\'ef\`ere principalement \`a \cite{E}.  Les notions  de produits tensoriels relatifs d'espaces de Hilbert et de produits fibr\'es  d'alg\`ebres de von Neumann sont rappel\'es dans l'appendice.

\subsection{Irr\'eductibilit\'e }

Un groupo\"ide  mesur\'e   quantique   (m.q.)\index{ma@m.q.} de base $N$,   est un octuplet    ${ \cal   G} = (N, M,  \alpha, \beta, \Gamma, T, T', \nu)$, o\`u   :\hfill\break
 -  $M$ et $N$ sont  des    alg\`ebres de von Neumann,    $\alpha : N  \rightarrow M $ et $\beta : N^{\rm o} \rightarrow M$ sont    des morphismes  normaux et fid\`eles ($N^{\rm o}$ d\'esigne l'alg\`ebre de von Neumann oppos\'ee) ;  \hfill\break
 - le coproduit $\Gamma : M \rightarrow \fprod{M}{\beta}{\alpha}{M}$ est un morphisme   normal et fid\`ele ($\fprod{M}{\beta}{\alpha}{M}$ est un produit fibr\'e d'alg\`ebres de von Neumann, \cf \ref{prodfib}) ;\hfill\break
 - $T : M^+ \rightarrow \alpha(N)^{+,{\rm ext}}$  et $T' : M^+ \rightarrow \beta(N^{\rm o})^{+,{\rm ext}}$  sont des   poids op\'eratoriels normaux et fid\`eles ;\hfill\break
 - $\nu$ est un poids nsff sur $N$.

Ces objets sont assujettis aux conditions suivantes : \hfill\break
1. les images des morphismes $\alpha$ et $\beta$ commutent ;\hfill\break
2. $\Gamma$ est coassociatif, \ie $(\fprod{\Gamma}{\beta}{\alpha}{{\rm id}}) \Gamma = (\fprod{{\rm id}}{\beta}{\alpha}{\Gamma}) \Gamma$ ; \hfill\break
3. pour tout $n\in N$, on a $\Gamma(\alpha(n))  =   \reltens { \alpha(n) } { \beta } { \alpha } { 1 }$ et $\Gamma(\beta(n^{\rm o}))  =  \reltens { 1 } { \beta } { \alpha } { \beta(n^{\rm o}) }$ ;\hfill\break
4. les poids  nsff $\varphi$ et $\psi$   sur $M$ d\'efinis respectivement par $\varphi = \nu \circ  \alpha^{-1} \circ   T$ et $\psi = 
\nu \circ   \beta^{-1} \circ   T'$ v\'erifient 
$$T = (\fprod{{\rm id}}{\beta}{\alpha}{\varphi}) \Gamma \quad {\rm et} \quad T' =(\fprod{\psi}{\beta}{\alpha}{{\rm id}} )\Gamma\,;$$\noindent
5. les groupes modulaires des poids $\varphi$ et $\psi$ commutent.

\noindent
Soit $(H, \pi, \Lambda)$ la repr\'esentation GNS du poids $\varphi$, notons   $(\sigma_t)$, $\Delta$ et $J$ respectivement le  groupe modulaire, l'op\'erateur modulaire   et  l'op\'erateur de Tomita associ\'es au  poids $\varphi$. Dans ce qui suit, on identifie $M$ et son image par $\pi$ dans $B(H)$.\hfill\break  
D'apr\`es \cite{E},  on a : \hfill\break
 - $R_{\cal G}  : M \rightarrow M$ un *-antiautomorphisme v\'erifiant $R_{\cal G}^2 = {\rm id} _M$ et coinvolutif ;
\hfill\break
 - on peut supposer que $T' = R_{\cal G}  T R_{\cal G} $, et donc aussi $\psi = \varphi \circ   R_{\cal G} $ ;
\hfill\break
 -  pour tout $t\in\R$, on  a  $(D\psi : D\varphi)_t  = \lambda^{i t^2\!/ 2} d^{\,it}$, o\`u $\lambda, d$ sont des op\'erateurs autoadjoints positifs affili\'es \`a $M$, l'op\'erateur $\lambda$ \'etant central.
\hfill\break
 -  la repr\'esentation GNS du poids $\psi$ est donn\'ee par $(H,  \id , \Lambda_\psi)$, o\`u $\Lambda_\psi$ est la fermeture pour les topologies ultraforte/normique de l'op\'erateur   non born\'e   :
$$\{ x\in M\,|\,\, x d^{1/ 2} \,\,\hbox{born\'e et } \,\,\overline{xd^{1/ 2}}\in \mathfrak N_\varphi \} \rightarrow  H  : x   \mapsto \Lambda \overline{xd^{1/ 2}}\,;$$
\noindent
 - l'op\'erateur de Tomita $J_\psi$ associ\'e au poids $\psi$ est donn\'e par $J_\psi = \lambda^{i/ 4} J$ ;\hfill\break
 - le groupo\"ide mesur\'e dual $\widehat{\cal  G}  = (N, \widehat M, \alpha, \widehat\beta, \widehat\Gamma, \widehat T, \widehat T', \nu)$ est d\'efinie de la fa\c con suivante : \hfill\break
$\bullet$ $\widehat M$\index{mb@$\widehat{M}$} est l'alg\`ebre de von Neumann engendr\'ee par les op\'erateurs $(\omega \ast {\rm id})({\cal  W}_{\cal G})$, o\`u $\omega\in B(H)_\ast$ (${\cal  W}_{\cal G}$\index{w@${\cal W}_{\cal G}$} est l'unitaire pseudo-multiplicatif \cite{Va} associ\'e au groupo\"ide ${ \cal    G}$). \hfill\break
$\bullet$ $\widehat\beta : N^{\rm o}  \rightarrow  \widehat M$\index{ba@$\widehat{\beta}$} est le *-morphisme fid\`ele et normal d\'efini par $\widehat\beta(n^{\rm o}) := J \alpha (n)^* J$, pour tout $n\in N$.\hfill\break
$\bullet$ $\widehat\Gamma  : \widehat M \rightarrow  \fprod { \widehat M } { \widehat\beta } { \alpha } { \widehat M } :  x \mapsto 
\widehat\Gamma(x) := \sigma_{\alpha\widehat{\beta}} {\cal   W}_{\cal G}(\reltens { x } { \beta } { \alpha } { 1 })  {\cal   W}_{\cal G}^* \sigma_{\widehat{\beta}\alpha}$ (o\`u $\sigma_{\alpha\widehat{\beta}}$ et $\sigma_{\widehat{\beta}\alpha}$ d\'esignent les voltes).\hfill\break
$\bullet$ On a un unique poids nsff $\widehat\varphi$ sur $\widehat M$ avec comme repr\'esentation GNS 
  $(H, {\rm id}, \Lambda_{\widehat\varphi})$, o\`u     $\Lambda_{\widehat\varphi}$  est   la fermeture de l'op\'erateur 
$(\omega \ast  {\rm id})({\cal  W}_{\cal G}) \mapsto  a_\varphi(\omega)$, o\`u $\omega$ appartient \`a  
 un sous-espace dense de  
 $$I_\varphi :=\{ \omega\in B(H)_\ast\, | \, \exists k \geqslant 0,\,\, \forall x\in \mathfrak N_\varphi,   \,\,|\omega(x^*)|^2\leqslant k \varphi(x^* x)\},$$  
le vecteur   $a_\varphi(\omega)\in H$ \'etant  d\'efini par  
$$\omega(x^*) =  \langle \Lambda x, a_\varphi(\omega)\rangle,\quad \text{pour tout }x\in \mathfrak N_\varphi.$$
$\widehat T$ est alors l'unique poids op\'eratoriel nsff de $\widehat M$ sur $\alpha(N)$ v\'erifiant 
 $\widehat\varphi= \nu \circ   \alpha^{-1} \circ   \widehat T$.
Soit $\widehat T' =  R_{\widehat{\cal  G}} \widehat T R_{\widehat{\cal  G}}$, o\`u $R_{\widehat{\cal  G}}: \widehat M  \rightarrow \widehat M$ est la coinvolution unitaire d\'efinie par $R_{\widehat{\cal  G}}(x) = J x^* J$, pour $x\in\widehat{M}$.
L'op\'erateur de Tomita du poids $\widehat\varphi$ est not\'e $\widehat J$.
\medbreak
Dans la suite, nous utiliserons pour dual du  groupo\"ide ${\cal G}$,  le groupo\"ide 
$\widehat{\cal  G} ^{\rm c} = (N^{\rm o}, \widehat M',  \beta, \widehat\alpha, \widehat\Gamma^{\rm c}, \widehat T^{\rm c}, \widehat T^{\rm c}{'}, \nu^{\rm o})$ car il est mieux adapt\'e pour les actions \`a droite du groupo\"ide ${\cal  G}$. Notons qu'on a  : \hfill\break
 $\bullet$  $\widehat\Gamma^{\rm c}(x) = ({\cal   W}_{(\widehat{\cal  G} )^{\rm c}})^* (\reltens{1}{\beta}{\alpha}{x}) { \cal   W}_{(\widehat{\cal  G} )^{\rm c}}$ pour tout $x\in \widehat M'$ ;\hfill\break
$\bullet$ $\widehat T^{\rm c}  = C_{\widehat M} \circ \widehat T \circ C_{\widehat M}^{-1}$, o\`u 
$C_{\widehat M}  : \widehat M  \rightarrow \widehat M' : x \mapsto \widehat J x^* \widehat J$ ;\hfill\break
$\bullet$   $\widehat T^{\rm c}{'}  = R_{\widehat{\cal  G} ^{\rm c}} \circ  \widehat T^{\rm c} \circ R_{\widehat{\cal  G} ^{\rm c}}$ ;\hfill\break
$\bullet$ le poids commutant  $\widehat\varphi' = \nu^{\rm o} \circ \beta^{-1} \circ \widehat T^{\rm c}$    d\'eduit du poids $\widehat\varphi$,  est invariant \`a gauche pour le coproduit $\widehat\Gamma^{\rm c}$.

\begin{notations}
Notons ${\cal W}_{\cal G}$ l'unitaire pseudo-multiplicatif associ\'e \cite{E} \`a un groupo\"ide m.q. ${\cal G}$.
Posons avec les notations de \cite{E} :\index{va@$\widehat{\cal V}$, $\cal V$, $\widetilde{\cal V}$}\index{u@$U$} 
$$\widehat{\cal V}  := {\cal W}_{\cal G} , \quad   {\cal V}  := {\cal W}_{\widehat{{\cal G}^{\rm o}}} =   {\cal W}_{(\widehat{\cal G})^{\rm c}} , \quad 
  \widetilde{\cal V}  := {\cal W}_{ ({\cal G}^{\rm o})^{\rm c}} , \quad  U:= \widehat J J.$$
\end{notations} 

\begin{lemme} Nous avons : 
\begin{enumerate}
\item
  $U^* = \lambda^{-{ i/ 4}} U$ ;
\item $\widehat{{\cal V}  } = \sigma_{\widehat\beta \alpha} (\reltens{ U }{ \beta }{ \alpha }{ 1 })   {{\cal V}  } (\reltens{U^*}{\alpha}{\beta}{1}) \sigma_{\beta \alpha}$ ;
\item $\widetilde{{\cal V}  } = \sigma_{\beta \widehat\alpha} (\reltens{ 1 }{ \beta }{ \widehat\alpha } { U })    {{\cal V}  }  (\reltens{ 1 }{ \widehat\alpha }{ \beta }{ U^* }) \sigma_{\widehat\beta \widehat\alpha}$ ;
\end{enumerate}
o\`u $\sigma_{\beta \alpha} : \reltens{H}{\beta}{\alpha}{H}  \rightarrow  \reltens{H}{\alpha}{\beta}{H}$, ainsi que $\sigma_{\widehat\beta \alpha}$, $\sigma_{\widehat\beta \widehat\alpha}$ et $\sigma_{\beta \widehat\alpha}$\index{sb@$\sigma_{\widehat\beta \alpha}$, $\sigma_{\widehat\beta \widehat\alpha}$, $\sigma_{\beta \widehat\alpha}$} d\'esignent les voltes, et les repr\'esentations $\widehat\alpha$ et $\widehat\beta$ sont donn\'ees par $\widehat\alpha := {\rm Ad}\,U \circ \alpha$\index{ac@$\widehat{\alpha}$} et $\widehat\beta := {\rm Ad}\,U \circ \beta$.
\begin{proof} 
Le a) r\'esulte de \cite{E} 3.11 (iv). Rappelons que $\lambda$  est central dans $M$ et $\widehat M$.

Le b) (resp. le c)) r\'esulte de \cite{E} 3.12 (v) (resp. 3.12 (vi)).
\end{proof}
\end{lemme}

\begin{lemme} 
Soient ${\cal G} = (N, M,  \alpha, \beta, \Gamma, T, T', \nu)$    un groupo\"ide \mq et  $\widehat{\cal G} = (N, M,  \alpha, \widehat\beta, \widehat\Gamma, \widehat T, \widehat R \circ \widehat T  \circ \widehat R, \nu)$    le  groupo\"ide dual \cite{E} de ${\cal G}$.  
\begin{enumerate}
\item  Pour tout $x\in M'$, on a :
$$\widehat{{\cal V}  }(\reltens { x }{ \beta }{ \alpha }{ 1 }) =(\reltens{ x }{ \alpha }{ \widehat\beta }{ 1 })\widehat{{\cal V}  } , \quad  {{\cal V}  } (\reltens{ 1 }{ \widehat\alpha }{ \beta }{ x }) = (\reltens{ 1 }{ \beta }{ \alpha }{ x }) {{\cal V}  }.$$
\item Pour tout $x\in \widehat M$, on a :
$${{\cal V}  } (\reltens{x}{\widehat\alpha}{\beta}{1}) = (\reltens{x}{\beta}{\alpha}{1}) {{\cal V}  }, \quad 
\widetilde{{\cal V}  } (\reltens{1}{\widehat\beta}{\widehat\alpha}{x}) = (\reltens{1}{\widehat\alpha}{\beta}{x}) \widetilde{{\cal V}  }.$$
\item Pour tout $x\in M$, on a :
$${{\cal V}  } (\reltens{x}{\widehat\alpha}{\beta}{1}){{\cal V}  }^* = \widehat{{\cal V}  }^*(\reltens{1}{\alpha}{\widehat\beta}{x}) \widehat{{\cal V}  } , \quad \widetilde{{\cal V}  }  (\reltens{x}{\widehat\beta}{\widehat\alpha}{1}) = (\reltens{x}{\widehat\alpha}{\beta}{1})\widetilde{{\cal V}  }.$$
\item  Pour tout $x\in \widehat M'$, on a :
$${{\cal V}  }^* (\reltens{1}{\beta}{\alpha}{x}){{\cal V}  }  = \widetilde{{\cal V}  }(\reltens{x}{\widehat\beta}{\widehat\alpha}{1}) \widetilde{{\cal V}  }^* , \quad 
\widehat{{\cal V}  } (\reltens{1}{\beta}{\alpha}{x}) = (\reltens{1}{\alpha}{\widehat\beta}{x}) \widehat{{\cal V}  }.$$
\end{enumerate}
\begin{proof}
Les op\'erateurs apparaissant dans l'\'enonc\'e  ont un sens gr\^ace aux relations \cite{E} :
\begin{equation}\label{Eq:commut}
M \cap \widehat M = \alpha(N),\quad 
M \cap \widehat M' = \beta(N^{\rm o}),\quad M' \cap \widehat M = \widehat\beta(N^{\rm o}),\quad M' \cap \widehat M' = \widehat\alpha(N).
\end{equation}
\noindent
La preuve repose uniquement sur les r\'esultats 3.8, 3.11 et 3.12 de  \cite{E}. Par exemple,  
 la premi\`ere  relation du a) r\'esulte du fait   que $M$ est l'adh\'erence faible de l'alg\`ebre $\{   
({\rm id} \ast \omega)(\widehat{\cal V}) \,\,|\,\,\omega\in B(H)_\ast\}$, \cf 3.8. La deuxi\`eme relation r\'esulte alors de la   premi\`ere et de la formule $\widehat{{\cal V}  } = \sigma_{\widehat\beta \alpha} (\reltens{U}{\beta}{\alpha}{1})     {{\cal V}  }  (\reltens{U^*}{\alpha}{\beta}{1}) \sigma_{\beta \alpha}$.
\hfill\break
La premi\`ere  relation du c) (resp.  du d)) est exactement la premi\`ere (resp. derni\`ere) relation de 3.12 (v)  (resp. 3.12 (vi)).
\end{proof}
\end{lemme}
\begin{corollary} \label{2.4} Nous avons :
\begin{enumerate}[label=\arabic*)]
\item ${{\cal V}  }^* (\reltens{UxU^*}{\beta}{\alpha}{1}) {{\cal V}  } =  (\reltens{U}{\alpha}{\beta}{1}) \sigma_{\beta \alpha} {{\cal V}  } (\reltens{x}{\widehat\alpha}{\beta}{1}) {{\cal V}  }^*\sigma_{\alpha \beta} (\reltens{U^*}{\widehat\alpha}{\beta}{1})$, \quad $x\in M$ ;
\item ${{\cal V}  } (\reltens{1}{\widehat\alpha}{\beta}{UyU^*}) {{\cal V}  }^*  =  (\reltens{1}{\beta} {\alpha}{U}) \sigma_{\widehat\alpha \beta}  {{\cal V}  }^* (\reltens{1}{\beta}{\alpha}{y}) {{\cal V}  }\sigma_{\beta \widehat\alpha} (\reltens{1}{\beta}{\widehat\alpha}{U^*})$, \quad $y\in \widehat M'$ ;
\item $\widetilde{{\cal V}  } (\reltens{1}{\widehat\beta}{\widehat\alpha}{UyU^*}) \widetilde{{\cal V}  }^* =   (\reltens{1}{\widehat\alpha}{\beta}{U}) \sigma_{\widehat\beta \widehat\alpha}  \widetilde{{\cal V}  }^* (\reltens{1}{\widehat\alpha}{\beta}{y}) \widetilde{{\cal V}  }\sigma_{\widehat\alpha \widehat\beta} (\reltens{1}{\widehat\alpha}{\widehat\beta}{U^*})$, \quad $y\in  M'$ ;
\item $\widehat{{\cal V}  }_{12} {{\cal V}  }_{23} ={{\cal V}  }_{23} \widehat{{\cal V}  }_{12}$,  \quad   ${{\cal V}  }_{12} \widetilde{{\cal V}  }_{23} =\widetilde{{\cal V}  }_{23}{{\cal V}  }_{12}$.
\end{enumerate}
\end{corollary}

\begin{proof}
Le 1) (resp. le 2)) est une  cons\'equence de la prem\`ere relation du c) (resp. du d). Le 3) s'obtient \`a  partir du 2) en rempla\c cant le groupo\"ide ${\cal   G}$ par le groupo\"ide $(\widehat{\cal  G} )^{\rm c}$. La premi\`ere (resp. la deuxi\`eme) \'egalit\'e du 4) est \'equivalente \`a la premi\`ere  relation du b) (resp. deuxi\`eme du c).
\end{proof}

Nous avons :

\begin{proposition}\label{irr1}
$(\reltens{1}{\widehat\beta}{\widehat\alpha}{U}) \sigma_{\alpha \widehat\beta} \widehat{{\cal V}  } {{\cal V}  } \widetilde{{\cal V}  } \in
\fprod { \widehat\beta(N^{\rm o}) } { \widehat\beta } { \widehat\alpha } {\widehat\alpha(N)}$.
\end{proposition}

\begin{proof} 
Posons $T = (\reltens{1}{\widehat\beta}{\widehat\alpha}{U}) \sigma_{\alpha \widehat\beta} \widehat{{\cal V}  } {{\cal V}  } \widetilde{{\cal V}  }$ et remarquons que $T\in B(\reltens{H}{\widehat\beta}{\widehat\alpha}{H})$ est un unitaire. Utilisant la d\'efinition \cite{E} du produit fibr\'e $\fprod { \widehat\beta(N^{\rm o}) } { \widehat\beta } { \widehat\alpha } { \widehat\alpha(N) }$, il revient au m\^eme de montrer que pour tout $x\in \widehat\beta(N^{\rm o})'$ et tout $y\in\widehat\alpha(N)'$, on a $[T, \reltens{x}{\widehat\beta}{\widehat\alpha}{y}] = 0$.

Montrons qu'on a $[T, \reltens{x}{\widehat\beta}{\widehat\alpha}{1}] = 0$ pour tout $x\in 
\widehat\beta(N^{\rm o})'$.
\hfill\break
 - Si $x\in M$, on a  
\begin{align} 
T(\reltens{x}{\widehat\beta}{\widehat\alpha}{1}) &=  (\reltens{1}{\widehat\beta}{\widehat\alpha}{U}) \sigma_{\alpha \widehat\beta}  \widehat{{\cal V}  }{{\cal V}  }(\reltens{x}{\widehat\alpha}{\beta}{1}) \widetilde{{\cal V}  } & (2.3 \, c)) \nonumber\\
&= (\reltens{1}{\widehat\beta}{\widehat\alpha}{U}) \sigma_{\alpha \widehat\beta}  \widehat{{\cal V}  }{{\cal V}  }(\reltens{x}{\widehat\alpha}{\beta}{1}) {{\cal V}  }^*{{\cal V}  }\widetilde{{\cal V}  }  &  \nonumber   \\
 & = (\reltens{1}{\widehat\beta}{\widehat\alpha}{U}) \sigma_{\alpha \widehat\beta}  \widehat{{\cal V}  }\widehat{{\cal V}  }^*(\reltens{1}{\alpha}{\widehat\beta}{x}) \widehat{{\cal V}  }{{\cal V}  }\widetilde{{\cal V}  } & (2.3 \, c)) \nonumber\\
& = (\reltens{1}{\widehat\beta}{\widehat\alpha}{U}) \sigma_{\alpha \widehat\beta}  (\reltens{1}{\alpha} {\widehat\beta}{x}) \widehat{{\cal V}  }{{\cal V}  }\widetilde{{\cal V}  } = (\reltens{x}{\widehat\beta}{\widehat\alpha}{1}) T.  & {}\nonumber 
\intertext{- Si $x\in \widehat M'$, on a}
T(\reltens{x}{\widehat\beta}{\widehat\alpha}{1}) &=  (\reltens{1}{\widehat\beta}{\widehat\alpha}{U}) \sigma_{\alpha \widehat\beta} \widehat{{\cal V}  }{{\cal V}  }\widetilde{{\cal V}  } (\reltens{x}{\widehat\beta}{\widehat\alpha}{1})\widetilde{{\cal V}  }^*\widetilde{{\cal V}  } & \nonumber\\
&=    (\reltens{1}{\widehat\beta}{\widehat\alpha}{U}) \sigma_{\alpha \widehat\beta}  \widehat{{\cal V}  }{{\cal V}  } 
{{\cal V}  }^* (\reltens{1}{\beta}{\alpha}{x}){{\cal V}  } \widetilde{{\cal V}  } & (2.3 \,d))\nonumber \\ 
&= (\reltens{1}{\widehat\beta}{\widehat\alpha}{U}) \sigma_{\alpha \widehat\beta} (\reltens{1}{\alpha}{\widehat\beta}{x})
\widehat{{\cal V}  }{{\cal V}  }\widetilde{{\cal V}  } 
= (\reltens{x}{\widehat\beta}{\widehat\alpha}{1}) T. & {} \nonumber
 \nonumber 
\end{align}
On a donc montr\'e que pour tout $x\in \widehat\beta(N^{\rm o})'$, on a $[T, \reltens{x}{\widehat\beta}{\widehat\alpha}{1}] = 0$. De fa\c con similaire,  montrons  que  pour tout $y\in \widehat\alpha(N)'$, on a $[T, \reltens{1}{\widehat\beta}{\widehat\alpha}{y}] = 0$.

- Si $y\in \widehat M$, on a
\begin{align} 
T(\reltens{1}{\widehat\beta}{\widehat\alpha}{y})  &= (\reltens{1}{\widehat\beta}{\widehat\alpha}{U}) \sigma_{\alpha \widehat\beta} \widehat{{\cal V}  }{{\cal V}  }(\reltens{1}{\widehat\alpha}{\beta}{y}) \widetilde{{\cal V}  } =
(\reltens{1}{\widehat\beta}{\widehat\alpha}{U}) \sigma_{\alpha \widehat\beta} \widehat{{\cal V}  }{{\cal V}  }(\reltens{1}{\widehat\alpha}{\beta}{y}){{\cal V}  }^*{{\cal V}  } \widetilde{{\cal V}  }  &  {(2.3 \,\,b))} \nonumber\\ 
&=(\reltens{1}{\widehat\beta}{\widehat\alpha}{U}) \sigma_{\alpha \widehat\beta} \widehat{{\cal V}  }(\reltens{1}{\beta}{\alpha}{U}) \sigma_{\widehat\alpha \beta}  {{\cal V}  }^* (\reltens{1}{\beta}{\alpha}{UyU^*})  {{\cal V}  }\sigma_{\beta \widehat\alpha} (\reltens{1}{\beta}{\widehat\alpha}{U^*}){{\cal V}  } \widetilde{{\cal V}  } &  {(\ref{2.4}\, 2))} \nonumber\\ 
&= (\reltens{U}{\beta}{\widehat\alpha}{U}) 
 (\reltens{1}{\beta}{\alpha}{U^*yU})  {{\cal V}  }\sigma_{\beta \widehat\alpha} (\reltens{1}{\beta} {\widehat\alpha}{U^*}){{\cal V}  } \widetilde{{\cal V}  } &  {(2.2\,\, b))}\nonumber\\
&=(\reltens{1}{\widehat\beta}{\widehat\alpha}{y}) (\reltens{1}{\widehat\beta}{\widehat\alpha}{U}) (\reltens{U}{\beta}{\alpha}{1})
{{\cal V}  }\sigma_{\beta \widehat\alpha} (\reltens{1}{\beta}{\widehat\alpha}{U^*}){{\cal V}  } \widetilde{{\cal V}  } &  {(2.2 \,\,b))}\nonumber\\
 &=(\reltens{1}{\widehat\beta}{\widehat\alpha}{y}) (\reltens{1}{\widehat\beta}{\widehat\alpha}{U}) \sigma_{\alpha \widehat\beta} \sigma_{\widehat\beta \alpha}
(\reltens{U}{\beta}{\alpha}{1})
{{\cal V}  }\sigma_{\beta \widehat\alpha} (\reltens{1}{\beta}{\widehat\alpha}{U^*}){{\cal V}  } \widetilde{{\cal V}  }  = (\reltens{1}{\widehat\beta}{\widehat\alpha}{y})  T. \nonumber &  {(2.2 \,\,b))} 
\intertext{- Si $y\in M$, on a}
T(\reltens{1}{\widehat\beta}{\widehat\alpha}{y}) &= 
 (\reltens{1}{\widehat\beta}{\widehat\alpha}{U}) \sigma_{\alpha \widehat\beta} \widehat{{\cal V}  }{{\cal V}  } 
(\reltens{1}{\widehat\alpha}{\beta}{U}) \sigma_{\widehat\beta \widehat\alpha}  \widetilde{{\cal V}  }^* (\reltens{1}{\widehat\alpha}{\beta}{UyU^*}) \widetilde{{\cal V}  }\sigma_{\widehat\alpha \widehat\beta} (\reltens{1}{\widehat\alpha}{\widehat\beta}{U^*})
\widetilde{{\cal V}  } & (\ref{2.4}\,3)) \nonumber\\
&=(\reltens{1}{\widehat\beta}{\widehat\alpha}{U}) \sigma_{\alpha \widehat\beta} \widehat{{\cal V}  }
(\reltens{1}{\beta}{\alpha}{U})\sigma_{\widehat\alpha  \beta}
(\reltens{1}{\widehat\alpha}{\beta}{UyU^*}) \widetilde{{\cal V}  }\sigma_{\widehat\alpha \widehat\beta} (\reltens{1}{\widehat\alpha}{\widehat\beta}{U^*})
\widetilde{{\cal V}  } & (2.2\,c)) \nonumber\\
&=
(\reltens{1}{\widehat\beta}{\widehat\alpha}{U}) \sigma_{\alpha \widehat\beta} \widehat{{\cal V}  }(\reltens{UyU^*}{\beta}{\alpha}{U})
\sigma_{ \widehat\alpha \beta}
\widetilde{{\cal V}  }\sigma_{\widehat\alpha \widehat\beta} (\reltens{1}{\widehat\alpha}{\widehat\beta}{U^*})
\widetilde{{\cal V}  } &  {}\nonumber\\
&=(\reltens{1}{\widehat\beta}{\widehat\alpha}{U}) (\reltens{1}{\widehat\beta}{\alpha}{U^*yU}) \sigma_{\alpha \widehat\beta} \widehat{{\cal V}  } (\reltens{1}{\beta}{\alpha}{U})
\sigma_{ \widehat\alpha \beta}
\widetilde{{\cal V}  }\sigma_{\widehat\alpha \widehat\beta} (\reltens{1}{\widehat\alpha}{\widehat\beta} {U^*})
\widetilde{{\cal V}  } & (2.3\,a)) \nonumber\\
&=
(\reltens{1}{\widehat\beta}{\widehat\alpha}{y})  (\reltens{1}{\widehat\beta}{\widehat\alpha}{U}) \sigma_{\alpha \widehat\beta}
\widehat{{\cal V}  } (\reltens{1}{\beta}{\alpha}{U})
\sigma_{ \widehat\alpha \beta}
\widetilde{{\cal V}  }\sigma_{\widehat\alpha \widehat\beta} (\reltens{1}{\widehat\alpha}{\widehat\beta}{U^*})
\widetilde{{\cal V}  }& {}\nonumber\\
& =
(\reltens{1}{\widehat\beta}{\widehat\alpha}{y})  (\reltens{1}{\widehat\beta}{\widehat\alpha}{U}) \sigma_{\alpha \widehat\beta}
\widehat{{\cal V}  } {{\cal V}  }
\widetilde{{\cal V}  } & (2.2\,c))\nonumber \\ 
&=(\reltens{1}{\widehat\beta}{\widehat\alpha}{y}) T. & {}
\nonumber 
\end{align}
\end{proof}

\subsection{Cas o\`u la base  est de dimension finie}\label{2.2}

\noindent
Si la base d'un groupo\"ide   \mq   est de dimension finie, De Commer \cite{DeC1,DeC2} a donn\'e une d\'efinition \'equivalente plus simple d'un tel groupo\"ide. En se basant sur cette d\'efinition que nous rappelons,  nous am\'eliorons le r\'esultat d'irr\'eductibilt\'e \ref{irr1}  et nous g\'en\'eralisons aussi \`a ce cadre, quelques r\'esultats utiles du cas des groupes quantiques localement compacts.  

\noindent
Fixons dans ce qui suit, $\displaystyle{ N = \oplus_{l=1}^k M_{n_l}}$ une C*-alg\`ebre de dimension finie, munie de la trace de Markov non normalis\'ee $\displaystyle \epsilon =\oplus_{l=1}^k n_l {\rm Tr}_{M_{n_l}}$\index{ea@$\epsilon$}.
\smallbreak
Le r\'esultat suivant sera  utilis\'e plusieurs fois :

\begin{lemme}\label{q}  Soient  $A$ et $B$ deux C*-alg\`ebres et soient $\alpha : N  \rightarrow M(A)$ et $\beta : N^{\rm o} \rightarrow M(B)$ deux *-morphismes non d\'eg\'en\'er\'es. Alors il existe un unique projecteur $p\in M(B \otimes A)$ tel que pour tout syst\`eme d'unit\'es matricielles (s.u.m.)\index{sc@s.u.m.} $(e_{ij}^{(l)})_{i,j = 1, \cdots, n_l,\, l = 1, \cdots,k}$\index{eb@$e_{ij}^{(l)}$} de $N$, on ait 
$$p = \sum_{l=1}^k {\frac{1}{n_l}} \sum_{i,j =1}^{n_l} \beta(e_{ji}^{(l){\rm o}}) \otimes \alpha(e_{ij}^{(l)}).$$
\noindent
Dans toute la suite, le projecteur $p$ sera not\'e $q_{\beta, \alpha}$\index{qa@$q_{\beta, \alpha}$}.
\begin{proof}
 On peut supposer que $B$ (resp. $A$) est une C*-alg\`ebre non d\'eg\'en\'er\'ee de $B(K)$ (resp. $B(H)$)), avec $K, H$ des espaces de Hilbert.\hfill\break
Notons ${\cal E}$ le C*-module sur $N$ canoniquement associ\'e (\ref{pr})  au   morphisme $\beta : N^{\rm o} \rightarrow  B(K)$. Soit $v$ la co-isom\'etrie  d\'efinie par :
$$v  : K \otimes H  \rightarrow {\cal  E} \otimes_{\alpha} H : \xi \otimes \eta  \mapsto 
\xi \otimes_{\alpha} \eta.$$
\noindent
On v\'erifie que $p = v^* v$ convient (\cf \cite{DeC1}).
\end{proof}
\end{lemme}  

\noindent
La d\'efinition \'equivalente d'un groupo\"ide mesur\'e quantique  de base $N$, donn\'ee par De Commer \cite{DeC1, DeC2},  est alors la suivante : 
\begin{definition}\label{m.qfini}
Un groupo\"ide  mesur\'e quantique (m.q.) de base $N$,   est un octuplet    
$$
{\cal G} = (N, M,  \alpha, \beta, \delta, T, T', \epsilon),
$$
o\`u  :\hfill\break
 -  $M$ est   une   alg\`ebre de von Neumann,    $\alpha : N  \rightarrow M $ et $\beta : N^{\rm o} \rightarrow M$ sont    des morphismes  normaux et fid\`eles ;\hfill\break
 - le coproduit $\delta : M \rightarrow M \otimes M$ est un morphisme   normal et fid\`ele ;\hfill\break
 - $T : M^+ \rightarrow \alpha(N)^{+,{\rm ext}}$  et $T' : M^+ \rightarrow \beta(N^{\rm o})^{+,{\rm ext}}$  sont des   poids op\'eratoriels  normaux, semi-finis  et fid\`eles.
\smallbreak
Ces objets sont assujettis aux conditions suivantes : 
\begin{enumerate}[label=\arabic*)]
\item Les images des morphismes $\alpha$ et $\beta$ commutent ;
\item $\delta(1) = q_{\beta, \alpha}$ et $\delta$ est coassociatif, \ie $(\delta \otimes {\rm id}_M) \delta = ({\rm id}_M \otimes \delta) \delta$ ; 
\item Pour tout $n\in N$, on a $\delta(\alpha(n))  = \delta(1) (\alpha(n)\otimes 1)$ et $\delta(\beta(n^{\rm o}))  = \delta(1) (1 \otimes \beta(n^{\rm o}))$ ;
\item Les poids  nsff $\varphi$ et $\psi$   sur $M$ d\'efinis respectivement par $\varphi =  \epsilon \circ  \alpha^{-1} \circ T$ et $\psi = 
\epsilon \circ \beta^{-1} \circ T'$ v\'erifient 
$$T = ({\rm id}_M \otimes \varphi) \delta \quad {\rm et} \quad T' =(\psi \otimes {\rm id}_M)\delta \text{ ;}$$ \item Pour tout $t\in\R$, on a 
$\sigma_t^T \circ \beta = \beta$ et $\sigma_t^{T'} \circ \alpha= \alpha$.
\end{enumerate}
\end{definition}

\begin{notations}\strut
\begin{enumerate}
\item
  $V$  d\'esigne  l'image par l'injection canonique $\iota_{\widehat\alpha \alpha}^\beta  : 
B(\reltens{H}{\widehat\alpha}{\beta}{H},\reltens{H}{\beta}{\alpha}{H})  \rightarrow  B(H \otimes H)$  de l'unitaire pseudo-multiplicatif \cite{E}
${\cal V}  := {\cal W}_{\widehat{{\cal G}^{\rm o}}} =   {\cal W}_{(\widehat{\cal G})^{\rm c}}$.
\item
 $W$ d\'esigne  l'image par l'injection canonique  
$\iota_{\beta  \widehat\beta} ^\alpha  : 
B(\reltens{H}{\beta}{\alpha}{H}, \reltens{H}{\alpha}{\widehat\beta}{H})  \rightarrow  B(H \otimes H)$ de l'unitaire pseudo-multiplicatif \cite{E} $\widehat{\cal V}  := {\cal W}_{\cal G}$. 
\item $\widetilde V$ d\'esigne l'image par l'injection canonique 
$\iota_{\widehat\beta  \beta}^{\widehat\alpha} : 
B( \reltens{H}{\widehat\beta}{\widehat\alpha}{H}, \reltens{H}{\widehat\alpha}{\beta}{H})  \rightarrow  B(H \otimes H)$ 
de l'unitaire pseudo-multiplicatif \cite{E}  $\widetilde{\cal V}  := {\cal W}_{ ({\cal G}^{\rm o})^{\rm c}}$.\index{vb@$V$, $W$, $\widetilde{V}$}
\end{enumerate}
\end{notations}
Dans ce qui suit, nous r\'ecapitulons les principales propri\'etes v\'erifi\'ees par $W, V$ et $\widetilde V$ qui seront utilis\'ees dans la suite. La preuve de ces r\'esultats d\'ecoule  des propri\'et\'es  \cite{E} des unitaires pseudo-multiplicatifs dont ils sont les images (\cf \cite{E, DeC2}).
\begin{proposition}\label{2.9} Les op\'erateurs $V , W , \widetilde{V}$ sont des isom\'etries partielles pentagonales agissant dans $H \otimes H$ et v\'erifient :
\begin{enumerate}
\item $W = \Sigma (U \otimes 1) V (U^* \otimes 1)\Sigma$, \quad $\widetilde V = \Sigma (1 \otimes U) V (1 \otimes U^*)\Sigma$,\quad avec\quad  $U:= \widehat J J$ \quad ;
\item $V^* = (J \otimes \widehat J) V (J \otimes \widehat J), \quad  W^* = (\widehat J \otimes J) W (\widehat J \otimes J)$ \quad ;
\item
le support initial et le support final de chacune des isom\'etries partielles pentagonales $W , V , \widetilde V$ sont donn\'es par les formules :
$$V^* V = \widetilde V \widetilde V^* = q_{\widehat\alpha, \beta}, \quad W^*W  = V V^* = q_{\beta, \alpha} , \quad W W^* = q_{\alpha, \widehat\beta} , \quad \widetilde V^*\widetilde V  = q_{\widehat\beta, \widehat\alpha}.$$
\end{enumerate}
\end{proposition}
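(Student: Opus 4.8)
La strat\'egie que je suivrais consiste \`a transf\'erer, via les injections canoniques $\iota$ intervenant dans la d\'efinition de $V$, $W$, $\widetilde V$, les propri\'et\'es des unitaires pseudo-multiplicatifs $\cV$, $\widehat\cV$, $\widetilde\cV$ \'etablies par Enock \cite{E}. Le point de d\'epart est le dictionnaire fourni par la dimension finie de la base $N$ : chaque produit tensoriel relatif $H_\beta\otimes_\alpha H$ s'identifie au sous-espace ferm\'e $q_{\beta,\alpha}(H\otimes H)$ (\cf lemme \ref{q}), l'injection $\iota^\beta_{\widehat\alpha\alpha}$ envoyant un op\'erateur de $H_{\widehat\alpha}\otimes_\beta H$ dans $H_\beta\otimes_\alpha H$ sur son prolongement par z\'ero \`a $H\otimes H$. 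Je commencerais par expliciter la lecture de chaque volte relative $\sigma_{\beta\alpha}$ dans cette identification : c'est la restriction de la volte ordinaire $\Sigma$, compos\'ee avec les projecteurs $q$ ad\'equats. Une fois ce dictionnaire fix\'e, les trois assertions s'obtiennent par substitution.

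Pour le 3), j'observerais que $\cV = \cW_{(\widehat\cG)^c}$ est un \emph{unitaire} de $H_{\widehat\alpha}\otimes_\beta H$ sur $H_\beta\otimes_\alpha H$ ; son image $V$ est donc une isom\'etrie partielle de support initial la projection sur $H_{\widehat\alpha}\otimes_\beta H$, soit $q_{\widehat\alpha,\beta}$, et de support final la projection sur $H_\beta\otimes_\alpha H$, soit $q_{\beta,\alpha}$, d'o\`u $V^*V = q_{\widehat\alpha,\beta}$ et $VV^* = q_{\beta,\alpha}$. Le m\^eme raisonnement appliqu\'e \`a $\widehat\cV : H_\beta\otimes_\alpha H \to H_\alpha\otimes_{\widehat\beta} H$ donne $W^*W = q_{\beta,\alpha}$ et $WW^* = q_{\alpha,\widehat\beta}$, et appliqu\'e \`a $\widetilde\cV : H_{\widehat\beta}\otimes_{\widehat\alpha} H \to H_{\widehat\alpha}\otimes_\beta H$ donne $\widetilde V^*\widetilde V = q_{\widehat\beta,\widehat\alpha}$ et $\widetilde V\widetilde V^* = q_{\widehat\alpha,\beta}$ ; les \'egalit\'es crois\'ees $V^*V = \widetilde V\widetilde V^*$ et $W^*W = VV^*$ en r\'esultent aussit\^ot.

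Pour le 1), je transporterais par $\iota$ les formules du premier lemme de \S2.1 exprimant $\widehat\cV$ et $\widetilde\cV$ \`a partir de $\cV$ et de $U=\widehat J J$ : la relation $\widehat\cV = \sigma_{\widehat\beta\alpha}(U_\beta\otimes_\alpha 1)\cV(U^*_\alpha\otimes_\beta 1)\sigma_{\beta\alpha}$ devient, apr\`es remplacement des voltes relatives par $\Sigma$ et des facteurs $U_\beta\otimes_\alpha 1$ par $U\otimes 1$, exactement $W = \Sigma(U\otimes 1)V(U^*\otimes 1)\Sigma$, et de m\^eme pour $\widetilde V = \Sigma(1\otimes U)V(1\otimes U^*)\Sigma$. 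Pour le 2), j'utiliserais les relations d'Enock \cite{E} reliant $\cV$ \`a $\cV^*$ par les conjugaisons de Tomita $J$ et $\widehat J$, lues de la m\^eme mani\`ere dans $H\otimes H$, pour obtenir $V^* = (J\otimes\widehat J)V(J\otimes\widehat J)$ et $W^* = (\widehat J\otimes J)W(\widehat J\otimes J)$. Enfin, le caract\`ere pentagonal de chaque isom\'etrie partielle proviendrait de la pseudo-multiplicativit\'e de $\cW$ (axiome du pentagone dans \cite{E}), dont l'image par $\iota$, compte tenu des commutations avec les projecteurs $q$ (\cf corollaire \ref{2.4}), fournit la relation pentagonale voulue.

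Le principal obstacle n'est pas conceptuel mais tient \`a la gestion rigoureuse du formalisme des produits tensoriels relatifs et des produits fibr\'es : il faut v\'erifier que les conventions de num\'erotation des pattes co\"incident, afin que pentagone, commutations avec les $q$ et conjugaisons de Tomita se transportent correctement, et suivre le scalaire central $\lambda$ (via $U^* = \lambda^{-i/4}U$, \cf le premier lemme de \S2.1) \`a travers les identifications. Une fois ce dictionnaire \'etabli, chacune des trois assertions se ram\`ene \`a une substitution dans les r\'esultats d'Enock \cite{E} et dans les lemmes pr\'eliminaires.
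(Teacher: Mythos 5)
Votre esquisse est correcte et suit essentiellement la m\^eme d\'emarche que l'article, qui ne donne d'ailleurs aucune preuve d\'etaill\'ee : il se contente d'indiquer que ces propri\'et\'es d\'ecoulent, via les injections canoniques, de celles des unitaires pseudo-multiplicatifs de \cite{E} dont $V$, $W$, $\widetilde V$ sont les images (et des lemmes pr\'eliminaires du paragraphe 2.1). Votre dictionnaire (produits tensoriels relatifs $\leftrightarrow$ sous-espaces $q(H\otimes H)$, voltes relatives $\leftrightarrow$ restrictions de $\Sigma$) et la lecture de l'unitarit\'e de $\cV$, $\widehat\cV$, $\widetilde\cV$ comme isom\'etries partielles de supports les projecteurs $q$ correspondants sont exactement ce qui est sous-entendu.
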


\begin{proposition}\label{2.10}
\begin{enumerate}
\item $W_{12} V_{23} =  V_{23} W_{12}  , \quad V_{12} \widetilde V_{23} =  \widetilde V_{23} V_{12}$.
\item
Pour tout $n\in N$, on a :
$$[V, \alpha(n) \otimes 1] =       [V, \widehat \beta(n^{\rm o}) \otimes 1] =    [V, 1 \otimes \widehat\alpha(n)] =  [V,  1 \otimes \widehat \beta(n^{\rm o})] = 0,$$
$$V(1 \otimes \alpha(n)) =  (\widehat\alpha(n) \otimes 1) V  \,,\,  V(\beta(n^{\rm o}) \otimes 1) =  (1 \otimes \beta(n^{\rm o}))V \quad ;$$
$$[W, \widehat \beta(n^{\rm o}) \otimes 1] =  [W, \widehat\alpha(n) \otimes 1] =  [W, 1 \otimes \beta(n^{\rm o})]=  [W, 1 \otimes \widehat\alpha(n)] = 0,$$
$$W(1 \otimes \widehat \beta(n^{\rm o})) =  (\beta(n^{\rm o}) \otimes 1) W  \,,\,  W(\alpha(n) \otimes 1) =  (1 \otimes \alpha(n))W \quad ;$$
$$[\widetilde V, \alpha(n) \otimes 1] =  [\widetilde V, \beta(n^{\rm o}) \otimes 1] =  [\widetilde V, 1 \otimes \alpha(n)] =  [\widetilde V, 1 \otimes \widehat \beta(n^{\rm o})] = 0,$$
$$\widetilde V(1 \otimes \beta(n^{\rm o})) =  (\widehat \beta(n^{\rm o}) \otimes 1) \widetilde V  \,,\,  \widetilde V(\widehat\alpha(n) \otimes 1) =  (1 \otimes \widehat\alpha(n))\widetilde V.$$
\item
Le coproduit  de l'alg\`ebre de von Neumann $M$ v\'erifie :  
$$\delta(x) = V (x \otimes 1_H) V^* = W^* (1_H \otimes x) W  , \quad x \in M.$$
\item Le coproduit  de l'alg\`ebre de von Neumann $\widehat M'$, qu'on note $\widehat\delta$,  v\'erifie : 
$$\widehat\delta(x) = V^* (1_H \otimes x) V = \widetilde V (x \otimes 1_H) \widetilde V  , \quad x \in\widehat M'.$$
\end{enumerate}
\end{proposition}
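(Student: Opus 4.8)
Le plan est d'obtenir chacune des quatre assertions de la Proposition~\ref{2.10} en transportant, \`a travers les injections canoniques $\iota_{\widehat\alpha \alpha}^\beta$, $\iota_{\beta \widehat\beta}^\alpha$ et $\iota_{\widehat\beta \beta}^{\widehat\alpha}$ des Notations pr\'ec\'edentes, les identit\'es d\'ej\`a disponibles pour les unitaires pseudo-multiplicatifs ${\cal V}$, $\widehat{\cal V}$ et $\widetilde{\cal V}$ (Lemmes 2.2 et 2.3, Corollaire~\ref{2.4}, et les r\'esultats structurels {\bf 3.8}, {\bf 3.11}, {\bf 3.12} de \cite{E}). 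La base $N$ \'etant de dimension finie, chaque produit tensoriel relatif $H_\beta \otimes_\alpha H$ s'identifie au sous-espace $q_{\beta, \alpha}(H \otimes H)$, les voltes $\sigma_{\beta \alpha}$ deviennent des restrictions de la volte ordinaire $\Sigma$, et $V , W , \widetilde V$ h\'eritent des supports donn\'es par la Proposition~\ref{2.9}. L'essentiel du travail consistera donc \`a pousser en avant des relations connues, en g\'erant soigneusement les jambes, les voltes et les projecteurs supports.

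Pour le a), j'appliquerais $\iota$ \`a la relation 4) du Corollaire~\ref{2.4} : $\widehat{\cal V}_{12} {\cal V}_{23} = {\cal V}_{23} \widehat{\cal V}_{12}$ deviendrait $W_{12} V_{23} = V_{23} W_{12}$, et ${\cal V}_{12} \widetilde{\cal V}_{23} = \widetilde{\cal V}_{23} {\cal V}_{12}$ deviendrait $V_{12} \widetilde V_{23} = \widetilde V_{23} V_{12}$. Pour le b), les relations de commutation et d'entrelacement avec $\alpha(n)$, $\beta(n^o)$, $\widehat\alpha(n)$ et $\widehat\beta(n^o)$ se d\'eduiraient des propri\'et\'es d\'efinissant un unitaire pseudo-multiplicatif (entrelacement des repr\'esentations source et but sur chaque jambe), traduites via $\iota$ ; je d\'eduirais ensuite les relations pour $W$ et $\widetilde V$ de celles pour $V$ gr\^ace aux formules $W = \Sigma (U \otimes 1) V (U^* \otimes 1)\Sigma$ et $\widetilde V = \Sigma (1 \otimes U) V (1 \otimes U^*)\Sigma$ de la Proposition~\ref{2.9}, et aux \'egalit\'es $\widehat\alpha = {\rm Ad}\,U \circ \alpha$, $\widehat\beta = {\rm Ad}\,U \circ \beta$.

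Pour le c), j'utiliserais que le coproduit $\delta = \Gamma$ de ${\cal G}$ est impl\'ement\'e par ${\cal W}_{\cal G} = \widehat{\cal V}$, ce qui donne $\delta(x) = W^*(1 \otimes x) W$ apr\`es $\iota$, la seconde \'egalit\'e $V(x \otimes 1)V^* = W^*(1 \otimes x)W$ \'etant l'image de la premi\`ere relation du c) du Lemme 2.3. Sym\'etriquement, pour le d), le coproduit $\widehat\delta = \widehat\Gamma^c$ de $\widehat M'$ est impl\'ement\'e par ${\cal V} = {\cal W}_{(\widehat{\cal G})^c}$ via $\widehat\Gamma^c(x) = {\cal V}^*(1_\beta \otimes_\alpha x){\cal V}$, d'o\`u $\widehat\delta(x) = V^*(1 \otimes x)V$, tandis que l'\'egalit\'e restante $V^*(1 \otimes x)V = \widetilde V(x \otimes 1)\widetilde V$ serait l'image de la premi\`ere relation du d) du Lemme 2.3.

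Le point d\'elicat sera la gestion des injections canoniques et des conventions de num\'erotation des jambes : il faudra v\'erifier que, lorsqu'une relation sur les produits tensoriels relatifs (avec leurs voltes $\sigma$) est pouss\'ee dans $B(H \otimes H)$, les projecteurs $q_{\beta, \alpha}$, etc.\ (supports de $V , W , \widetilde V$ d'apr\`es la Proposition~\ref{2.9}) apparaissent de fa\c con coh\'erente, et surtout que les adjoints d'isom\'etries partielles soient correctement plac\'es --- en particulier le passage de $\widetilde V^*$ \`a $\widetilde V$ dans le d), qui ne peut \^etre justifi\'e que par la compatibilit\'e des supports sur le sous-espace concern\'e. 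La v\'erification de a) (trois jambes) demandera le plus de soin pour suivre sur quelle copie de $H$ agit chaque op\'erateur et confirmer que les voltes relatives disparaissent sous $\iota$ en dimension finie.
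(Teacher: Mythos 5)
Votre d\'emarche co\"incide avec celle du texte : la preuve de la Proposition~\ref{2.10} y est pr\'ecis\'ement renvoy\'ee au transport, par les injections canoniques $\iota$, des propri\'et\'es des unitaires pseudo-multiplicatifs \'etablies dans \cite{E} et reprises aux Lemmes 2.2 et 2.3 et au Corollaire~\ref{2.4}, exactement comme vous le proposez. Votre remarque sur la place de l'adjoint dans le d) est pertinente : la formule de l'\'enonc\'e doit se lire $\widetilde V (x \otimes 1_H) \widetilde V^*$, conform\'ement \`a la premi\`ere relation du d) du Lemme 2.3 dont elle est l'image.
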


\begin{notation}\label{RemNot}
Contrairement \`a \cite{E}, on adopte pour    l'alg\`ebre de von Neumann $\widehat M$, le  coproduit d\'efini par :\index{da@$\widehat\delta_\lambda$}
$$\widehat\delta_\lambda(x)  = W (x \otimes 1_H) W^*  , \quad x\in  \widehat M.$$
{\bf  Dans toute la suite, on note $\widehat{\cal G}$ le groupo\"ide dual 
$(N^{\rm o}, \widehat M',  \beta, \widehat\alpha, \widehat\delta, \widehat T, \widehat {T'}, \epsilon)$} avec 
$\widehat T$ le poids op\'eratoriel d\'efini  par $\widehat\varphi' = \epsilon \circ \beta^{-1} \circ \widehat T$, $\widehat {T'} = R_{\widehat M'} \circ \widehat T \circ R_{\widehat M'}$ et $R_{\widehat M'} : \widehat M' \rightarrow   \widehat M'  : x \mapsto J x^* J$.
 \end{notation}
 
\subsection{C*-alg\`ebres de Hopf  faibles associ\'ees \`a un groupo\"ide mesur\'e quantique\label{Hopff} \texorpdfstring{\cite{DeC1, DeC2}}{}}

\noindent
Nous rappelons (avec des notations et des conventions diff\'erentes),  les d\'efinitions des C*-alg\`ebres de Hopf faibles associ\'ees par De Commer \cite{DeC1, DeC2} \`a un groupo\"ide \mq ${\cal G}$ de base $N$  de dimension finie, ainsi  que les principaux r\'esultats de \cite{DeC1, DeC2}. 

Avec les notations du paragraphe \ref{2.2}, 
posons :\index{sa@$S$, $\widehat{S}$}
$$S = [(\omega \otimes {\rm id} )(V) \,\, | \,\, \omega \in B(H)_\ast] , \quad \widehat S = [({\rm id}  \otimes \omega)(V) \,\, | \,\, \omega \in B(H)_\ast].$$
\noindent
Nous avons :
\hfill\break
 $\bullet$  $S$ et $\widehat S$ sont des sous-C*-alg\`ebres non d\'eg\'en\'er\'ees de $B(H)$, faiblement denses dans $M$ et $\widehat M'$ respectivement ; 
 
$\bullet$ On munit $S$   et  $\widehat S$  des  repr\'esentations fid\`eles et non d\'eg\'en\'er\'ees  :
\hfill\break
 -  $L  : S \rightarrow B(H) : x \mapsto x  , \quad R  : S \rightarrow B(H) : x \mapsto U x U^*$,
\hfill\break
 -  $\rho   : \widehat S \rightarrow B(H) : x \mapsto x  , \quad \lambda  : \widehat S \rightarrow B(H) : x \mapsto U x U^*.$\index{lb@$L$, $R$, $\rho$, $\lambda$}
\hfill\break
On rappelle que les *-automorphismes  ${\rm Ad}\,U$ et ${\rm Ad}\,U^*$ co\"incident sur $M$ et $\widehat M$ et on a  (\cf\cite{DeC1,DeC2}) :
\begin{equation}\label{mult} 
V\in M( \widehat S \otimes S)  , \quad    W \in M( S \otimes \lambda(\widehat S))  , \quad  \widetilde V \in M(R(S) \otimes \widehat S) \quad ;
\end{equation}
$\bullet$ $[S , R(S)] = [\widehat S , \lambda(\widehat S)] = 0$ ;

$\bullet$  Les *-morphismes 
$$\delta : S \rightarrow M(S \otimes S) :  x \mapsto V(x \otimes 1_H) V^* \quad \text{et} \quad \widehat\delta : \widehat S  \rightarrow M(\widehat S \otimes \widehat S) : x \mapsto V^*(1_H \otimes x) V$$
\noindent
se prolongent  de fa\c con unique \`a $M(S)$ et   $M(\widehat S)$ respectivement en des  *-morphismes
$\delta : M(S) \rightarrow M(S \otimes S)$  et $\widehat\delta : M(\widehat S) \rightarrow M(\widehat S \otimes \widehat S)$, strictement continus et v\'erifiant 
$$\delta(1_S) = q_{\beta, \alpha} , \quad \widehat\delta(1_{\widehat S}) = q_{\widehat \alpha, \beta} \quad ;$$
\noindent
$\bullet$ 
$\delta$ et $\widehat\delta$ sont coassociatifs et v\'erifient 
\begin{equation}\label{simp}
[\delta(S)(1_S \otimes S)] = [\delta(S)(S \otimes 1_S)] = \delta(1_S) (S \otimes S) , \quad [\delta(\widehat S)(1_{\widehat S} \otimes \widehat S)] = [\widehat\delta(\widehat S)(\widehat S \otimes 1_{\widehat S})] = \widehat\delta(1_{\widehat S}) (\widehat S \otimes \widehat S) \, ;
\end{equation}
\noindent
$\bullet$
Les *-morphismes $\alpha : N \rightarrow M(S)$ et $\beta : N^{\rm o} \rightarrow M(S)$ v\'erifient 
$$\delta(\alpha(n))  = \delta(1_S)(\alpha(n) \otimes 1_S) , \quad \delta(\beta(n^{\rm o}))  = \delta(1_S)(1_S \otimes \beta(n^{\rm o}))  , \quad n\in N\quad ;$$
$\bullet$
Les *-morphismes $\beta : N^{\rm o} \rightarrow M(\widehat S)$ et $\widehat \alpha  : N  \rightarrow M(\widehat S)$ v\'erifient 
$$\widehat\delta(\beta(n^{\rm o}))  = \delta(1_{\widehat S})(\beta(n^{\rm o}) \otimes 1_S)  , \quad \widehat\delta(\widehat \alpha(n))  = \delta(1_{\widehat S})(1_S \otimes \widehat \alpha(n ))  , \quad n\in N.$$

\noindent
\begin{notations} On d\'esigne par $S \widehat S$ l'espace vectoriel {\it ferm\'e} engendr\'e par les produits $x y$, o\` u $x\in S$ et $y\in \widehat S$. On a donc $S \widehat S =[(\omega_1 \otimes {\rm id} \otimes  \omega_2)(V_{12} V_{23}) \,\,|\,\,\omega_1, \omega_2\in B(H)_\ast]$. 

Posons aussi ${\cal C}(V) := [({\rm id}  \otimes \omega)(\Sigma V) \,\,|\,\,\omega\in B(H)_\ast]$\index{c@$\cC(-)$}.  
Il est facile de voir que ${\cal C}(V)$ est une alg\`ebre, la preuve est identique \`a celle de (\cite{BaSka2} 3.2).
 
On note $(e^{(l)})_ {l =1, \cdots,  k}$, les projecteurs centraux minimaux de la C*-alg\`ebre $N=\oplus_{l=1}^k M_{n_l}$.
\end{notations}
\noindent
Le principal r\'esultat de ce paragraphe est  :

\begin{theorem}\label{th13}Il existe des scalaires $\lambda_l\in\C $ tels que $|\lambda_l|= 1$, pour $1\leqslant l \leqslant k$, uniques v\'erifiant 
$$(1 \otimes U) \Sigma W V \widetilde V = \sum_{l=1}^k \lambda_l (\widehat\beta(e^{(l){\rm o}}) \otimes \widehat\alpha(e^{(l)}))q_{\widehat\beta , \widehat\alpha}.$$
\end{theorem}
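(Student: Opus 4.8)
\textbf{The plan.} The starting point is Proposition~\ref{irr1}, which asserts that $(1_{\widehat\beta}\otimes_{\widehat\alpha}U)\sigma_{\alpha\widehat\beta}\widehat{\mathcal V}\,{\mathcal V}\,\widetilde{\mathcal V}$ belongs to the fibre product $\widehat\beta(N)_{\widehat\beta}\ast_{\widehat\alpha}\widehat\alpha(N)$. First I would transport this statement through the canonical injections $\iota$ of the Notations following Definition~\ref{m.qfini}, which carry $\widehat{\mathcal V},{\mathcal V},\widetilde{\mathcal V}$ to $W,V,\widetilde V$, the relative flip $\sigma_{\alpha\widehat\beta}$ to the ordinary flip $\Sigma$, and $1_{\widehat\beta}\otimes_{\widehat\alpha}U$ to $1\otimes U$. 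Thus the operator $T:=(1\otimes U)\Sigma WV\widetilde V$ of the left-hand side is exactly the image of the operator of Proposition~\ref{irr1}, so that $T$ lies in the image in $B(H\otimes H)$ of the fibre product $\widehat\beta(N)_{\widehat\beta}\ast_{\widehat\alpha}\widehat\alpha(N)$, a subalgebra of the corner $q_{\widehat\beta,\widehat\alpha}B(H\otimes H)q_{\widehat\beta,\widehat\alpha}$.

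The key structural observation is that, since both legs of this fibre product are copies of the \emph{same} finite-dimensional base $N=\oplus_{l=1}^k M_{n_l}$ glued over $N$ itself, the relative tensor product contracts each matrix block $M_{n_l}$ to a line. Using the explicit form of $q_{\widehat\beta,\widehat\alpha}$ from Lemme~\ref{q} together with the description of the fibre product as the commutant of $\{x_{\widehat\beta}\otimes_{\widehat\alpha}1:x\in\widehat\beta(N)'\}\cup\{1_{\widehat\beta}\otimes_{\widehat\alpha}y:y\in\widehat\alpha(N)'\}$, I would check that $\widehat\beta(N)_{\widehat\beta}\ast_{\widehat\alpha}\widehat\alpha(N)$ is the \emph{abelian} algebra whose minimal projections are $p_l:=(\widehat\beta(e^{(l)})\otimes\widehat\alpha(e^{(l)}))q_{\widehat\beta,\widehat\alpha}$, namely $p_l=\tfrac1{n_l}\sum_{i,j}\widehat\beta(e^{(l)o}_{ji})\otimes\widehat\alpha(e^{(l)}_{ij})$; these are mutually orthogonal and sum to $q_{\widehat\beta,\widehat\alpha}$. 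Granting this, $T=\sum_{l=1}^k\lambda_l p_l$ for some scalars $\lambda_l$, which is the announced decomposition.

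It remains to see that $|\lambda_l|=1$, for which I would show that $T$ is a unitary of the corner, i.e. $T^*T=TT^*=q_{\widehat\beta,\widehat\alpha}$. The support relations of Proposition~\ref{2.9} give $\widetilde V^*\widetilde V=q_{\widehat\beta,\widehat\alpha}$, $\widetilde V\widetilde V^*=V^*V=q_{\widehat\alpha,\beta}$, $VV^*=W^*W=q_{\beta,\alpha}$ and $WW^*=q_{\alpha,\widehat\beta}$, so that $WV\widetilde V$ is a partial isometry with initial support $q_{\widehat\beta,\widehat\alpha}$ and final support $q_{\alpha,\widehat\beta}$. Conjugation by $\Sigma$ turns the final support into the leg-swap of $q_{\alpha,\widehat\beta}$, and since $\widehat\alpha=\mathrm{Ad}\,U\circ\alpha$ the further conjugation by $1\otimes U$ produces exactly $q_{\widehat\beta,\widehat\alpha}$; hence $T^*T=TT^*=q_{\widehat\beta,\widehat\alpha}$. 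Expanding $T^*T=\sum_l|\lambda_l|^2 p_l=q_{\widehat\beta,\widehat\alpha}=\sum_l p_l$ and using that the $p_l$ are non-zero and orthogonal forces $|\lambda_l|=1$ and, at the same time, the uniqueness of the $\lambda_l$.

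The main obstacle is the middle step: correctly identifying the image of $\widehat\beta(N)_{\widehat\beta}\ast_{\widehat\alpha}\widehat\alpha(N)$ as the abelian algebra spanned by the $p_l$. This is precisely where the finite-dimensionality of the base is essential (the matrix blocks are contracted in the relative tensor product), and one must handle with care the passage from the fibre product over the relative tensor product to honest operators on $H\otimes H$ through the projection $q_{\widehat\beta,\widehat\alpha}$ and the injections $\iota$; the support computation and the uniqueness are then routine.
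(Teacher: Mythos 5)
Your proposal is correct and follows essentially the same route as the paper: Proposition \ref{irr1} transported to $B(H\otimes H)$ places $T=(1\otimes U)\Sigma WV\widetilde V$ in $q_{\widehat\beta,\widehat\alpha}(\widehat\beta(N)\otimes\widehat\alpha(N))q_{\widehat\beta,\widehat\alpha}$, the identification of that corner as $\oplus_l\,\mathbf{C}\,(\widehat\beta(e^{(l)})\otimes\widehat\alpha(e^{(l)}))q_{\widehat\beta,\widehat\alpha}$ is exactly Lemme \ref{lemth13} (proved there by the same matrix-unit computation you sketch), and the modulus-one conclusion comes from $T^*T=\widetilde V^*\widetilde V=q_{\widehat\beta,\widehat\alpha}$ together with the non-vanishing of each block projection. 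The only cosmetic difference is that the paper gets by with the initial support $T^*T$ alone, whereas you also compute $TT^*$.
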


Pour la preuve du th\'eor\`eme, on a besoin du  lemme suivant :

\begin{lemme}\label{lemth13} Soient $\alpha : N \rightarrow B(H)$, $\beta  : N^{\rm o} \rightarrow B(H)$ deux *-morphismes unitals. 
\begin{enumerate}
\item $q_{\beta, \alpha} (\beta(N^{\rm o})  \otimes \alpha(N)) q_{\beta, \alpha} = \bigoplus_{l=1}^k \C\cdot(\beta(e^{(l){\rm o}}) \otimes \alpha(e^{(l)})) q_{\beta, \alpha}$.
\item$q_{\alpha, \beta} (\alpha(N)  \otimes \beta(N^{\rm o})) q_{\alpha, \beta} = \bigoplus_{l=1}^k \C \cdot(\alpha(e^{(l)}) \otimes \beta(e^{(l){\rm o}})) q_{\alpha, \beta}$.
\end{enumerate}
\end{lemme}

\begin{proof}  Montrons le a), la preuve du b) est similaire. 
\hfill\break
Soient $x, y\in N$, on a :
$$q_{\beta, \alpha} (\beta(x^{\rm o})  \otimes \alpha(y)) q_{\beta, \alpha} = 
\sum_{l=1}^k {\frac{1}{n_l^2}}\,\epsilon(yxe^{(l)}) (\beta(e^{(l){\rm o}}) \otimes \alpha(e^{(l)}))  q_{\beta, \alpha}.$$
Par bilin\'earit\'e, il suffit de v\'erifier cette relation pour $x= e_{ij}^{(l)}$ et $y = e_{rs}^{(l')}$. 
\begin{align}
q_{\beta, \alpha} (\beta(e_{ij}^{(l){\rm o}} )  \otimes \alpha(e_{rs}^{(l')})) q_{\beta, \alpha} &= \displaystyle
\delta_l^{l'} \delta_i^s \delta_j^r {\frac{1}{n_l^2}} \sum_{a,b=1}^{n_l}  (\beta(e_{ba}^{(l){\rm o}} e^{(l){\rm o}}) \otimes \alpha(e_{ab}^{(l)}) 
\nonumber\\
&= \sum_{l''=1}^k  \frac{1}{n_{l''}^2}\,  \epsilon(e_{rs}^{(l')}  e_{ij}^{(l)} e^{(l'')}) (\beta(e^{(l''){\rm o}}) \otimes \alpha(e^{(l'')})) q_{\beta, \alpha}.\nonumber
\end{align}
\end{proof}

\noindent
\begin{proof}[D\'emonstration du th\'eor\`eme] Posons $T:= (1 \otimes U) \Sigma \widehat V V \widetilde V$. On a $T\in B(H \otimes H)$ est une isom\'etrie partielle v\'erifiant   (\ref{2.9} c)) :
$$   T \in q_{\widehat\beta , \widehat\alpha} (\widehat\beta(N^{\rm o}) \otimes \widehat\alpha(N))q_{\widehat\beta , \widehat\alpha}  , \quad   T^* T = \widetilde V^* \widetilde V = q_{\widehat\beta , \widehat\alpha}.$$
 \noindent
Il existe alors  (\ref{lemth13}) des scalaires $\lambda_l\in\mathbb{C}$ v\'erifiant 
$$T = \sum_{l=1}^k \lambda_l (\widehat\beta(e^{(l){\rm o}}) \otimes \widehat\alpha(e^{(l)}))q_{\widehat\beta , \widehat\alpha}.$$ 
Par injectivit\'e de $\widehat\alpha$ et $\widehat\beta$,    $(\widehat\beta(e^{(l){\rm o}}) \otimes \widehat\alpha(e^{(l)}))q_{\widehat\beta , \widehat\alpha} \not= 0$ pour tout $l$. On en d\'eduit que
$$T^* T = \sum_{l=1}^k |\lambda_l|^2(\widehat\beta(e^{(l){\rm o}}) \otimes \widehat\alpha(e^{(l)}))q_{\widehat\beta , \widehat\alpha} = q_{\widehat\beta , \widehat\alpha},$$\noindent
d'o\`u le r\'esultat.
\end{proof}
\begin{corollary}\label{ShatS} $S \widehat S$ est une C*-alg\`ebre et on a $S \widehat S = U {\cal C}(V) U^*$.
\end{corollary}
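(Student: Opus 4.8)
The statement has two parts: the algebraic identity $S\widehat{S}=U\mathcal{C}(V)U^*$ and the fact that this common space is a C*-algebra. My plan is to establish the identity first and then read off the C*-algebra property, exploiting that $\mathcal{C}(V)$ is already known to be an algebra.

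For the identity, the idea is to rewrite the generators $U(\id\otimes\omega)(\Sigma V)U^*$ of $U\mathcal{C}(V)U^*$ as slices of $V_{12}V_{23}$. Conjugation moves $U$ inside the slice: $U(\id\otimes\omega)(\Sigma V)U^*=(\id\otimes\omega)\big((U\otimes 1)\Sigma V(U^*\otimes 1)\big)$. Using $(U\otimes 1)\Sigma=\Sigma(1\otimes U)$ together with $\widetilde V=\Sigma(1\otimes U)V(1\otimes U^*)\Sigma$ from Proposition \ref{2.9}(1), a short manipulation gives $(U\otimes 1)\Sigma V(U^*\otimes 1)=\widetilde V(U\otimes U^*)\Sigma$ (equivalently one may route this through $W=\Sigma(U\otimes 1)V(U^*\otimes 1)\Sigma$). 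Thus the generators of $U\mathcal{C}(V)U^*$ are the $\mathcal{C}$-type (mixed) slices of $\widetilde V$, up to $U$-conjugations that are reabsorbed using Proposition \ref{2.9}(1), hence slices of $V$ itself. The last step is to convert such a single slice into a double slice of $V_{12}V_{23}$; here I would invoke the pentagon relation for the pentagonal partial isometry $V$ together with the commutation relations $W_{12}V_{23}=V_{23}W_{12}$ and $V_{12}\widetilde V_{23}=\widetilde V_{23}V_{12}$ of Proposition \ref{2.10}(1). Comparing with the already recorded description $S\widehat{S}=[\{(\omega_1\otimes\id\otimes\omega_2)(V_{12}V_{23})\,|\,\omega_1,\omega_2\in B(H)_\ast\}]$ yields both inclusions, hence the identity.

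Once $S\widehat{S}=U\mathcal{C}(V)U^*$ is proved, the multiplicative structure is automatic: $\mathcal{C}(V)$ is a norm-closed algebra, so its unitary conjugate $U\mathcal{C}(V)U^*=S\widehat{S}$ is a norm-closed subalgebra of $B(H)$. It then remains to verify self-adjointness, i.e. $[S\widehat{S}]=[\widehat{S}S]$, which (as $S$ and $\widehat S$ are C*-algebras) is exactly equivalent to $S\widehat{S}$ being a C*-algebra. For this I would use the pentagon relation in the form $V_{23}V_{12}=V_{12}V_{13}V_{23}$ to express $\widehat{S}S$ as the leg-$1,3$ slices of $V_{12}V_{13}V_{23}$ and then eliminate the middle factor $V_{13}$. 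The main obstacle — and the point where Theorem \ref{th13} is indispensable — is precisely this bookkeeping: $V,W,\widetilde V$ are only partial isometries, with supports $q_{\beta,\alpha}$, $q_{\widehat\alpha,\beta}$, $q_{\widehat\beta,\widehat\alpha}$ by Proposition \ref{2.9}(3), so the slicing identities must be performed on the correct corners, and the explicit form of $(1\otimes U)\Sigma WV\widetilde V$ given by Theorem \ref{th13} is what lets the third leg be absorbed and the closed spans $[S\widehat{S}]$ and $[\widehat{S}S]$ be matched exactly. Alternatively, self-adjointness of $\mathcal{C}(V)$ may be checked directly from $V^*=(J\otimes\widehat J)V(J\otimes\widehat J)$ (Proposition \ref{2.9}(2)) and $U=\widehat J J$, which again is where the partial-isometry supports demand care.
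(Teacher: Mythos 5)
Your outline correctly identifies the two halves of the statement, and the formula $(U\otimes 1)\Sigma V(U^*\otimes 1)=\widetilde V(U\otimes U^*)\Sigma$ is right, but both of your key steps are asserted rather than proved, and neither follows from the tools you invoke. First gap: you claim that the pentagon relation and the commutation relations of \ref{2.10} let you convert a single mixed slice $({\rm id}\otimes\omega)(\Sigma V)$ into a double slice $(\omega_1\otimes{\rm id}\otimes\omega_2)(V_{12}V_{23})$, i.e.\ into an element of $S\widehat S$. No such direct conversion exists: the pentagon for $V$ relates $V_{23}V_{12}$ to $V_{12}V_{13}V_{23}$ and never produces a flip $\Sigma$. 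The paper's route is different, and this is where the real work lies: one first proves the intermediate identity $S\widehat S=[({\rm id}\otimes\omega)(WV)\,|\,\omega\in B(H)_\ast]$ (Lemme \ref{2.17}, which uses $\widehat\delta$, the multiplier properties (\ref{mult}) and repeated insertion of the support projections $V^*V$, $VV^*$), and only then applies Theorem \ref{th13} to the product $WV=WV\widetilde V\widetilde V^*$ to rewrite $WV$ as a central unitary times $(U^*\otimes U^*)V^*\Sigma(U\otimes 1)$. Theorem \ref{th13} is thus the engine of the identity $S\widehat S=U^*{\cal C}(V)U$ itself, not merely ``bookkeeping'' for supports, and without Lemme \ref{2.17} you have nothing to apply it to.

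The second gap is self-adjointness. Your plan --- write $\widehat S S$ as slices of $V_{12}V_{13}V_{23}$ and ``eliminate the middle factor $V_{13}$'' --- is exactly as hard as the statement itself, and you give no mechanism for it: $V_{13}$ sits between legs that must be absorbed into $\widehat S$ on leg $1$ and $S$ on leg $3$, which requires precisely the kind of control you are trying to establish. The paper proves involutivity by a separate and genuinely different computation: it shows $B:=[\delta(x)(1_H\otimes y)\,|\,x\in S,\,y\in\widehat S]$ is stable under involution (via $V_{23}\delta(x)_{12}V_{23}^*V_{23}=(\delta\otimes{\rm id})(\delta(x))V_{23}$ and $[(1_S\otimes S)\delta(S)]\subset S\otimes S$), and then recovers $S\widehat S=(\omega\otimes{\rm id})(WBW^*)$ for a state $\omega$ with $(\omega\otimes{\rm id})q_{\alpha,\widehat\beta}=1_H$. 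Note also that the order matters: in the paper the C*-property of $S\widehat S$ is used to pass from $U^*{\cal C}(V)U$ to $U{\cal C}(V)U^*$ (via $U^*=\lambda^{-i/4}U$ with $\lambda$ central), whereas you want to deduce the algebra property from the identity; your plan is therefore circular unless self-adjointness is settled independently and beforehand.
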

 
\noindent
Le corollaire se d\'eduit directement des trois lemmes suivants que nous allons \'etablir.

\begin{lemme} Posons $B = [\delta(x) (1_H \otimes y) \,\,|\,\, x\in S \,, \, y\in\widehat S]$. Nous avons :
\begin{enumerate}
\item $B$  est stable par involution ;
\item Il existe une forme positive normale $\omega\in B(H)_\ast^+$,  v\'erifiant $S\widehat S  = (\omega \otimes {\rm id} ) (W B W^*)$ ;
\item $S \widehat S$ est stable par involution.
\end{enumerate}
\end{lemme}

\begin{proof}
 Par (\ref{2.9} c)), on a $[V^* V , S \otimes 1_H] = 0$.  Soit $\omega\in B(H)_\ast$, posons $y = ({\rm id} \otimes   \omega)(V)$. Pour tout tout $x\in S$, on a  
\begin{align*} (1_H \otimes y) \delta(x) &= (1_H \otimes ({\rm id}  \otimes \omega)(V)) \delta(x) = ({\rm id}  \otimes {\rm id}  \otimes \omega)(V_{23} \delta(x)_{12}) \\
&= ({\rm id}  \otimes {\rm id}  \otimes \omega)(V_{23} V_{23}^*V_{23}\delta(x)_{12}) = ({\rm id}  \otimes {\rm id}  \otimes \omega)(V_{23} \delta(x)_{12}V_{23}^*V_{23}) \\
&= ({\rm id}  \otimes {\rm id}  \otimes \omega)(({\rm id}  \otimes \delta)(\delta(x)) V_{23}) = ({\rm id}  \otimes {\rm id}  \otimes \omega)((\delta \otimes {\rm id} )(\delta(x)) V_{23}).
\end{align*}

Posons $\omega = \omega's$ avec $\omega'\in B(H)_\ast$ et $s\in S$. On d\'eduit   :
\begin{align}
(1_H \otimes y) \delta(x) &= ({\rm id} \otimes {\rm id}  \otimes \omega')((q_{\beta, \alpha} \otimes s)(\delta \otimes {\rm id} )(\delta(x)) V_{23}) \nonumber \\
&= ({\rm id} \otimes {\rm id}  \otimes \omega')(( (\delta \otimes {\rm id} )((1_S \otimes s)\delta(x)) V_{23}).\nonumber 
\end{align}
Comme $[(1_S \otimes S)\delta(S)] \subset S \otimes S$, on a $({\rm id} \otimes {\rm id}  \otimes \omega')((\delta \otimes {\rm id} )((1_S \otimes s)\delta(x)) V_{23})\in B$, ce qui prouve le a).
\hfill\break
On a $W W^* = q_{\alpha, \widehat \beta}$ (\ref{2.9} c)), donc $[W W^*, 1_H \otimes S \widehat S] =0$. On en d\'eduit  
en utilisant (\ref{2.10} c)) :
$$(1_H \otimes S \widehat S) W W^* = W W^* (1_H \otimes S \widehat S) W W^* =  W  B W^*.$$
\noindent
Soit   $(e_{ij}^{(l)})_{i,j = 1, \cdots, n_l, l = 1, \cdots ,k}$ un s.u.m.\ pour la C*-alg\`ebre  $N = \oplus_{l=1}^k M_{n_l}$. Il est clair qu'il existe une forme $\omega\in B(H)_\ast^+$ v\'erifiant $\omega(\alpha(e_{ij}^{(l)})) = \delta_i^j n_l$, pour tout $1\leqslant l\leqslant k$ et $1\leqslant i,j\leqslant n_l$. On a donc $(\omega \otimes {\rm id} ) (q_{\alpha, \widehat \beta}) = 1_H$. Il en r\'esulte que $S \widehat S =  (\omega \otimes {\rm id} ) ((1_H \otimes S \widehat S) W W^*) = (\omega \otimes {\rm id} )(W  B W^*)$, d'o\`u le b).
\hfill\break
Le c) est une cons\'equence du a) et du b).
\end{proof}

\begin{lemme}\label{2.17}On a $S \widehat S = [({\rm id} \otimes \omega) (W V)\,\,|\,\,\omega\in B(H)_\ast]$.
\end{lemme}

\begin{proof} Soient $s =({\rm id} \otimes \omega)(W)$ et $x\in \widehat S$. En utilisant (\ref{2.9} c)), on a :
$$s x  = ({\rm id} \otimes \omega)(W (x \otimes 1)) = ({\rm id} \otimes \omega)(W V V^*(x \otimes 1)).$$
\noindent
En posant $\omega = s'\omega' \,$ avec $s'\in S$, on en d\'eduit que $s  x\in [({\rm id} \otimes \omega') (W V)x'\,\,|\,\,\omega'\in B(H)_\ast \,,\,x'\in\widehat S]$.

Pour $\omega\in B(H)_\ast , x \in \widehat S $, on  a $({\rm id} \otimes \omega)(W V) x   = ({\rm id} \otimes \omega)(W V V^* V( x \otimes 1))$. 
En posant $\omega  = x' \omega'$ avec  $x'\in\widehat S$, on obtient avec (\ref{2.10} d)) que 
$({\rm id} \otimes \omega)(W V V^* V( x \otimes 1))\in [({\rm id} \otimes \omega')(W V\widehat\delta(x'))\,\,|\,\,x'\in\widehat S \,,\,\omega'\in B(H)_\ast]$.
Mais pour $\omega'\in B(H)_\ast  ,  x' \in \widehat S $, on a 
$({\rm id} \otimes \omega')(W V\widehat\delta(x')) =({\rm id} \otimes \omega')(W VV^*(1 \otimes x')V)) = ({\rm id} \otimes \omega' x')(W V)$,  
 d'o\`u les   inclusions $S \widehat S \subset 
[({\rm id} \otimes \omega) (W V) x\,\,|\,\,\omega\in B(H)_\ast \,,\,x \in \widehat S ] \subset  [({\rm id} \otimes \omega) (W V)\,\,|\,\,\omega\in B(H)_\ast]$.
\hfill\break
Montrons maintenant  l'inclusion 
  $[({\rm id} \otimes \omega) (W V)\,\,|\,\,\omega\in B(H)_\ast] \subset S \widehat S$. 

Soit $\omega\in B(H)_\ast$. Posons 
$\omega  =   x' \omega' x$ avec  $\omega'\in B(H)_\ast$ et $x,  x' \in \widehat S $. On a   
\begin{align} ({\rm id} \otimes \omega) (W V)& = ({\rm id} \otimes x'\omega')(W   (1 \otimes x) V)  
=  ({\rm id} \otimes x'\omega')(W V V^* (1 \otimes x) V) \nonumber \\
&= ({\rm id} \otimes \omega')(W V\widehat\delta(x)(1 \otimes x')\in [({\rm id} \otimes \omega) (W V) x\,\,|\,\,\omega\in B(H)_\ast \,,\,x \in \widehat S ].\nonumber
\end{align}
Finalement, pour $\omega\in B(H)_\ast \,,\,x \in \widehat S $, posons $\omega = s\, \omega'$ avec $s\in S$.  On a 
$$({\rm id} \otimes \omega) (W V) x = ({\rm id} \otimes \omega') (W V(x \otimes s))\in S \widehat S.$$
\end{proof}

\begin{lemme}  $S \widehat S = U^* [({\rm id} \otimes \omega) (V^* \Sigma )\,\,|\,\,\omega\in B(H)_\ast] U = U^* \cC(V) U = U \cC(V) U^* $.

En particulier, $\cC(V) =  [({\rm id} \otimes \omega) (\Sigma V )\,\,|\,\,\omega\in B(H)_\ast]$ est une C*-alg\`ebre.
\end{lemme}

\begin{proof}
En  utilisant  (\ref{2.9} c), \ref{th13}), on a
\begin{align} W V &= W V V^*V = W V\widetilde V \widetilde V^*  = 
\Sigma (1_H \otimes U^*)(\sum_{l=1}^k \lambda_l (\widehat\beta(e^{(l){\rm o}}) \otimes \widehat\alpha(e^{(l)}))q_{\widehat\beta , \widehat\alpha})
\widetilde V^* 
\nonumber \\
&=\Sigma (1_H \otimes U^*)(\sum_{l=1}^k \lambda_l (\widehat\beta(e^{(l){\rm o}}) \otimes \widehat\alpha(e^{(l)}))\widetilde V^*)  
=
\Sigma (1_H \otimes U^*)(\sum_{l=1}^k \lambda_l (\widehat\beta(e^{(l){\rm o}}) \otimes 1_H))\widetilde V^*) \nonumber \\
&= 
(1_H \otimes \sum_{l=1}^k \lambda_l \widehat\beta(e^{(l){\rm o}})) \Sigma (1_H \otimes U^*) \widetilde V^* 
= (1_H \otimes \sum_{l=1}^k \lambda_l \widehat\beta(e^{(l){\rm o}})) (U^* \otimes U^*) V^* \Sigma (U \otimes 1_H). \nonumber
\end{align}

Comme $\sum_{l=1}^k \lambda_l \widehat\beta(e^{(l){\rm o}}) U^*$ est un unitaire, on d\'eduit de (\ref{2.17}) que
$$S \widehat S = 
[({\rm id} \otimes \omega) (W V)\,\,|\,\,\omega\in B(H)_\ast] = U^* [({\rm id} \otimes \omega) (V^* \Sigma )\,\,|\,\,\omega\in B(H)_\ast] U.$$
\noindent 
Mais $S \widehat S$ \'etant une C*-alg\`ebre, on a aussi $S \widehat S = U^* \cC(V) U  = U \cC(V) U^*  $.
\end{proof}

\begin{remark}\label{Sl(hatS)} Gr\^ace aux deux relations 
$V^* = (J \otimes \widehat J) V (J \otimes \widehat J)$ et $W^* = (\widehat J \otimes J) W (\widehat J \otimes J)$, 
on peut proc\'eder comme dans \cite{BaSkaV} pour prouver que 
$${\cal C}(\widetilde V):=   [({\rm id} \otimes \omega) (\Sigma \widetilde V )\,\,|\,\,\omega\in B(H)_\ast]\quad {\rm et}\quad {\cal C}(W) := [({\rm id} \otimes \omega) (\Sigma W )\,\,|\,\,\omega\in B(H)_\ast]$$
sont des C*-alg\`ebres et v\'erifient 
$$   S\lambda(\widehat S)  = {\cal C}(\widetilde V) \quad \text{et} \quad  R(S) \widehat S   = {\cal C}(W).$$
\end{remark}

\subsection{Groupo\"ide mesur\'e associ\'e \`a une  \'equivalence mono\"idale}\label{grequivmono}

\noindent
Il s'agit d'un groupo\"ide \mq  de base $N =\C^2$,  introduit par De Commer \cite{DeC1,DeC2}. Avant de donner sa d\'efinition, 
  nous rappelons   les d\'efinitions et les r\'esultats fondamentaux  de \cite{DeC1} portant sur l'\'equivalence mono\"idale des groupes quantiques localement compacts,   que nous utiliserons.

Soit  $G$ un groupe  quantique  l.c.
\begin{definition}
Une action galoisienne \`a droite du groupe $G$ dans une alg\`ebre de von Neumann $N$,  est une coaction 
$\alpha_N : N \rightarrow N \otimes  L^\infty(G)$   ergodique, int\'egrable et telle que le produit crois\'e $N \rtimes G$ soit un facteur de type I.
\hfill\break
Le couple $(N, \alpha_N)$ est appel\'e objet galoisien \`a droite du groupe $G$.
\hfill\break
De fa\c con similaire, on d\'efinit aussi les actions galoisiennes et les objets galoisiens \`a gauche  pour $G$. 
\end{definition}

Fixons un objet galoisien \`a droite $(N, \alpha_N)$ du groupe $G$.  Dans sa th\`ese \cite{DeC1,DeC2}, De Commer construit    un groupe quantique \lc  $H$, muni d'une action galoisienne  \`a gauche $\gamma_N : N \rightarrow  L^\infty(H)  \otimes N$ qui commute avec $\alpha_N$.  
 
De fa\c con canonique, il associe aussi \`a l'objet galoisien \`a droite $(N, \alpha_N)$ de $G$, un objet galoisien \`a droite $(O, \alpha_O)$ pour le groupe $H$, et une action  galoisienne \`a  gauche $\gamma_O : O \rightarrow L^\infty(G) \otimes O$ pour le groupe $G$, qui commute avec $\alpha_O$. 
 
Finalement, De Commer a construit \cite{DeC1,DeC2}  un groupo\"ide mesur\'e quantique 
${\cal G}_{H, G} = (\C^2,  M , \alpha , \beta , \delta , T , T', \epsilon)$\index{ga@${\cal G}_{H, G}$}, o\`u $M := L^\infty(H) \oplus  N \oplus  O \oplus  L^\infty(G)$, dont l'axiomatique traduit  toutes les propri\'et\'es des coactions ou  coproduits des quatre alg\`ebres $L^\infty(H)$, $N$, $O$ et $L^\infty(G)$, ainsi que   celles des poids invariants ou de Haar sous-jacents. Plus pr\'ecis\`ement, le coproduit $\delta : M \rightarrow M \otimes M$ est form\'e par les coactions ou coproduits des quatre alg\`ebres composant $M$. Les poids op\'eratoriels $T$ et $T'$ sont d\'eduits des poids invariants par les coactions ou de Haar. De plus,  les *-morphismes $\alpha$ et $\beta$ sont \`a valeurs dans le centre de $M$ et leurs images engendrent une copie de l'alg\`ebre $\C^4$.

R\'eciproquement, il est facile de voir qu'un groupo\"ide ${\cal G}  = (\C^2,  M , \alpha , \beta , \Gamma , T , T', \epsilon)$, o\`u les applications source et but v\'erifient les hypoth\`eses pr\'ec\'edentes, est de fa\c con unique,  de la forme ${\cal G} = {\cal G}_{H, G}$ avec $H$ et $G$ des groupes quantiques \lc uniques \`a isomorphisme pr\`es.

Dans ce qui suit, on se fixe un  groupo\"ide  mesur\'e ${\cal G}  = (\C^2,  M , \alpha , \beta , \delta , T , T', \epsilon)$,   o\`u les *-morphismes $\alpha$ et $\beta$ sont \`a valeurs dans le centre de $M$ avec   des images qui engendrent une copie de l'alg\`ebre $\C^4$.  

\begin{lemme}\label{2.21} 
On a $\alpha = \widehat \beta$ et $\beta = \widehat \alpha$. 
\end{lemme}

\begin{proof}
On a (\cite{E} 3.6) $\widehat \beta(n) = J \alpha(n)^* J  = \alpha(n)$  car $\alpha(n)$ est central dans $M$. 

De m\^eme, on a (\cite{E} 3.8) $J \beta(n)^* J  = \beta(n)   =  \widehat J \alpha(n)^* \widehat J$, d'o\`u 
$\beta = {\rm Ad}\,U \circ \alpha = \widehat \alpha$.
\end{proof}

Avec les notations introduites dans le paragraphe \ref{2.2}, on se propose de pr\'eciser les repr\'esentations r\'eguli\`eres $V$ et $W$ de ${\cal G}$, ainsi que les deux groupes quantiques \lc  que d\'etermine ${\cal G}$. 
 
D'abord, on identifie $M$  \`a son image dans $B(H)$ par la repr\'esentation GNS du poids $\varphi  = \epsilon \circ \alpha^{-1} \circ T$. Posons $\psi =  \epsilon \circ \beta^{-1} \circ T'$. 

\begin{notations} Soit $(\varepsilon_1, \varepsilon_2)$\index{ec@$\varepsilon_j$} la base canonique de l'espace vectoriel $\C^2$.   Pour tout $i , j=1,2$, posons :\index{pb@$p_{ij}$}\index{mc@$M_{ij}$}\index{h@$H_{ij}$}\index{pc@$\varphi_{ij}$, $\psi_{ij}$}
$$p_{ij}  := \alpha(\varepsilon_i) \beta(\varepsilon_j) , \quad M_{ij} := p_{ij} M  , \quad H_{ij} = p_{ij} H  , \quad \varphi_{ij} := \restr{\varphi}{M_{ij}}
 , \quad \psi_{ij} := \restr{\psi}{M_{ij}}.$$
\end{notations}

On a facilement :
$$\alpha(\varepsilon_i) = p_{i1} + p_{i2} \,,\,\beta(\varepsilon_j) = p_{1j} + p_{2j} \quad ; \quad M = \bigoplus_{i,j=1,2}  M_{ij} \quad ; \quad
\delta(p_{ij})  = \sum_{k = 1,2} p_{ik} \otimes p_{kj}\quad ; \quad H = \bigoplus_{i,j=1,2} H_{ij}.$$
Le poids  $\varphi_{ij}$ est  nsff sur l'alg\`ebre de von Neumann $M_{ij}$ et sa repr\'esentation GNS s'obtient par restriction \`a $M_{ij} \subset M$, de la repr\'esentation GNS 
  de $\varphi$.  En particulier,  l'espace de Hilbert $L^2(M_{ij} , \varphi_{ij})$ s'identifie \`a    $H_{ij}$.

En utilisant (\ref{2.10} b)), on obtient :
\begin{proposition} Pour tout $i , j , k , l=1,2$, on a :
$$    (p_{ij} \otimes 1_H) V (p_{kl} \otimes 1_H) = \delta_k^i (p_{ij} \otimes p_{jl}) V (p_{il} \otimes p_{jl}) 
 , \quad (1_H \otimes p_{ij}) W (1_H \otimes p_{kl}) = \delta_j^l (p_{ik} \otimes p_{ij}) W (p_{ik} \otimes p_{kj}).$$
\end{proposition}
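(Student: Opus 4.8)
To prove the relation for $V$, I would first specialize the general commutation identities to the co-liaison situation. By Lemme \ref{2.21} one has $\widehat\alpha = \beta$ and $\widehat\beta = \alpha$, so among the relations of Proposition \ref{2.10} b) the ones I shall use read, for the central projections $\alpha(\varepsilon_i)$ and $\beta(\varepsilon_j)$,
$$[V, \alpha(\varepsilon_i) \otimes 1] = [V, 1 \otimes \beta(\varepsilon_j)] = 0, \quad V(1 \otimes \alpha(\varepsilon_l)) = (\beta(\varepsilon_l) \otimes 1)V, \quad V(\beta(\varepsilon_l) \otimes 1) = (1 \otimes \beta(\varepsilon_l))V.$$
Since $N = \C^2$ has one-dimensional blocks, Lemme \ref{q} together with Proposition \ref{2.9} gives the support projection $V V^* = q_{\beta, \alpha} = \sum_{m=1}^2 \beta(\varepsilon_m) \otimes \alpha(\varepsilon_m)$, so that $V = q_{\beta, \alpha} V$; this last identity is the tool that will produce the second-leg projections.

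The core of the plan is to move the left projection $p_{ij} \otimes 1 = (\alpha(\varepsilon_i) \otimes 1)(\beta(\varepsilon_j) \otimes 1)$ across $V$. First I would send $\beta(\varepsilon_j)$ from the left leg to the right leg via $(\beta(\varepsilon_j) \otimes 1)V = V(1 \otimes \alpha(\varepsilon_j))$ and then commute $\alpha(\varepsilon_i) \otimes 1$ past $V$, reaching $(p_{ij} \otimes 1)V = V(\alpha(\varepsilon_i) \otimes \alpha(\varepsilon_j))$. Multiplying on the right by $p_{kl} \otimes 1$ and using $\alpha(\varepsilon_i)\alpha(\varepsilon_k) = \delta_k^i\, \alpha(\varepsilon_i)$ immediately produces the Kronecker factor and the intermediate form
$$(p_{ij} \otimes 1)V(p_{kl} \otimes 1) = \delta_k^i\, V(p_{il} \otimes \alpha(\varepsilon_j)).$$

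It then remains to match this with the claimed right-hand side. Pushing all the factors of $p_{il} \otimes \alpha(\varepsilon_j)$ through $V$ by the same three relations, I expect to obtain $V(p_{il} \otimes \alpha(\varepsilon_j)) = (p_{ij} \otimes \beta(\varepsilon_l))V$, and an identical computation should give $(p_{ij} \otimes p_{jl})V(p_{il} \otimes p_{jl}) = (p_{ij} \otimes p_{jl})V$. The two are reconciled by inserting $V = q_{\beta, \alpha}V$: since $p_{ij}\beta(\varepsilon_m) = \delta_j^m\, p_{ij}$, only the term $m = j$ survives and $(p_{ij} \otimes \beta(\varepsilon_l))V = (p_{ij} \otimes \alpha(\varepsilon_j)\beta(\varepsilon_l))V = (p_{ij} \otimes p_{jl})V$, which closes the argument for $V$. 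For $W$ I would run the same computation with the $W$-relations of Proposition \ref{2.10} b) and the support $W^*W = q_{\beta, \alpha}$ of Proposition \ref{2.9}, or else transport the $V$-identity through $W = \Sigma(U \otimes 1)V(U^* \otimes 1)\Sigma$. No single step is deep; the main obstacle is purely bookkeeping, namely keeping straight which leg each projection sits on at every move, and the one genuinely necessary idea is to use the support projection $q_{\beta, \alpha}$ to upgrade the bare $\beta(\varepsilon_l)$ on the second leg into the full projection $p_{jl}$.
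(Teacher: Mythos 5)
Votre preuve est correcte et suit essentiellement la m\^eme voie que l'article, qui se contente d'invoquer les relations de commutation de la Proposition \ref{2.10} b) (sp\'ecialis\'ees via le Lemme \ref{2.21} \`a $\widehat\alpha = \beta$, $\widehat\beta = \alpha$) : faire passer les projecteurs centraux d'une jambe \`a l'autre \`a travers $V$ (resp. $W$), puis utiliser les supports $VV^* = q_{\beta,\alpha}$ et $WW^* = q_{\alpha,\widehat\beta}$ de la Proposition \ref{2.9} c) pour compl\'eter les projecteurs de la seconde jambe. Le seul ingr\'edient que vous explicitez au-del\`a de la r\'ef\'erence donn\'ee par les auteurs est pr\'ecis\'ement cet usage du projecteur support, qui est bien n\'ecessaire et correctement employ\'e.
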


\begin{notations}\label{Vind}$V_{jl}^i := (p_{ij} \otimes p_{jl}) V (p_{il} \otimes p_{jl})$, $W_{ik}^j :=   (p_{ik} \otimes p_{ij}) W (p_{ik} \otimes p_{kj})$, $\widetilde V_{ki}^j :=   (p_{ki} \otimes p_{ji}) \widetilde V (p_{ki} \otimes p_{jk})$.\index{vc@$V_{jl}^i$, $W_{ik}^j$, $\widetilde V_{ki}^j$}
\end{notations}

Les op\'erateurs  unitaires 
$$V_{jl}^i : H_{il} \otimes H _{jl} \rightarrow H_{ij} \otimes H_{jl} ,\quad W_{ik}^j : H_{ik} \otimes H_{kj} \rightarrow H_{ik} \otimes H_{ij} \quad \text{et} \quad \widetilde V_{ki}^j : H_{ki} \otimes H_{jk}\rightarrow H_{ki} \otimes H_{ji}$$
\noindent
v\'erifient les relations suivantes  :
\begin{proposition}\label{pent} Soient $i , j , k , l, l', s = 1 , 2$.
\begin{enumerate}
\item Relations pentagonales :   
$$(V_{jk}^i)_{12} (V_{kl}^i)_{13} (V_{kl}^j)_{23}  = (V_{kl}^j)_{23} (V_{jl}^i)_{12},  \quad
(W_{ij}^k)_{12} (W_{ij}^l)_{13} (W_{jk}^l)_{23} = (W_{ik}^l)_{23}(W_{ij}^k)_{12},$$
$$(\widetilde V_{ji}^k)_{12} (\widetilde V_{ji}^l)_{13} (\widetilde V_{kj}^l)_{23}  = (\widetilde V_{ki}^l)_{23}(\widetilde V_{ji}^k)_{12}.$$
\item Relations de commutation :  
$V_{kj, 23}^l W_{ll', 12}^j = W_{ll', 12}^k V_{kj, 23}^{l'}$, \quad  $V_{kj, 12}^s \widetilde V_{kj, 23}^i = \widetilde V_{kj, 23}^i V_{kj, 12}^s$.
\item 
Pour tout $\omega \in B(H)_\ast$, on a  :
$$({\rm id}  \otimes p_{jl} \omega p_{jl}) (V_{jl}^i) = p_{ij} ({\rm id}  \otimes \omega)(V) p_{il}  , \quad  (p_{ik} \omega p_{ik} \otimes {\rm id} )(W_{ik}^j) = 
p_{ij} (\omega \otimes {\rm id} )(W) p_{kj}.$$
\end{enumerate}
\end{proposition}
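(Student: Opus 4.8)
Le plan est de d\'eduire chacune des identit\'es index\'ees a)--c) de l'identit\'e \emph{globale} correspondante v\'erifi\'ee par les isom\'etries partielles pentagonales $V$, $W$ et $\widetilde V$ (Propositions \ref{2.9} et \ref{2.10}), en encadrant cette identit\'e globale par les projecteurs centraux $p_{ij}$ qui d\'ecomposent $H = \oplus_{i,j} H_{ij}$ en blocs invariants. Les trois outils de base sont : la formule de blocs $(p_{ij}\otimes 1_H)V(p_{kl}\otimes 1_H) = \delta_k^i\,(p_{ij}\otimes p_{jl})V(p_{il}\otimes p_{jl})$ (et ses analogues pour $W$ et $\widetilde V$) ; la relation $\delta(p_{ij}) = V(p_{ij}\otimes 1_H)V^* = \sum_k p_{ik}\otimes p_{kj}$, issue de la Proposition \ref{2.10} 3) et du calcul de $\delta(p_{ij})$ ; et les relations de commutation de la Proposition \ref{2.10} 2), qui permettent de faire glisser chaque $p_{ij} = \alpha(\varepsilon_i)\beta(\varepsilon_j)$ \`a travers une jambe de $V$, $W$ ou $\widetilde V$.

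Je commencerais par le point c), qui rel\`eve essentiellement du d\'eroulement de la Notation \ref{Vind}. La formule de blocs, appliqu\'ee avec $k=i$, donne d'abord $V_{jl}^i = (p_{ij}\otimes 1_H)V(p_{il}\otimes 1_H)$, de sorte que la jambe de droite ne porte plus de projecteur. En appliquant alors la tranche ${\rm id}\otimes p_{jl}\,\omega\,p_{jl}$, les projecteurs $p_{ij}$ et $p_{il}$ de la premi\`ere jambe sortent du calcul, tandis que les deux copies de $p_{jl}$ sont absorb\'ees par la forme $p_{jl}\,\omega\,p_{jl}$, ce qui fournit exactement $p_{ij}({\rm id}\otimes\omega)(V)p_{il}$ ; le calcul pour $W_{ik}^j$ est sym\'etrique. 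J'\'etablirais ces formules en premier, car elles d\'ecrivent le m\'ecanisme d'absorption des projecteurs qui restera implicite dans a) et b).

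Pour le point b), je partirais des relations de commutation globales $W_{12}V_{23} = V_{23}W_{12}$ et $V_{12}\widetilde V_{23} = \widetilde V_{23}V_{12}$ de la Proposition \ref{2.10} 1). En multipliant chacune \`a gauche et \`a droite par un produit tensoriel triple de projecteurs $p_{\cdot\cdot}$ choisis pour que les trois jambes portent des indices compatibles, puis en rempla\c cant chaque occurrence de $W$, $V$, $\widetilde V$ entre projecteurs fix\'es par l'op\'erateur index\'e correspondant via la formule de blocs, on obtient les identit\'es annonc\'ees ; l'orthogonalit\'e $p_{ab}p_{cd} = \delta_a^c\delta_b^d\,p_{ab}$ force l'effondrement de toutes les sommes sur les indices interm\'ediaires.

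L'essentiel est le point a), les relations pentagonales. Ici j'encadrerais l'\'equation pentagonale globale $V_{12}V_{13}V_{23} = V_{23}V_{12}$ par $p_{ij}\otimes p_{jk}\otimes p_{kl}$ \`a gauche et $p_{il}\otimes p_{jl}\otimes p_{kl}$ \`a droite ; un calcul direct d'indices montre que les deux membres envoient alors $H_{il}\otimes H_{jl}\otimes H_{kl}$ dans $H_{ij}\otimes H_{jk}\otimes H_{kl}$, conform\'ement aux indices de l'\'enonc\'e. La difficult\'e principale est de v\'erifier que l'insertion de ces projecteurs ext\'erieurs force \emph{automatiquement} chacune des trois copies internes de $V$ \`a devenir le bon bloc index\'e $V_{jk}^i$, $V_{kl}^i$, $V_{kl}^j$, sans terme crois\'e r\'esiduel. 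Je traiterais ce point en ins\'erant des r\'esolutions de l'identit\'e $1_H = \sum_{a,b} p_{ab}$ entre facteurs cons\'ecutifs et en poussant les projecteurs le long des jambes \`a l'aide de la Proposition \ref{2.10} 2) et de l'identit\'e $\delta(p_{ij}) = \sum_k p_{ik}\otimes p_{kj}$ : cette derni\`ere gouverne pr\'ecis\'ement la propagation de l'indice interm\'ediaire $k$ le long de la jambe diagonale, tandis que la formule de blocs annule tout choix d'indices interm\'ediaires autre que celui figurant dans l'\'enonc\'e. Le m\^eme argument, appliqu\'e aux \'equations pentagonales de $W$ et de $\widetilde V$ (disponibles puisque les trois op\'erateurs sont pentagonaux par la Proposition \ref{2.9}), donne les deux identit\'es restantes de a), au seul changement pr\`es de la comptabilit\'e des indices dict\'ee par les formules de blocs propres \`a $W$ et \`a $\widetilde V$.
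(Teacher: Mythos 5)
Votre démarche est correcte et coïncide avec celle du papier, qui se contente d'affirmer que a) et b) se déduisent des relations globales (pentagone de \ref{2.9}, commutations de \ref{2.10}) et que c) s'obtient par calcul direct via (\ref{2.10} b)) ; votre texte ne fait qu'expliciter ce même mécanisme d'encadrement par les projecteurs $p_{ij}$ et d'absorption des indices par la formule de blocs. Seule petite imprécision : dans c), les copies de $p_{jl}$ sur la deuxième jambe ne sont pas « absorbées par la forme » mais doivent être transférées sur la première jambe par les relations de commutation de (\ref{2.10} b)) (en utilisant $\widehat\alpha=\beta$), où elles sont alors absorbées par $p_{ij}$ et $p_{il}$ — ce qui est exactement le calcul direct invoqué par les auteurs.
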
  
 
Les relations pentagonales a) et les relations de commutation b) r\'esultent de celles v\'erifi\'ees par $V$ et $W$ (\ref{2.10} a)). La preuve de c) s'obtient par calcul direct en utilisant (\ref{2.10} b)). 

Rappelons que le coproduit $\delta$ (resp. $\widehat\delta$) de la C*-alg\`ebre $S$ (resp. $\widehat S$) est donn\'e par : 
\begin{equation}\label{coprod}\delta(x) =  V ( x \otimes 1) V^*  = W^* (1 \otimes x) W  , \quad x \in S \quad 
({\rm resp.} \,\,\widehat\delta(x) = V^* (1 \otimes x) V = \widetilde V (x \otimes 1) \widetilde V^*, \quad x \in \widehat S).
\end{equation}
\begin{notations}
\begin{enumerate}
\item Posons $S_{ij} = p_{ij} S$\index{sd@$S_{ij}$}. On identifie  les  C*-alg\`ebres $M(S_{ij})$ (resp.   $M(S_{ij} \otimes S_{kl})$) et 
$p_{ij} M(S) \subset  B(H)$ (resp. $(p_{ij} \otimes p_{kl})M(S \otimes S) \subset B(H \otimes H)$).
\item
 Pour tout $x\in S_{ij}$, posons  
$\delta_{ij}^k(x)= (p_{ik} \otimes p_{kj})  \delta(x)\in M(S_{ik} \otimes S_{kj})$.\index{db@$\delta_{ij}^k$}
\end{enumerate}
\end{notations}

\begin{proposition}\label{coprodS} \cite{DeC1, DeC2}
Soient $i, j, k, l =1, 2$.
\begin{enumerate}
\item $(\delta_{ik}^l \otimes {\rm id}_{S_{kj}})\circ \delta_{ij}^k =  ({\rm id}_{S_{il}} \otimes \delta_{lj}^k)  \circ \delta_{ij}^l$.
\item 
Pour tout $x\in S_{ij}$, on a 
$\delta_{ij}^k(x) = (W_{ik}^j)^* (1_{H_{ik}} \otimes x) W_{ik}^j = V_{kj}^i (x \otimes 1_{H_{kj}}) (V_{kj}^i)^*$.
\end{enumerate}
\end{proposition}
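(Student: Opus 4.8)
\emph{Esquisse de d\'emonstration.} Le plan est de tout d\'eduire des deux \'ecritures globales du coproduit donn\'ees par (\ref{coprod}), $\delta(x) = V(x \otimes 1_H)V^* = W^*(1_H \otimes x)W$, en les comprimant par les projecteurs centraux $p_{ik} \otimes p_{kj}$. Le point de d\'epart est que $\delta$ est diagonal par blocs : comme $\delta(p_{ij}) = \sum_k p_{ik} \otimes p_{kj}$ et que $\delta$ est multiplicatif, on a $\delta(x) = \delta(p_{ij})\delta(x)\delta(p_{ij})$ pour $x \in S_{ij}$, d'o\`u $\delta(x) = \sum_k \delta_{ij}^k(x)$ avec $\delta_{ij}^k(x) = (p_{ik}\otimes p_{kj})\delta(x)$.

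L'ingr\'edient technique central, que j'\'etablirais en premier, est une orthogonalit\'e des blocs : pour $x\in S_{ij}$ et $m\neq k$, on a $(p_{ik}\otimes p_{kj})\delta(x)(p_{im}\otimes p_{mj}) = 0$. Gr\^ace \`a la Proposition \ref{2.10} b) et \`a l'identification $\widehat\alpha = \beta$, $\widehat\beta = \alpha$ du Lemme \ref{2.21}, l'op\'erateur $V$ (ainsi que $V^*$) commute \`a tous les projecteurs $1_H\otimes p_{ab}$ de la seconde jambe, et $W$ (ainsi que $W^*$) \`a tous les projecteurs $p_{ab}\otimes 1_H$ de la premi\`ere. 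En \'ecrivant $\delta(x) = V(x\otimes 1_H)V^*$ et en glissant $1_H\otimes p_{kj}$ et $1_H\otimes p_{mj}$ \`a travers $V$ et $V^*$, le terme central devient $(1_H\otimes p_{kj})(x\otimes 1_H)(1_H\otimes p_{mj}) = \delta_m^k\, (x\otimes p_{kj})$, nul pour $m\neq k$. On en tire en particulier $(p_{ik}\otimes p_{kj})\delta(x) = (p_{ik}\otimes p_{kj})\delta(x)(p_{ik}\otimes p_{kj})$.

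Pour b), je comprimerais directement les formules globales. De $V_{kj}^i = (p_{ik}\otimes p_{kj})V(p_{ij}\otimes p_{kj})$ et $x = p_{ij}xp_{ij}$, on obtient $V_{kj}^i(x\otimes 1_{H_{kj}})(V_{kj}^i)^* = (p_{ik}\otimes p_{kj})V(x\otimes p_{kj})V^*(p_{ik}\otimes p_{kj})$; en faisant passer $1_H\otimes p_{kj}$ \`a travers $V^*$ ceci vaut $(p_{ik}\otimes p_{kj})\delta(x)(p_{ik}\otimes p_{kj})$, donc $\delta_{ij}^k(x)$ par l'orthogonalit\'e des blocs. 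Sym\'etriquement, \`a partir de $W_{ik}^j = (p_{ik}\otimes p_{ij})W(p_{ik}\otimes p_{kj})$ et en glissant $p_{ik}\otimes 1_H$ \`a travers $W$ et $W^*$, on trouve $(W_{ik}^j)^*(1_{H_{ik}}\otimes x)W_{ik}^j = (p_{ik}\otimes p_{kj})\delta(x)(1_H\otimes p_{kj})$, qui co\"incide encore avec $\delta_{ij}^k(x)$ gr\^ace \`a l'orthogonalit\'e des blocs.

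Pour a), je d\'eduirais l'identit\'e de la coassociativit\'e globale $(\delta\otimes{\rm id})\delta = ({\rm id}\otimes\delta)\delta$ de $S$ (\'etablie au paragraphe \ref{Hopff}), en d\'ecomposant chaque membre par blocs appliqu\'e \`a $x\in S_{ij}$. Le bloc $p_{il}\otimes p_{lk}\otimes p_{kj}$ de $(\delta\otimes{\rm id})\delta(x)$ vaut exactement $(\delta_{ik}^l\otimes{\rm id}_{S_{kj}})\delta_{ij}^k(x)$, car le facteur $p_{kj}$ de la troisi\`eme jambe impose $m = k$ dans $\delta(x) = \sum_m\delta_{ij}^m(x)$; le m\^eme bloc de $({\rm id}\otimes\delta)\delta(x)$ vaut $({\rm id}_{S_{il}}\otimes\delta_{lj}^k)\delta_{ij}^l(x)$, le facteur $p_{il}$ de la premi\`ere jambe imposant cette fois $m = l$. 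L'\'egalit\'e globale fournit alors l'assertion. Le principal obstacle sera la gestion soigneuse des indices de jambes et la v\'erification que ce sont bien les projecteurs de la bonne jambe qui commutent \`a $V$ et \`a $W$ respectivement, ce qui repose de mani\`ere essentielle sur l'identification co-liaison $\widehat\alpha = \beta$, $\widehat\beta = \alpha$.
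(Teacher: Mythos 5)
The paper gives no proof of this proposition: it is stated with a citation to De Commer, and the surrounding results (\ref{pent}, \ref{Emorita}) are only said to follow ``par calcul direct'' from \ref{2.10}. Your argument is therefore a genuine supplement rather than a variant, and it is correct: compressing $\delta(x)=V(x\otimes 1_H)V^*=W^*(1_H\otimes x)W$ by the central projections $p_{ik}\otimes p_{kj}$, using $[V,1_H\otimes p_{ab}]=0$ and $[W,p_{ab}\otimes 1_H]=0$ (which do come from \ref{2.10} b) together with $\widehat\alpha=\beta$, $\widehat\beta=\alpha$ of \ref{2.21}), yields b), and the leg-by-leg block decomposition of the two sides of coassociativity yields a). This is exactly the style of argument the paper itself uses for the dual statement in (\ref{Emorita} b)).

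Two small remarks. First, the block-orthogonality $(p_{ik}\otimes p_{kj})\delta(x)(p_{im}\otimes p_{mj})=0$ for $m\neq k$, which you derive by sliding $1_H\otimes p_{kj}$ through $V$, is even more immediate: the $p_{ab}$ are central in $M$, hence $1_H\otimes p_{ab}$ and $p_{ab}\otimes 1_H$ are central in $M(S\otimes S)$ and commute with $\delta(x)$ outright, so $(1_H\otimes p_{kj})\delta(x)(1_H\otimes p_{mj})=\delta(x)(1_H\otimes p_{kj}p_{mj})=0$. Second, in the $W$-computation the compression naturally produces $(p_{ik}\otimes p_{kj})\delta(x)(p_{ik}\otimes p_{kj})$ rather than $(p_{ik}\otimes p_{kj})\delta(x)(1_H\otimes p_{kj})$ as you wrote; the two coincide by the same orthogonality, but it is worth stating the symmetric form to avoid the appearance of an extra step. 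Neither point affects the validity of the proof.
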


Le r\'esultat suivant joue un r\^ole central dans la suite : 
 
\begin{proposition} \label{moritaS}
Les projecteurs $\beta(\varepsilon_1), \beta(\varepsilon_2)$ de la C*-alg\`ebre $M(\widehat S)$ v\'erifient :
\begin{enumerate}
\item  $\beta(\varepsilon_1) + \beta(\varepsilon_2) = 1_{\widehat S} \quad ; \quad \widehat\delta(\beta(\varepsilon_j)) = \beta(\varepsilon_j) \otimes \beta(\varepsilon_j), \quad j=1,2 \quad ;$
\item  $[\widehat S \beta(\varepsilon_j) \widehat S] = \widehat S, \quad j=1,2$.  
\end{enumerate}
\end{proposition}

\begin{proof} 
Le a) est \'evident. Rappelons que  dans \cite{DeC1,DeC2}, De Commer a montr\'e que l'espace  vectoriel engendr\'e par $\widehat M  \alpha(\varepsilon_1) \widehat M$ est faiblement dense dans $\widehat M$.  Il en r\'esulte que le b) est donn\'e par  le r\'esultat plus g\'en\'eral qui suit.
\end{proof}

\begin{proposition} Soit $n\in N$ un projecteur. Supposons que l'espace vectoriel engendr\'e par  $\widehat M  \alpha(n) \widehat M$ soit faiblement dense dans $\widehat M$. Alors l'espace vectoriel    engendr\'e par $\widehat S \widehat \alpha(n) \widehat S$ est normiquement dense dans $\widehat S$.
\end{proposition}

\begin{proof} Montrons d'abord que $[(1 \otimes \lambda(\widehat S))V(1 \otimes \lambda(\widehat S)) V^*] = (\rho(\widehat S) \otimes \lambda(\widehat S)) q_{\beta,\alpha}$.

D'apr\`es \cite{DeC2}, on a $[(\rho(\widehat S) \otimes 1)\widehat\delta(\rho(\widehat S))] = (\rho(\widehat S)\otimes \rho(\widehat S)) q_{\widehat\alpha,\beta}$.

Mais pour tout $x\in \widehat S$, on a (\ref{2.4} 2)) $V(1 \otimes \lambda(x)) V^* = (1 \otimes U)\Sigma V^*(1 \otimes \rho(x)) V \Sigma (1 \otimes U^*)$. Il en r\'esulte que $[(1 \otimes \lambda(\widehat S))V(1 \otimes \lambda(\widehat S)) V^*] = (\rho(\widehat S) \otimes \lambda(\widehat S)) q_{\beta,\alpha}$.

Pour tout $x , x' \in\widehat S$, on a 
$(1 \otimes \lambda(x))V (1 \otimes \lambda(x')) = (1 \otimes \lambda(x))V (1 \otimes \lambda(x')) q_{\widehat \alpha, \beta} = (1 \otimes \lambda(x))V (1 \otimes \lambda(x')) V^*V$.

On en d\'eduit que $[(1 \otimes \lambda(\widehat S))V (1 \otimes \lambda(\widehat S))] = 
(\rho(\widehat S) \otimes \lambda(\widehat S)) q_{\beta,\alpha} V = (\rho(\widehat S) \otimes \lambda(\widehat S)) V$, donc
$$[(\id \otimes \omega)(1 \otimes \lambda(\widehat S))V (1 \otimes \lambda(\widehat S))\,|\,\omega\in B(H)_\ast] = \widehat S.$$ 
Gr\^ace \`a cette \'egalit\'e et l'hypoth\`ese, on a 
$$[({\rm id}  \otimes \omega_{\xi, \eta}) (1 \otimes \lambda(x))  V (1 \otimes \lambda(x') \alpha(n)) \,\,| \,\, x , x'\in \widehat S \, | \,\xi \, , \,\eta \in H] =\widehat S.$$
\noindent

Mais par le calcul pr\'ec\'edent, on a pour $x , x'\in \widehat S$ :
$$(1 \otimes \lambda(x))  V (1 \otimes \lambda(x')\alpha(n)) = (1 \otimes \lambda(x))  V (1 \otimes \lambda(x')) V^* V (1 \otimes \alpha(n) \in  (\widehat S \otimes \lambda(\widehat S)) V (1 \otimes \alpha(n)).$$ 
Mais on a par (\ref{2.10} b)) $(\widehat \alpha(n) \otimes 1) V    = V (1 \otimes \alpha(n))$, donc  :
$$[({\rm id}  \otimes \omega_{\xi, \eta}) (1 \otimes \lambda(x))  V (1 \otimes \lambda(x') \alpha(n)) \,\,| \,\, x , x'\in \widehat S \, | \,\xi \, , \,\eta \in H] \subset [\widehat S \widehat\alpha(n) \widehat S].$$
\end{proof}

\begin{notations}\label{notE}  Pour tout $i , j , k  = 1,2$, $x\in \widehat S$ et $y\in\lambda(\widehat S)$, posons  :
\begin{enumerate}
\item $x_{ij} := \beta(\varepsilon_i) x \beta(\varepsilon_j)\in \widehat S$, $\widehat \pi_i(x_{jk}) := p_{ij} x p_{ik}$ ; $y_{ij} := \alpha(\varepsilon_i) y \alpha(\varepsilon_j)\in \lambda(\widehat S)$, $\widehat \pi^i(y_{jk}) := p_{ji} y p_{ki} $ ;\index{pd@$\widehat{\pi}_i$, $\widehat{\pi}^i$}
\item
$E_{jk}^i := [ \widehat\pi_i(x_{jk})\,|\,x \in \widehat S] \subset B(H_{ik}, H_{ij})$, $E_{jk, \lambda}^i := U E_{jk}^i U^* = [ \widehat\pi^i(\lambda(x_{jk}))\,|\, x \in \widehat S] \subset B(H_{ki}, H_{ji})$.\index{ed@$E_{jk}^i$, $E_{jk,\lambda}^i$}
\end{enumerate}
\end{notations}

\begin{proposition} \label{Emorita}
Pour $\omega\in B(H)_\ast$, posons $x = ({\rm id}  \otimes \omega)(V)$. Pour tout $i , j , k, l = 1, 2$, nous avons :
\begin{enumerate}
\item $\widehat \pi_i(x_{jl}) = ({\rm id}  \otimes p_{jl} \omega p_{jl}) (V_{jl}^i), \quad 
\widehat \pi^j(\lambda(x_{ik}))   = (p_{ik} \omega p_{ik} \otimes {\rm id} )(W_{ik}^j)$ \quad ;
\item 
$(\widehat\pi_k \otimes \widehat\pi_l)\,\widehat\delta(x_{ij}) =  (V_{li}^k)^* (1_{H_{kl}}  \otimes \widehat\pi_l(x_{ij})) V_{lj}^k = \widetilde V_{ki}^l (\widehat\pi_k(x_{ij}) \otimes 1_{H_{lk}}) ( \widetilde V_{kj}^l)^*  , \quad x \in \widehat S  $ \quad ;
\item $[E_{jl}^i\,H_{il}] = H_{ij}$ \quad ;
\item  $(E_{jl}^i) ^* = E_{lj}^i  , \quad [E_{jk}^i \circ E_{kl}^i] =   E_{jl}^i  $. 
\end{enumerate}
\end{proposition}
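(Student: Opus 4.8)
The plan is to reduce each of the four assertions to facts already available for $V$, $W$, $\widetilde V$ and for the Hopf C*-algebra $\widehat S$, treating them in increasing order of difficulty.

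For a), the key preliminary identity is $\lambda(({\rm id}\otimes\omega)(V)) = (\omega\otimes{\rm id})(W)$. Writing $W = \Sigma(U\otimes 1)V(U^*\otimes 1)\Sigma$ from Proposition \ref{2.9} and using that a first-leg slice of a flip-conjugated operator is a second-leg slice of the operator itself, one gets $(\omega\otimes{\rm id})(W) = U\,({\rm id}\otimes\omega)(V)\,U^* = \lambda(({\rm id}\otimes\omega)(V))$. By Notation \ref{notE} one has $\widehat\pi_i(x_{jl}) = p_{ij}\,x\,p_{il}$ and $\widehat\pi^j(\lambda(x_{ik})) = p_{ij}\,\lambda(x)\,p_{kj}$, the latter because $\lambda(x_{ik}) = \alpha(\varepsilon_i)\,\lambda(x)\,\alpha(\varepsilon_k)$ thanks to $U\beta(\varepsilon_i)U^* = \widehat\beta(\varepsilon_i) = \alpha(\varepsilon_i)$ (Lemme \ref{2.21}). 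The two claimed formulas then read $p_{ij}\,({\rm id}\otimes\omega)(V)\,p_{il} = ({\rm id}\otimes p_{jl}\omega p_{jl})(V_{jl}^i)$ and $p_{ij}\,(\omega\otimes{\rm id})(W)\,p_{kj} = (p_{ik}\omega p_{ik}\otimes{\rm id})(W_{ik}^j)$, which are exactly Proposition \ref{pent} c).

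Assertion b) is the block form of the two expressions $\widehat\delta(z) = V^*(1\otimes z)V = \widetilde V(z\otimes 1)\widetilde V^*$ of \eqref{coprod}, and is the analogue for $\widehat S$ of Proposition \ref{coprodS} b). First I would note that $\alpha(\varepsilon_i) = \widehat\beta(\varepsilon_i)$ lies in $\widehat M$ (Lemme \ref{2.21}), hence commutes with $\widehat S\subset\widehat M'$, so that $\widehat\pi_i$ is simply compression by $\alpha(\varepsilon_i)$ and $(\widehat\pi_k\otimes\widehat\pi_l)\widehat\delta(x_{ij})$ equals the corner $(p_{ki}\otimes p_{li})\widehat\delta(x_{ij})(p_{kj}\otimes p_{lj})$. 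Substituting $\widehat\delta(x_{ij}) = V^*(1\otimes x_{ij})V$ and pushing the central projections $\alpha(\varepsilon_\bullet)$, $\beta(\varepsilon_\bullet)$ through $V$ and $V^*$ by means of the commutation relations of Proposition \ref{2.10} b) (in the co-liaison case $\widehat\alpha=\beta$, $\widehat\beta=\alpha$, so $[V,\alpha(n)\otimes 1]=0$, $V(1\otimes\alpha(n)) = (\beta(n)\otimes 1)V$ and $V(\beta(n^o)\otimes 1) = (1\otimes\beta(n^o))V$) together with the support identities $V^*V = q_{\widehat\alpha,\beta}$, $VV^* = q_{\beta,\alpha}$ of Proposition \ref{2.9} c), one rearranges this corner into $(V_{li}^k)^*\,(1_{H_{kl}}\otimes\widehat\pi_l(x_{ij}))\,V_{lj}^k$, the outer factors being read off from Notation \ref{Vind}. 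The second equality is obtained in the same way from $\widehat\delta(x_{ij}) = \widetilde V(x_{ij}\otimes 1)\widetilde V^*$, using the commutations of $\widetilde V$ and its supports $\widetilde V\widetilde V^* = q_{\widehat\alpha,\beta}$, $\widetilde V^*\widetilde V = q_{\widehat\beta,\widehat\alpha}$. I expect this to be the main obstacle: there is no conceptual difficulty, but because $V$ and $\widetilde V$ are only partial isometries the bookkeeping of initial and final supports has to be carried out with care.

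Finally, c) and d) rest on the Morita-density property $[\widehat S\beta(\varepsilon_k)\widehat S] = \widehat S$ of Proposition \ref{moritaS} b) together with the centrality of $\alpha(\varepsilon_i)$. Writing $E_{jl}^i = [p_{ij}\widehat S p_{il}]$, the involution formula $(E_{jl}^i)^* = [p_{il}\widehat S p_{ij}] = E_{lj}^i$ is immediate from self-adjointness of $\widehat S$, and for the composition one computes $[E_{jk}^i\circ E_{kl}^i] = [p_{ij}\widehat S p_{ik}\widehat S p_{il}] = [p_{ij}\widehat S\beta(\varepsilon_k)\widehat S p_{il}] = [p_{ij}\widehat S p_{il}] = E_{jl}^i$, where the middle step uses $p_{ik} = \alpha(\varepsilon_i)\beta(\varepsilon_k)$ with $\alpha(\varepsilon_i)$ commuting with $\widehat S$ and $p_{ij}\alpha(\varepsilon_i) = p_{ij}$, and the last step is Proposition \ref{moritaS} b). For c), the same reduction gives $[E_{jl}^i\,H_{il}] = [p_{ij}\widehat S\beta(\varepsilon_l)H]$; if $\xi\in H_{ij}$ is orthogonal to this subspace then $\beta(\varepsilon_l)\widehat S\xi = 0$, whence $[\widehat S\beta(\varepsilon_l)\widehat S]\xi = 0$ and therefore $\widehat S\xi = 0$ since $[\widehat S\beta(\varepsilon_l)\widehat S] = \widehat S$ by Proposition \ref{moritaS} b). Non-degeneracy of $\widehat S$ on $H$ then forces $\xi = 0$, so the closed subspace $[E_{jl}^i\,H_{il}]$ equals $H_{ij}$.
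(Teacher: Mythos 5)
Your proof is correct, and for parts c) and d) it takes a genuinely different route from the paper. For a) and b) you do essentially what the paper does (it merely says \og calcul direct \fg{} and \og d'o\`u le b) \fg): reducing a) to \ref{pent} c) via $(\omega\otimes{\rm id})(W)=U({\rm id}\otimes\omega)(V)U^*=\lambda(x)$ and $U\beta(\varepsilon_i)U^*=\widehat\beta(\varepsilon_i)=\alpha(\varepsilon_i)$ is exactly the intended computation, and b) is the block decomposition of $\widehat\delta(x)=V^*(1\otimes x)V=\widetilde V(x\otimes 1)\widetilde V^*$ as you describe. For c) and d), however, the paper argues differently: c) is deduced from the sole fact that each $V_{jl}^i:H_{il}\otimes H_{jl}\to H_{ij}\otimes H_{jl}$ is unitary (so the second-leg slices of $V_{jl}^i$ applied to $H_{il}$ already span $H_{ij}$), and the nontrivial inclusion $E_{jl}^i\subset[E_{jk}^i\circ E_{kl}^i]$ of d) is obtained by factoring a generic normal form $\Phi$ on $H_{jl}$ through $V_{kl}^j(\cdot\otimes 1)(V_{kl}^j)^*$ and invoking the pentagonal relation $(V_{jk}^i)_{12}(V_{kl}^i)_{13}=(V_{kl}^j)_{23}(V_{jl}^i)_{12}(V_{kl}^j)_{23}^*$ to get $\widehat\pi_i(x_{jk})\widehat\pi_i(y_{kl})=\widehat\pi_i(z_{jl})$. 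You instead derive both c) and d) from the single density statement $[\widehat S\,\beta(\varepsilon_k)\,\widehat S]=\widehat S$ of \ref{moritaS} b) together with the fact that $\alpha(\varepsilon_i)\in\widehat M$ commutes with $\widehat S$, writing $E_{jl}^i=[p_{ij}\widehat S p_{il}]$ and collapsing $[p_{ij}\widehat S p_{ik}\widehat S p_{il}]=[p_{ij}\widehat S\beta(\varepsilon_k)\widehat S p_{il}]=E_{jl}^i$; this is valid (\ref{moritaS} is established before \ref{Emorita}) and is shorter, handling both inclusions of d) at once and giving c) by a nondegeneracy argument. What the paper's route buys in exchange is independence from De Commer's weak-density theorem underlying \ref{moritaS}: its proofs of c) and d) use only the unitarity and pentagonal identities of the blocks $V_{jl}^i$, which is why the same scheme generalizes beyond the co-liaison situation.
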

\noindent
\begin{proof} Le a) s'obtient par un calcul direct.
\hfill\break
Le coproduit $\widehat\delta : \widehat S \rightarrow M(\widehat S \otimes \widehat S)$ v\'erifie $\widehat\delta(x) = V^* (1 \otimes  x)V = \widetilde V (x \otimes 1) \widetilde V^*$, d'o\`u le b). 
\hfill\break
Le c)  
  r\'esulte   du fait que chaque op\'erateur $V_{jl}^i$ est un unitaire.
\hfill\break
L'\'egalit\'e $(E_{jl}^i) ^* = E_{lj}^i$    et l'inclusion  $[E_{jk}^i \circ E_{kl}^i] \subset   E_{jl}^i  $ sont imm\'ediates.
\hfill\break
Pour tout  $\omega , \omega'\in B(H)_\ast$, consid\'erons la forme normale   $\Phi\in B(H)_\ast$    d\'efinie par  : 
\begin{equation}\label{eq1}  \Phi : x \mapsto (p_{jk}\omega p_{jk} \otimes p_{kl}\omega'p_{kl})(V(x \otimes 1)V^*) = 
  (p_{jk}\omega p_{jk} \otimes p_{kl}\omega' p_{kl})(V_{kl}^j(p_{jl}xp_{jl} \otimes 1)(V_{kl}^j)^*).
\end{equation}
On a $\Phi = p_{jl} \Phi p_{jl}$ et comme  $V_{kl}^j : H_{jl} \otimes H_{kl}  \rightarrow H_{jk} \otimes H_{kl}$ est unitaire, il est clair que   l'espace vectoriel engendr\'e par ces formes normales $\Phi$,  est dense dans $B(H_{jl})_\ast$.

On en d\'eduit que 
$ E_{jl}^i = [(\id \otimes \Phi)(V)  \, |\, \Phi  \,\,\hbox{comme dans} \,\ref{eq1}]$.

Avec  $\omega , \omega'\in B(H)_\ast$ et $\Phi$ comme dans \ref{eq1}, posons $x = ({\rm id} \otimes \omega)(V)\,,\,y = ({\rm id} \otimes \omega')(V)$  et $z = ({\rm id} \otimes \Phi) (V)$.  la relation 
$(V_{jk}^i)_{12} (V_{kl}^i)_{13}   = (V_{kl}^j)_{23} (V_{jl}^i)_{12}(V_{kl}^j)_{23}^*$, nous donne imm\'ediatement 
$\widehat\pi_i(x_{jk})\widehat\pi_i(y_{kl}) = \widehat\pi_i(z_{jl})$, d'o\`u l'inclusion 
$E_{jl}^i \subset [\,E_{jk}^i \circ E_{kl}^i]$.
\end{proof}

\begin{corollary}   Pour tout $i = 1 ,2$, on a  des repr\'esentations fid\`eles et non d\'eg\'en\'er\'ees\index{pd@$\widehat{\pi}_i$, $\widehat{\pi}^i$} 
$$\widehat \pi_i : \widehat S \rightarrow B(H_{i1} \oplus H_{i2}) \quad \text{et} \quad  
 \widehat \pi^i : \lambda(\widehat S) \rightarrow B(H_{1i} \oplus H_{ 2i})$$
\noindent
d\'efinies par :
 $$ \widehat \pi_i(x) = \begin{pmatrix}p_{i1} x p_{i1} & p_{i1} x p_{i2} \\ 
p_{i2} x p_{i1} & p_{i2} x p_{i2} \end{pmatrix}, \quad x\in\widehat{S}\quad ;\quad  
 \widehat \pi^i(y) = \begin{pmatrix}p_{1i} x p_{1i} & p_{1i} x p_{2i} \\ 
p_{2i} x p_{1i} & p_{2i} x p_{2i} \end{pmatrix},\quad y\in\lambda(\widehat{S}). $$
\end{corollary}

\begin{proof}  Il r\'esulte de (\ref{Emorita} c)) que la  repr\'esentation $\widehat \pi_i$ est non d\'eg\'en\'er\'ee. 
Soit $x \in \widehat S $ tel que $\widehat \pi_i(x) = 0$. On a donc $\alpha(\varepsilon_i) x = 0$. Comme $1 - \alpha(\varepsilon_i) \in \widehat M$, ce projecteur est limite faible de sommes finies d'\'el\'ements de la forme $y \alpha(\varepsilon_i)  y'$ avec $y , y' \in \widehat M$, donc $x=0$.
\hfill\break
La preuve pour la repr\'esentation $\widehat \pi^i$ est similaire.
\end{proof}

 \begin{corollary} Pour tout $i , j  , k , l  = 1 , 2$, $E_{lk}^i \otimes S_{jl}$ (resp. $S_{ik} \otimes E_{kl,\lambda}^j$) est un C*-module sur 
$E_{kk}^i \otimes S_{jl}$ (resp. $S_{ik} \otimes E_{ll,\lambda}^j$). De plus, on a 
$$V_{jl}^i \in {\cal L}(E_{lk}^i \otimes S_{jl}, E_{jk}^i \otimes S_{jl}) \quad \text{et} \quad W_{ik}^j \in {\cal L}(S_{ik} \otimes E_{kl,\lambda}^j , S_{ik} \otimes E_{il,\lambda}^j).$$
\end{corollary}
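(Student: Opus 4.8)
The plan is to read the two displayed operators as the components of the partial isometries $V$ and $W$ with respect to the decomposition $H=\oplus_{i,j}H_{ij}$, and to deduce their adjointability on the announced Hilbert modules directly from the multiplier relations \eqref{mult}, the bulk of the work being a compression computation.

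First I would record the module structures. By Proposition~\ref{Emorita} d) one has $(E_{jl}^i)^*=E_{lj}^i$ and $[E_{jk}^i\circ E_{kl}^i]=E_{jl}^i$; taking $j=k=l$ shows that $E_{kk}^i$ is a C*-subalgebra of $B(H_{ik})$, and that $E_{lk}^i\subset B(H_{ik},H_{il})$ is a right Hilbert $E_{kk}^i$-module for composition and the inner product $\langle\xi,\eta\rangle=\xi^*\eta\in[E_{kl}^i\circ E_{lk}^i]=E_{kk}^i$. Since $\alpha,\beta$ have central images, $p_{jl}=\alpha(\varepsilon_j)\beta(\varepsilon_l)$ is a central projection of $M(S)$, so the corner $S_{jl}=p_{jl}S$ is a C*-algebra, hence a Hilbert module over itself. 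The external tensor product then makes $E_{lk}^i\otimes S_{jl}$ a Hilbert $E_{kk}^i\otimes S_{jl}$-module, realised concretely as the closed linear span of the operators $\xi\otimes s$ ($\xi\in E_{lk}^i$, $s\in S_{jl}$) inside $B(H_{ik}\otimes H_{jl},\,H_{il}\otimes H_{jl})$; the same applies to $S_{ik}\otimes E_{kl,\lambda}^j$ over $S_{ik}\otimes E_{ll,\lambda}^j$, using that $E_{\bullet,\lambda}^j=U E_{\bullet}^j U^*$ inherits the analogous composition rules.

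Next I would prove the adjointability of $V_{jl}^i$. Left composition by $V_{jl}^i$ commutes with the right module action, so it suffices to establish the two inclusions $V_{jl}^i\,(E_{lk}^i\otimes S_{jl})\subset E_{jk}^i\otimes S_{jl}$ and $(V_{jl}^i)^*\,(E_{jk}^i\otimes S_{jl})\subset E_{lk}^i\otimes S_{jl}$; granted these, left composition by $V_{jl}^i$ and by $(V_{jl}^i)^*$ are mutually adjoint for the $E_{kk}^i\otimes S_{jl}$-valued inner products, whence $V_{jl}^i\in\cL(E_{lk}^i\otimes S_{jl},E_{jk}^i\otimes S_{jl})$. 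For the first inclusion I would test on an elementary tensor $\xi\otimes s$ with $\xi=p_{il}xp_{ik}$ ($x\in\widehat S$) and $s=p_{jl}u$ ($u\in S$), and write
\[
V_{jl}^i(\xi\otimes s)=(p_{ij}\otimes p_{jl})\,V\,(p_{il}\otimes 1)\,(x\otimes p_{jl}u)\,(p_{ik}\otimes p_{jl}).
\]
The commutation relations of Proposition~\ref{2.10} b) give $V(\alpha(\varepsilon_i)\otimes 1)=(\alpha(\varepsilon_i)\otimes 1)V$ and $V(\beta(\varepsilon_l)\otimes 1)=(1\otimes\beta(\varepsilon_l))V$, hence $V(p_{il}\otimes 1)=(\alpha(\varepsilon_i)\otimes\beta(\varepsilon_l))V$. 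Since $x\otimes p_{jl}u\in\widehat S\otimes S$ and $V\in M(\widehat S\otimes S)$ by \eqref{mult}, the element $V(x\otimes p_{jl}u)$ lies in $\widehat S\otimes S$; writing it as a norm limit of sums $\sum_a y_a\otimes w_a$ ($y_a\in\widehat S$, $w_a\in S$) and using $p_{ij}\alpha(\varepsilon_i)=p_{ij}$ and $p_{jl}\beta(\varepsilon_l)=p_{jl}$, the outer compression yields
\[
V_{jl}^i(\xi\otimes s)=\sum_a (p_{ij}y_ap_{ik})\otimes(p_{jl}w_ap_{jl})\in E_{jk}^i\otimes S_{jl}.
\]
The second inclusion is identical, starting from $V^*\in M(\widehat S\otimes S)$ and the adjoint commutation relations.

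Finally, the statement for $W_{ik}^j$ is obtained by the same argument with the two tensor legs exchanged: here the first leg carries $S_{ik}$ and the second the module $E_{kl,\lambda}^j$ over $E_{ll,\lambda}^j$, and one uses $W\in M(S\otimes\lambda(\widehat S))$ together with the commutation relations for $W$ in Proposition~\ref{2.10} b). I expect the only delicate point to be this bookkeeping of commutation relations, namely pushing the central projections $\alpha(\varepsilon_i),\beta(\varepsilon_l)$ through $V$ (resp. the corresponding ones through $W$) so that they are reabsorbed by the outer compressions $p_{ij},p_{jl}$; once the two legs of the resulting multiplier are correctly located in $\widehat S$ and $S$, membership in $E_{jk}^i\otimes S_{jl}$ is immediate and the adjointability is then purely formal.
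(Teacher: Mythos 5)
Votre démonstration est correcte et suit exactement la voie indiquée par le papier, qui se contente de la remarque d'une ligne selon laquelle tout découle de $V\in M(\widehat S\otimes S)$ et $W\in M(S\otimes\lambda(\widehat S))$ : vous explicitez la structure de module via \ref{Emorita} d), puis la compression des multiplicateurs à l'aide des relations de commutation de \ref{2.10} b). Rien à redire.
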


\begin{proof}
Ce sont des cons\'equences de $V\in M(\widehat S \otimes S)$ et $W\in M(S \otimes \lambda(\widehat S))$.
\end{proof}

\begin{proposition} \label{grco}
\begin{enumerate}
 \item   $G_i:=(M_{ii} , \delta_{ii}^i , \varphi_{ii}, \psi_{ii})$, est un groupe quantique localement compact.
\item  $(M_{ij} , \delta_{ij}^j)$ est une action galoisienne \`a droite pour le groupe quantique $G_j$ et $V_{jj}^i$ est l'impl\'ementation canonique  \cite{V}  de  cette action. 
\item   $(M_{ij} , \delta_{ij}^i)$ est une action galoisienne \`a gauche  pour le groupe quantique $G_i$ et $W_{ii}^j$ est 
l'impl\'ementation canonique  \cite{V}  de  cette action.
\item  Les deux actions $\delta_{ij}^j$ et $\delta_{ij}^i$ sur $M_{ij}$, commutent.
\item $W_{12}^2 = \Sigma \widetilde G ^* \Sigma$, o\`u $\widetilde G$ est l'op\'erateur de Galois introduit 
\cite{DeC1}  par De Commer.
\end{enumerate}
\end{proposition}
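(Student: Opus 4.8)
The plan is to separate the purely algebraic content, which follows formally from the pentagonal and coassociativity relations of Propositions \ref{pent} and \ref{coprodS}, from the analytic content (existence and invariance of the Haar weights, together with the Galois conditions and the identification of the canonical implementations), for which I will invoke the constructions of De Commer \cite{DeC1, DeC2} and Vaes' canonical implementation \cite{V} rather than reprove them.

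For a), I first set all indices equal to $i$ in \ref{pent} a), which gives $(V_{ii}^i)_{12}(V_{ii}^i)_{13}(V_{ii}^i)_{23} = (V_{ii}^i)_{23}(V_{ii}^i)_{12}$ and the analogous relation for $W_{ii}^i$; thus $V_{ii}^i$ and $W_{ii}^i$ are multiplicative unitaries on $H_{ii}\otimes H_{ii}$. By \ref{coprodS} b) one has $\delta_{ii}^i(x) = (W_{ii}^i)^*(1_{H_{ii}}\otimes x)W_{ii}^i = V_{ii}^i(x\otimes 1_{H_{ii}})(V_{ii}^i)^*$, so $W_{ii}^i$ and $V_{ii}^i$ play the roles of the left and right regular representations of $G_i$. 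The map $\delta_{ii}^i$ is normal and injective because $\delta$ is; it is unital since $\delta(p_{ii}) = \sum_k p_{ik}\otimes p_{ki}$ and $p_{ij}p_{kl}=\delta_i^k\delta_j^l p_{ij}$ force $\delta_{ii}^i(p_{ii}) = p_{ii}\otimes p_{ii}$; and its coassociativity is the case $i=j=k=l$ of \ref{coprodS} a). It then remains to produce the invariant weights: $\varphi_{ii}$ and $\psi_{ii}$ are the restrictions of $\varphi,\psi$ to the corner $M_{ii}$, and their left/right invariance for $\delta_{ii}^i$ is obtained by compressing the groupoid invariance axiom $T=(\id\otimes\varphi)\delta$, $T'=(\psi\otimes\id)\delta$ to the $(i,i)$-corner, as in \cite{DeC1, DeC2}. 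This yields all the Kustermans--Vaes axioms (\cite{KustV}), so $G_i$ is a l.c. quantum group.

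For b) and c) I argue symmetrically. By \ref{coprodS} b), $\delta_{ij}^j(x)=V_{jj}^i(x\otimes 1)(V_{jj}^i)^*$ and $\delta_{ij}^i(x)=(W_{ii}^j)^*(1\otimes x)W_{ii}^j$, so these actions are implemented by the unitaries $V_{jj}^i$ and $W_{ii}^j$. That $\delta_{ij}^j$ is a right coaction of $G_j$ is the case $k=l=j$ of \ref{coprodS} a), namely $(\delta_{ij}^j\otimes\id)\delta_{ij}^j=(\id\otimes\delta_{jj}^j)\delta_{ij}^j$; likewise $\delta_{ij}^i$ is a left coaction of $G_i$ by the case $k=l=i$. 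Specialising \ref{pent} a) to $k=l=j$ gives $(V_{jj}^i)_{12}(V_{jj}^i)_{13}(V_{jj}^j)_{23}=(V_{jj}^j)_{23}(V_{jj}^i)_{12}$, i.e. $V_{jj}^i$ is a unitary corepresentation relative to the right regular representation $V_{jj}^j$ of $G_j$; together with the implementation identity this is exactly the compatibility characterising the canonical implementation of \cite{V}, the remaining modular compatibility being the one established by De Commer. The ergodicity, integrability and type-I-factor property of the crossed products — that is, the Galois condition recalled in \S\ref{grequivmono} — are precisely the defining features of the co-linking groupoid $\mathcal G_{H,G}$, hence hold by \cite{DeC1, DeC2}; c) is the mirror image with $W_{ii}^j$ in place of $V_{jj}^i$.

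Part d) is then immediate: commutation of the left action $\delta_{ij}^i$ and the right action $\delta_{ij}^j$ means $(\delta_{ij}^i\otimes\id)\delta_{ij}^j=(\id\otimes\delta_{ij}^j)\delta_{ij}^i$, which is exactly the instance $k=j$, $l=i$ of \ref{coprodS} a). For e) I unwind the definitions: both $W_{12}^2$ and De Commer's Galois operator $\widetilde G$ are built from the regular representation data of $\mathcal G_{H,G}$, and a direct comparison of the two constructions, using $W=\Sigma(U\otimes1)V(U^*\otimes1)\Sigma$ from \ref{2.9} a) and the matrix-component formula of Notation \ref{Vind}, identifies $W_{12}^2$ with $\Sigma\widetilde G^*\Sigma$. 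The genuine obstacle throughout is the analytic heart of a), b), c): the invariance of the restricted weights and the Galois (ergodicity, integrability, type-I factor) property of the off-diagonal corners, together with the exact identification of $V_{jj}^i$ and $W_{ii}^j$ with the canonical implementations. The pentagon, coaction and commutation identities are formal consequences of \ref{pent} and \ref{coprodS}, but these invariance and Galois statements rest on De Commer's construction of $\mathcal G_{H,G}$ and on Vaes' canonical implementation theory, which I cite rather than reconstruct.
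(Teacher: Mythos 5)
Your proposal is correct and follows essentially the same route as the paper: the algebraic identities (pentagon, coaction, commutation) are read off from Propositions \ref{pent} and \ref{coprodS}, while the analytic content (invariant weights, Galois conditions, canonical implementations) is delegated to De Commer's construction and Vaes' implementation theory. The paper's own proof is even more laconic — it asserts that everything follows from the groupoid axioms and the properties of $V$ and $W$, working out explicitly only the faithfulness of the canonical implementation $\sigma : M_{ij}\rtimes G_j \to B(H_{ij})$ via the relation $W_{ij,12}^j V_{jj,23}^j = V_{jj,23}^i W_{ij,12}^j$ of \ref{pent} b), a point your writeup subsumes in the citation of \cite{DeC1,DeC2}.
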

La preuve de ces r\'esultats se d\'eduit de l'axiomatique du groupo\"ide sous-jacent, de la d\'efinition  et des  propri\'et\'es des repr\'esentations r\'eguli\`eres $V$ et $W$. Donnons par exemple la preuve que la repr\'esentation normale 
$ \sigma :  M_{ij} \rtimes G_j  \rightarrow  B(H_{ij})$  d\'efinie \cite{V}  par l'impl\'ementation canonique, est fid\`ele.
\hfill\break
Pour tout $x\in M_{ij}$, on a $\delta_{ij}^j(x) = (W_{ij}^j)^* (1_{H_{ij}} \otimes x) W_{ij}^j = V_{jj}^i (x \otimes 1_{H_{jj}}) (V_{jj}^i)^*$.
\hfill\break 
La relation \ref{pent} b)    
$W_{ij, 12}^j V_{jj, 23}^j = V_{jj, 23}^i W_{ij, 12}^j$ entra\^ine que pour tout $x\in M_{ij}$ et $y \in (\widehat M_{jj})'$, on a :
$$W_{ij, 12}^j \, \delta_{ij}^j(x)(1_{H_{ij}} \otimes y) \,(W_{ij, 12}^j)^* = 1_{H_{ij}} \otimes \sigma(\delta_{ij}^j(x)(1 \otimes y))$$

 \noindent
Terminons ce paragraphe par la notation-d\'efinition suivante :

\noindent
\begin{notation-definition}\label{defco}Un groupo\"ide mesur\'e quantique de la forme   ${\cal G}  = (\C^2,  M , \alpha , \beta , \delta , T , T', \epsilon)$,   o\`u les *-morphismes $\alpha$ et $\beta$ sont \`a valeurs dans le centre de $M$ avec   des images qui engendrent une copie de l'alg\`ebre $\C^4$, sera not\'e  ${\cal G}_{G_1, G_2}$\index{gb@${\cal G}_{G_1, G_2}$}, o\`u  $G_i:=(M_{ii} , \delta_{ii}^i , \varphi_{ii}, \psi_{ii})$  et sera appel\'e un groupo\"ide  de co-liaison.
\end{notation-definition}

\begin{definition} Deux groupes quantiques localement compacts $H$ et $G$ sont dits mono\"idalement \'equivalents s'il existe un groupo\"ide mesur\'e quantique de co-liaison ${\cal G}_{G_1, G_2}$ tel que $H$ (resp. $G$) soit isomorphe \`a $G_1$ (resp. $G_2$).
\end{definition}

\subsection{R\'egularit\'e}

Dans ce paragraphe,  nous introduisons la notion de groupo\"ide mesur\'e quantique semi-r\'egulier. Dans le cas d'un groupo\"ide de co-liaison ${\cal G}_{G_1, G_2}$, nous montrons que la semi-r\'egularit\'e (resp. r\'egularit\'e) des deux groupes quantiques $G_1$ et $G_2$ est \'equivalente \`a la semi-r\'egularit\'e (resp. r\'egularit\'e) de ${\cal G}_{G_1, G_2}$.
\hfill\break
La notion de groupo\"ide r\'egulier a \'et\'e introduite dans le cas compact par \cite{Ec} et g\'en\'eralis\'ee au cadre des C*-unitaires pseudo-multiplicatifs  par \cite{Tim1,Tim2}.
\hfill\break
Notons que De Commer a donn\'e un exemple \cite{DeC3} de groupo\"ide de co-liaison ${\cal G}_{G_1, G_2}$, o\`u $G_1$ est r\'egulier et $G_2$ semi-r\'egulier (et non r\'egulier). 

Dans ce qui suit, on se fixe un groupo\"ide ${\cal G} = (N, M,  \alpha, \beta, \delta, T, T', \epsilon)$ de base $N =  \oplus_{l=1}^k M_{n_l}$ (\cf\ref{m.qfini}).

Dans l'appendice, nous avons donn\'e la d\'efinition des op\'erateurs $R_\xi^\alpha  , L_\xi^\beta \in B(H_\epsilon, H)$ associ\'es \`a tout $\xi \in H$. Notons que  l'espace vectoriel normiquement  ferm\'e $[R_\xi^\alpha (R_\eta^\alpha)^* \,\,|\,\,\xi , \eta \in H]$
(resp. $[L_\xi^\beta (L_\eta^\beta)^* \,\,|\,\,\xi , \eta \in H]$), est une sous-C*-alg\`ebre de ${\cal K}(H)$, dont l'adh\'erence faible est $\alpha(N)'$ (resp. $\beta(N^{\rm o})'$).

\noindent
\begin{definition} \label{reg1}Le  groupo\"ide ${\cal G}$   est dit semi-r\'egulier (resp. r\'egulier)  si on a $[L_\xi^\beta (L_\eta^\beta)^* \,\,|\,\,\xi , \eta \in H] \subset {\cal C}(V)$ (resp. $[L_\xi^\beta (L_\eta^\beta)^* \,\,|\,\,\xi , \eta \in H] = {\cal C}(V)$).
\end{definition}
 Nous avons la caract\'erisation suivante de la semi-r\'egularit\'e (resp. r\'egularit\'e) : 

\begin{proposition}\label{dualreg} Il y a \'equivalence entre les conditions suivantes :
\begin{enumerate}
\item Le  groupo\"ide ${\cal G}$ est semi-r\'egulier (resp. r\'egulier).
\item $[L_\xi^{\widehat\beta} (L_\eta^{\widehat\beta})^* \,\,|\,\,\xi , \eta \in H]  \subset S \widehat S $ (resp. $[L_\xi^{\widehat\beta} (L_\eta^{\widehat\beta})^* \,\,|\,\,\xi , \eta \in H]  = S \widehat S $).
\item Le groupo\"ide dual $\widehat{{\cal G}}$ est semi-r\'egulier (resp. r\'egulier).
\item $[R_\xi^\alpha (R_\eta^\alpha)^* \,\,|\,\,\xi , \eta \in H] \subset {\cal C}(W)$ (resp.  $[R_\xi^\alpha (R_\eta^\alpha)^* \,\,|\,\,\xi , \eta \in H] = {\cal C}(W)$).
\end{enumerate}
\end{proposition}

\begin{proof}
  L'\'equivalence des conditions a) et b) r\'esulte (\ref{ShatS}) des \'egalit\'es  :
$$S \widehat S = U {\cal C}(V) U^*  , \quad  
 U [L_\xi^\beta (L_\eta^\beta)^* \,\,|\,\,\xi , \eta \in H ]U^* = [L_\xi^{\widehat\beta} (L_\eta^{\widehat\beta})^* \,\,|\,\,\xi , \eta \in H].$$
\noindent
Les C*-alg\`ebres ``$S$\,'' et ``$\widehat S$\,'' correspondant au groupo\"ide $\widehat{{\cal G}}$, sont respectivement $\widehat S$ et $R(S) = U S U^*$. Par l'\'equivalence des conditions a) et b) appliqu\'ee au groupo\"ide $\widehat{{\cal G}}$, la semi-r\'egularit\'e (resp. r\'egularit\'e) de $\widehat{{\cal G}}$  est \'equivalente \`a :
$$[R_\xi^\alpha (R_\eta^\alpha)^* \,\,|\,\,\xi , \eta \in H] \subset R(S) \widehat S \quad \quad  \text{(resp. }  [R_\xi^\alpha (R_\eta^\alpha)^* \,\,|\,\,\xi , \eta \in H] = R(S) \widehat S).$$
\noindent 
Par ailleurs, on a $J S \widehat S J  = R(S) \widehat S = {\cal C}(W)$ et $J [L_\xi^{\widehat\beta} (L_\eta^{\widehat\beta})^* \,\,|\,\,\xi , \eta \in H] J =  [R_\xi^\alpha (R_\eta^\alpha)^* \,\,|\,\,\xi , \eta \in H]$, d'o\`u l'\'equivalence des quatre conditions.
\end{proof}

\noindent
Pour \'etudier la (semi-)r\'egularit\'e d'un groupo\"ide de co-liaison ${\cal G}_{G_1, G_2}$ associ\'e \`a deux groupes quantiques \lc mono\"idalement \'equivalents, nous avons besoin du lemme qui va suivre, dont les notations sont les suivantes:
\hfill\break
Soient $H , K$ et $L$ des espaces de Hilbert,   $X : K \otimes H \rightarrow L \otimes H$  et $Y : H \otimes K \rightarrow H \otimes L$ des op\'erateurs born\'es. Posons\index{c@$\cC(-)$} : 
$${\cal C}(X) := [({\rm id} \otimes   \omega ) (\Sigma_{L \otimes H}\, X)\,\,|\,\, \omega\in B(H, L)_\ast ] , \quad {\cal C}(Y) := [({\rm id} \otimes   \omega ) (\Sigma_{H \otimes L}\, Y)\,\,|\,\, \omega\in B(K, H)_\ast ].
 $$

\begin{lemme} \label{reg}Soient $E , E'$ des espaces de Hilbert non nuls. Les  conditions suivantes sont \'equivalentes  :
\begin{enumerate}
\item  ${\cal K}(H, L) \subset {\cal C}(Y)\, $ (resp. ${\cal K}(K, H) \subset {\cal C}(X)$).
\item  ${\cal K}(H \otimes E , E' \otimes L) \subset [({\cal K}(H, E') \otimes 1_L) Y (1_H \otimes {\cal K}(E, K))]$ \hfill\break
(resp. 
${\cal K}(K \otimes E', E \otimes H) \subset 
[({\cal K}(L, E) \otimes 1_H) X (1_K \otimes {\cal K}(E', H))]$).
\end{enumerate}
\begin{proof}
On a  par un calcul direct :
$$(1_L \otimes \theta_{e',\xi}) \Sigma_{H \otimes L}\, Y (1_H \otimes \theta_{\eta, e})  =  
({\rm id} \otimes   \omega_{\xi, \eta} ) (\Sigma_{H \otimes L}\, Y) \otimes \theta_{e', e},\quad
\xi \in H \, , \,\eta \in K \, , \, e \in E \, , \,e' \in E'.$$
\noindent
Il en r\'esulte  :
$$[(1_L \otimes {\cal K}(H, E'))\Sigma_{H \otimes L}\, Y (1_H \otimes {\cal K}(E, K))] = [{\cal C}(Y) \odot {\cal K}(E, E')] \subset B(H \otimes E , L \otimes E').$$
\noindent
L'\'equivalence des conditions a) et b) est alors une cons\'equence de :
\noindent 
$$\Sigma_{L \otimes E'} \, (1_L \otimes {\cal K}(H, E')\,\Sigma_{H \otimes L} = {\cal K}(H, E') \otimes 1_L.$$
\noindent
L'\'equivalence des conditions  (resp.) s'obtient en appliquant l'\'equivalence des conditions a) et b) \`a 
$$Y' := \Sigma_{K \otimes H}  X^*\Sigma_{H \otimes L}\, ,\, K' := L\, ,\, L' := K$$
\noindent
car  on a $\cC(Y')  = \cC(X)^*$.
\end{proof}
\end{lemme}

\noindent
On va appliquer le lemme pr\'ec\'edent aux op\'erateurs unitaires  
$$V_{rj}^i : H_{ij} \otimes H _{rj} \rightarrow H_{ir} \otimes H_{rj} \quad \text{et} \quad W_{ij}^k : H_{ij} \otimes H_{jk} \rightarrow H_{ij} \otimes H_{ik}.$$
\begin{corollary}\label{cor-reg}
 Soient $i , j , k , r = 1 , 2$.
\begin{enumerate}
\item Il y a \'equivalence entre les conditions suivantes :\begin{enumerate}\renewcommand\theenumii{\roman{enumii}}
\renewcommand\labelenumii{\rm ({\theenumii})}
\item
${\cal K}(H_{ij} , H_{rj}) \subset {\cal C}(V_{rj}^i)$ \quad (resp. ${\cal K}(H_{ij} , H_{rj}) = {\cal C}(V_{rj}^i)$).
\item ${\cal K}(H_{ij} \otimes H_{ir} , H_{rj} \otimes H_{rj}) \subset    [({\cal K}(H_{ir} , H_{rj}) \otimes 1_{H_{rj}}) V_{rj}^i (1_{H_{ij}} \otimes {\cal K}(H_{ir} , H_{rj}))]$\hfill\break
(resp.
${\cal K}(H_{ij} \otimes H_{ir} , H_{rj} \otimes H_{rj}) =    [({\cal K}(H_{ir} , H_{rj}) \otimes 1_{H_{rj}}) V_{rj}^i (1_{H_{ij}} \otimes {\cal K}(H_{ir} , H_{rj}))]$).
\end{enumerate}
\item Il y a \'equivalence entre les conditions suivantes :\begin{enumerate}\renewcommand\theenumii{\roman{enumii}}
\renewcommand\labelenumii{\rm ({\theenumii})}
\item ${\cal K}(H_{ij} , H_{ik}) \subset  {\cal C}(W_{ij}^k)$ \quad (resp. ${\cal K}(H_{ij} , H_{ik})=  {\cal C}(W_{ij}^k)$).
\item ${\cal K}(H_{ij} \otimes H_{jk} , H_{ik} \otimes H_{ik}) \subset 
[({\cal K}(H_{ij}, H_{ik})  \otimes  1_{H_{ik}})\, W_{ij}^k (1_{H_{ij}} \otimes {\cal K}(H_{jk}))] $
\hfill\break
(resp. ${\cal K}(H_{ij} \otimes H_{jk} , H_{ik} \otimes H_{ik}) = 
[({\cal K}(H_{ij}, H_{ik})  \otimes  1_{H_{ik}})\, W_{ij}^k (1_{H_{ij}} \otimes {\cal K}(H_{jk}))]$).
\end{enumerate}
\end{enumerate}
\end{corollary}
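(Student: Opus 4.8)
The plan is to obtain Corollary~\ref{cor-reg} as two direct instances of the preceding Lemme~\ref{reg}. That lemma converts a one-sided containment involving ${\cal C}(X)$ (resp. ${\cal C}(Y)$) into a doubled containment, provided the operator has the right shape: $X$ must act as $K \otimes H \to L \otimes H$ with its \emph{second} leg common to source and target, and $Y$ as $H \otimes K \to H \otimes L$ with its \emph{first} leg common. The first observation is that the two unitaries displayed just before the corollary, namely $V_{rj}^i : H_{ij} \otimes H_{rj} \to H_{ir} \otimes H_{rj}$ and $W_{ij}^k : H_{ij} \otimes H_{jk} \to H_{ij} \otimes H_{ik}$, are of precisely these two types: $V_{rj}^i$ keeps $H_{rj}$ as its \emph{second} leg and $W_{ij}^k$ keeps $H_{ij}$ as its \emph{first}. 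Once this is noticed, the corollary reduces to substituting the correct components and choosing the auxiliary spaces $E,E'$ so that the lemma's doubled inclusion reproduces the displayed one verbatim.

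For part~a) I would set $X := V_{rj}^i$, so that in the notation of Lemme~\ref{reg} one has $H := H_{rj}$ (the common second leg), $K := H_{ij}$ and $L := H_{ir}$. The ``(resp.)'' alternative of condition~a) of the lemma then reads ${\cal K}(H_{ij},H_{rj}) \subset {\cal C}(V_{rj}^i)$, which is condition~(i). Taking the auxiliary spaces to be $E' := H_{ir}$ and $E := H_{rj}$, the ``(resp.)'' alternative of condition~b), namely ${\cal K}(K \otimes E', E \otimes H) \subset [({\cal K}(L,E) \otimes 1_H)\,X\,(1_K \otimes {\cal K}(E',H))]$, becomes exactly condition~(ii). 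The equivalence (i)$\Leftrightarrow$(ii) is then immediate, and since Lemme~\ref{reg} treats the inclusion and the equality uniformly, both the semiregular ($\subset$) and the regular ($=$) forms follow at once.

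Part~b) is the mirror image: I would set $Y := W_{ij}^k$, whose common \emph{first} leg forces $H := H_{ij}$, $K := H_{jk}$, $L := H_{ik}$. Condition~a) of the lemma is then ${\cal K}(H_{ij},H_{ik}) \subset {\cal C}(W_{ij}^k)$, i.e. condition~(i); and with $E' := H_{ik}$, $E := H_{jk}$ the lemma's condition~b), ${\cal K}(H \otimes E, E' \otimes L) \subset [({\cal K}(H,E') \otimes 1_L)\,Y\,(1_H \otimes {\cal K}(E,K))]$, is condition~(ii), using ${\cal K}(E,K) = {\cal K}(H_{jk})$. I do not anticipate any genuine obstacle, since all the analytic content already resides in Lemme~\ref{reg} and the corollary is a bookkeeping exercise in index substitution; the only point worth a line of verification is the nondegeneracy hypothesis ``$E,E'$ non nuls'' of that lemma. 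Here $E,E'$ are the components $H_{ir},H_{rj}$ (resp. $H_{ik},H_{jk}$) of the decomposition $H = \oplus_{i,j} H_{ij}$, and these are nonzero because the projections $p_{ij} = \alpha(\varepsilon_i)\beta(\varepsilon_j)$ are nonzero by faithfulness of $\alpha$ and $\beta$, so the hypothesis is met and both applications are legitimate.
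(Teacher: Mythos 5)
Votre preuve est correcte et suit exactement la même démarche que celle du texte : appliquer le Lemme~\ref{reg} avec, pour le a), $X:=V_{rj}^i$, $H:=H_{rj}$, $K:=H_{ij}$, $L:=H_{ir}$, $E':=H_{ir}$, $E:=H_{rj}$, et pour le b), $Y:=W_{ij}^k$, $H:=H_{ij}$, $K:=H_{jk}$, $L:=H_{ik}$, $E':=H_{ik}$, $E:=H_{jk}$. La vérification supplémentaire que les espaces $E,E'$ sont non nuls (via l'injectivité de $\alpha$ et $\beta$) est un complément bienvenu mais ne change rien à l'argument.
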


\begin{proof} Pour le a), appliquer   \ref{reg} avec $X:= V_{rj}^i$, $H:= H_{rj}$, $K:=H_{ij}$, $L:=H_{ir}$ et aussi $E' := H_{ir}$, $E:=H_{rj}$.
\hfill\break
Pour le b), appliquer \ref{reg} avec $Y:= W_{ij}^k$, $H:= H_{ij}$, $K:=H_{jk}$, $L:=H_{ik}$ et aussi $E' := H_{ik}$, $E:=H_{jk}$.
\end{proof}

Par un calcul direct, on obtient :
\begin{lemme} Pour tout $\omega\in B(H)_\ast$   et  tout $i , j , k ,  r = 1 , 2$, nous avons :
$$p_{ik}({\rm id} \otimes \omega)(\Sigma W) p_{ij} =     p_{ik}({\rm id} \otimes p_{jk}\omega p_{ij})(\Sigma_{ij \otimes ik} W_{ij}^k)  p_{ij}  , \quad p_{ik} [R_\xi^\alpha (R_\eta^\alpha)^* \,\,|\,\,\xi , \eta \in H] p_{ij} = {\cal K}(H_{ij} , H_{ik})\, ;$$
\noindent
$$p_{rj}({\rm id} \otimes \omega)(\Sigma V) p_{ij} =     p_{rj}({\rm id} \otimes p_{rj} \omega p_{ir}) (\Sigma_{ir \otimes rj}\, V_{rj}^i)  p_{ij} , \quad p_{rj} [L_\xi^\beta (L_\eta^\beta)^* \,\,|\,\,\xi , \eta \in H] p_{ij} = {\cal K}(H_{ij} , H_{rj})$$
(On note $\Sigma_{ij\otimes ik}:=\Sigma_{H_{ij}\otimes H_{ik}}$ et $\Sigma_{ir\otimes rj}:=\Sigma_{H_{ir}\otimes H_{rj}}$).
\end{lemme}

\begin{corollary}\label{reg group}
Il y a \'equivalence entre les conditions suivantes :
\begin{enumerate}
\item
${\cal G}$ est semi-r\'egulier (resp. r\'egulier).
\item Pour tout $i , j , r$, on a  
${\cal K}(H_{ij} , H_{rj}) \subset {\cal C}(V_{rj}^i)$ (resp. ${\cal K}(H_{ij} , H_{rj}) = {\cal C}(V_{rj}^i)$).
\item Pour tout $i , j , k$, on a
${\cal K}(H_{ij} , H_{ik}) \subset  {\cal C}(W_{ij}^k)$ (resp. ${\cal K}(H_{ij} , H_{ik})=  {\cal C}(W_{ij}^k)$).
\end{enumerate}
\begin{proof} Appliquer \ref{dualreg} et le lemme pr\'ec\'edent.
\end{proof}
\end{corollary}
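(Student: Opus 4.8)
The plan is to read conditions (b) and (c) directly off the defining inclusion of (semi-)regularity, by slicing that inclusion along the corners $p_{rj}\cdot p_{ij}$ of the grading $H=\bigoplus_{i,j}H_{ij}$ and identifying each corner with the preceding lemma.

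By definition, (a) asserts $[L_\xi^\beta (L_\eta^\beta)^* \,|\, \xi,\eta\in H]\subseteq {\cal C}(V)$, with equality in the regular case. I would first check that both members are orthogonally graded by the second index $j$. The left member is a C*-subalgebra of ${\cal K}(H)$ with weak closure $\beta(N^o)'$; since $\beta(\varepsilon_j^o)=p_{1j}+p_{2j}$ projects onto $H_{1j}\oplus H_{2j}$, its multiplier algebra is $\beta(N^o)'$, which contains every $p_{ij}$, so it splits as the genuine direct sum $\bigoplus_{i,j,r}{\cal K}(H_{ij},H_{rj})$ (the preceding lemma identifying the $(p_{rj},p_{ij})$-corner with ${\cal K}(H_{ij},H_{rj})$). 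For ${\cal C}(V)$ I would use the relation $V(\beta(n^o)\otimes 1)=(1\otimes\beta(n^o))V$ of \ref{2.10}: slicing the second leg shows every $({\rm id}\otimes\omega)(\Sigma V)$ commutes with $\beta(N^o)$, hence ${\cal C}(V)\subseteq\beta(N^o)'$ is block-diagonal in $j$.

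To turn this into a genuine corner decomposition of ${\cal C}(V)$, I would combine the corner formula $p_{rj}({\rm id}\otimes\omega)(\Sigma V)p_{ij}=({\rm id}\otimes p_{rj}\omega p_{ir})(\Sigma_{ir\otimes rj}V_{rj}^i)$ of the lemma, which yields $p_{rj}{\cal C}(V)p_{ij}={\cal C}(V_{rj}^i)$, with the following observation: for a normal functional $\omega$ supported on the single $(ir,rj)$-block, the element $({\rm id}\otimes\omega)(\Sigma V)\in{\cal C}(V)$ has all corners zero except the $(rj,ij)$ one, so it equals an arbitrary element of ${\cal C}(V_{rj}^i)$; thus ${\cal C}(V_{rj}^i)\subseteq{\cal C}(V)$ and ${\cal C}(V)=\bigoplus_{i,j,r}{\cal C}(V_{rj}^i)$. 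With both sides now honest direct sums of their corners, the inclusion (resp. equality) of (a) is equivalent to the family ${\cal K}(H_{ij},H_{rj})\subseteq{\cal C}(V_{rj}^i)$ (resp. $=$), which is (b).

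For the equivalence of (a) and (c) I would invoke \ref{dualreg}, by which (a) is equivalent to $[R_\xi^\alpha (R_\eta^\alpha)^* \,|\, \xi,\eta\in H]\subseteq{\cal C}(W)$ (resp. $=$), and then run the identical scheme graded by the first index $i$: the left member has weak closure $\alpha(N)'$ and equals $\bigoplus_{i,j,k}{\cal K}(H_{ij},H_{ik})$, while $W(\alpha(n)\otimes 1)=(1\otimes\alpha(n))W$ from \ref{2.10} gives ${\cal C}(W)\subseteq\alpha(N)'$ and the lemma gives $p_{ik}{\cal C}(W)p_{ij}={\cal C}(W_{ij}^k)$, whence ${\cal C}(W)=\bigoplus_{i,j,k}{\cal C}(W_{ij}^k)$ and corner comparison yields (c). The step I expect to demand the most care is exactly this reduction to corners on the ${\cal C}(V)$ and ${\cal C}(W)$ side: unlike $[L_\xi^\beta(L_\eta^\beta)^*]$ and $[R_\xi^\alpha(R_\eta^\alpha)^*]$, whose multiplier algebras already contain the $p_{ij}$, the Hopf C*-algebras ${\cal C}(V)$ and ${\cal C}(W)$ must be split using the block-diagonality from \ref{2.10} together with the single-corner functional trick above; once that genuine direct-sum decomposition is secured, the rest is bookkeeping of the corners delivered by the preceding lemma.
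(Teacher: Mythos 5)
Your argument is correct and is exactly the route the paper takes: its proof reads ``Appliquer \ref{dualreg} et le lemme pr\'ec\'edent'', i.e.\ convert (a) into the $\mathcal{C}(W)$ statement via \ref{dualreg} and then compare corner by corner using the formulas $p_{rj}[L_\xi^\beta(L_\eta^\beta)^*]p_{ij}=\mathcal{K}(H_{ij},H_{rj})$, $p_{rj}(\mathrm{id}\otimes\omega)(\Sigma V)p_{ij}=(\mathrm{id}\otimes p_{rj}\omega p_{ir})(\Sigma V_{rj}^i)$ and their $W$-analogues. The block-diagonality and single-corner-functional details you supply are precisely the bookkeeping the paper leaves implicit.
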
 
\begin{lemme} \label{cv}
Soient $i , j ,  r , s = 1 , 2$.
\begin{enumerate}
\item ${\cal C}(V_{rj}^i) H_{ij} = H_{rj}$.
\item
${\cal C}(V_{rj}^i)^* =  {\cal C}(V_{ij}^r)  , \quad  [{\cal C}(V_{rj}^i)  {\cal C}(V_{ij}^s)] ={\cal C}(V_{rj}^s)$.
\end{enumerate}
\begin{proof} 
Il r\'esulte de l'\'egalit\'e $S \widehat S = U {\cal C}(V) U^*$, qu'on a :
$${\cal C}(V_{rj}^i) =  [R(S)_{rj} E_{ri, \lambda}^j] \quad   \quad {\rm avec} \quad    R(S)_{rj}:= R(S)p_{rj} 
  \quad {\rm et} \quad E_{ri, \lambda}^j = p_{rj}  \lambda(\widehat S)p_{ij}.$$
\noindent
On d\'eduit alors de \ref{ShatS} qu'on a :
$$[L(S)_{ji} E_{ir}^j]   =  [E_{ir}^j L(S)_{jr}]\quad   \quad {\rm avec} \quad    L(S)_{ji}:= S p_{ji} 
  \quad {\rm et} \quad E_{ir }^j = p_{ji}   \widehat S p_{jr}.$$
\noindent
En appliquant ${\rm Ad} U$  \`a l'\'egalit\'e  pr\'ec\'edente, on obtient 
$[R(S)_{ij} E_{ir, \lambda}^j]   =  [E_{ir, \lambda}^j R(S)_{rj}]$.\hfill\break
De (\ref{Emorita} c) et d)), il r\'esulte alors :
\hfill\break
- ${\cal C}(V_{rj}^i) H_{ij} = [R(S)_{rj} E_{ri, \lambda}^j] H_{ij} =  H_{rj}$, car $R(S)_{rj}$ est une sous-C*-alg\`ebre non d\'eg\'en\'er\'ee de $B(H_{rj})$.\hfill\break
 - ${\cal C}(V_{rj}^i)^* = [R(S)_{rj} E_{ri, \lambda}^j]^* = [E_{ir, \lambda}^j R(S)_{rj}] = [R(S)_{ij} E_{ir, \lambda}^j] = {\cal C}(V_{ij}^r)$.\hfill\break
 - ${\cal C}(V_{rj}^i)  {\cal C}(V_{ij}^s) = [R(S)_{rj} E_{ri, \lambda}^j][R(S)_{ij} E_{is, \lambda}^j] =
 [R(S)_{rj} E_{ri, \lambda}^j R(S)_{ij} E_{is, \lambda}^j] 
= [R(S)_{rj} R(S)_{rj} E_{ri, \lambda}^j E_{is, \lambda}^j] =[R(S)_{rj}E_{rs, \lambda}^j] = {\cal C}(V_{rj}^s)$.
\end{proof}
\end{lemme}

\begin{proposition} \label{reg-car}Pour $i , j = 1 , 2$, il y a \'equivalence entre les conditions suivantes : 
\begin{enumerate}
\item
$ {\cal K}(H_{jj}) \subset  {\cal C}(V_{jj}^j)\,$ (resp. $ {\cal K}(H_{jj}) = {\cal C}(V_{jj}^j)\,$).
\item  $ {\cal K}(H_{jj} , H_{ij}) \subset  {\cal C}(V_{ij}^j)$ (resp. $ {\cal K}(H_{jj} , H_{ij}) =  {\cal C}(V_{ij}^j)\,$).
\item  Le groupe quantique $G_j$ est semi-r\'egulier (resp. r\'egulier).
\end{enumerate}
\begin{proof} l'implication  $a) \Longrightarrow b)$  r\'esulte  \ref{cv} de ${\cal C}(V_{ij}^j) H_{jj} = H_{ij}$ et de  
${\cal C}(V_{ij}^j) = [{\cal C}(V_{ij}^j) {\cal C}(V_{jj}^j)]$.
\hfill\break
l'implication  $b) \Longrightarrow a)$  r\'esulte \ref{cv} de ${\cal C}(V_{ij}^j)^* H_{ij} = {\cal C}(V_{jj}^i)H_{ij} = H_{jj}$ et de  
${\cal C}(V_{jj}^j) = [{\cal C}(V_{jj}^i) {\cal C}(V_{ij}^j)]$.
\hfill\break
Le c) r\'esulte de l'\'equivalence des conditions a) et b).
\end{proof}
\end{proposition}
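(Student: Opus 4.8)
The plan is to read condition (a) as the very definition of regularity for the corner quantum group $G_j$, and to obtain (a)$\,\Leftrightarrow\,$(b) as a short computation inside the linking formalism, governed entirely by the fusion rules of Lemme~\ref{cv}.

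First I would dispose of (a)$\,\Leftrightarrow\,$(c). By Proposition~\ref{grco}, $G_j=(M_{jj},\delta_{jj}^j,\varphi_{jj},\psi_{jj})$ is a locally compact quantum group with GNS space $H_{jj}=L^2(G_j)$, and $V_{jj}^j$ is its right regular representation. Hence $\cC(V_{jj}^j)=[\{(\id\otimes\omega)(\Sigma\,V_{jj}^j)\mid\omega\in B(H_{jj})_\ast\}]$ is precisely the algebra $C(V)$ attached to $G_j$, while the compacts on $L^2(G_j)$ are $\cK(H_{jj})$. By the definition of (semi\nobreakdash-)regularity, $G_j$ is semi-regular (resp. regular) iff $\cK(H_{jj})\subset\cC(V_{jj}^j)$ (resp. $\cK(H_{jj})=\cC(V_{jj}^j)$), which is exactly condition (a).

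For (a)$\,\Rightarrow\,$(b) I would invoke the two facts of Lemme~\ref{cv}: that $\cC(V_{ij}^j)H_{jj}=H_{ij}$, which says $\cC(V_{ij}^j)\subset B(H_{jj},H_{ij})$ acts with total range, and that $\cC(V_{ij}^j)=[\cC(V_{ij}^j)\,\cC(V_{jj}^j)]$. From the range statement I get
\[
[\cC(V_{ij}^j)\,\cK(H_{jj})]=\cK(H_{jj},H_{ij}),
\]
since any $\theta_{\xi,\eta}$ with $\xi\in H_{ij}$, $\eta\in H_{jj}$ is approximated by $T\,\theta_{\zeta,\eta}$ with $T\in\cC(V_{ij}^j)$, $\zeta\in H_{jj}$ and $\xi=T\zeta$. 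Feeding $\cK(H_{jj})\subset\cC(V_{jj}^j)$ into the second fact then gives $\cK(H_{jj},H_{ij})=[\cC(V_{ij}^j)\,\cK(H_{jj})]\subset[\cC(V_{ij}^j)\,\cC(V_{jj}^j)]=\cC(V_{ij}^j)$; in the regular case $\cK(H_{jj})=\cC(V_{jj}^j)$ the same chain is a string of equalities.

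For (b)$\,\Rightarrow\,$(a) I would pass to adjoints, using $\cC(V_{ij}^j)^*=\cC(V_{jj}^i)$ and $\cC(V_{jj}^j)=[\cC(V_{jj}^i)\,\cC(V_{ij}^j)]$ from Lemme~\ref{cv}. Taking adjoints in (b) yields $\cK(H_{ij},H_{jj})\subset\cC(V_{jj}^i)$, whence
\[
\cC(V_{jj}^j)=[\cC(V_{jj}^i)\,\cC(V_{ij}^j)]\supset[\cK(H_{ij},H_{jj})\,\cK(H_{jj},H_{ij})]=\cK(H_{jj}),
\]
the last equality holding because the corner $H_{ij}$ is nonzero; in the regular case all inclusions become equalities. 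The two points deserving care are precisely these collapses, $[\cC(V_{ij}^j)\,\cK(H_{jj})]=\cK(H_{jj},H_{ij})$ and $[\cK(H_{ij},H_{jj})\,\cK(H_{jj},H_{ij})]=\cK(H_{jj})$, which rest respectively on the surjectivity of Lemme~\ref{cv}~a) and on $H_{ij}\neq 0$; once they are secured the argument is pure bookkeeping with the relations $\cC(V_{rj}^i)^*=\cC(V_{ij}^r)$ and $[\cC(V_{rj}^i)\,\cC(V_{ij}^s)]=\cC(V_{rj}^s)$.
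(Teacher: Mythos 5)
Votre preuve est correcte et suit essentiellement la même démarche que l'article : les deux directions reposent sur les mêmes faits du Lemme~\ref{cv} (transitivité ${\cal C}(V_{ij}^j)H_{jj}=H_{ij}$, involution ${\cal C}(V_{rj}^i)^*={\cal C}(V_{ij}^r)$ et règles de fusion $[{\cal C}(V_{rj}^i){\cal C}(V_{ij}^s)]={\cal C}(V_{rj}^s)$), combinés à l'identité élémentaire $[{\cal C}(V_{ij}^j)\,{\cal K}(H_{jj})]={\cal K}(H_{jj},H_{ij})$. La seule différence est cosmétique (passage aux adjoints et usage de $H_{ij}\neq 0$ pour b)$\Rightarrow$a), là où l'article utilise directement ${\cal C}(V_{jj}^i)H_{ij}=H_{jj}$), et vous explicitez utilement l'identification de $V_{jj}^j$ avec la représentation régulière droite de $G_j$ qui rend a)$\Leftrightarrow$c) tautologique.
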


\noindent
Le r\'esultat principale de ce paragraphe est le suivant :

\noindent
\begin{theorem}\label{thregu}Soit ${\cal G}_{G_1, G_2}$ un groupo\"ide de co-liaison associ\'e \`a deux groupes quantiques \lc  $G_1$ et $G_2$ mono\"idalement \'equivalents. Alors il y a \'equivalence entre les conditions suivantes :
\begin{enumerate}
\item $G_1$ et $G_2$ sont semi-r\'eguliers (resp. r\'eguliers).
\item Le groupo\"ide ${\cal G}_{G_1, G_2}$ est semi-r\'egulier (resp. r\'egulier).
\end{enumerate}
\begin{proof}
l'implication  $b) \Longrightarrow a)$  r\'esulte de \ref{reg group}.

R\'eciproquement, supposons que pour $j \in \{ 1 , 2\}$,  on a ${\cal K}(H_{jj}) \subset{\cal C}(V_{jj}^j)$ (resp. 
${\cal K}(H_{jj}) ={\cal C}(V_{jj}^j)$) et montrons  (\ref{reg group} b)) que  pour tout $i ,  r = 1 , 2$, on a 
${\cal K}(H_{ij}, H_{rj}) \subset {\cal C}(V_{rj}^i)$ (resp.  ${\cal K}(H_{ij}, H_{rj}) = {\cal C}(V_{rj}^i)$).
\hfill\break
Il r\'esulte de \ref{cv} que pour tout $i , j , r= 1,2$, on a  
   ${\cal C}(V_{rj}^i) H_{ij} = H_{rj}$. On en d\'eduit :
$${\cal C}(V_{rj}^j) H_{jj} = H_{rj} , \quad {\cal C}(V_{ij}^j)H_{jj} = H_{ij}.$$
\noindent
Tout op\'erateur compact  $k\in {\cal K}(H_{ij}, H_{rj})$ est alors de la forme  :
$$k = t \, k' \, t'^*  \quad ;  \quad 
t\in {\cal C}(V_{rj}^j) , \, k'\in {\cal K}(H_{jj}) , \, t'\in {\cal C}(V_{ij}^j).$$
\noindent
Mais, par  \ref{cv}, on a $[{\cal C}(V_{rj}^j) {\cal C}(V_{jj}^j) {\cal C}(V_{ij}^j)^*] = [{\cal C}(V_{rj}^j) {\cal C}(V_{jj}^j) {\cal C}(V_{jj}^i)] ={\cal C}(V_{rj}^i) $, d'o\`u l'implication $a) \Longrightarrow b)$.
\end{proof}
\end{theorem}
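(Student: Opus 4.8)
The plan is to reduce both equivalences (the semi-regular case and, simultaneously, the regular one indicated by ``resp.'') to the internal characterizations already at our disposal: Corollary~\ref{reg group}, which expresses the (semi-)regularity of ${\cal G}_{G_1,G_2}$ through the inclusions ${\cal K}(H_{ij},H_{rj})\subset{\cal C}(V^i_{rj})$ (resp. equalities) for all $i,j,r$, and Proposition~\ref{reg-car}, which expresses the (semi-)regularity of $G_j$ through the single corner inclusion ${\cal K}(H_{jj})\subset{\cal C}(V^j_{jj})$ (resp. equality). The whole difficulty is then to propagate the ``diagonal'' information at the corner $(j,j)$ to every off-diagonal pair $(i,j)\to(r,j)$, and for this I would lean on the multiplicative calculus of the spaces ${\cal C}(V^i_{rj})$ recorded in Lemma~\ref{cv}.

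The implication $b)\Rightarrow a)$ is immediate: assuming ${\cal G}_{G_1,G_2}$ semi-regular (resp. regular), Corollary~\ref{reg group} gives ${\cal K}(H_{ij},H_{rj})\subset{\cal C}(V^i_{rj})$ (resp. equality) for all indices, and specialising to $i=r=j$ yields ${\cal K}(H_{jj})\subset{\cal C}(V^j_{jj})$ (resp. equality), which by Proposition~\ref{reg-car} is precisely the (semi-)regularity of $G_j$, for $j=1,2$.

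For $a)\Rightarrow b)$, assume $G_1$ and $G_2$ are semi-regular (resp. regular), so that ${\cal K}(H_{jj})\subset{\cal C}(V^j_{jj})$ (resp. equality) for $j=1,2$ by Proposition~\ref{reg-car}; by Corollary~\ref{reg group} it then suffices to establish ${\cal K}(H_{ij},H_{rj})\subset{\cal C}(V^i_{rj})$ (resp. equality) for all $i,j,r$. First I would invoke the surjectivity relations of Lemma~\ref{cv}~a) at the two relevant corners, ${\cal C}(V^j_{rj})H_{jj}=H_{rj}$ and ${\cal C}(V^j_{ij})H_{jj}=H_{ij}$, to factor rank-one operators: for $\xi\in H_{rj}$ and $\eta\in H_{ij}$, writing $\xi=t\xi'$ and $\eta=t'\eta'$ with $t\in{\cal C}(V^j_{rj})$, $t'\in{\cal C}(V^j_{ij})$ and $\xi',\eta'\in H_{jj}$, one computes $\theta_{\xi,\eta}=t\,\theta_{\xi',\eta'}\,t'^*$. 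Passing to closed linear spans of finite-rank operators this yields ${\cal K}(H_{ij},H_{rj})\subset[{\cal C}(V^j_{rj})\,{\cal K}(H_{jj})\,{\cal C}(V^j_{ij})^*]$, i.e. every compact $k$ decomposes as $k=t\,k'\,t'^*$ with $k'\in{\cal K}(H_{jj})$. Since $k'\in{\cal C}(V^j_{jj})$ by hypothesis, applying the adjoint law ${\cal C}(V^j_{ij})^*={\cal C}(V^i_{jj})$ and the fusion law $[{\cal C}(V^a_{rj}){\cal C}(V^s_{aj})]={\cal C}(V^s_{rj})$ of Lemma~\ref{cv}~b) twice, I would obtain
$$k\in\big[{\cal C}(V^j_{rj})\,{\cal C}(V^j_{jj})\,{\cal C}(V^i_{jj})\big]={\cal C}(V^i_{rj}),$$
which is the desired inclusion. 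In the regular case the reverse inclusion is automatic: since ${\cal C}(V^j_{jj})={\cal K}(H_{jj})$, the same identity ${\cal C}(V^i_{rj})=[{\cal C}(V^j_{rj})\,{\cal C}(V^j_{jj})\,{\cal C}(V^i_{jj})]$ now exhibits ${\cal C}(V^i_{rj})$ as products with a compact middle factor, hence contained in ${\cal K}(H_{ij},H_{rj})$, giving equality.

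I expect the genuine obstacle to lie in the factorization step: one must know that the spaces ${\cal C}(V^j_{rj})$ and ${\cal C}(V^j_{ij})$ act nondegenerately, so that the rewriting $\theta_{\xi,\eta}=t\,\theta_{\xi',\eta'}\,t'^*$ is legitimate and the closed span of such triple products really exhausts ${\cal K}(H_{ij},H_{rj})$; this rests exactly on Lemma~\ref{cv}~a). The remaining care is purely combinatorial, namely tracking the sub/super-script bookkeeping in the adjoint and fusion laws of Lemma~\ref{cv}~b) so that the triple product collapses onto ${\cal C}(V^i_{rj})$ and not onto a neighbouring corner.
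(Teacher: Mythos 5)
Your proof is correct and follows essentially the same route as the paper's: both directions reduce to Corollaire~\ref{reg group} and Proposition~\ref{reg-car}, and the key step $a)\Rightarrow b)$ is the same factorization $k=t\,k'\,t'^*$ of compact operators through the corner $H_{jj}$ (via ${\cal C}(V_{rj}^j)H_{jj}=H_{rj}$) followed by the adjoint and fusion laws of Lemme~\ref{cv} to collapse the triple product onto ${\cal C}(V_{rj}^i)$. Your explicit remark that the reverse inclusion in the regular case is automatic because the middle factor is compact is left implicit in the paper, but it is the same argument.
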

\noindent
\begin{corollary}  Soit ${\cal G}_{G_1, G_2}$ un groupo\"ide de co-liaison associ\'e \`a deux groupes quantiques \lc  $G_1$ et $G_2$ mono\"idalement \'equivalents. Alors $G_1$ et $G_2$ sont semi-r\'eguliers (resp. r\'eguliers) si et seulement si pour tout $i , j , k = 1 , 2$, on a
$${\cal K}(H_{ij} \otimes H_{jk} , H_{ik} \otimes H_{ik}) \subset 
[({\cal K}(H_{ij}, H_{ik})  \otimes  1_{H_{ik}})\, W_{ij}^k (1_{H_{ij}} \otimes {\cal K}(H_{jk}))]$$
\noindent
$$(\text{resp. }{\cal K}(H_{ij} \otimes H_{jk} , H_{ik} \otimes H_{ik}) = 
[({\cal K}(H_{ij}, H_{ik})  \otimes  1_{H_{ik}})\, W_{ij}^k (1_{H_{ij}} \otimes {\cal K}(H_{jk}))]).$$
\end{corollary}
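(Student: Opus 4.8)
The plan is to obtain the statement as a straightforward concatenation of three characterizations already established above, with the partial isometries $W_{ij}^k$ playing the role that the regular representation plays in the group case. First I would invoke Theorem~\ref{thregu}, which asserts that $G_1$ and $G_2$ are semi-r\'eguliers (resp. r\'eguliers) if and only if the co-liaison groupoid ${\cal G}_{G_1, G_2}$ is semi-r\'egulier (resp. r\'egulier). This reduces the whole question to a characterization of the (semi-)r\'egularit\'e of the groupoid ${\cal G} = {\cal G}_{G_1, G_2}$ itself, which is convenient because that notion has already been reformulated in terms of the operators attached to ${\cal G}$.

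Next I would apply Corollary~\ref{reg group}, and more precisely the equivalence between its conditions a) and c): the groupoid ${\cal G}$ is semi-r\'egulier (resp. r\'egulier) if and only if, for all $i, j, k = 1, 2$, one has ${\cal K}(H_{ij}, H_{ik}) \subset {\cal C}(W_{ij}^k)$ (resp. ${\cal K}(H_{ij}, H_{ik}) = {\cal C}(W_{ij}^k)$). This step expresses (semi-)r\'egularit\'e purely through the algebras ${\cal C}(W_{ij}^k)$. Finally I would use Corollary~\ref{cor-reg} b), which gives, for each fixed triple $(i,j,k)$, the equivalence between the inclusion ${\cal K}(H_{ij}, H_{ik}) \subset {\cal C}(W_{ij}^k)$ (resp. the corresponding equality) and the inclusion
$${\cal K}(H_{ij} \otimes H_{jk} , H_{ik} \otimes H_{ik}) \subset [({\cal K}(H_{ij}, H_{ik})  \otimes  1_{H_{ik}})\, W_{ij}^k (1_{H_{ij}} \otimes {\cal K}(H_{jk})]$$
(resp. the equality). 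Since this last equivalence is established triple by triple, quantifying over all $i, j, k = 1, 2$ and combining it with the two previous reductions closes the chain of equivalences and yields exactly the assertion of the corollary.

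Because the argument is merely a composition of results proved earlier, there is no genuine analytic obstacle here; the only point requiring care is bookkeeping. One must check that the index conventions governing the unitaries $W_{ij}^k$ in Notation~\ref{Vind} are carried consistently through Corollary~\ref{reg group} and through the application of Lemma~\ref{reg} inside Corollary~\ref{cor-reg} b) (where one sets $Y := W_{ij}^k$, $H := H_{ij}$, $K := H_{jk}$, $L := H_{ik}$ and $E' := H_{ik}$, $E := H_{jk}$), so that the quantified conditions coincide \emph{verbatim} with the displayed ones. Once this verification is done, the three equivalences concatenate without residue.
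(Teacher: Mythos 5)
Votre démonstration est correcte et suit exactement le même chemin que celle du papier, qui se contente d'invoquer \ref{cor-reg}, \ref{reg group} et \ref{thregu} dans le même ordre. La vérification des conventions d'indices que vous signalez est bien le seul point de soin, et votre instanciation du Lemme \ref{reg} coïncide avec celle utilisée dans \ref{cor-reg}.
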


\begin{proof}C'est une cons\'equence de \ref{cor-reg} ,  \ref{reg group} et \ref{thregu}.\end{proof}

\begin{remarks}\label{larem} Soient   $G_1$ et $G_2$ deux groupes quantiques mono\"idalement \'equivalents.
\begin{enumerate} 
\item Nous verrons que l'inclusion
$${\cal K}(H_{21} \otimes H_{21} , H_{21} \otimes H_{11}) \subset [(1_{H_{21}} \otimes {\cal K}(H_{11})) (W_{21}^1)^* ({\cal K}(H_{21})  \otimes 1_{H_{21}})]$$
 \noindent
joue un r\^ole crucial pour \'etablir, dans le cas r\'egulier,  l'\'equivalence des actions continues de $G_1$ et $G_2$.
\item  Le c) de \ref{reg-car},  montre que l'\'egalit\'e ${\cal K}(H_{22} , H_{12}) ={\cal C}(V_{12}^2)$ n'entra\^ine pas en g\'en\'eral que $G_1$ est r\'egulier,   ce qui permet de r\'epondre 
n\'egativement \`a une question de \cite{NT}.
\end{enumerate}
\end{remarks}

\section {Action continue d'un groupo\"ide  quantique sur une base de dimension finie}

Dans ce paragraphe, nous introduisons la notion d'action continue dans une C*-alg\`ebre, d'un groupo\"ide \mq    de base   une
C*-alg\`ebre  de dimension finie. Nous d\'efinissons le produit crois\'e correspondant,  qu'on munit d'une action continue du groupo\"ide dual. 

Dans le cas d'un groupo\"ide de co-liaison, nous montrons qu'une action continue d'un tel groupo\"ide, nous donne une action continue de chacun des groupes quantiques \lc sous-jacents. G\'en\'eralisant le cas de l'action triviale \cite{DeC1}, nous montrons que les produits crois\'es correspondants \`a ces actions de groupes quantiques, sont Morita \'equivalents.

Enfin, nous g\'en\'eralisons la bidualit\'e de Takesaki-Takai pour les actions continues d'un groupo\"ide \mq r\'egulier. Nous terminons ce paragraphe par une description utile pour la suite, du double produit crois\'e par une action d'un groupo\"ide de co-liaison.

\begin{notation} On se fixe un groupo\"ide mesur\'e  ${\cal G} = (N, M,  \alpha, \beta, \delta, T, T', \epsilon)$  de base   $N = \oplus_{l=1}^k M_{n_l}$ munie de la trace $\epsilon = \oplus_{l=1}^k n_l {\rm Tr}_{M_{n_l}}$ et on reprend les notations de 
\ref{2.2}. 
\end{notation}

\subsection{Action continue. Produit crois\'e}

\begin{definition}\label{ac} Soit $A$ une C*-alg\`ebre. Une action continue du groupo\"ide \mq   ${\cal G}$ dans $A$ est un couple $(\beta_A, \delta_A)$, o\`u $\beta_A : N^{\rm o} \rightarrow  M(A)$ est un *-morphisme non d\'eg\'en\'er\'e et $\delta_A : A  \rightarrow   M(A \otimes    S)$ est un *-morphisme injectif  v\'erifiant : 
\begin{enumerate}
\item $ \displaystyle \delta_A(1_A) = q_{\beta_A, \alpha} = \sum_{l=1}^k \frac{1}{n_l} \sum_{i, j=1}^{n_l}    
 \beta_A(e_{ji}^{(l){\rm o}}) \otimes \alpha(e_{ij}^{(l)})$ \quad  (\cf\ref{d(1)}) ;
\item  $(\delta_A \otimes {\rm id}_S) \circ \delta_A =  ({\rm id}_A \otimes \delta)\circ \delta_A$ ; 
\item Pour tout $n\in N$, on a $\delta_A \circ\beta_A(n^{\rm o}) = q_{\beta_A, \alpha} (1_A \otimes \beta(n^{\rm o}))$ ;
\item   $[\delta_A(A)(1_A \otimes S)] = q_{\beta_A, \alpha} (A\otimes S)$.
\end{enumerate}
Une C*-alg\`ebre $A$ munie d'une action continue $(\beta_A, \delta_A)$ du groupo\"ide ${\cal G}$ est appel\'ee une 
${\cal G}$-alg\`ebre.
\end{definition}

\begin{remark} La condition a)   est \'equivalente \`a   $q_{\beta_A, \alpha}(A \otimes S) = [\delta_A(A)(A \otimes S)]$. Elle entra\^ine que les  *-homomorphismes  $ \delta_A \otimes {\rm id}_S$ et ${\rm id}_A \otimes \delta$,  se prolongent de fa\c con unique  \`a $M(A \otimes  S)$, en des *-homomorphismes strictement continus et v\'erifiant respectivement $(\delta_A \otimes {\rm id}_S)(1_A) = q_{\beta_A, \alpha, 12}$ et $({\rm id}_A \otimes \delta)(1_A) = q_{\beta, \alpha, 23}.$
 
\noindent
On a alors  : \hfill\break
 - $(\delta_A \otimes {\rm id}_S)\delta_A(1_A) = ({\rm id}_A \otimes \delta)\delta_A(1_A) = q_{\beta_A, \alpha, 12} \,\, q_{\beta, \alpha, 23}  = q_{\beta, \alpha, 23} \,\, q_{\beta_A, \alpha, 12}$ ;\hfill\break
 - $\delta_A \circ\beta_A(n^{\rm o}) = (1_A \otimes \beta(n^{\rm o})) q_{\beta_A, \alpha} =(1_A \otimes \beta(n^{\rm o}))  \delta_A(1_A) , \quad n\in N$ ;\hfill\break
 - $[(1_A \otimes S) \delta_A(A)] =  (A \otimes S)q_{\beta_A, \alpha} =  (A \otimes S) \delta_A(1_A)$.
 \end{remark}

\begin{examples}\label{2ex}
\begin{enumerate}
 \item  L'action  triviale du groupo\"ide ${\cal G}$. Il s'agit de l'action    de ${\cal G}$  dans la C*-alg\`ebre $A:= N^{\rm o}$ d\'efinie par les *-morphismes :
$$\beta_{N^{\rm o}} : N^{\rm o} \rightarrow N^{\rm o} : n^{\rm o} \mapsto n^{\rm o} \quad ; \quad \delta_{N^{\rm o}} : N^{\rm o} \rightarrow M(N^{\rm o} \otimes  S) : x^{\rm o}  \mapsto  q_{\beta_{N^{\rm o}},\alpha}(1_{N^{\rm o}} \otimes  \beta(x^{\rm o})).$$
 \item L'action    du groupo\"ide ${\cal G}$ dans lui-m\^eme. On prend $A:= S , \beta_A := \beta$ et $\delta_A := \delta$ le coproduit de $S$.
\end{enumerate}
\end{examples}

\begin{definition}\label{morgrou}  Soient $(A, \beta_A, \delta_A)$ et $(B, \beta_B, \delta_B)$ deux ${\cal G}$-alg\`ebres. On dit qu'un *-morphisme non d\'eg\'en\'er\'e 
$f : A \rightarrow M(B)$ est ${\cal G}$-\'equivariant si on a :
$$(f \otimes {\rm id}_S) \delta_A = \delta_B \circ f \quad ; \quad  f \circ \beta_A  =\beta_B.$$
\noindent
\end{definition}
\begin{notation} On note $A^{\cal G}$\index{ad@$A^{\cal G}$} la cat\'egorie dont les objets sont les ${\cal G}$-alg\`ebres,  et les morphismes sont les *-morphismes 
$f : A \rightarrow M(B)$ non d\'eg\'en\'er\'es et ${\cal G}$-\'equivariants, \ie $f \circ \beta_A  =\beta_B \,$ et $(f \otimes \id)\delta_A  = \delta_B \circ f$.  
\end{notation}

Fixons dans ce qui suit  une action continue $(\beta_A, \delta_A)$ du groupo\"ide ${\cal G}$ dans une C*-alg\`ebre $A$.
Comme cons\'equence  de la condition (\ref{ac} a)), on a facilement :
\begin{lemme-notation}\label{piL}
La repr\'esentation de $A$ dans le C*-module $A \otimes H$ d\'efinie par  $\pi_L := ({\rm id}_A \otimes L) \circ \delta_A $\index{pf@$\pi_L$, $\widehat{\theta}$} se prolonge de fa\c con unique en une repr\'esentation  $\pi_L : M(A) \rightarrow   {\cal L}(A \otimes H)$ fid\`ele,  continue pour les topologies stricte/*-forte et v\'erifiant 
$\pi_L(1_A) =   
 q_{\beta_A, \alpha}$. De plus, on a  
$$\pi_L(m) =  \pi_L(m) q_{\beta_A, \alpha} = q_{\beta_A, \alpha} \pi_L(m),\quad m\in M(A).$$
\end{lemme-notation}
Introduisons le C*-module\index{ee@${\cal E}_{A,L}$} 
\begin{equation}\label{eL} {\cal  E}_{A, L} := q_{\beta_A, \alpha} (A \otimes H).
\end{equation} 
Par restriction de  $\pi_L$, on obtient une repr\'esentation unitale, fid\`ele et continue pour les topologies stricte/*-forte, not\'ee
\begin{equation}\label{piMA} \pi : M(A) \rightarrow {\cal L}( {\cal  E}_{A, L})   : m \mapsto \restr{\pi_L(m)}{{\cal  E}_{A,L}}.
\end{equation}
Pour tout $T\in M(\widehat S)$ , on a $[1_A \otimes T , q_{\beta_A, \alpha}] = 0$ (\ref{2.10} a)). On en d\'eduit 
  une repr\'esentation unitale   continue pour les topologies stricte/*-forte :
\begin{equation}\label{thetaMwS} \widehat\theta : M(\widehat S) \rightarrow {\cal L}( {\cal  E}_{A,L}) :  T \mapsto\widehat\theta(T) : = \restr{(1_A \otimes T)}{{\cal  E}_{A,L}}.
\end{equation}  

\noindent
\begin{definition}\label{pc} On appelle  produit crois\'e de la C*-alg\`ebre $A$ par l'action $(\beta_A, \delta_A)$ du groupo\"ide mesur\'e ${\cal G}$,    la sous-C*-alg\`ebre not\'ee $A \rtimes {\cal G}$\index{ae@$A \rtimes {\cal G}$}  de ${\cal L}( {\cal  E}_{A, L})$ engendr\'ee par les produits $\pi(a) \widehat\theta(x)$, o\`u $a\in A$ et $x\in\widehat S$.
\end{definition}

\begin{lemme}\label{pclem} Le produit crois\'e   $A \rtimes {\cal G}$ est l'adh\'erence de l'espace vectoriel ferm\'e engendr\'e par les produits $\pi(a) \widehat\theta(x)$, o\`u $a\in A$ et $x\in\widehat S$.
\end{lemme}

\begin{proof}
Soient $\omega\in B(H)_\ast$ et $a\in A$. Posons $x = (\id \otimes \omega)(V)$. Nous avons :
\begin{align} (1_A \otimes x) \pi_L(a) &= (\id_A \otimes \id_H \otimes \omega) (V_{23} \pi_L(a)_{12}) 
= (\id_A \otimes \id_H \otimes \omega)(V_{23}V_{23}^*V_{23} \pi_L(a)_{12}) \nonumber\\
&= (\id_A \otimes \id_H \otimes \omega)(V_{23} \pi_L(a)_{12}V_{23}^*V_{23}) \quad (\ref{2.9}\, c)) \nonumber \\
&= 
(\id_A \otimes \id_H \otimes \omega)[(\id_A \otimes L \otimes L)(\id_A \otimes \delta)(\delta_A(a))] 
= (\id_A \otimes \id_H \otimes \omega)[(\pi_L \otimes L)(\delta_A(a))]. \nonumber
\end{align}
En posant $\omega =   \omega'  s$ avec $\omega'\in B(H)_\ast$ et $s \in S$, on obtient  que 
$ (1_A \otimes x) \pi_L(a)$ est limite normique de sommes finies $\sum_i \, \pi_L(a_i) (1_A \otimes x_i)$ avec  $a_i\in A$ et $x_i \in \widehat S$.
\end{proof}

 Le r\'esultat pr\'ec\'edent montre clairement que $\pi$ (resp.\ $\widehat\theta$)  d\'efinit un *-morphisme 
$\pi : M(A) \rightarrow  M(A \rtimes {\cal G})$ (resp.\  $\widehat\theta : M(\widehat S) \rightarrow  M(A \rtimes {\cal G})$)  
unital et   strictement continu.\index{pg@$\pi$, $\widehat{\theta}$} 

Notons aussi qu'on a une unique repr\'esentation fid\`ele, continue pour les topologies stricte/*-forte :\index{ja@$j_{A\rtimes{\cal G}}$}
\begin{equation} 
j_{A\rtimes{\cal G}} : M(A\rtimes{\cal G}) \rightarrow {\cal L}(A \otimes H) , \quad j_{A\rtimes{\cal G}}(1_{A\rtimes{\cal G}}) = q_{\beta_A,\alpha}, 
\end{equation} 
v\'erifiant :  
\begin{equation} 
(j_{A\rtimes{\cal G}} \circ \pi)(m) = ({\rm id}_A \otimes L) \delta_A(m),\quad m\in M(A)\quad ; \quad
  (j_{A\rtimes{\cal G}} \circ \widehat\theta)(T) = q_{\beta_A, \alpha} (1_A \otimes T),\quad T\in M(\widehat S).
\end{equation}  

\noindent
On d\'efinit de m\^eme le produit crois\'e d'une C*-alg\`ebre par une action continue   du groupo\"ide mesur\'e dual  $\widehat{\cal  G}$ (\cf\ref{RemNot}).

\noindent
\begin{definition} Soit $B$ une C*-alg\`ebre. Une action continue  du groupo\"ide mesur\'e dual  $\widehat{\cal  G}$ dans $B$ est un couple $(\alpha_B, \delta_B)$, où  $\alpha_B : N  \rightarrow  M(B)$  est un *-morphisme non d\'eg\'en\'er\'e et $\delta_B : B  \rightarrow   M(B \otimes    \widehat S)$ est un *-morphisme injectif v\'erifiant :
\begin{enumerate}
\item  $ \displaystyle \delta_B(1_B) = q_{\alpha_B, \beta} = \sum_{l=1}^k \frac{1}{n_l}\sum_{i, j=1}^{n_l}
 \alpha_B(e_{ji}^{(l)}) \otimes \beta(e_{ij}^{(l){\rm o}})$ \quad (\cf\ref{d(1)}) ;
\item  $(\delta_B \otimes {\rm id}_{\widehat S}) \circ \delta_B =  ({\rm id}_B \otimes \widehat\delta)\circ \delta_B$ ;
\item  Pour tout $n\in N$, on a $\delta_B \circ\alpha_B(n) = q_{\alpha_B, \beta} (1_B \otimes  \widehat\alpha(n))$ ;
\item  $[\delta_B(B)(1_B \otimes \widehat S)] = q_{\alpha_B, \beta} (B\otimes  \widehat S)$.
\end{enumerate}
Une C*-alg\`ebre $B$ munie d'une action continue $(\alpha_B, \delta_B)$ du groupo\"ide $\widehat{\cal G}$ est appel\'ee une 
$\widehat{\cal G}$-alg\`ebre.
\end{definition}
La condition a)  entra\^ine que les  *-morphismes  $ \delta_B \otimes {\rm id}_{\widehat S}$ et ${\rm id}_B \otimes \widehat\delta$ se prolongent de fa\c con unique  \`a  $M(B \otimes  \widehat S)$, en des *-morphismes strictement continus v\'erifiant :
$$(\delta_B \otimes {\rm id}_{\widehat S})(1_B) = q_{\alpha_B, \beta, 12} , \quad ({\rm id}_B \otimes \widehat\delta)(1_B) = q_{\widehat\alpha, \beta, 23}.$$
\noindent
Pour d\'efinir le produit crois\'e  $B \rtimes \widehat{\cal  G}$, on introduit le  C*-module\index{ef@${\cal E}_{B,\lambda}$} 
\begin{equation}\label{Elambda}  
{\cal  E}_{B, \lambda}: = q_{ \alpha_B, \widehat\beta}(B   \otimes H).
\end{equation} 
Exactement comme dans le cas d'une action du groupo\"ide ${\cal G}$, on a :\hfill\break  
  -  un   *-morphisme  $\widehat\pi  : M(B) \rightarrow {\cal L}( {\cal  E}_{B, \lambda})$ unital, injectif et continu pour les topologies stricte/*-forte  v\'erifiant $\widehat\pi(m) = \restr{ ({\rm id  }_B \otimes \lambda) \delta_B(m) }{{\cal  E}_{B, \lambda}}$ ;\hfill\break
 -   un   *-morphisme  $\theta   : M(S) \rightarrow {\cal L}( {\cal  E}_{B, \lambda})$ unital  et continu pour les topologies stricte/*-forte v\'erifiant $\theta(T) = \restr{({\rm id  }_B \otimes T)}{ {\cal  E}_{B, \lambda}}$.\index{ph@$\widehat{\pi}$, $\theta$}\hfill\break
Comme dans le cas d'une action de ${\cal G}$, on a :

\begin{proposition-definition}\label{copc}
L'adh\'erence du sous-espace vectoriel de $\cL({\cal  E}_{B, \lambda})$   engendr\'e par les produits $\widehat\pi(b)  \theta(x) \,,\, b\in B \,{\rm et }\,\, x\in  S$,  est une sous-C*-alg\`ebre, qu'on appelle produit crois\'e de $B$ par l'action de $\widehat {\cal G}$ et qu'on note   $B \rtimes \widehat{\cal  G}$.\index{bb@$B\rtimes\widehat{\cal G}$}
\end{proposition-definition}

\subsection{Action duale}

\subsubsection{Cas d'une action continue du groupo\"ide \texorpdfstring{${\cal G}$}{G}}

Soit $(\beta_A, \delta_A)$ une action continue du groupo\"ide ${\cal G}$ dans une C*-alg\`ebre $A$.\hfill\break
Posons $B = A \rtimes {\cal G}$ et $ {\cal  E}_A :=  {\cal  E}_{A, L}$. Remarquons (\ref{2.10} b)) qu'on a 
$[q_{\beta_A, \alpha,12} , \widetilde V_{23}] = 0$ dans $\cL(A \otimes H \otimes H)$. Soit $X\in {\cal L}( {\cal  E}_A \otimes  H)$ l'isom\'etrie partielle d\'efinie par 
$$X:=  \restr{\widetilde V_{23}}{ {\cal E}_{A}\otimes H}. $$
On a   (\ref{2.9} c)) :
\begin{equation} 
X^*X = \restr{q_{\widehat \beta  , \widehat \alpha ,23}}{ {\cal E}_{A}\otimes H}  , \quad X X^* = \restr{q_{\widehat \alpha , \beta ,23}}{ {\cal E}_{A}\otimes H} = (\widehat\theta \otimes {\rm id}_{\widehat S})(q_{\widehat\alpha , \beta}).
\end{equation}
 Soient  $ \delta_B : B \rightarrow {\cal L}( {\cal  E}_A \otimes H)\,$   et  $\alpha_B : N  \rightarrow {\cal L}({\cal E}_A)\,$, les applications lin\'eaires   d\'efinies par :
$$\delta_B(b) = X(b \otimes 1_H) X^*,\quad b\in B \quad ; \quad \alpha_B(n) = \widehat\theta(\widehat\alpha(n)) = \restr{(1_A  \otimes \widehat\alpha(n))}{{\cal E}_{A}},\quad n\in N. $$\noindent
On a :

\noindent
\begin{proposition} Le couple $(\alpha_B, \delta_B)$ d\'efinit une action continue du groupo\"ide mesur\'e $\widehat{\cal  G}$ dans la C*-alg\`ebre $B = A \rtimes {\cal G}$.
\begin{proof} Il est clair que $\alpha_B : N  \rightarrow M(B)$ est un *-morphisme et qu'on a $XX^* = 
q_{\alpha_B, \beta} =  (\widehat\theta \otimes {\rm id}_{\widehat S})(q_{\widehat\alpha , \beta})$.\hfill\break
Pour voir que $\delta_B$ est \`a valeurs dans $M(B \otimes \widehat S)$, consid\'erons $M(B \otimes \widehat S)$ comme une sous-C*-alg\`ebre de ${\cal L}({\cal E}_A \otimes H)$. Pour tout $a\in A$ et tout $x\in \widehat S$, on a  (\ref{mult} et \ref{2.10} d)) :
$$X (\pi(a) \otimes 1_H) X^* = (\pi(a) \otimes 1_{\widehat S}) (\widehat\theta \otimes {\rm id}_{\widehat S})(q_{\widehat\alpha, \beta}),\quad X (\widehat\theta(x) \otimes 1_H) X^* = (\widehat\theta \otimes {\rm id}_{\widehat S}) \widehat\delta(x).$$\noindent
Comme $X^*X = \restr{q_{\widehat \beta  , \widehat \alpha ,23}}{ {\cal E}_{A}\otimes H}$, on a   (\ref{2.10} b))  pour tout  
$a\in A$, $x\in \widehat S$ et $b\in M(B)$ :
\begin{equation}\label{acd} [X^*X, b \otimes 1_H] = 0    \quad {\rm et} \quad  X (\pi(a) \widehat\theta(x) \otimes 1_H) X^* =  (\pi(a) \otimes 1_{\widehat S}) (\widehat\theta \otimes {\rm id}_{\widehat S}) \widehat\delta(x).
\end{equation}
Il en r\'esulte que $ \delta_B : B \rightarrow M(B \otimes \widehat S)\,$  est un *-morphisme et on a $\delta_B(1_B) = q_{\alpha_B, \beta}$. \hfill\break
Montrons qu'il est injectif. Soit $b\in B$ tel que $\delta_B(b) = 0$.\hfill\break
On a alors $X^*X (b \otimes 1_H) X^* = (b \otimes 1_H) X^*X X^* = (b \otimes 1_H) X^*=0$. Il en r\'esulte que pour tout $\omega\in B(H)_\ast$, on a $\,\,b ({\rm id} \otimes \omega)(X^*) = 0$. Comme $[({\rm id} \otimes \omega)(X^*) \,|\, 
 \omega\in B(H)_\ast] = \restr{(1_A \otimes R(S))}{ {\cal E}_{A} }$, on a $b=0$. \hfill\break
Montrons la continuit\'e de $\delta_B$. \hfill\break
Pour $a\in A$ et $x , x'\in\widehat S$, on a :
$$\delta_B(\pi(a) \widehat\theta(x))(1_B \otimes x') = (\pi(a) \otimes 1_{\widehat S}) (\widehat\theta \otimes {\rm id}_{\widehat S}) (\widehat\delta(x) (1_{\widehat S} \otimes x')),$$\noindent
donc $[\delta_B(B) (1_B \otimes \widehat S) ] \subset \delta_B(1_B) (B \otimes \widehat S) $.\hfill\break
Pour montrer l'inclusion $\delta_B(1_B) (B \otimes \widehat S) \subset 
[\delta_B(B) (1_B \otimes \widehat S) ]$, remarquons  qu'on a : 
\begin{align} \delta_B(1_B) (B \otimes \widehat S) &= \delta_B(1_B) [(\pi(a) \otimes 1_{\widehat S}) (\widehat\theta(x) \otimes x') |\,\, a\in A \,\,;\,\,x\,,\,x'\in \widehat S] \nonumber\\
&=   [ \delta_B(1_B) (\pi(a) \otimes 1_{\widehat S}) (\widehat\theta \otimes {\rm id}_{\widehat S})(q_{\widehat\alpha , \beta} (x  \otimes x')) \,|\,\, a\in A \,\,;\,\,x\,,\,x'\in \widehat S]\nonumber\\
&= [\delta_B(1_B) (\pi(a) \otimes 1_{\widehat S}) (\widehat\theta \otimes {\rm id}_{\widehat S})(\widehat\delta(u) (1_{\widehat S} \otimes u'))\, |  \,\, a\in A \,\,;\,\,u\,,\,u'\in \widehat S] \quad  (\ref{simp})\nonumber\\
 &=
   [\delta_B(\pi(a)\widehat\theta(u)) (1_B \otimes u')\, |  \,\, a\in A \,\,,\,\,u\,,\,u'\in \widehat S] = [\delta_B(B) (1_B \otimes \widehat S) ]. \nonumber
   \end{align}
Montrons la coassociativit\'e de $\delta_B$. Pour tout $c\in M(B \otimes \widehat S)$,  on a  :
$$(\delta_B \otimes {\rm id}_{\widehat S})(c)  =  X_{12} c_{13} X_{12}^* \quad {\rm  et } \quad 
({\rm id}_B \otimes \widehat\delta)(c) = \widetilde V_{23} c_{12} \widetilde V_{23}^*.$$\noindent
La coassociativit\'e de $\delta_B$ r\'esulte de la relation pentagonale v\'erifi\'ee par $\widetilde V$  et  des relations (\cf\ref{2.10} b)):     
$$X_{12} X_{13} = \widetilde V_{23} X_{12} \widetilde V_{23}^* \quad {\rm et } \quad [X_{12},  \widetilde V_{23}^*\widetilde V_{23}] = 0 \quad {\rm dans  } \quad
 {\cal L}({\cal E}_A \otimes H \otimes H).$$\noindent
Finalement, pour tout $n\in N$, on a  \ref{2.10} b) :\hfill\break
$\delta_B(\alpha_B(n)) = X (\widehat\theta(\widehat\alpha(n)) \otimes 1_H) X^* = 
(\widehat\theta \otimes {\rm id}_{\widehat S})\widehat\delta(\widehat\alpha(n)) = \delta_B(1_B)(1_B \otimes 
\widehat\alpha(n)).$
\end{proof}
\end{proposition}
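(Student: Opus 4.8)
The plan is to verify, one by one, the structural requirements and the four axioms a)--d) defining a continuous action of the dual groupoid $\widehat{\cal G}$ on $B=A\rtimes{\cal G}$. That $\alpha_B=\widehat\theta\circ\widehat\alpha$ is a non-degenerate $*$-morphism into $M(B)$ is clear, and applying $\widehat\theta\otimes{\rm id}_{\widehat S}$ to $q_{\widehat\alpha,\beta}$ shows that the final projection $XX^*$ equals $q_{\alpha_B,\beta}$, which gives axiom a) once multiplicativity is established. The engine of the whole argument is that $X=\widetilde V_{23}|_{{\cal E}_A\otimes H}$ is a partial isometry whose initial projection $X^*X$ (the restriction of $q_{\widehat\beta,\widehat\alpha,23}$) commutes with every $b\otimes 1_H$, $b\in M(B)$; this is relation (\ref{acd}), itself a consequence of (\ref{2.10} b)). Granting this, $b\mapsto X(b\otimes 1_H)X^*$ is automatically a $*$-homomorphism: $\delta_B(b)\delta_B(b')=X(b\otimes 1_H)(X^*X)(b'\otimes 1_H)X^*=X(bb'\otimes 1_H)(X^*X)X^*=X(bb'\otimes 1_H)X^*$, using the commutation of $X^*X$ with $b'\otimes 1_H$ and the identity $(X^*X)X^*=X^*$; evaluating at $1_B$ yields $\delta_B(1_B)=XX^*=q_{\alpha_B,\beta}$.

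Next I would show that $\delta_B$ takes values in $M(B\otimes\widehat S)$, not merely in ${\cal L}({\cal E}_A\otimes H)$, and is injective. For this I conjugate the two families of generators separately: from $V\in M(\widehat S\otimes S)$ and (\ref{2.10} d)) one gets $X(\pi(a)\otimes 1_H)X^*=(\pi(a)\otimes 1_{\widehat S})(\widehat\theta\otimes{\rm id}_{\widehat S})q_{\widehat\alpha,\beta}$ and $X(\widehat\theta(x)\otimes 1_H)X^*=(\widehat\theta\otimes{\rm id}_{\widehat S})\widehat\delta(x)$, so that $\delta_B(\pi(a)\widehat\theta(x))=(\pi(a)\otimes 1_{\widehat S})(\widehat\theta\otimes{\rm id}_{\widehat S})\widehat\delta(x)$ visibly lies in $M(B\otimes\widehat S)$. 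Injectivity then follows from $\delta_B(b)=0\Rightarrow(b\otimes 1_H)X^*=0$ (left-multiply by $X^*$ and use $X^*XX^*=X^*$), together with the density fact $[({\rm id}\otimes\omega)(X^*)\mid\omega\in B(H)_\ast]=(1_A\otimes R(S))|_{{\cal E}_A}$, the algebra $R(S)$ acting non-degenerately on ${\cal E}_A$.

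The two remaining algebraic axioms are short. Axiom c) is direct: $\delta_B(\alpha_B(n))=X(\widehat\theta(\widehat\alpha(n))\otimes 1_H)X^*=(\widehat\theta\otimes{\rm id}_{\widehat S})\widehat\delta(\widehat\alpha(n))=q_{\alpha_B,\beta}(1_B\otimes\widehat\alpha(n))$, using (\ref{2.10} b)) and the value of $\widehat\delta$ on $\widehat\alpha(N)$. For the strong-continuity axiom d), the inclusion $[\delta_B(B)(1_B\otimes\widehat S)]\subset\delta_B(1_B)(B\otimes\widehat S)$ is read off the generator formula above, while the reverse inclusion is obtained by rewriting $q_{\alpha_B,\beta}(x\otimes x')$ through relation (\ref{simp}), namely $[\widehat\delta(\widehat S)(1_{\widehat S}\otimes\widehat S)]=\widehat\delta(1_{\widehat S})(\widehat S\otimes\widehat S)$, and pulling it back under $\widehat\theta\otimes{\rm id}_{\widehat S}$.

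The coassociativity axiom b) is the step I expect to be the main obstacle, since it is where the groupoid structure of $\widetilde V$ is genuinely used. The plan is to work with the strictly continuous extensions $(\delta_B\otimes{\rm id}_{\widehat S})(c)=X_{12}c_{13}X_{12}^*$ and $({\rm id}_B\otimes\widehat\delta)(c)=\widetilde V_{23}c_{12}\widetilde V_{23}^*$ on $M(B\otimes\widehat S)$, reducing the desired equality on $\delta_B(b)$ to $X_{12}X_{13}(b\otimes 1\otimes 1)X_{13}^*X_{12}^*=\widetilde V_{23}X_{12}(b\otimes 1\otimes 1)X_{12}^*\widetilde V_{23}^*$. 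As $b\otimes 1\otimes 1$ commutes with $\widetilde V_{23}$, this collapses to the two operator identities $X_{12}X_{13}=\widetilde V_{23}X_{12}\widetilde V_{23}^*$ and $[X_{12},\widetilde V_{23}^*\widetilde V_{23}]=0$ in ${\cal L}({\cal E}_A\otimes H\otimes H)$; the first is the pentagonal relation for the partial isometry $\widetilde V$ (\ref{2.9}, \ref{2.10}) transported to ${\cal E}_A$. The delicate point is precisely that $\widetilde V$ is only a \emph{partial} isometry, so these pentagon and commutation manipulations are legitimate only after checking that the relevant support projections ($q_{\beta_A,\alpha}$ defining ${\cal E}_A$, and $X^*X$, $XX^*$) commute with everything they must be moved past; carrying out these commutation verifications via (\ref{2.10} b)) is the real work.
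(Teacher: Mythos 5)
Your proposal reproduces the paper's proof essentially step for step: the same conjugation formulas for the two families of generators, the same injectivity argument via $(b\otimes 1_H)X^*=0$ and the density of $({\rm id}\otimes\omega)(X^*)$ in $(1_A\otimes R(S))|_{{\cal E}_A}$, the same use of (\ref{simp}) for continuity, and the same reduction of coassociativity to $X_{12}X_{13}=\widetilde V_{23}X_{12}\widetilde V_{23}^*$ together with $[X_{12},\widetilde V_{23}^*\widetilde V_{23}]=0$. The only addition is that you spell out the multiplicativity computation from $[X^*X,b\otimes 1_H]=0$, which the paper leaves implicit in relation (\ref{acd}); this is a correct and welcome clarification, not a deviation.
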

\begin{definition}On appelle action duale du   groupo\"ide  $\widehat{\cal  G}$ dans le produit crois\'e $B=A \rtimes {\cal G}$, l'action d\'efinie par le couple $(\alpha_B, \delta_B)$.
\end{definition}

\subsubsection{Cas d'une action continue du groupo\"ide dual \texorpdfstring{$\widehat{\cal G}$}{}} 

Soit $(\alpha_B, \delta_B)$ une action continue    du groupo\"ide   $\widehat{\cal  G}$ dans une C*-alg\`ebre $B$.\hfill\break
Posons $C = B \rtimes \widehat{\cal  G}$ et ${\cal E}_B := {\cal E}_{B, \lambda}$ (\cf \ref{Elambda}). Remarquons (\ref{2.10} b)) qu'on a $[q_{\alpha_B, \widehat\beta,12} ,  V_{23}] = 0$ dans $\cL(B \otimes H \otimes H)$.
Soit $Y\in {\cal L}({\cal E}_B \otimes  H)$ l'isom\'etrie partielle d\'efinie par 
$$Y:=  \restr{V_{23}}{ {\cal E}_{B}\otimes H }. $$\noindent
On a   (\ref{2.9} c)) :
\begin{equation}
Y^*Y = \restr{q_{\widehat \alpha  , \beta ,23}}{ {\cal E}_{B}\otimes H } , \quad Y Y^* =\restr{q_{ \beta  , \alpha ,23}}{ {\cal E}_{B}\otimes H } = (\theta \otimes {\rm id}_{S})(q_{\beta, \alpha}). 
\end{equation}
Soient  $ \delta_C : C \rightarrow {\cal L}({\cal E}_B \otimes H)$   et  $\beta_C : N^{\rm o}  \rightarrow {\cal L}({\cal E}_B)$, les applications lin\'eaires d\'efinies par :
$$\delta_C(c) = Y(c \otimes 1_H) Y^*, \quad c\in C \quad ; \quad \beta_C(n^{\rm o}) = \theta(\beta(n^{\rm o}))  = \restr{(1_B  \otimes \beta(n^{\rm o}))}{ {\cal E}_{B} },\quad n\in N. $$\noindent
Comme dans le cas d'une action continue du groupo\"ide ${\cal G}$, on \'etablit les formules :
$$Y (\widehat\pi(b) \otimes 1_H) Y^* = (\widehat\pi(b) \otimes 1_S) (\theta \otimes {\rm id}_{S})(q_{\beta, \alpha}),\quad Y (\theta(s) \otimes 1_H) Y^* = (\theta \otimes {\rm id}_{S})  \delta(s),\quad b\in B \,,\, s \in S\, ;$$\noindent
\begin{equation}\label{acbd}
[Y^*Y, c \otimes 1_H] = 0,  \quad Y (\widehat\pi(b) \theta(s)  \otimes 1_H) Y^* = 
(\widehat\pi(b) \otimes 1_S) (\theta \otimes {\rm id}_{S})\delta(s),\quad c\in M(C) \,,\, b\in B \,,\, s \in S\, ;
\end{equation}
$$Y_{12} Y_{13} =  V_{23} Y_{12}   V_{23}^* \quad {\rm et } \quad [Y_{12},    V_{23}^*  V_{23}] = 0 \quad {\rm dans  } \quad
 {\cal L}({\cal E}_B \otimes H \otimes H).$$\noindent
Il en r\'esulte qu'on a  :

\noindent
\begin{proposition} Le couple $(\beta_C, \delta_C)$ d\'efinit une action continue du groupo\"ide mesur\'e ${\cal G}$ dans la C*-alg\`ebre $C = B  \rtimes \widehat{\cal  G}$  qu'on appelle l'action duale de l'action $(\alpha_B, \delta_B)$.
\end{proposition}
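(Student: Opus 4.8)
The plan is to mirror, verbatim, the proof of the preceding proposition, exchanging the roles of ${\cal G}$ and $\widehat{\cal G}$: the partial isometry $Y := V_{23}|_{{\cal E}_B \otimes H}$ now plays the role that $X = \widetilde V_{23}$ played there, $V$ replaces $\widetilde V$, and the coproduct $\delta$ of $S$ replaces the coproduct $\widehat\delta$ of $\widehat S$. All the operator identities needed have already been recorded: the relations collected in \ref{acbd}, the support formulas \ref{2.9} c), the commutation relations \ref{2.10} b), and the fact $V \in M(\widehat S \otimes S)$ from \ref{mult}. Thus the whole task reduces to verifying, for the pair $(\beta_C, \delta_C)$, the four axioms of Definition \ref{ac} together with injectivity of $\delta_C$.

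First I would check that $\beta_C = \theta \circ \beta : N^{o} \to M(C)$ is a non-degenerate *-morphism and that $\delta_C(1_C) = Y Y^* = (\theta \otimes {\rm id}_S) q_{\beta, \alpha} = q_{\beta_C, \alpha}$, which is axiom a) and identifies the final support of $Y$ with the required projector. Next, regarding $M(C \otimes S)$ as a sub-C*-algebra of ${\cal L}({\cal E}_B \otimes H)$, the two conjugation formulas displayed just before \ref{acbd}, together with $[Y^*Y, c \otimes 1_H] = 0$, show that $\delta_C(c) = Y(c \otimes 1_H)Y^*$ lands in $M(C \otimes S)$ and is a *-morphism, while the identity $Y(\widehat\pi(b)\theta(s) \otimes 1_H)Y^* = (\widehat\pi(b) \otimes 1_S)(\theta \otimes {\rm id}_S)\delta(s)$ makes $\delta_C$ explicit on generators. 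For injectivity, if $\delta_C(c) = 0$ then $(c \otimes 1_H)Y^* = Y^*Y(c \otimes 1_H)Y^* = 0$, hence $c\,({\rm id} \otimes \omega)(Y^*) = 0$ for every $\omega \in B(H)_\ast$; since $[\,({\rm id} \otimes \omega)(Y^*)\,|\,\omega \in B(H)_\ast\,] = (1_B \otimes \rho(\widehat S))|_{{\cal E}_B}$ is a non-degenerate algebra on ${\cal E}_B$ (the analogue of $(1_A \otimes R(S))|_{{\cal E}_A}$ from the dual computation, non-degenerate because $\rho(\widehat S) \subset \widehat M'$ commutes with the $\widehat\beta(N)$ occurring in $q_{\alpha_B, \widehat\beta}$), this forces $c = 0$.

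For the strong-continuity axiom d), the computation $\delta_C(\widehat\pi(b)\theta(s))(1_C \otimes s') = (\widehat\pi(b) \otimes 1_S)(\theta \otimes {\rm id}_S)(\delta(s)(1_S \otimes s'))$ yields the inclusion $[\delta_C(C)(1_C \otimes S)] \subset q_{\beta_C, \alpha}(C \otimes S)$, and the reverse inclusion follows exactly as before by rewriting $q_{\beta, \alpha}(s \otimes s')$ via \ref{simp} as a closed span of terms $\delta(u)(1_S \otimes u')$. Coassociativity b) comes from the pentagonal relation for $V$ together with the two identities $Y_{12}Y_{13} = V_{23}Y_{12}V_{23}^*$ and $[Y_{12}, V_{23}^*V_{23}] = 0$ recorded after \ref{acbd}, once one notes that on ${\cal E}_B \otimes H \otimes H$ one has $(\delta_C \otimes {\rm id}_S)(c) = Y_{12}c_{13}Y_{12}^*$ and $({\rm id}_C \otimes \delta)(c) = V_{23}c_{12}V_{23}^*$. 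Finally, axiom c) reads $\delta_C(\beta_C(n^{o})) = Y(\theta(\beta(n^{o})) \otimes 1_H)Y^* = (\theta \otimes {\rm id}_S)\delta(\beta(n^{o})) = q_{\beta_C, \alpha}(1_C \otimes \beta(n^{o}))$, using the second formula before \ref{acbd} and the third coproduct axiom $\delta(\beta(n^{o})) = \delta(1_S)(1_S \otimes \beta(n^{o}))$ of $S$.

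Since every step is the literal dual of one already carried out for $(\alpha_B, \delta_B)$, I expect no genuine obstacle; the only point demanding care is the density statement invoked in the injectivity step, namely the precise identification of $[\,({\rm id} \otimes \omega)(Y^*)\,]$ as a non-degenerate subalgebra of ${\cal L}({\cal E}_B)$ coming from $V \in M(\widehat S \otimes S)$, together with bookkeeping of which tensor leg carries which of the representations $L, R, \rho, \lambda$ throughout the conjugations by $Y$.
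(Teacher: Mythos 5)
Votre démonstration est correcte et suit exactement la démarche du texte : l'article se contente d'énoncer les formules clés ($[Y^*Y, c\otimes 1_H]=0$, $Y(\widehat\pi(b)\theta(s)\otimes 1_H)Y^* = (\widehat\pi(b)\otimes 1_S)(\theta\otimes{\rm id}_S)\delta(s)$, $Y_{12}Y_{13}=V_{23}Y_{12}V_{23}^*$) et renvoie au cas d'une action de ${\cal G}$, ce qui est précisément la dualisation que vous effectuez en détail. Votre vérification du point délicat (la non-dégénérescence de $(1_B\otimes\rho(\widehat S))|_{{\cal E}_B}$ pour l'injectivité, via la commutation de $\rho(\widehat S)$ avec $\widehat\beta(N^o)\subset\widehat M$) est juste.
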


\subsubsection{Action continue d'un groupo\"ide de co-liaison}

Soient $G_1$ et $G_2$ deux groupes quantiques \lc mono\"idalement \'equivalents. Dans ce qui suit, on se fixe un groupo\"ide de co-liaison ${\cal G} : = {\cal G}_{G_1, G_2} = (\C^2,  M , \alpha , \beta , \delta , T , T', \epsilon)$, dont les coins sont identifi\'es aux       groupes quantiques    $G_1$ et $G_2$. 

Pour \'etudier les actions continues de ce groupo\"ide, nous reprenons pour ${\cal G}$, les notations introduites au paragraphe \ref{grequivmono} et nous identifions les  C*-alg\`ebres $M(S_{ij})$ (resp. $M(S_{ij} \otimes  S_{kl})$) et 
$p_{ij} M(S) \subset  B(H)$ (resp. $(p_{ij} \otimes p_{kl})M(S \otimes S) \subset B(H \otimes H)$).

\noindent
Soit $A$  une C*-alg\`ebre.  Nous allons commencer par donner une description   \'equivalente d'une action continue    $(\beta_A, \delta_A)$ du groupo\"ide 
${\cal G}$, en termes d'actions continues des groupes quantiques $G_1$ et $G_2$. 
\hfill\break
On  rappelle que $\beta_A : \C^2 \rightarrow M(A)$ est un *-morphisme unital et 
$\delta_A : A \rightarrow M(A\otimes S)$ un *-morphisme injectif v\'erifiant les conditions \ref{ac}.

\noindent
Remarquons d'abord que le *-morphisme 
$\beta_A : \C^2 \rightarrow M(A)$ est \`a valeurs dans le centre de $M(A)$. En effet, pour tout $n\in \C^2 , \,x\in A$, on a :
\begin{equation}\label{centre}
\delta_A(\beta_A(n) x) = \delta_A(1_A) (1_A \otimes \beta(n)) \delta_A(x)  =  \delta_A(x) \delta_A(1_A) (1_A \otimes \beta(n))  = \delta_A(x\beta_A(n)).
\end{equation}
\noindent
Par injectivit\'e de $\delta_A$, on d\'eduit $[\beta_A(n) , x ] = 0$.

\noindent
\begin{notations}\label{not1} Posons :
\begin{enumerate} 
\item  $q_j := \beta_A(\varepsilon_j)$,\index{qb@$q_j$} $A_j = q_j A$.  Il est clair que $A_1$ et $A_2$ sont des id\'eaux  bilat\`eres ferm\'es de la C*-alg\`ebre $A$ et on a $A = A_1\oplus A_2$ ;
\item Pour $j,k = 1,2$, notons  $\pi_j^k : M(A_k \otimes  S_{kj}) \rightarrow M(A \otimes S)$ le prolongement strictement continu de l'injection canonique $A_k \otimes  S_{kj} \rightarrow A \otimes S$ v\'erifiant  $\pi_j^k(1_{A_k \otimes  S_{kj}}) = q_k \otimes p_{kj}$.\index{pi@$\pi_j^k$}
\end{enumerate}
\end{notations}
Par d\'efinition \ref{q} du projecteur $q_{\beta_A, \alpha}$, on a :
\begin{equation} \label{qba} 
q_{\beta_A, \alpha} = q_1 \otimes \alpha(\varepsilon_1) + q_2  \otimes \alpha(\varepsilon_2).
  \end{equation}
On d\'eduit alors de (\ref{ac} c))  que pour $j = 1 , 2$, on a :
\begin{equation} \label{delq}
\delta_A(q_j) = \sum_{k=1,2}   q_k \otimes p_{kj}.
  \end{equation}
Par un calcul direct, on obtient facilement :
\begin{lemme}\label{com1}
 Pour tout $a\in A$ et $j,k = 1,2$, on a 
\begin{equation} (q_k \otimes 1_S) \delta_A(q_j a) =  (1_A \otimes \alpha(\varepsilon_k)) \delta_A(q_j a) = (q_k \otimes p_{kj}) \delta_A(a).
\nonumber \end{equation}
\end{lemme}
\begin{proposition} \label{acc}Pour $j,k =1,2$, il existe un unique *-morphisme injectif et non d\'eg\'en\'er\'e\index{dc@$\delta_{A_j}^k$/$\delta_{B_j}^k$} 
\begin{equation} \delta_{A_j}^k : A_j \rightarrow  M(A_k \otimes  S_{kj})
\nonumber \end{equation}
v\'erifiant   $\pi_j^k \circ \delta_{A_j}^k(x) = (q_k \otimes 1_S) \delta_A(x) =  (1_A \otimes \alpha(\varepsilon_k)) \delta_A(x) = (q_k \otimes p_{kj}) \delta_A(x)$ pour tout $x\in A_j$.
\hfill\break 
De plus, on a : 
\begin{enumerate}
\item Pour tout $a\in A$, on a $\delta_A(a)  = \sum_{k,j} \pi_j^k \circ \delta_{A_j}^k(a q_j)$ ;
\item  Pour tout $j, k, l =1, 2$, on a 
 $(\delta_{A_k}^l \otimes {\rm id}_{S_{kj}}) \delta_{A_j}^k = ({\rm id}_{A_l} \otimes \delta_{lj}^k) \delta_{A_j}^l $ ; 
\item $[\delta_{A_j}^k(A_j)(1_{A_k} \otimes S_{kj})] = A_k \otimes S_{kj}$, en particulier on a :
\begin{equation} 
M(\delta_{A_j}^k(A_j)) \subset 
M(A_k \otimes  S_{kj}) , \quad  
 A_k = [({\rm id}_{A_k} \otimes \omega) \delta_{A_j}^k(A_j)\, | \,\omega\in B(H_{kj})_\ast] \quad \text{;}\nonumber 
\end{equation}
\item $\delta_{A_i}^i : A_i  \rightarrow M(A_i \otimes S_{ii})$ est une action continue du groupe quantique $G_i$ dans la C*-alg\`ebre $A_i$. 
\end{enumerate}
\begin{proof} On a ${\rm Im}\,(\pi_j^k) =  (q_k \otimes p_{kj}) M(A \otimes S)$. Par ailleurs,    pour tout $x\in A_j$, on a 
$$\delta_A(x) = \delta_A(q_j x) = \sum_k   (q_k \otimes p_{kj}) \delta_A(x),$$\noindent
d'o\`u l'existence des *-morphismes $\delta_{A_j}^k$, qui v\'erifient :
\begin{equation} \pi_j^k \circ \delta_{A_j}^k(x) = (q_k \otimes p_{kj}) \delta_A(x),\quad x\in A_j.
\nonumber 
\end{equation} 
Le a)    r\'esulte  de l'\'egalit\'e $A = A_1\oplus A_2$. Par ailleurs, pour $j, k, l =1, 2$, nous avons :
$$(\pi_k^l  \otimes {\rm id} _{S_{kj}}) (\delta_{A_k}^l \otimes  {\rm id} _{S_{kj}})(T) = (q_l \otimes p_{lk} \otimes p_{kj})(\delta_A \otimes {\rm id} _S)(\pi_j^k(T)), \quad T\in M(A_k \otimes S_{kj})\quad ;$$\noindent
$$(\pi_k^l  \otimes {\rm id} _{S_{kj}}) ({\rm id} _{A_l} \otimes  \delta_{lj}^k)(T) = (q_l \otimes p_{lk} \otimes p_{kj})({\rm id} _A \otimes \delta)(\pi_j^l(T)), \quad T\in M(A_l \otimes S_{lj}).$$\noindent
Le b) r\'esulte alors de ces deux relations et de la coassociativit\'e de $\delta_A$. Notons que le b) entra\^ine l'injectivit\'e des *-morphismes  $\delta_{A_j}^k$.
\hfill\break
Le c) se d\'eduit de la condition de continuit\'e  (\ref{ac} d)) de la coaction $\delta_A$. Le d) est une cons\'equence de b)  et de c).
\end{proof}
\end{proposition}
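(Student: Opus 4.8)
The idea is to read off $\delta_{A_j}^k$ as the $(k,j)$-block of $\delta_A$ and then transport each axiom of $\delta_A$ through the block decomposition $A=A_1\oplus A_2$. First I would construct the morphisms. For $x\in A_j$, Lemma \ref{com1} gives $(q_k\otimes 1_S)\delta_A(x)=(q_k\otimes p_{kj})\delta_A(x)$, which lies in $(q_k\otimes p_{kj})M(A\otimes S)=\operatorname{Im}(\pi_j^k)$; since $\pi_j^k$ is injective there is a unique $\delta_{A_j}^k(x)$ with $\pi_j^k(\delta_{A_j}^k(x))=(q_k\otimes p_{kj})\delta_A(x)$, giving existence and uniqueness together with the three displayed equalities. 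Because $q_k$ is central in $M(A)$ (cf.\ \ref{centre}), the projection $q_k\otimes 1_S$ is central in $M(A\otimes S)$, so $x\mapsto(q_k\otimes 1_S)\delta_A(x)$ is multiplicative and $*$-preserving on $A_j$; hence so is $\delta_{A_j}^k$. Part a) is then immediate: since $A=A_1\oplus A_2$,
\[\delta_A(a)=\sum_j\delta_A(aq_j)=\sum_{j,k}(q_k\otimes p_{kj})\delta_A(aq_j)=\sum_{j,k}\pi_j^k\bigl(\delta_{A_j}^k(aq_j)\bigr).\]

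For b) I would first establish two transport identities. For $T\in M(A_k\otimes S_{kj})$,
\[(\pi_k^l\otimes\id_{S_{kj}})(\delta_{A_k}^l\otimes\id_{S_{kj}})(T)=(q_l\otimes p_{lk}\otimes p_{kj})(\delta_A\otimes\id_S)(\pi_j^k(T)),\]
and for $T\in M(A_l\otimes S_{lj})$,
\[(\pi_k^l\otimes\id_{S_{kj}})(\id_{A_l}\otimes\delta_{lj}^k)(T)=(q_l\otimes p_{lk}\otimes p_{kj})(\id_A\otimes\delta)(\pi_j^l(T)).\]
These follow by evaluating on elementary tensors, using $\delta(p_{ij})=\sum_m p_{im}\otimes p_{mj}$ and $\delta_A(q_j)=\sum_k q_k\otimes p_{kj}$ (\ref{delq}). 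Applying them to $T=\delta_{A_j}^k(x)$ and $T=\delta_{A_j}^l(x)$ and invoking coassociativity $(\delta_A\otimes\id_S)\delta_A=(\id_A\otimes\delta)\delta_A$ shows the two sides agree; injectivity of $\pi_k^l\otimes\id$ then yields b). Moreover b) forces each $\delta_{A_j}^k$ injective: if $\delta_{A_j}^k(x)=0$ then for every $l$ one has $(\id_{A_l}\otimes\delta_{lj}^k)\delta_{A_j}^l(x)=(\delta_{A_k}^l\otimes\id)\delta_{A_j}^k(x)=0$, and since $\delta_{lj}^k(s)=(W_{lk}^j)^*(1\otimes s)W_{lk}^j$ is injective (\ref{coprodS}), we get $\delta_{A_j}^l(x)=0$ for all $l$, whence $\delta_A(x)=0$ by a) and $x=0$.

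For c) I would use $\delta_A(A_j)=\delta_A(q_j)\delta_A(A)$ with $\delta_A(q_j)=\sum_k q_k\otimes p_{kj}$. Since $\delta_A(q_j)\,q_{\beta_A,\alpha}=\delta_A(q_j)$, the density condition \ref{ac} d) gives
\[[\delta_A(A_j)(1_A\otimes S)]=\delta_A(q_j)\,q_{\beta_A,\alpha}(A\otimes S)=\delta_A(q_j)(A\otimes S)=\bigoplus_k A_k\otimes S_{kj}.\]
Cutting by the central projection $q_k\otimes 1_S$ and applying $(\pi_j^k)^{-1}$ yields $[\delta_{A_j}^k(A_j)(1_{A_k}\otimes S_{kj})]=A_k\otimes S_{kj}$, which is c); the non-degeneracy of $\delta_{A_j}^k$, the inclusion $M(\delta_{A_j}^k(A_j))\subset M(A_k\otimes S_{kj})$ and the slice formula for $A_k$ follow at once. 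Finally d) is the specialization $j=k=l=i$: b) becomes the coaction identity $(\delta_{A_i}^i\otimes\id)\delta_{A_i}^i=(\id_{A_i}\otimes\delta_{ii}^i)\delta_{A_i}^i$ and c) becomes the strong-continuity condition $[\delta_{A_i}^i(A_i)(1_{A_i}\otimes S_{ii})]=A_i\otimes S_{ii}$, so $(A_i,\delta_{A_i}^i)$ is a $G_i$-algebra, $G_i$ being the corner $(M_{ii},\delta_{ii}^i)$ of \ref{grco}. I expect the main obstacle to be purely the bookkeeping in b): checking the two transport identities through $\delta_A\otimes\id$ and $\id\otimes\delta$ and matching the three-leg projections $q_l\otimes p_{lk}\otimes p_{kj}$ correctly via $\delta(p_{ij})=\sum_m p_{im}\otimes p_{mj}$.
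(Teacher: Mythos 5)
Your proposal is correct and follows essentially the same route as the paper: defining $\delta_{A_j}^k$ as the $(k,j)$-block via the image of $\pi_j^k$, proving b) from the two transport identities together with coassociativity of $\delta_A$, deducing c) from the density condition (\ref{ac} d)), and obtaining d) as the case $j=k=l=i$. You merely spell out two steps the paper leaves implicit (the injectivity of $\delta_{A_j}^k$ via the injectivity of $\delta_{lj}^k$, and the cut-down computation for c)), and both are correct.
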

\noindent
\begin{corollary}\label{isoGj} Soient $j , k = 1 , 2$ avec $j\not= k$.
\begin{enumerate}
\item  Pour tout $x\in \delta_{A_j}^k(A_j)$, on a $({\rm id}_{A_k} \otimes \delta_{kj}^j)(x) \in M(\delta_{A_j}^k(A_j) \otimes S_{jj})$.
\item  $\delta_{A_j}^k(A_j) \rightarrow M(\delta_{A_j}^k(A_j) \otimes S_{jj}) : x \mapsto ({\rm id}_{A_k} \otimes \delta_{kj}^j)(x)$ est une action continue du groupe quantique ${G_j}$  dans la C*-alg\`ebre $\delta_{A_j}^k(A_j)$.
\item  $A_j \rightarrow \delta_{A_j}^k(A_j) : x \mapsto \delta_{A_j}^k(x)$ est un *-isomorphisme $G_j$-\'equivariant.
\end{enumerate}
\begin{proof}  Les assertions du corollaire
    sont des cons\'equences de (\ref{acc}  b), c) et d)) faciles \`a \'etablir.
\end{proof}
\end{corollary}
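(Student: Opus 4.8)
The plan is to obtain all three assertions from Proposition~\ref{acc} (parts b, c, d) together with the coassociativity of the corner coproducts recorded in Proposition~\ref{coprodS} a). Fix $j\neq k$ throughout and write $D:=\delta_{A_j}^k(A_j)$, a $C^*$-subalgebra of $M(A_k\otimes S_{kj})$; the map appearing in assertions 1 and 2 is $\delta_D:=\id_{A_k}\otimes\delta_{kj}^j$. Recall from Proposition~\ref{grco} a) that $G_j=(M_{jj},\delta_{jj}^j,\dots)$ is an l.c.\ quantum group with reduced Hopf $C^*$-algebra $(S_{jj},\delta_{jj}^j)$, so that $\delta_D$ is a candidate $G_j$-action.

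First I would dispose of assertion 3. By Proposition~\ref{acc} b) the $*$-morphism $\delta_{A_j}^k$ is injective, and its range is $D$ by definition, so it is a $*$-isomorphism $A_j\to D$. Applying the relation of \ref{acc} b) with the substitution of indices $(j,k,l)\mapsto(j,j,k)$ gives
$$(\delta_{A_j}^k\otimes\id_{S_{jj}})\circ\delta_{A_j}^j=(\id_{A_k}\otimes\delta_{kj}^j)\circ\delta_{A_j}^k=\delta_D\circ\delta_{A_j}^k,$$
which is precisely the $G_j$-equivariance of $\delta_{A_j}^k$ between the action $\delta_{A_j}^j$ on $A_j$ (Proposition~\ref{acc} d)) and the candidate action $\delta_D$ on $D$. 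Hence assertion 3 follows once $\delta_D$ is known to be a continuous $G_j$-action.

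Next, assertion 1 and the strong continuity of $\delta_D$ both fall out of the same identity. Reading the display above from right to left, for $a\in A_j$ we get $\delta_D(\delta_{A_j}^k(a))=(\delta_{A_j}^k\otimes\id_{S_{jj}})(\delta_{A_j}^j(a))$, which already shows $\delta_D$ takes values in $M(A_k\otimes S_{kj}\otimes S_{jj})$. I would then apply the (non-degenerate) $*$-homomorphism $\delta_{A_j}^k\otimes\id_{S_{jj}}$ to the Podle\'s density $[\delta_{A_j}^j(A_j)(1_{A_j}\otimes S_{jj})]=A_j\otimes S_{jj}$ of Proposition~\ref{acc} c); using the identity, the left-hand side rewrites as $[\delta_D(D)(1_{A_k\otimes S_{kj}}\otimes S_{jj})]$ while the right-hand side becomes $D\otimes S_{jj}$, so
$$[\delta_D(D)(1_{A_k\otimes S_{kj}}\otimes S_{jj})]=D\otimes S_{jj}.$$
This is the strong-continuity condition for $\delta_D$, and it also gives assertion 1: for $x\in D$, left multiplication by $\delta_D(x)$ preserves the dense subspace $[\delta_D(D)(1\otimes S_{jj})]=D\otimes S_{jj}$ (and so does $\delta_D(x^*)$), whence $\delta_D(x)\in M(D\otimes S_{jj})$.

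Finally, for assertion 2 it remains to check coassociativity of $\delta_D$; non-degeneracy is automatic from the density just obtained. The condition $(\delta_D\otimes\id_{S_{jj}})\delta_D=(\id_D\otimes\delta_{jj}^j)\delta_D$ reduces, after cancelling the passive leg $\id_{A_k}$, to
$$(\delta_{kj}^j\otimes\id_{S_{jj}})\,\delta_{kj}^j=(\id_{S_{kj}}\otimes\delta_{jj}^j)\,\delta_{kj}^j,$$
which is exactly Proposition~\ref{coprodS} a) with indices $(i,j,k,l)=(k,j,j,j)$. I expect the only genuinely non-formal point to be the multiplier argument in assertion 1, namely passing from invariance of the dense subspace $[\delta_D(D)(1\otimes S_{jj})]$ to membership of $\delta_D(x)$ in $M(D\otimes S_{jj})$; everything else is index bookkeeping layered on top of \ref{acc} and \ref{coprodS}.
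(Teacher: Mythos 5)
Your proof is correct and follows exactly the route the paper intends: the paper's one-line proof simply asserts that the corollary is an easy consequence of (\ref{acc} b), c) et d)), and your argument supplies precisely those details — the index substitution $(j,k,l)\mapsto(j,j,k)$ in \ref{acc} b) giving simultaneously the equivariance, the multiplier membership and the Podle\'s density, plus \ref{coprodS} a) for the coassociativity. The one point you rightly flag as non-formal (passing from the idealizer property of $\delta_D(x)$ on the non-degenerately embedded subalgebra $D\otimes S_{jj}$ to membership in $M(D\otimes S_{jj})$) is handled correctly via \ref{acc} c).
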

\begin{remarks}
\begin{enumerate} 
\item 
Dans le cas \ref{2ex} de l'action triviale, les C*-alg\`ebres  $A_j$ s'identifient \`a $\C$ et le d) de \ref{acc}, correspond \`a l'action triviale de $G_1$ et $G_2$.
\item Dans le cas \ref{2ex} de l'action    du groupo\"ide ${\cal G}$ dans lui-m\^eme. On a 
$$A_1 = S_{11} \oplus S_{21}   , \quad A_2 = S_{12} \oplus S_{22} , \quad \delta_{A_j}^k = 
\delta_{1j}^k  \oplus \delta_{2j}^k.$$
\item Dans le cas o\`u les groupes quantiques $G_1$ et $G_2$ sont r\'eguliers, nous verrons (\ref{isoind}) que la $G_j$-alg\`ebre $\delta_{A_j}^k(A_j)$ peut \^etre obtenue directement \`a partir de la $G_k$-alg\`ebre $A_k$, dont elle est en fait une d\'eformation.
\end{enumerate}
\end{remarks} 

De cette    description  concr\`ete  d'une action continue $(A, \beta_A, \delta_A)$ du groupo\"ide ${\cal G}$ en fonction des *-morphis\-mes $\delta_{A_j}^k$, on d\'eduit  une d\'efinition pratique des *-morphismes ${\cal G}$-\'equivariants.
\begin{lemme}\label{recmor}  Soient $(A, \beta_A, \delta_A)$ et $(B, \beta_B, \delta_B)$ deux ${\cal G}$-alg\`ebres. Pour $k=1,2$, posons $q_k := \beta_A(\varepsilon_k)$ et soit  $\iota_k : M(B_k) \rightarrow M(B)$ le prolongement strictement continu de l'inclusion $B_k \subset B$.
\begin{enumerate} 
\item  Si    $f : A \rightarrow M(B)$ est un *-morphisme   (\ref{morgrou})   ${\cal G}$-\'equivariant, alors pour tout $j=1, 2$, il existe un unique *-morphisme   non d\'eg\'en\'er\'e  
 $f_j  : A_j \rightarrow  M(B_j)$ v\'erifiant :
\begin{equation} \label{morph} (f_k \otimes {\rm id} _{S_{kj}})\circ  \delta_{A_j}^k = \delta_{B_j}^k \circ f_j  , \quad k, j =1, 2. \end{equation}
De plus, pour tout $a\in A$, nous avons $f(a)  = \sum_j \iota_j \circ  f_j(aq_j)$.
\item R\'eciproquement, si  pour $j=1, 2$, nous avons un *-morphisme  non d\'eg\'en\'er\'e $f_j  : A_j \rightarrow  M(B_j)$  v\'erifiant \ref{morph}, alors le *-morphisme non d\'eg\'en\'er\'e 
$$f : A \rightarrow M(B) : a \mapsto f(a) = \sum_j \iota_j \circ  f_j(aq_j)$$
est ${\cal G}$-\'equivariant.
\end{enumerate} 
\begin{proof}  De la seconde condition de la ${\cal G}$-\'equivariance de $f$, il r\'esulte que  $f(\beta_A(\varepsilon_j)) = \beta_B(\varepsilon_j)$, donc 
$f(A_j) \subset     \beta_B(\varepsilon_j)M(B) = \iota_j(M(B_j))$ pour tout $j=1,2$,  d'o\`u l'existence des *-morphismes non d\'eg\'en\'er\'es   $f_j  : A_j \rightarrow  M(B_j)$.
\hfill\break
Le a) se d\'eduit   ais\'ement de la premi\`ere condition de la ${\cal G}$-\'equivariance de $f$. Le b) est facile \`a v\'erifier. 
\end{proof}
\end{lemme}

\begin{corollary}\label{cor1} 
Les   deux    correspondances  :
$$A^{\cal G} \rightarrow A^{G_i}  :  (A, \beta_A, \delta_A) \mapsto (A_i,  \delta_{A_i}^i), \quad  i =1, 2,$$
\noindent 
  sont fonctorielles.
\end{corollary}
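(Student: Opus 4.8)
The plan is to promote the object-level assignment of Proposition \ref{acc} and the morphism-level assignment of Lemme \ref{recmor} into a genuine functor $F_i : A^{\cal G} \to A^{G_i}$ and then check the two functoriality axioms. On objects, Proposition \ref{acc} d) already guarantees that $(A_i, \delta_{A_i}^i)$ is a $G_i$-algebra, so $F_i(A, \beta_A, \delta_A) := (A_i, \delta_{A_i}^i)$ is well defined. On morphisms, given a ${\cal G}$-equivariant $f : A \to M(B)$, Lemme \ref{recmor} a) furnishes unique non-d\'eg\'en\'er\'es $*$-morphismes $f_j : A_j \to M(B_j)$ satisfying $(f_k \otimes \id_{S_{kj}}) \circ \delta_{A_j}^k = \delta_{B_j}^k \circ f_j$; specializing to $k = j = i$ yields $(f_i \otimes \id_{S_{ii}}) \circ \delta_{A_i}^i = \delta_{B_i}^i \circ f_i$, which is exactly the $G_i$-equivariance of $f_i$. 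I would therefore set $F_i(f) := f_i$.

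For the identity axiom, I would invoke the uniqueness clause of Lemme \ref{recmor} a) applied to $f = \id_A$: the family $f_j = \id_{A_j}$ visibly satisfies (\ref{morph}) and reproduces $\id_A$ through the reconstruction formula $a = \sum_j \iota_j(a q_j)$ (with $q_j = \beta_A(\varepsilon_j)$), so by uniqueness $F_i(\id_A) = (\id_A)_i = \id_{A_i}$.

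The substantive step is compatibility with composition. Given ${\cal G}$-equivariant $f : A \to M(B)$ and $g : B \to M(C)$, I would first concatenate the two intertwining relations: passing to the strictly continuous extensions to the multiplier algebras,
\[
((g_k \circ f_k) \otimes \id_{S_{kj}}) \circ \delta_{A_j}^k = (g_k \otimes \id_{S_{kj}}) \circ \delta_{B_j}^k \circ f_j = \delta_{C_j}^k \circ (g_j \circ f_j),
\]
so that the family $(g_j \circ f_j)$ again satisfies (\ref{morph}). It then remains to identify the morphism reconstructed from this family with $g \circ f$. Here I would use that ${\cal G}$-equivariance (Definition \ref{morgrou}) forces $g(\beta_B(\varepsilon_k)) = \beta_C(\varepsilon_k)$, whence $g$ carries the ideal $B_k$ into $\iota_k(M(C_k))$ and restricts there to $g_k$; combining this with $f(a) = \sum_k \iota_k(f_k(a q_k))$ gives $(g \circ f)(a) = \sum_k \iota_k\big((g_k \circ f_k)(a q_k)\big)$. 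By the uniqueness in Lemme \ref{recmor} a), the $i$-th component of $g \circ f$ is thus $g_i \circ f_i$, i.e. $F_i(g \circ f) = F_i(g) \circ F_i(f)$.

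The main obstacle I anticipate is purely technical and concentrated in this composition step: one must handle the non-degenerate extensions of $f$ and $g$ to the multiplier algebras with care, and in particular justify that $g$ respects the orthogonal ideal decomposition $B = B_1 \oplus B_2$ compatibly with its components $g_k$, which rests on the equivariance identity $g \circ \beta_B = \beta_C$. Everything else reduces cleanly to the existence-and-uniqueness statement of Lemme \ref{recmor}.
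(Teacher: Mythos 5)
Your proposal is correct and follows essentially the same route as the paper, which simply observes (relying on Lemme \ref{recmor}) that the component $f_i$ of a ${\cal G}$-equivariant morphism $f$ is $G_i$-equivariant. You merely spell out the identity and composition axioms, which the paper leaves implicit; the uniqueness clause of Lemme \ref{recmor} handles both exactly as you describe.
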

\begin{proof} Il est clair que si $f :  (A, \beta_A, \delta_A) \rightarrow (B, \beta_B, \delta_B)$ est un *-morphisme ${\cal G}$-\'equivariant, alors $f_i : (A_i, \delta_{A_i}^i) \rightarrow (B_i, \delta_{B_i}^i)$ est $G_i$-\'equivariant.
\end{proof}

Pour montrer que les deux correspondances $A^{\cal G} \rightarrow A^{G_i}  :  (A, \beta_A, \delta_A) \mapsto (A_i,  \delta_{A_i}^i)$ sont biunivoques dans le cas o\`u $G_1$ et $G_2$ sont r\'eguliers, nous  aurons besoin d'une  r\'eciproque de  \ref{acc} :  

\noindent
\begin{lemme}\label{recacc}  Soient $A_1$ et $A_2$ deux C*-alg\`ebres  munies de *-morphismes     injectifs
\begin{equation} 
\delta_{A_j}^k : A_j \rightarrow  M(A_k \otimes  S_{kj}) , \quad j,k =1,2, \nonumber
\end{equation} 
v\'erifiant les conditions (\ref{acc} b) et c)).
\hfill\break
Posons $A := A_1 \oplus A_2 $ et avec les notations de \ref{not1}, d\'efinissons   les *-morphismes : 
$$\delta_A : A \rightarrow M(A \otimes S) : a =(a_1,a_2) \mapsto \delta_A(a) := \sum_{k,j} \pi_j^k \delta_{A_j}^k(a_j)  , \quad 
\beta_A :\C^2 \rightarrow M(A) : (\lambda, \mu) \mapsto  \begin{pmatrix}\lambda & 0 \\
0 & \mu \end{pmatrix}.$$
\noindent 
Alors $(\beta_A , \delta_A)$ est une action continue du groupo\"ide ${\cal  G}$ dans la C*-alg\`ebre $A = A_1 \oplus A_2$.
\end{lemme}

{\bf \'Equivalence de Morita des produits crois\'es  des C*-alg\`ebres $A_1 \rtimes G_1$ et $A_2 \rtimes G_2$}

\noindent
Soit $(A,  \beta_A, \delta_A)$ une action continue du groupo\"ide ${\cal G}:= {\cal G}_{G_1, G_2}$. Avec les notations de \ref{acc}, nous allons montr\'e que les produits crois\'es $A_1 \rtimes G_1$ et $A_2 \rtimes G_2$ sont Morita \'equivalents.

Commen\c cons par expliciter le produit crois\'e  (\cf\ref{pc})  $A \rtimes {\cal G}$.

On a $${\cal E}_A  := {\cal E}_{A, L} =  (q_1 \otimes \alpha(\varepsilon_1) + q_2  \otimes \alpha(\varepsilon_2)) (A \otimes H) = {\cal E}_{A, 1} \oplus {\cal E}_{A, 2},$$
\noindent
avec ${\cal E}_{A, k} := A_k \otimes  \alpha(\varepsilon_k) H =  A_k \otimes H_{k,1} \oplus A_k \otimes H_{k,2}$, $k=1,2$.\index{eg@${\cal E}_{A,k}$}
\hfill\break
Posons $B  = A \rtimes {\cal G} \subset {\cal L}({\cal E}_A)$. On rappelle qu'on a $B = [\pi(a) \widehat\theta(x) \,\,| \,\, a\in A \,,\,x \in \widehat S ]$, o\`u :
$$\pi : M(A)   \rightarrow M(B) : m \mapsto \restr{({\rm id}_A \otimes L)\delta_A(m)}{ {\cal E}_A } \quad ; \quad \widehat\theta : M(\widehat S)  \rightarrow M(B) :  T \mapsto \restr{(1_A \otimes T)}{ {\cal E}_A }.$$
Il est clair que l'application $a \mapsto {\pi(a)}\!\!\!\restriction_{{\cal E}_{A, k}}$ d\'efinit une repr\'esentation fid\`ele 
$\pi_k  : A \rightarrow {\cal L}({\cal E}_{A, k})$ de la C*-alg\`ebre $A$ et on a pour tout $a\in A$ :
$$\pi_k(a) = \begin{pmatrix}({\rm id}_{A_k} \otimes L_{k1})\delta_{A_1}^k(aq_1) & 0 \\
0 & ({\rm id}_{A_k} \otimes L_{k2})\delta_{A_2}^k(aq_2)\end{pmatrix}.$$
\noindent
De m\^eme, pour $k=1,2$, on a une repr\'esentation fid\`ele $\widehat\theta_k  : \widehat S \rightarrow {\cal L}({\cal E}_{A, k}) : x  \mapsto \restr{(1_A \otimes \rho(x))}{ {\cal E}_{A, k} }$ de la  C*-alg\`ebre $\widehat S$ et pour tout $x \in \widehat S $, on a :\index{pj@$\pi_k$, $\widehat{\theta}_k$}
$$\widehat\theta_k(x) = 
\begin{pmatrix} 1_{A_k} \otimes \widehat\pi_k(x_{11}) & 1_{A_k} \otimes \widehat\pi_k(x_{12}) \\
1_{A_k} \otimes \widehat\pi_k(x_{21}) & 1_{A_k} \otimes \widehat\pi_k(x_{22})\end{pmatrix}, \quad  x_{ij} := \beta(\varepsilon_i) x \beta(\varepsilon_j) , \quad \widehat\pi_k(x_{ij}) := p_{ki} x p_{kj}.$$
\noindent
Notons $\pi_B$ la repr\'esentation du  produit crois\'e $B = A \rtimes\cG$ d\'efinie par l'inclusion 
$B   \subset  {\cal L}({\cal E}_A)$. 

Par restriction de la repr\'esentation $\pi_B$ au sous C*-module ${\cal E}_{A, k}$, on obtient pour tout $k=1,2$,   une repr\'esentation 
$\pi_{B,k}  : B \rightarrow {\cal L}({\cal E}_{A, k})$. Pour tout $a\in A$ et $x \in \widehat S $, posons $b = \pi(a) \widehat\theta(x)$.  On a  :
$$\pi_{B,k}(b) = \pi_k(a) \widehat\theta_k(x) =\begin{pmatrix}({\rm id}_{A_k} \otimes L_{k1})\delta_{A_1}^k(aq_1)(1_{A_k} \otimes \widehat\pi_k(x_{11})) & ({\rm id}_{A_k} \otimes L_{k1})\delta_{A_1}^k(aq_1)(1_{A_k} \otimes \widehat\pi_k(x_{12})) \\
({\rm id}_{A_k} \otimes L_{k2})\delta_{A_2}^k(aq_2)(1_{A_k} \otimes \widehat\pi_k(x_{21})) & ({\rm id}_{A_k} \otimes L_{k2})\delta_{A_2}^k(aq_2)(1_{A_k} \otimes \widehat\pi_k(x_{22}))
\end{pmatrix}.$$
\noindent
Pour tout $i , j , k  = 1,2$, posons\index{eh@${\cal E}_{jk}^i$} 
\begin{equation}\label{Er} 
{\cal E}_{jk}^i := [ \pi_i(aq_j) \widehat\theta_i(x_{jk})\,\,|\,\, a\in A \,\,, \,\, x \in \widehat S 
] \subset {\cal L}(A_i\otimes  H_{ik}, A_i \otimes H_{ij}).
\end{equation}
 Remarquons que pour tout $i=1,2$, on a  ${\cal E}_{ii}^i = A_i \rtimes_{\delta_{A,i}} G_i$.

\begin{theorem} Pour tout $i, j, k, l =1, 2$, on a  : 
$${\cal E}_{jl}^i\,(A_i \otimes H_{il}) = A_i \otimes H_{ij}  , \quad 
 [{\cal E}_{jk}^i \circ {\cal E}_{kl}^i] =   {\cal E}_{jl}^i  , \quad ({\cal E}_{jl}^i) ^* = {\cal E}_{lj}^i.$$\noindent
En particulier, on a :
$$[{\cal E}_{jk}^i \circ ({\cal E}_{jk}^i)^*] = {\cal E}_{jj}^i   , \quad
[({\cal E}_{jk}^i)^* \circ ({\cal E}_{jk}^i)]  = {\cal E}_{kk}^i.$$
\end{theorem}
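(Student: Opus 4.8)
The plan is to recognise the family $({\cal E}_{jk}^i)_{j,k}$, for fixed $i$, as the system of corners of a single $C^*$-algebra cut by two complementary projections, and then run the standard linking-algebra argument. Write $B:=A\rtimes{\cal G}$, $B_i:=\pi_{B,i}(B)=[\pi_i(A)\widehat\theta_i(\widehat S)]\subset{\cal L}({\cal E}_{A,i})$, and set $P_j:=\widehat\theta_i(\beta(\varepsilon_j))$ for $j=1,2$. From the block description of $\widehat\theta_i$ one reads off $P_j=1_{A_i}\otimes p_{ij}$, the projection of ${\cal E}_{A,i}=(A_i\otimes H_{i1})\oplus(A_i\otimes H_{i2})$ onto $A_i\otimes H_{ij}$; by \ref{moritaS} a) and the unitality of $\widehat\theta_i$ these are complementary, $P_1+P_2=1$. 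A direct computation using \ref{delq} also identifies $P_j$ with $\pi_i(\beta_A(\varepsilon_j))$, so by the centrality of $\beta_A$ (relation \ref{centre}) each $P_j$ is a projection in $M(B_i)$ commuting with $\pi_i(A)$. Cutting the generators $\pi_i(aq_j)\widehat\theta_i(x_{jk})$ of ${\cal E}_{jk}^i$ then gives the key identification ${\cal E}_{jk}^i=[P_jB_iP_k]$: indeed $\pi_i(aq_j)=\pi_i(a)P_j$ and $\widehat\theta_i(x_{jk})=P_j\widehat\theta_i(x)P_k$, while conversely the commutation of $P_j$ with $\pi_i(A)$ gives $P_j\pi_i(a)\widehat\theta_i(x)P_k=\pi_i(aq_j)\widehat\theta_i(x_{jk})$.

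With this identification the three asserted relations become statements about corners $P_jB_iP_l$. The adjoint relation $({\cal E}_{jl}^i)^*={\cal E}_{lj}^i$ is immediate from the self-adjointness of $B_i$, since $(P_jB_iP_l)^*=P_lB_i^*P_j=P_lB_iP_j$. For the fullness relation ${\cal E}_{jl}^i(A_i\otimes H_{il})=A_i\otimes H_{ij}$, I would note $A_i\otimes H_{il}=P_l{\cal E}_{A,i}$ and use that $B_i$ acts nondegenerately on ${\cal E}_{A,i}$ (both $\pi_i$ and $\widehat\theta_i$ being nondegenerate, by \ref{piMA} and \ref{thetaMwS}): then $[{\cal E}_{jl}^i(A_i\otimes H_{il})]=[P_jB_iP_l{\cal E}_{A,i}]=[P_jB_i{\cal E}_{A,i}]=P_j{\cal E}_{A,i}=A_i\otimes H_{ij}$. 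The inclusion $[{\cal E}_{jk}^i\circ{\cal E}_{kl}^i]\subset{\cal E}_{jl}^i$ is equally routine: as $P_k\in M(B_i)$, one has $[(P_jB_iP_k)(P_kB_iP_l)]=[P_jB_iP_kB_iP_l]\subset[P_jB_iP_l]$.

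The only substantial point is the reverse inclusion ${\cal E}_{jl}^i\subset[{\cal E}_{jk}^i\circ{\cal E}_{kl}^i]$, which I would reduce to the \emph{fullness} of each projection $P_k$ in $B_i$, that is to $[B_iP_kB_i]=B_i$. Here the essential input is De Commer's density result recalled in \ref{moritaS} b), namely $[\widehat S\beta(\varepsilon_k)\widehat S]=\widehat S$; applying the $*$-homomorphism $\widehat\theta_i$ yields $[\widehat\theta_i(\widehat S)P_k\widehat\theta_i(\widehat S)]=\widehat\theta_i(\widehat S)$, whence $[B_iP_kB_i]=[\pi_i(A)\widehat\theta_i(\widehat S)P_k\widehat\theta_i(\widehat S)\pi_i(A)]=[\pi_i(A)\widehat\theta_i(\widehat S)\pi_i(A)]=[\pi_i(A)B_i]=B_i$, the last equalities exploiting the nondegeneracy of $\pi_i$. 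With fullness in hand, $P_jB_iP_l=P_j[B_iP_kB_i]P_l=[(P_jB_iP_k)(P_kB_iP_l)]=[{\cal E}_{jk}^i\circ{\cal E}_{kl}^i]$, which is the desired equality. The two final identities then follow by specialising $l=j$ and reading off the two corners, combined with the adjoint relation: $[{\cal E}_{jk}^i\circ({\cal E}_{jk}^i)^*]=[{\cal E}_{jk}^i\circ{\cal E}_{kj}^i]={\cal E}_{jj}^i$ and $[({\cal E}_{jk}^i)^*\circ{\cal E}_{jk}^i]=[{\cal E}_{kj}^i\circ{\cal E}_{jk}^i]={\cal E}_{kk}^i$. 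The main obstacle is thus concentrated entirely in the fullness of $P_k$, which is exactly where the monoidal-equivalence hypothesis (through \ref{moritaS} b)) enters; everything else is general linking-algebra bookkeeping.
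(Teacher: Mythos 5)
Your proof is correct, but it is organised quite differently from the paper's. You package the whole statement as a linking-algebra computation: identifying ${\cal E}_{jk}^i$ with the corner $P_jB_iP_k$ of the single $C^*$-algebra $B_i=\pi_{B,i}(A\rtimes{\cal G})$ makes the adjoint relation and the inclusion $[{\cal E}_{jk}^i\circ{\cal E}_{kl}^i]\subset{\cal E}_{jl}^i$ formal, and concentrates everything in the fullness $[B_iP_kB_i]=B_i$, which you extract from De Commer's density result as recalled in \ref{moritaS}~b). The paper instead proves the adjoint relation by hand (writing $\widehat\theta(x^*)\pi(a^*)$ as a limit of sums $\sum_s\pi(a_s)\widehat\theta(x_s)$ via \ref{pc} and cutting by the projections of \ref{inj}), deduces one inclusion of the composition law from it, and obtains the reverse inclusion ${\cal E}_{jl}^i\subset[{\cal E}_{jk}^i\circ{\cal E}_{kl}^i]$ from the factorisation $[E_{jk}^i\circ E_{kl}^i]=E_{jl}^i$ of Proposition~\ref{Emorita}~d), whose proof rests on the pentagon relation for the $V_{jl}^i$ rather than on \ref{moritaS}~b); the module fullness is likewise derived from (\ref{acc}~c)) together with $[E_{jl}^iH_{il}]=H_{ij}$. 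Both inputs are available in the paper, so neither route is circular; yours is shorter and more conceptual, while the paper's stays at the level of the generators $E_{jk}^i$ of $\widehat S$, which it reuses in the subsequent Morita-equivalence and bidualit\'e arguments.

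One small repair: in your chain $[P_jB_iP_l{\cal E}_{A,i}]=[P_jB_i{\cal E}_{A,i}]$ the middle equality does not follow from nondegeneracy alone (a block-diagonal $B_i$ would violate it); you need $[B_iP_lB_i]=B_i$ here as well, writing $[B_iP_l{\cal E}_{A,i}]=[B_iP_l[B_i{\cal E}_{A,i}]]=[[B_iP_lB_i]{\cal E}_{A,i}]=[B_i{\cal E}_{A,i}]$. Since you establish that fullness two sentences later, this is only a matter of reordering, not a gap in substance.
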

Pour la preuve de ce th\'eor\`eme, nous avons besoin du lemme suivant :
\begin{lemme} \label{inj} Pour tout $i, j, k =1,2$, notons $\iota_{jk}^i$ l'injection canonique d\'efinie par la composition :
\begin{equation}
{\cal L}(A_i \otimes H_{ik} , A_i \otimes H_{ij}) \rightarrow  {\cal L}({\cal E}_{A, i}) \rightarrow  {\cal L}({\cal E}_A).\nonumber 
\end{equation}
Pour tout $a\in A$ et tout $x \in \widehat S $, on a  :  
\begin{equation}
\iota_{jk}^i(\widehat\theta_i(x_{jk}) \pi_i(aq_k)) = (q_i \otimes p_{ij}) \widehat\theta(x) \pi(a) (q_i \otimes p_{ik}) , \quad 
\iota_{jk}^i(\pi_i(aq_j) \widehat\theta_i(x_{jk})) = (q_i \otimes p_{ij})   \pi(a) \widehat\theta(x) (q_i \otimes p_{ik}).\nonumber
\end{equation}
\end{lemme}
Un calcul direct permet d'\'etablir ce lemme.
\begin{proof}[D\'emonstration du th\'eor\`eme] Montrons l'inclusion $({\cal E}_{jl}^i) ^* \subset {\cal E}_{lj}^i$.\hfill\break
Soient $a\in A$ et $x \in \widehat S $. On a  $(\pi_i(aq_j)\widehat\theta_i(x_{jl}))^* = \widehat\theta_i(x^*_{lj}) \pi_i(a^*q_j)$. Posons 
$$\widehat\theta(x^*) \pi(a^*)  = \lim \sum_{\rm finie}   \pi(a_s) \widehat\theta(x_s),\text{ avec }a_s\in A \text{ et } x_s\in\widehat S.$$ 
Il r\'esulte de \ref{inj}  qu'on a 
$$\widehat\theta_i(x^*_{lj}) \pi_i(a^*q_j) = \lim \sum_{\rm finie} \pi_i(a_sq_l) \widehat\theta_i(x_{lj,s}),$$
\noindent
d'o\`u l'inclusion $({\cal E}_{jl}^i) ^* \subset {\cal E}_{lj}^i$  et finalement l'\'egalit\'e.

L'inclusion $[{\cal E}_{jk}^i \circ {\cal E}_{kl}^i ]\subset    {\cal E}_{jl}^i$ r\'esulte de l'\'egalit\'e 
$({\cal E}_{kj}^i) ^* = {\cal E}_{jk}^i$. 
\hfill\break
Soient $a\in A$ et $x\in \widehat S$, montrons que $\pi_i(aq_j)\widehat\theta_i(x_{jl})\in [{\cal E}_{jk}^i \circ {\cal E}_{kl}^i ]$.
\hfill\break
Posons  $a = a_1 a_2$ avec $a_i\in A$.  D'apr\`es (\ref{Emorita} d)), on peut supposer que $\widehat\theta_i(x_{jl}) = \widehat\theta_i(y_{jk})\widehat\theta_i(z_{kl})$ avec $y , z \in\widehat S$.  
On a alors 
$\pi_i(aq_j)\widehat\theta_i(x_{jl}) = \pi_i(a_1q_j)\pi_i(a_2q_j)\widehat\theta_i(y_{jk})\widehat\theta_i(z_{kl})$. 

Comme 
$\pi_i(a_2q_j)\widehat\theta_i(y_{jk}) \in {\cal E}_{jk}^i = ({\cal E}_{kj}^i) ^* $, on a 
$\pi_i(aq_j)\widehat\theta_i(x_{jl}) \in [{\cal E}_{jk}^i \circ {\cal E}_{kl}^i]$.
\hfill\break
L'\'egalit\'e $[{\cal E}_{jl}^i\,(A_i \otimes H_{il})] = A_i \otimes H_{ij}$ r\'esulte de   (\ref{acc} c))  et   de l'\'egalit\'e  $[{E}_{jl}^i\,H_{il}] = H_{ij}$.
\end{proof}

\begin{corollary} Pour tout $i,j=1,2$, ${\cal E}_{jj}^i$  est une C*-alg\`ebre et ${\cal E}_{ij}^i$ est une \'equivalence de Morita entre les C*-alg\`ebres ${\cal E}_{ii}^i =  A_i \rtimes_{\delta_{A,i}} G_i$ et ${\cal E}_{ij}^i$.
\begin{proof}
On d\'eduit imm\'ediatement du th\'eor\`eme que ${\cal E}_{jj}^i$ est une C*-alg\`ebre qui agit de fa\c con non d\'eg\'en\'er\'ee dans le $A_i$-module hilbertien $A_i \otimes H_{ij}$ et on peut consid\'erer $M({\cal E}_{jj}^i)$ comme une sous-C*-alg\`ebre de ${\cal L}(A_i \otimes H_{ij}) = M(A_i \otimes {\cal K}(H_{ij}))$.
\hfill\break
Il est clair que ${\cal E}_{ij}^i$, muni de l'action \`a droite de ${\cal E}_{jj}^i$ et du produit scalaire $\langle \xi , \eta \rangle := \xi^* \eta$ est un C*-module plein et on a   ${\cal E}_{ii}^i  = A_i \rtimes_{\delta_{A,i}} G_i  \simeq {\cal K}({\cal E}_{ij}^i)$.
\end{proof}
\end{corollary}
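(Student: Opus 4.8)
Le plan est de tout d\'eduire du th\'eor\`eme pr\'ec\'edent, qui fournit le calcul complet des adjoints, des compositions et des espaces d'images des familles ${\cal E}_{jk}^i$; la d\'emonstration se r\'eduit alors \`a reconna\^itre la structure d'un bimodule d'imprimitivit\'e. D'abord je v\'erifierais que ${\cal E}_{jj}^i$ est une C*-alg\`ebre: par construction (\ref{Er}) c'est un sous-espace vectoriel ferm\'e de ${\cal L}(A_i \otimes H_{ij})$, et le th\'eor\`eme donne $({\cal E}_{jj}^i)^* = {\cal E}_{jj}^i$ (troisi\`eme \'egalit\'e avec $l = j$) ainsi que $[{\cal E}_{jj}^i \circ {\cal E}_{jj}^i] = {\cal E}_{jj}^i$ (deuxi\`eme \'egalit\'e avec $k = l = j$); un sous-espace ferm\'e, autoadjoint et stable par composition de ${\cal L}(A_i \otimes H_{ij})$ est une sous-C*-alg\`ebre. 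La premi\`ere \'egalit\'e du th\'eor\`eme, prise avec $l = j$, donne ${\cal E}_{jj}^i\,(A_i \otimes H_{ij}) = A_i \otimes H_{ij}$, donc ${\cal E}_{jj}^i$ agit non d\'eg\'en\'er\'ement sur $A_i \otimes H_{ij}$ et l'on peut voir $M({\cal E}_{jj}^i)$ comme une sous-C*-alg\`ebre de ${\cal L}(A_i \otimes H_{ij}) = M(A_i \otimes {\cal K}(H_{ij}))$.

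Ensuite je munirais ${\cal E}_{ij}^i \subset {\cal L}(A_i \otimes H_{ij}, A_i \otimes H_{ii})$ d'une structure de C*-module hilbertien \`a droite sur ${\cal E}_{jj}^i$. L'action \`a droite est la composition d'op\'erateurs: l'\'egalit\'e $[{\cal E}_{ij}^i \circ {\cal E}_{jj}^i] = {\cal E}_{ij}^i$, cas particulier de la deuxi\`eme \'egalit\'e du th\'eor\`eme, montre qu'elle pr\'eserve ${\cal E}_{ij}^i$. Le produit scalaire $\langle \xi, \eta \rangle := \xi^* \eta$ prend ses valeurs dans $[({\cal E}_{ij}^i)^* \circ {\cal E}_{ij}^i] = [{\cal E}_{ji}^i \circ {\cal E}_{ij}^i] = {\cal E}_{jj}^i$ (en utilisant $({\cal E}_{ij}^i)^* = {\cal E}_{ji}^i$ et la deuxi\`eme \'egalit\'e), et il est positif puisque $\xi^* \xi \geq 0$ dans ${\cal L}(A_i \otimes H_{ij})$. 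La norme hilbertienne associ\'ee co\"incidant avec la norme d'op\'erateur, le sous-espace ferm\'e ${\cal E}_{ij}^i$ est complet, donc un v\'eritable C*-module; il est plein car $[\langle {\cal E}_{ij}^i, {\cal E}_{ij}^i \rangle] = {\cal E}_{jj}^i$.

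Enfin je calculerais les op\'erateurs compacts de ce module: pour $\xi, \eta \in {\cal E}_{ij}^i$, l'op\'erateur de rang un $\theta_{\xi, \eta}$ agit par $\zeta \mapsto \xi \langle \eta, \zeta \rangle = \xi \eta^* \zeta$, c'est-\`a-dire $\theta_{\xi, \eta} = \xi \eta^* \in [{\cal E}_{ij}^i \circ ({\cal E}_{ij}^i)^*] = [{\cal E}_{ij}^i \circ {\cal E}_{ji}^i] = {\cal E}_{ii}^i$; on obtient ainsi ${\cal K}({\cal E}_{ij}^i) = {\cal E}_{ii}^i = A_i \rtimes_{\delta_{A,i}} G_i$. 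Par cons\'equent ${\cal E}_{ij}^i$ r\'ealise une \'equivalence de Morita entre $A_i \rtimes_{\delta_{A,i}} G_i$ et ${\cal E}_{jj}^i$. L'essentiel \'etant d\'ej\`a acquis dans le th\'eor\`eme, le seul point demandant un peu de soin est la v\'erification que les deux produits scalaires (\`a gauche et \`a droite) sont compatibles et d\'efinissent bien un bimodule d'imprimitivit\'e au sens de Rieffel, ce qui est routinier une fois les \'egalit\'es d'espaces \'etablies.
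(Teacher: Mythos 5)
Votre preuve est correcte et suit essentiellement la m\^eme d\'emarche que celle du texte : on d\'eduit des \'egalit\'es $({\cal E}_{jl}^i)^* = {\cal E}_{lj}^i$, $[{\cal E}_{jk}^i \circ {\cal E}_{kl}^i] = {\cal E}_{jl}^i$ et ${\cal E}_{jl}^i (A_i \otimes H_{il}) = A_i \otimes H_{ij}$ du th\'eor\`eme que ${\cal E}_{jj}^i$ est une C*-alg\`ebre agissant non d\'eg\'en\'er\'ement, puis que ${\cal E}_{ij}^i$ muni de $\langle \xi,\eta\rangle := \xi^*\eta$ est un C*-module plein avec ${\cal K}({\cal E}_{ij}^i) = {\cal E}_{ii}^i$. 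Votre r\'edaction explicite simplement les v\'erifications que le texte laisse au lecteur.
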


\begin{proposition} On a un *-isomorphisme $\mu_{ji} : {\cal E}_{jj}^i \rightarrow  {\cal E}_{jj}^j$ v\'erifiant pour tout $x\in  {\cal E}_{jj}^i$  : 
\begin{equation}(\delta_{A_i}^j \otimes {\rm id}_{{\cal K}(H_{ij})})(x) = (W_{ji, 23}^j)^*\mu_{ji}(x)_{13}W_{ji, 23}^j.\nonumber 
\end{equation}
De plus,   pour tout $a\in A$ et $x \in \widehat S $, on a  $\mu_{ji}(\pi_i(a q_j)\widehat\theta_i(x_{jj})) = \pi_j(a q_j)\widehat\theta_j(x_{jj})$.
\end{proposition}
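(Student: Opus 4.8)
Le plan est de ramener l'\'enonc\'e \`a la v\'erification de la relation de covariance sur les g\'en\'erateurs de ${\cal E}_{jj}^i$, puis d'en d\'eduire formellement l'existence et la bijectivit\'e de $\mu_{ji}$. Un g\'en\'erateur s'\'ecrit $\pi_i(aq_j)\widehat\theta_i(x_{jj})$ ($a\in A$, $x\in\widehat S$), avec $\pi_i(aq_j) = ({\rm id}_{A_i}\otimes L_{ij})\delta_{A_j}^i(aq_j)$ et $\widehat\theta_i(x_{jj}) = 1_{A_i}\otimes\widehat\pi_i(x_{jj})$, op\'erateurs agissant dans $A_i\otimes H_{ij}$. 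Comme $\delta_{A_i}^j$ est non d\'eg\'en\'er\'e (\ref{acc}), son prolongement aux multiplicateurs est unital, et $\delta_{A_i}^j\otimes{\rm id}_{{\cal K}(H_{ij})}$ est un *-morphisme injectif (\ref{isoGj}). Je calculerais alors s\'epar\'ement l'image par $\delta_{A_i}^j\otimes{\rm id}$ des deux facteurs $\pi_i$ et $\widehat\theta_i$.

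Pour le facteur $\pi_i$, la coassociativit\'e (\ref{acc} b)) fournit
$$(\delta_{A_i}^j\otimes{\rm id}_{S_{ij}})\delta_{A_j}^i = ({\rm id}_{A_j}\otimes\delta_{jj}^i)\delta_{A_j}^j,$$
et le coproduit (\ref{coprodS} b)) s'\'ecrit $\delta_{jj}^i(s) = (W_{ji}^j)^*(1_{H_{ji}}\otimes s)W_{ji}^j$. Puisque $L$ est la repr\'esentation identit\'e et que $W_{ji}^j$ est unitaire de $H_{ji}\otimes H_{ij}$ sur $H_{ji}\otimes H_{jj}$ (\ref{2.9}, \ref{grco}), ces deux formules donnent directement
$$(\delta_{A_i}^j\otimes{\rm id}_{{\cal K}(H_{ij})})(\pi_i(aq_j)) = (W_{ji,23}^j)^*\,\pi_j(aq_j)_{13}\,W_{ji,23}^j.$$

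Le point d\'elicat sera le facteur $\widehat\theta_i$. Comme $W\in M(S\otimes\lambda(\widehat S))$ (\ref{mult}) et $[\widehat S,\lambda(\widehat S)] = 0$, on a $[W, 1_H\otimes x] = 0$ pour $x\in\widehat S$. En suivant l'action des projecteurs $p_{kl}$ \`a travers $W$ au moyen de (\ref{2.10} b)) --- ce qui donne notamment $W(p_{ji}\otimes p_{ij}) = (\beta(\varepsilon_i)\otimes p_{jj})W$ --- puis en utilisant la centralit\'e de $\alpha$ et le fait que $\alpha(N)\subset\widehat M$ commute avec $\widehat S\subset\widehat M'$ (\ref{2.21}, \ref{Eq:commut}), un calcul direct devrait fournir
$$W_{ji}^j(1_{H_{ji}}\otimes\widehat\pi_i(x_{jj}))(W_{ji}^j)^* = 1_{H_{ji}}\otimes\widehat\pi_j(x_{jj}).$$
Comme $\delta_{A_i}^j(1_{A_i}) = 1$, il en r\'esulte $(\delta_{A_i}^j\otimes{\rm id})(\widehat\theta_i(x_{jj})) = (W_{ji,23}^j)^*\,\widehat\theta_j(x_{jj})_{13}\,W_{ji,23}^j$. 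C'est l\`a le c\oe ur de la preuve~: le passage de $\widehat\pi_i$ \`a $\widehat\pi_j$, invisible sur le seul $[W,1\otimes x]=0$, est produit par les projecteurs $p_{kl}$ et la centralit\'e de $\alpha$, propres aux groupoides de co-liaison.

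En multipliant les deux \'egalit\'es pr\'ec\'edentes, j'obtiendrais pour tout g\'en\'erateur
$$(\delta_{A_i}^j\otimes{\rm id}_{{\cal K}(H_{ij})})(\pi_i(aq_j)\widehat\theta_i(x_{jj})) = (W_{ji,23}^j)^*\,[\pi_j(aq_j)\widehat\theta_j(x_{jj})]_{13}\,W_{ji,23}^j.$$
Ainsi $x\mapsto W_{ji,23}^j(\delta_{A_i}^j\otimes{\rm id})(x)(W_{ji,23}^j)^*$ est un *-morphisme de ${\cal E}_{jj}^i$ qui envoie chaque g\'en\'erateur sur un op\'erateur de la forme $z_{13}$ avec $z = \pi_j(aq_j)\widehat\theta_j(x_{jj})\in{\cal E}_{jj}^j$; par densit\'e et par injectivit\'e de l'amplification $z\mapsto z_{13}$, ceci d\'efinit $\mu_{ji}:{\cal E}_{jj}^i\to{\cal E}_{jj}^j$, v\'erifiant la relation de covariance voulue ainsi que $\mu_{ji}(\pi_i(aq_j)\widehat\theta_i(x_{jj})) = \pi_j(aq_j)\widehat\theta_j(x_{jj})$. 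Enfin $\mu_{ji}$ est injectif (car $\delta_{A_i}^j\otimes{\rm id}$ l'est et la conjugaison par l'unitaire $W_{ji,23}^j$ aussi) et surjectif (son image est une C*-alg\`ebre contenant tous les g\'en\'erateurs de ${\cal E}_{jj}^j$), donc c'est un *-isomorphisme.
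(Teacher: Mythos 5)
Votre preuve est correcte et suit le m\^eme squelette que celle du texte~: on scinde le g\'en\'erateur $\pi_i(aq_j)\widehat\theta_i(x_{jj})$ en ses deux facteurs, on \'etablit la relation de covariance pour chacun, puis on conclut par densit\'e. Pour le facteur $\pi_i$, votre argument (coassociativit\'e de (\ref{acc} b)) combin\'ee \`a la formule $\delta_{jj}^i(\cdot) = (W_{ji}^j)^*(1_{H_{ji}}\otimes\cdot)W_{ji}^j$ de (\ref{coprodS} b))) est exactement le point c) du lemme pr\'eparatoire du texte. La seule diff\'erence r\'eside dans la preuve de l'identit\'e cl\'e $W_{ji}^j(1_{H_{ji}}\otimes\widehat\pi_i(x_{jj}))(W_{ji}^j)^* = 1_{H_{ji}}\otimes\widehat\pi_j(x_{jj})$~: le texte la d\'eduit de la relation de commutation $(W_{ik,12}^j)^*\,V_{jj,23}^i = V_{jj,23}^k\,(W_{ik,12}^j)^*$ (\ref{pent} b)) jointe \`a l'\'ecriture $\widehat\pi_i(x_{jj}) = ({\rm id}\otimes p_{jj}\omega p_{jj})(V_{jj}^i)$ (\ref{Emorita} a)), tandis que vous la prouvez directement \`a partir de $[W,1\otimes x]=0$ pour $x\in\widehat S$ et du transport des projecteurs $W(p_{ji}\otimes p_{ij}) = (\beta(\varepsilon_i)\otimes p_{jj})W$ issu de (\ref{2.10} b)) et de \ref{2.21}~; ce calcul aboutit bien (en utilisant aussi $WW^* = q_{\alpha,\widehat\alpha}$ absorb\'e par $p_{ji}\otimes p_{jj}$). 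Les deux routes reposent au fond sur le m\^eme fait ($[\widehat M,\widehat M']=0$ et les applications source et but), mais la v\^otre \'evite le recours aux tranches de $V$~; elle est valable telle quelle, et votre v\'erification finale de la bijectivit\'e de $\mu_{ji}$ (injectivit\'e de $\delta_{A_i}^j\otimes{\rm id}$ et de l'amplification $z\mapsto z_{13}$, surjectivit\'e sur les g\'en\'erateurs) est plus explicite que celle du texte.
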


Pour \'etablir  cette  proposition, on va d'abord prouver le lemme suivant :
 
\begin{lemme} Soient $i, j, k = 1, 2$.
\begin{enumerate}
\item $(W_{ik, 12}^j)^* \, V_{jj, 23}^i \, =  V_{jj, 23}^k\, (W_{ik, 12}^j)^*$.
\item Pour tout $x \in \widehat S $, on a $(W_{ik}^j)^*(1_{H_{ik}} \otimes \widehat\pi_i(x_{jj}))W_{ik}^j = 
 1_{H_{ik}} \otimes \widehat\pi_k(x_{jj})$. 
\item Pour tout $a\in A$, on a 
$(\delta_{A_i}^j \otimes {\rm id}_{S_{ij}})(\delta_{A_j}^i(aq_j))  = 
(W_{ji, 23}^j)^*\delta_{A_j}^j(aq_j)_{13}W_{ji, 23}^j$.
\end{enumerate}
\end{lemme}

\begin{proof}
Le a) r\'esulte de (\ref{pent} b)).   Pour $\omega\in B(H)_\ast$, posons $x =({\rm id} \otimes \omega)(V)$. On a  :
$$\widehat\pi_i(x_{jj}) = ({\rm id} \otimes p_{jj}\omega p_{jj})(V_{jj}^i)  , \quad \widehat\pi_k(x_{jj}) = ({\rm id} \otimes p_{jj}\omega p_{jj})(V_{jj}^k).$$
\noindent
Le b) est donc une cons\'equence du a). Le c) r\'esulte de (\ref{coprodS} b)) et (\ref{acc} b)).
\end{proof}
\noindent
\begin{proof}[D\'emonstration  de la proposition] Soient $a \in A$ et $x \in \widehat S$. Posons $u =\pi_i(a q_j) \widehat\theta_i(x_{jj})\in {\cal E}_{jj}^i$. Nous avons :
\begin{align} 
(\delta_{A_i}^j \otimes {\rm id}_{{\cal K}(H_{ij})})(u) &= ({\rm id}_{A_j} \otimes \delta_{jj}^i)(\pi_j(a q_j))(1_{A_j} \otimes1_{H_{ji}} \otimes  \widehat\pi_i(x_{jj}))\nonumber \\
& = (W_{ji, 23}^j)^*  \pi_j(a q_j)_{13} W_{ji, 23}^j (1_{A_j} \otimes1_{H_{ji}} \otimes  \widehat\pi_i(x_{jj}))= (W_{ji, 23}^j)^*(\pi_j(a q_j)\widehat\theta_j(x_{jj}))_{13}W_{ji, 23}^j.\nonumber
\end{align}
On en d\'eduit l'existence de l'isomorphisme $\mu_{ji} : {\cal E}_{jj}^i \rightarrow  {\cal E}_{jj}^j$ et la relation 
\begin{equation}
\mu_{ji}(\pi_i(a q_j)\widehat\theta_i(x_{jj})) = \pi_j(a q_j)\widehat\theta_j(x_{jj})  \quad   ; \quad   a \in A \, , \,x\in \widehat S.\nonumber
\end{equation}
\end{proof}

On a donc obtenu le résultat suivant :

\begin{corollary} \label{pcM}Les C*-alg\`ebres $A_1 \rtimes_{\delta_{A_1}^1} G_1$ et $A_2 \rtimes_{\delta_{A_2}^2} G_2$ sont Morita \'equivalentes.
\end{corollary}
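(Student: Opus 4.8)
The plan is to assemble the final statement directly from the two results that immediately precede it, so that essentially no new computation is required. Fix $i, j \in \{1, 2\}$ with $i \neq j$, and recall from the preceding corollary that the space ${\cal E}_{ij}^i$, equipped with the natural right action of ${\cal E}_{jj}^i$ and the inner product $\langle \xi, \eta \rangle := \xi^* \eta$, is a full Hilbert C*-module implementing a Morita equivalence: one has the identification ${\cal E}_{ii}^i = A_i \rtimes_{\delta_{A_i}^i} G_i \simeq {\cal K}({\cal E}_{ij}^i)$, while the right inner products span ${\cal E}_{jj}^i$. Consequently $A_i \rtimes_{\delta_{A_i}^i} G_i$ is already Morita equivalent to ${\cal E}_{jj}^i$.

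Next I would invoke the proposition proved just above, which furnishes a $*$-isomorphism $\mu_{ji} : {\cal E}_{jj}^i \to {\cal E}_{jj}^j$. Since ${\cal E}_{jj}^j = A_j \rtimes_{\delta_{A_j}^j} G_j$ by the definition of the spaces ${\cal E}_{jk}^i$ (see \ref{Er} together with the remark that ${\cal E}_{ii}^i = A_i \rtimes_{\delta_{A_i}^i} G_i$), the map $\mu_{ji}$ identifies the C*-algebra ${\cal E}_{jj}^i$ with the crossed product $A_j \rtimes_{\delta_{A_j}^j} G_j$. This is exactly the bridge needed to replace the auxiliary algebra ${\cal E}_{jj}^i$ by the crossed product attached to $G_j$.

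Finally I would transport the equivalence bimodule through this isomorphism: viewing ${\cal E}_{ij}^i$ as a right module over $A_j \rtimes_{\delta_{A_j}^j} G_j$ via $\mu_{ji}$ turns it into an $A_i \rtimes_{\delta_{A_i}^i} G_i$ -- $A_j \rtimes_{\delta_{A_j}^j} G_j$ equivalence bimodule, whence the two crossed products are Morita equivalent. The only delicate point — and the closest thing to an obstacle — is to check that the right ${\cal E}_{jj}^i$-module structure is carried to a genuine $(A_j \rtimes_{\delta_{A_j}^j} G_j)$-module structure under $\mu_{ji}$, and that both fullness and the compact-operator identification ${\cal K}({\cal E}_{ij}^i) \simeq A_i \rtimes_{\delta_{A_i}^i} G_i$ are preserved. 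Since these are routine consequences of the functoriality of Morita equivalence with respect to $*$-isomorphisms, the corollary follows at once by taking $(i,j) = (1,2)$.
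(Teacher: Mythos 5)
Your argument is correct and is precisely the chain of reasoning the paper leaves implicit (its proof is empty): the preceding corollary gives the Morita equivalence $A_i \rtimes_{\delta_{A_i}^i} G_i = {\cal E}_{ii}^i \sim {\cal E}_{jj}^i$ via the bimodule ${\cal E}_{ij}^i$, and the proposition supplying $\mu_{ji} : {\cal E}_{jj}^i \to {\cal E}_{jj}^j = A_j \rtimes_{\delta_{A_j}^j} G_j$ converts this into the stated equivalence for $(i,j)=(1,2)$. Nothing further is needed.
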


Pour tout $x\in {\cal E}_{jj}^i$, posons 
\begin{equation}
\delta_{{\cal E}_{jj}^i}(x) :=  \widetilde V_{ij, 23}^j (x \otimes 1_{H_{ji}})(\widetilde V_{ij, 23}^j)^*.
\end{equation}
Alors $\delta_{{\cal E}_{jj}^i}$ est  la   restriction de la coaction duale du syst\`eme dynamique $(A, \beta_A, \delta_A)$ \`a la C*-alg\`ebre ${\cal E}_{jj}^i$. Elle   d\'efinit une action continue   du groupe quantique $\widehat{G_j}$ dans la C*-alg\`ebre 
${\cal E}_{jj}^i$.

\begin{proposition} L'isomorphisme 
$\mu_{ji} : ({\cal E}_{jj}^i , \delta_{{\cal E}_{jj}^i}) \rightarrow  ({\cal E}_{jj}^j , \delta_{{\cal E}_{jj}^j})$ est $\widehat {G_j}$-\'equivariant.
\begin{proof}
Posons $u = \pi_i(aq_j) \widehat\theta_i(x_{jj})$. En utilisant (\ref{pent} b)) et (\ref{Emorita} b)),  on obtient : 
$$\widetilde V_{ij, 23}^j (\pi_i(aq_j) \widehat\theta_i(x_{jj})  \otimes 1_{H_{ji}})(\widetilde V_{ij, 23}^j)^* = 
(\pi_i(aq_j) \otimes 1_{H_{jj}}) (\widehat\theta_i \otimes {\rm id }_{\widehat S_{jj}})\widehat\delta(x_{jj}).$$
\noindent
En r\'eutilisant (\ref{pent} b)) et (\ref{Emorita} b)),  on  d\'eduit :
\begin{align} \widetilde V_{jj, 23}^j (\mu_{ji}(u) \otimes 1_{H_{jj}})(\widetilde V_{jj, 23}^j)^* &= 
\widetilde V_{jj, 23}^j (\pi_j(aq_j) \widehat\theta_j(x_{jj}) \otimes 1_{H_{jj}})(\widetilde V_{jj, 23}^j)^* \nonumber \\
& = 
(\pi_j(aq_j) \otimes 1_{H_{jj}}) (\widehat\theta_j \otimes {\rm id}_{\widehat S_{jj}})\widehat\delta(x_{jj}) \nonumber \\
& = 
(\mu_{ji} \otimes {\rm id}_{\widehat S_{jj}})((\pi_i(aq_j) \otimes 1_{H_{jj}}) (\widehat\theta_i \otimes {\rm id}_{\widehat S_{jj}})\widehat\delta(x_{jj})) =
(\mu_{ji} \otimes {\rm id}_{\widehat S_{jj}})\delta_{{\cal E}_{jj}^i}(u).\nonumber 
\end{align}
\end{proof}
\end{proposition}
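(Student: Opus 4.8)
The statement to prove is the equivariance identity $(\mu_{ji}\otimes{\rm id}_{\widehat S_{jj}})\circ\delta_{\mathcal{E}_{jj}^i}=\delta_{\mathcal{E}_{jj}^j}\circ\mu_{ji}$. Since $\mu_{ji}$ is a $*$-isomorphism and both dual coactions are norm-continuous $*$-homomorphisms, this is a linear relation between continuous maps, so it suffices to verify it on a generating family whose closed span is $\mathcal{E}_{jj}^i$. By the definition (\ref{Er}) of $\mathcal{E}_{jj}^i$, the natural generators are $u=\pi_i(aq_j)\,\widehat\theta_i(x_{jj})$ with $a\in A$ and $x\in\widehat S$. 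The plan is to compute $(\mu_{ji}\otimes{\rm id})\,\delta_{\mathcal{E}_{jj}^i}(u)$ and $\delta_{\mathcal{E}_{jj}^j}\big(\mu_{ji}(u)\big)$ separately and to match them.

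First I would unfold the left-hand coaction. By its definition $\delta_{\mathcal{E}_{jj}^i}(u)=\widetilde V_{ij,23}^j\,(u\otimes 1_{H_{ji}})\,(\widetilde V_{ij,23}^j)^*$. The point is that $u$ factorises into the $G_j$-covariant part $\pi_i(aq_j)$, which is governed by the unitary $V_{jj}^i$ through $\delta_{A_j}^i$, and the dual part $\widehat\theta_i(x_{jj})$, which encodes $\widehat\delta(x_{jj})$. I would push $\widetilde V_{ij,23}^j$ through $u\otimes 1_{H_{ji}}$ using two inputs: the commutation relations of Proposition \ref{pent} b) between the operators $V$, $W$, $\widetilde V$, and the $\widetilde V$-form of the dual coproduct in Proposition \ref{Emorita} b), that is $(\widehat\pi_k\otimes\widehat\pi_l)\widehat\delta(x_{ij})=\widetilde V_{ki}^l\,(\widehat\pi_k(x_{ij})\otimes 1)\,(\widetilde V_{kj}^l)^*$. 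The expected result is the factorised form
$$\delta_{\mathcal{E}_{jj}^i}(u)=(\pi_i(aq_j)\otimes 1_{H_{jj}})\,(\widehat\theta_i\otimes{\rm id}_{\widehat S_{jj}})\,\widehat\delta(x_{jj}),$$
where the index $i$ now survives only inside the representation maps $\pi_i$ and $\widehat\theta_i$, the factor $\widehat\delta(x_{jj})$ being independent of $i$.

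With this identity in hand the rest is formal. Applying $\mu_{ji}\otimes{\rm id}_{\widehat S_{jj}}$ and using the defining property $\mu_{ji}(\pi_i(aq_j)\widehat\theta_i(x_{jj}))=\pi_j(aq_j)\widehat\theta_j(x_{jj})$, which exchanges $\pi_i$ with $\pi_j$ and $\widehat\theta_i$ with $\widehat\theta_j$ compatibly with the spectator leg $\widehat S_{jj}$, I obtain $(\mu_{ji}\otimes{\rm id})\delta_{\mathcal{E}_{jj}^i}(u)=(\pi_j(aq_j)\otimes 1_{H_{jj}})(\widehat\theta_j\otimes{\rm id}_{\widehat S_{jj}})\widehat\delta(x_{jj})$. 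On the other hand $\mu_{ji}(u)=\pi_j(aq_j)\widehat\theta_j(x_{jj})$ is itself a generator of $\mathcal{E}_{jj}^j$, so running the computation of the previous paragraph with $i$ replaced by $j$ (now conjugating by $\widetilde V_{jj,23}^j$) gives exactly $\delta_{\mathcal{E}_{jj}^j}(\mu_{ji}(u))=(\pi_j(aq_j)\otimes 1_{H_{jj}})(\widehat\theta_j\otimes{\rm id}_{\widehat S_{jj}})\widehat\delta(x_{jj})$. The two sides coincide, which is the desired equivariance.

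The hard part is the factorisation step of the second paragraph: disentangling the conjugation by $\widetilde V_{ij,23}^j$ into a $G_j$-part and a dual-coproduct part demands careful leg bookkeeping --- note in particular the shift of the spectator Hilbert space from $H_{ji}$ to $H_{jj}$ forced by the type $\widetilde V_{ij}^j:H_{ij}\otimes H_{ji}\to H_{ij}\otimes H_{jj}$ --- together with repeated, well-ordered applications of the pentagonal and commutation relations for $V$, $W$, $\widetilde V$. Once that single identity is established, the symmetry between $i$ and $j$ and the explicit form of $\mu_{ji}$ close the argument immediately.
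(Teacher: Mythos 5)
Votre démonstration suit essentiellement la même voie que celle de l'article : vérification sur les générateurs $u=\pi_i(aq_j)\widehat\theta_i(x_{jj})$, factorisation de $\widetilde V_{ij,23}^j(u\otimes 1)(\widetilde V_{ij,23}^j)^*$ en $(\pi_i(aq_j)\otimes 1)(\widehat\theta_i\otimes{\rm id})\widehat\delta(x_{jj})$ au moyen des relations de commutation et de la formule du coproduit dual, puis conclusion par la propriété définissante de $\mu_{ji}$ et la symétrie en $i$ et $j$. L'argument est correct et identique dans sa structure à celui du texte.
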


\begin{remarks}
\begin{enumerate}
\item Le corollaire  \ref{pcM}  a \'et\'e \'etabli (avec des notations et des conventions diff\'erentes) dans  \cite{DeC1} dans le cas de l'action triviale du groupo\"ide ${\cal G}$ dans la C*-alg\`ebre $A:= N^{\rm o} = \C^2$.\hfill\break
Pour cette action, notons que : les C*-alg\`ebres  $A_1$ et $A_2$ s'identifient \`a $\C$ ; pour $j,k=1,2$, les *-morphismes
$\delta_{A_j}^k : \C \rightarrow M(\C \otimes S_{kj})  = M(S_{kj})$ v\'erifient  $\delta_{A_j}^k(1) =    p_{kj}$ ; pour tout $i , j , k=1,2$, on a ${\cal E}_{jk}^i = E_{jk}^i$, avec les notations \ref{notE} et \ref{Er}.
\hfill\break
Le produit crois\'e $B = A \rtimes {\cal G}$ est canoniquement isomorphe \`a $\widehat S$. Plus pr\'ecis\'ement, nous avons que $\pi_B : (B, \alpha_B, \delta_B) \rightarrow (\widehat S , \widehat\alpha, \widehat\delta)$   est un isomorphisme de $\widehat{\cal G}$-alg\`ebres.
\item  Dans le cas \ref{2ex} de l'action    du groupo\"ide ${\cal G}$ dans lui-m\^eme, on obtient que les C*-alg\`ebres  
$$(S_{11} \rtimes_{\delta_{11}^1} G_1) \oplus (S_{21} \rtimes_{\delta_{21}^1} G_1) \quad \text{et} \quad
(S_{12} \rtimes_{\delta_{12}^2} G_2) \oplus (S_{22} \rtimes_{\delta_{22}^2} G_2)$$ 
sont Morita \'equivalentes.
\end{enumerate}
\end{remarks}
\bigskip 
\subsection{Bidualit\'e}

\noindent
Soit $(\beta_A, \delta_A)$ une action continue du groupo\"ide mesur\'e ${\cal G}$ dans une C*-alg\`ebre $A$.
Posons $B = A \rtimes {\cal G}$ muni de l'action duale $(\alpha_B, \delta_B)$ du groupo\"ide $\widehat{\cal  G}$ et $C = B \rtimes \widehat{\cal  G}$ muni de l'action (bi)duale $(\beta_C, \delta_C)$.\hfill\break
Dans une premi\`ere partie, nous montrons que  la C*-alg\`ebre $C$  s'identifie canoniquement \`a une   sous-C*-alg\`ebre 
$D \subset {\cal L}(A \otimes H)$. Nous notons alors  $(\beta_D, \delta_D)$ l'action continue de ${\cal G}$ dans $D$, obtenue par transport  de structure.
\hfill\break
Dans le cas d'un groupo\"ide mesur\'e ${\cal G}$ r\'egulier, nous montrons  l'\'egalit\'e  $D =   q_{\beta_A, \widehat\alpha} (A \otimes {\cal K}(H))q_{\beta_A, \widehat\alpha}$. Nous d\'ecrivons aussi explicitement 
 l'action continue                   $(\beta_D, \delta_D)$ du groupo\"ide mesur\'e ${\cal G}$ dans  $D$, \`a l'aide de l'action initiale  $(\beta_A, \delta_A)$ et de la repr\'esentation r\'eguli\`ere droite du groupo\"ide ${\cal G}$.\hfill\break
Nous examinons ensuite le cas d'une action continue d'un groupo\"ide de co-liaison.

\subsubsection{Cas g\'en\'eral}

Commen\c cons par d\'efinir $D$ et \'etablir un *-isomorphisme $C = A \rtimes {\cal G}\rtimes  \widehat{\cal  G}  \rightarrow D$.
\begin{lemme-notations}\label{DEAR} \strut 
\begin{enumerate}
\item Posons $D := [\,({\rm id}_A \otimes R) \delta_A(a)  \,  (1_A \otimes \lambda(x) L(s))\,| \, a\in A,\, x\in\widehat S,\, s\in S]  \subset  {\cal L}(A \otimes H)$. Alors $D$ est une sous-C*-alg\`ebre de ${\cal L}(A \otimes H)$.
\item Il existe un unique *-morphisme   $\pi_R : M(A) \rightarrow {\cal L}(A \otimes H)$\index{pk@$\pi_R$} injectif et continu pour les topologies stricte/*-forte v\'erifiant :
$$\pi_R(m) =  ({\rm id}_A \otimes R) \delta_A(m), \quad m\in M(A) \quad ; \quad \pi_R(1_A) = 
q_{\beta_A, \widehat\alpha}.$$
\item Pour tout $d\in D$, on a  $q_{\beta_A, \widehat\alpha}\, d =  d =  d \,q_{\beta_A, \widehat\alpha}$. De plus,    $D  (A\otimes H) = 
 q_{\beta_A, \widehat\alpha} (A \otimes H)$. 
\item Il existe un unique *-morphisme  $j_D : M(D) \rightarrow {\cal L}(A \otimes H)$\index{jb@$j_D$} fid\`ele,  continu pour les topologies stricte/*-forte,  prolongeant ${\rm id}_D$ et v\'erifiant $j_D(1_D) =  q_{\beta_A, \widehat\alpha}$.
\end{enumerate}
Dans la suite, on pose ${\cal E}_{A, R} := q_{\beta_A, \widehat\alpha} (A \otimes H)$.\index{ei@${\cal E}_{A,R}$}
\begin{proof}
Pour prouver le a), on peut proc\'eder comme dans \ref{pclem}. On peut aussi le d\'eduire de  la proposition \ref{isofi} qui va suivre.
\hfill\break
Le b) est une cons\'equence de la d\'efinition (\ref{ac} a)).\hfill\break 
Le b)  et (\ref{2.10} b)) entra\^inent que pour tout  $d\in D$, on a  $q_{\beta_A, \widehat\alpha} d =  d =  d q_{\beta_A, \widehat\alpha}$, donc $D(A \otimes H) \subset q_{\beta_A, \widehat\alpha} (A \otimes H)$. Pour prouver l'\'egalit\'e, il suffit de remarquer que la C*-alg\`ebre (\ref{Sl(hatS)}) $\lambda(\widehat S) S$ agit de fa\c con non d\'eg\'en\'er\'ee dans $H$, et d'utiliser la continuit\'e de l'action (\cf\ref{ac} d)). \hfill\break
Le d) est une cons\'equence du c).
\end{proof}
\end{lemme-notations}

Avec les notations de \ref{pc} et \ref{copc}, on a :

\begin{proposition} \label{isofi} Il existe un unique *-isomorphisme 
$\phi : C \rightarrow D$ v\'erifiant  :
$$\phi(\widehat \pi(\pi(a) \widehat\theta(x)) \theta(s)) =  \pi_R(a) (1_A  \otimes \lambda(x) L(s))  ,\quad a\in A,\, x\in \widehat S ,\, s\in S.$$
\noindent 
Posons 
$$\delta_D := (\phi \otimes {\rm id} _S) \circ \delta_C \circ \phi^{-1} \quad \text{et}  \quad \beta_D := \phi \circ \beta_C.$$
\hfill\break
Alors $(\beta_D, \delta_D)$ est une action continue du groupo\"ide ${\cal G}$ et 
pour tout $a\in A$, $x\in \widehat S$ et $s\in S$, nous avons  :
$$(j_D \otimes {\rm id}_S)\delta_D(\pi_R(a)(1_A \otimes \lambda(x) L(s))) =  (\pi_R(a) \otimes 1_S) (1_A \otimes [(\lambda(x) \otimes 1_S) (L\otimes {\rm id}_S) \delta(s)])\, ;$$\noindent
$$j_D(\beta_D(n^{\rm o}))  = q_{\beta_A, \widehat\alpha} (1_A \otimes \beta(n^{\rm o})), \quad n\in N.$$\noindent
\end{proposition}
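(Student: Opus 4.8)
The plan is to realise the double crossed product $C = A \rtimes {\cal G} \rtimes \widehat{\cal G}$ on a C*-module over $A$ and to identify its image with $D$ through an explicit intertwiner built from the pentagonal isometries $V, W, \widetilde V$. First I would compose the defining representation $C \subset {\cal L}({\cal E}_{B,\lambda})$ with the representation of $B = A \rtimes {\cal G}$ on ${\cal E}_{A,L}$, i.e.\ pass to the internal tensor product ${\cal F} := {\cal E}_{B,\lambda} \otimes_B {\cal E}_{A,L}$ over $A$ and describe it as a projection cut-down of $A \otimes H \otimes H$. Using $\delta_B(\cdot) = X(\cdot \otimes 1_H)X^*$ with $X = \widetilde V_{23}$, together with the definitions of $\widehat\pi$ and $\theta$ (\ref{copc}), I would then compute the action of a generator $\widehat\pi(\pi(a)\widehat\theta(x))\theta(s)$ in these coordinates as an explicit product of a $\pi_L(a)$-type term, the operator $\widetilde V_{23}$, and multiplication operators stemming from $\lambda(x)$ and $s \in S$.

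Next I would construct a partial isometry $\Phi$ on $A \otimes H \otimes H$, assembled from $\widetilde V$, $V$ and $U = \widehat J J$, whose conjugation collapses the two auxiliary copies of $H$ onto a single leg and carries the generator above to $\pi_R(a)(1_A \otimes \lambda(x)L(s))$, with $\pi_R$ as in \ref{DEAR}. \textbf{This is the heart of the argument and the main obstacle.} It rests on the commutation relations $V_{12}\widetilde V_{23} = \widetilde V_{23}V_{12}$ and $W_{12}V_{23} = V_{23}W_{12}$ (\ref{2.10} a)), on the identities $W = \Sigma(U \otimes 1)V(U^* \otimes 1)\Sigma$ and $\widetilde V = \Sigma(1 \otimes U)V(1 \otimes U^*)\Sigma$ relating the four representations $L, R, \rho, \lambda$ through $U$ (\ref{2.9} a)), and on the support computations $VV^* = q_{\beta,\alpha}$, $\widetilde V^*\widetilde V = q_{\widehat\beta,\widehat\alpha}$, etc.\ (\ref{2.9} c)). The delicate point is to verify that the support projections match, so that one genuinely lands in ${\cal L}({\cal E}_{A,R})$ with ${\cal E}_{A,R} = q_{\beta_A,\widehat\alpha}(A \otimes H)$ rather than in a larger cut-down.

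Granting the intertwiner, I would deduce that the prescription $\phi(\widehat\pi(\pi(a)\widehat\theta(x))\theta(s)) = \pi_R(a)(1_A \otimes \lambda(x)L(s))$ defines a $*$-homomorphism, multiplicativity and $*$-preservation being read off the intertwining. That its closed range is exactly $D$ follows from the density relations \ref{simp} together with the facts that $S\widehat S$ and $\lambda(\widehat S)S$ are C*-algebras acting nondegenerately in $H$ (\ref{ShatS}, \ref{Sl(hatS)}). Faithfulness of the concrete representation of $C$ yields injectivity, so $\phi$ is the asserted $*$-isomorphism $C \to D$; uniqueness is immediate since the chosen generators span a dense subalgebra.

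Finally, defining $\delta_D := (\phi \otimes {\rm id}_S)\circ\delta_C\circ\phi^{-1}$ and $\beta_D := \phi\circ\beta_C$, the pair $(\beta_D,\delta_D)$ is automatically a continuous action of ${\cal G}$, since $(\beta_C,\delta_C)$ is one and $\phi$ is a $*$-isomorphism, so that all the axioms of \ref{ac} are preserved by transport of structure. The explicit formula is obtained by evaluating $(\phi \otimes {\rm id}_S)\delta_C(\phi^{-1}(\cdot))$ on a generator: since $\delta_C(\cdot) = Y(\cdot \otimes 1_H)Y^*$ with $Y = V_{23}$, pushing this through $\phi$ and using once more the commutation relations of \ref{2.10} together with $\delta(s) = V(s \otimes 1_H)V^*$ (\ref{2.10} c)) and the leg $L \otimes {\rm id}_S$ yields $(\pi_R(a) \otimes 1_S)(1_A \otimes [(\lambda(x) \otimes 1_S)(L \otimes {\rm id}_S)\delta(s)])$. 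The relation $j_D(\beta_D(n^{o})) = q_{\beta_A,\widehat\alpha}(1_A \otimes \beta(n^{o}))$ follows at once from $\beta_C(n^{o}) = \theta(\beta(n^{o}))$ and the definition of $\phi$ on the $\theta(s)$-part of the generators.
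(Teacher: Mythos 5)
Your proposal follows essentially the same route as the paper: the paper also realises $C$ on the internal tensor product ${\cal E}_{B,\lambda}\otimes_B{\cal E}_{A,L}$, transports it to a cut-down of ${\cal E}_{A,L}\otimes H$, conjugates by the partial isometry $Z=(1_H\otimes U)V(1_H\otimes U^*)_{23}$ (using exactly the relations of \ref{2.9} c) and \ref{2.10} you cite, with the support projections $Z^*Z=q_{\alpha_B,\widehat\beta}$ and $ZZ^*=(\widehat\theta\otimes R)q_{\beta,\alpha}$ resolving the matching issue you flag), and finally un-tensors via a unitary $A\otimes H\otimes_\pi{\cal E}_{A}\to{\cal E}_{A}\otimes H$ to land in $D\subset{\cal L}(A\otimes H)$. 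The derivation of the bidual coaction formula and of $j_D\circ\beta_D$ by pushing $\delta_C(\cdot)=Y(\cdot\otimes 1_H)Y^*$ through the composed isomorphism is likewise the paper's argument.
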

\begin{proof} 
Le  produit crois\'e  $B = A \rtimes {\cal G}$ agit de fa\c con fid\`ele et  non d\'eg\'en\'er\'ee (\ref{pclem}) dans le $A$-module hilbertien ${\cal E}_A = q_{\beta_A, \alpha} (A \otimes H)$. Dans la suite, on consid\`ere $M(B)$ comme une sous-C*-alg\`ebre de la C*-alg\`ebre ${\cal L}({\cal E}_A)$.
\hfill\break  
Le  produit crois\'e  $C = B \rtimes \widehat{\cal  G}$ agit de fa\c con fid\`ele et  non d\'eg\'en\'er\'ee (\ref{copc}) dans le $B$-module hilbertien ${\cal E}_B = q_{\alpha_B,\widehat\beta} (B \otimes H)$ et on a $M(C) \subset {\cal L}({\cal E}_B)$.\hfill\break 
Introduisons  le  *-morphisme injectif,  continu pour les topologies stricte/*-forte :    
$$\phi_0 : M(C)  \rightarrow {\cal L}({\cal E}_B \otimes_B {\cal E}_A) :  T \mapsto T \otimes_B 1_{{\cal E}_A}.$$ \noindent 
Par restriction au sous-C*-A-module  ${\cal E}_B \otimes_B {\cal E}_A$, l'unitaire  
$$(B \otimes H) \otimes_B {\cal E}_A 
\rightarrow {\cal E}_A \otimes H : (b \otimes \eta) \otimes _B \xi \mapsto b\xi \otimes \eta$$
\noindent  
d\'efinit un unitaire $u : {\cal E}_B \otimes_B {\cal E}_A \rightarrow q_{\alpha_B, \widehat\beta}({\cal E}_A \otimes H)$. Soit 
$$\phi_1 : C  \rightarrow {\cal L}(q_{\alpha_B, \widehat\beta}({\cal E}_A \otimes H)) : c \mapsto u \phi_0(c) u^*.$$ \noindent
Posons $C_1 = \phi_1(C) \,,\,\beta_{C_1} =  \phi_1 \circ   \beta_C \,,\,\delta_{C_1} = (\phi_1 \otimes {\rm id}_S) \circ \delta_C \circ \phi_1^{-1}$.\hfill\break
Pour tout  $a\in A \,,\, x\in\widehat S \, $ et $s\in S$, on v\'erifie facilement qu'on a :
$$\phi_1(\widehat \pi(\pi(a) \widehat\theta(x)) \theta(s)) = (\pi(a)\otimes 1_H)\,     (\widehat\theta \otimes \lambda)\widehat\delta(x) \, (1_{{\cal E}_A} \otimes L(s))q_{\alpha_B, \widehat\beta} \in {\cal L}(q_{\alpha_B, \widehat\beta}({\cal E}_A \otimes H)).$$
\noindent
En utilisant (\ref{2.10} b)), on obtient $[q_{\beta_A, \alpha, 12}  , (1_H \otimes U) V (1_H \otimes  U^*)_{23}] = 0$ dans $\cL(A \otimes H \otimes H)$. Posons $Z := \restr{(1_H \otimes U) V (1_H \otimes  U^*)_{23}}{ {\cal E}_{A}\otimes H}\,\in {\cal L}({\cal E}_A \otimes H)$. \`A l'aide de (\ref{2.9} c)) et de (\ref{2.10} b)), on a :
$$Z^* Z = q_{\alpha_B, \widehat\beta}  =  (\widehat\theta \otimes \lambda)( q_{\widehat\alpha , \beta})  , \quad Z Z^* = (\widehat\theta \otimes R)( q_{\beta , \alpha}).$$\noindent
Consid\'erons alors le *-morphisme injectif :
$$\phi_Z : C_1  \rightarrow  {\cal L}({\cal E}_A \otimes H): c' \mapsto Z c'  Z^*.$$\noindent
Alors $\phi_Z$ s'\'etend de fa\c con unique en un *-morphisme injectif,  continu pour les topologies stricte/*-forte $\phi_Z : M(C_1)  \rightarrow  {\cal L}({\cal E}_A \otimes H)$  v\'erifiant $\phi_Z(1_{C_1}) = Z Z^* = (\widehat\theta \otimes R)( q_{\beta , \alpha} )$. \hfill\break 
Posons $C_2 := \phi_Z(C_1)$, $\beta_{C_2} :=  \phi_Z \circ   \beta_{C_1}$ et $\delta_{C_2} := (\phi_Z \otimes {\rm id}_S)   \circ \delta_{C_1} \circ \phi_Z^{-1}$.\hfill\break
Pour tout $a\in A \,,\,x\in\widehat S$ et $s\in S$, on a  (\cf \ref{2.10} b), c) et d)):
$$(\widehat\theta \otimes \lambda) \widehat\delta(x)  = Z^* (1_{{\cal E}_A} \otimes \lambda(x)) Z  , \quad (\pi \otimes R) \circ \delta_A(a) = Z (\pi(a) \otimes 1_H) Z^*  , \quad [Z, 1_{{\cal E}_A} \otimes L(s)] = 0.$$\noindent
Il en r\'esulte que $\phi_Z \circ \phi_1(\widehat \pi(\pi(a) \widehat\theta(x)) \theta(s)) = (\pi \otimes R) \circ \delta_A(a) \, (1_{{\cal E}_A} \otimes \lambda(x))\,(1_{{\cal E}_A} \otimes L(s))$. \hfill\break 
En tenant compte du sous-espace initial et du sous-espace final de $Z$, on obtient :
$$C_2 = [\,(\pi \otimes R) \circ \delta_A(a) \, (1_{{\cal E}_A} \otimes \lambda(x))\,(1_{{\cal E}_A} \otimes L(s))\,\,|\,\,\,a\in A \,,\,x\in\widehat S\,,\,s\in S] \subset {\cal L}({\cal E}_A \otimes H). $$\noindent
Soit $j_{C_2} : M(C_2) \rightarrow {\cal L}({\cal E}_A \otimes H)$ l'unique *-morphisme continu pour les topologies stricte/*-forte,  prolongeant ${\rm id}_{C_2}$ et v\'erifiant $j_{C_2}(1_{C_2}) =  (\widehat\theta \otimes R)( q_{\beta , \alpha})$.\hfill\break
Pour tout $a\in A \,,\,x\in\widehat S$ et $s\in S$,  posons $c = (\pi \otimes R) \circ \delta_A(a) \, (1_{{\cal E}_A} \otimes \lambda(x))\,(1_{{\cal E}_A} \otimes L(s))$.
\hfill\break 
On a dans ${\cal L}({\cal E}_A \otimes  H \otimes S)$, \cf (\ref{acbd}) : \hfill\break
 - $(j_{C_2} \otimes {\rm id}_S) \delta_{C_2}(c) =  ((\pi \otimes R) \circ \delta_A(a) \otimes 1_S) \, (1_{{\cal E}_A} \otimes  \lambda(x) \otimes 1_S) \,(1_{{\cal E}_A} \otimes (L \otimes {\rm id}_S) \delta(s))$ ;\hfill\break
 - $j_{C_2} \circ \beta_{C_2} (n^{\rm o}) = Z Z^* (1_{{\cal E}_A} \otimes \beta(n^{\rm o}))  =   (\widehat\theta \otimes R) q_{\beta , \alpha}(1_{{\cal E}_A} \otimes \beta(n^{\rm o}))\in {\cal L}({\cal E}_A \otimes H)$, pour tout $n\in N$.\hfill\break
Finalement, introduisons l'unitaire  
$$v : A \otimes H \otimes_\pi {\cal E}_A \rightarrow {\cal E}_A \otimes H : (a \otimes \eta) \otimes_\pi  \xi  \mapsto \pi(a) \xi \otimes \eta.$$
\noindent
Pour tout $T\in B(H) \,,\,m\in M(A \otimes S)$, on a facilement : \hfill\break
- $v^* (1_{{\cal E}_A} \otimes  T) v = (1_A \otimes T) \otimes _\pi 1_{{\cal E}_A}$ ;\hfill\break
 - $v^*\, (\pi \otimes R)(m) \,  v = ({\rm id}_A \otimes R) (m)    \otimes_\pi \,1_{{\cal E}_A}$.\hfill\break
Il en r\'esulte que pour tout $a\in A \,,\,x\in\widehat S$ et $s\in S$, on a 
$$v^*\,  (\pi \otimes R) \circ \delta_A(a) \, (1_{{\cal E}_A} \otimes \lambda(x) L(s) )\,  v =  
\pi_R(a)  \,  (1_A \otimes \lambda(x) L(s))  \otimes_\pi \,1_{{\cal E}_A}.$$\noindent
Pour tout $c\in C_2$, posons $v^* c v =  \phi_2(c) \otimes_\pi 1_{{\cal E}_A}$. On obtient un *-isomorphisme 
$\phi_2 : C_2 \rightarrow D$.\hfill\break
Pour tout $a\in A \,,\,x\in\widehat S$ et $s\in S$, posons $c = (\pi \otimes R) \circ \delta_A(a) \, (1_{{\cal E}_A} \otimes \lambda(x))\,(1_{{\cal E}_A} \otimes L(s))$. On a  alors :
$$\phi_2(c) = 
\pi_R(a)  \,  (1_A \otimes \lambda(x) L(s))\,,\,(j_{D} \otimes {\rm id}_S) (\phi_2 \otimes {\rm id}_S)\delta_{C_2}(c)) =  (\pi_R(a) \otimes 1_S)  \,(1_A \otimes [(\lambda(x) \otimes 1_S)(L \otimes {\rm id}_S) \delta(s)]),$$\noindent
$$j_D\circ \phi_2 \circ   \beta_{C_2}(n^{\rm o}) =  q_{\beta_A, \widehat\alpha} (1_A \otimes \beta(n^{\rm o}))  , \quad n\in N.$$
\noindent
Le *-isomorphisme $\phi := \phi_2 \circ \phi_Z \circ \phi_1$ convient.
\end{proof}

\noindent
Dans ce qui suit, nous allons d\'ecrire l'action continue $(\beta_D, \delta_D)$ \`a l'aide de l'action initiale $(\beta_A, \delta_A)$ et de la repr\'esentation r\'eguli\`ere droite du groupo\"ide ${\cal G}$.
\begin{notations}
\begin{enumerate} 
\item \'Etant donnée la repr\'esentation r\'eguli\`ere droite (\ref{mult}) $V$ de ${\cal G}$, on note ${\cal V}$\index{vd@${\cal V}\in{\cal L}(H\otimes S)$} l'unique isom\'etrie partielle de  ${\cal L}(H \otimes S)$ v\'erifiant 
$({\rm id}_H \otimes L) ({\cal V})  = V$.
\item La C*-alg\`ebre ${\cal K}(H)$ est not\'ee ${\cal K}$.
\item On note  $\delta_0 : A \otimes {\cal K} \rightarrow M(A \otimes {\cal K} \otimes S)$\index{dd@$\delta_0$} le *-morphisme injectif v\'erifiant :
\begin{equation}\delta_0(a \otimes k)  = \delta_A(a)_{13} (1_A  \otimes k \otimes 1_S),  \quad a\in A \, , \, k\in{\cal K}.
\end{equation}
\item Pour tout $x\in M(A \otimes {\cal K})$  et  tout $n\in N$, posons  :\index{df@$\delta_{A\otimes{\cal K}}$, $\beta_{A\otimes{\cal K}}$}
\begin{equation}\label{bak}
\delta_{A \otimes {\cal K}}(x):={{\cal V}  }_{23}  \delta_0(x) {{\cal V}  }_{23}^* \quad ; \quad \beta_{A \otimes {\cal K}}(n^{\rm o}) := q_{\beta_A, \widehat\alpha} (1_A \otimes \beta(n^{\rm o})).
\end{equation}
\end{enumerate}
\end{notations}

Alors $\delta_0$ (resp. $\delta_0 \otimes {\rm id}_S$) se prolonge de fa\c con  unique en un *-morphisme  $M(A \otimes {\cal K}) \rightarrow 
M(A \otimes {\cal K} \otimes S)\,$ (resp.  $M(A \otimes {\cal K} \otimes S) \rightarrow 
M(A \otimes {\cal K} \otimes S \otimes S)$), strictement continu et injectif   v\'erifiant 
\begin{equation}
\delta_0(1_{A \otimes {\cal K}}) = q_{\beta_A, \alpha, 13} \quad \hbox{(resp. } (\delta_0 \otimes {\rm id}_S)(1_{A \otimes {\cal K} \otimes S}) = q_{\beta_A, \alpha, 13}{\rm )}.
\end{equation}
De m\^eme, on a un *-morphisme strictement continu et injectif :
\begin{equation}
{\rm id}_{A \otimes {\cal K}} \otimes \delta  :  
M(A \otimes {\cal K} \otimes S) \rightarrow 
M(A \otimes {\cal K} \otimes S \otimes S) , \quad ({\rm id}_{A \otimes {\cal K}} \otimes \delta )(1_{A \otimes {\cal K} \otimes S}) = q_{\beta, \alpha, 34}.
\end{equation}

\begin{lemme} Pour tout $x\in M(A \otimes {\cal K})$, on a 
\begin{equation}q_{\beta_A, \alpha,13} (\delta_0 \otimes {\rm id}_S) \delta_0(x) = (\delta_0 \otimes {\rm id}_S) \delta_0(x) =  ({\rm id}_{A \otimes {\cal K}} \otimes \delta) \delta_0(x)  =q_{\beta, \alpha,34} ({\rm id}_{A \otimes {\cal K}} \otimes \delta) \delta_0(x).\nonumber
\end{equation}
\begin{proof}Pour tout $a\in A$, on a  : 
\begin{equation}
(\delta_0 \otimes {\rm id}_S) (\delta_A(a)_{13}) = [(\delta_A \otimes {\rm id}_S)\delta_A(a)]_{134} =[({\rm id}_A \otimes \delta) \delta_A(a)]_{134},  \nonumber
\end{equation}
\begin{equation}
({\rm id}_{A \otimes {\cal K}} \otimes \delta)(\delta_A(a)_{13}) =  [({\rm id}_A \otimes \delta) \delta_A(a)]_{134}.\nonumber
\end{equation}
\noindent
Pour tout $k\in {\cal K}$, on a :
\begin{equation}
(\delta_0 \otimes {\rm id}_S) (1_A \otimes k  \otimes 1_S) = q_{\beta_A, \alpha, 13}(1_A \otimes k  \otimes 1_{S \otimes S}),\nonumber
\end{equation}
\begin{equation}
({\rm id}_{A \otimes {\cal K}} \otimes \delta)(1_A \otimes k  \otimes 1_S) =
q_{\beta, \alpha, 34} (1_A \otimes k  \otimes 1_{S \otimes S}).\nonumber
\end{equation}
\end{proof}
\end{lemme}
Notons que 
$ \beta_{A \otimes {\cal K}} : N^{\rm o} \rightarrow M(A \otimes {\cal K})$  est un *-morphisme car $[\widehat\alpha(N), \beta(N^{\rm o})] = 0$.

\begin{lemme} On a $\delta_{A \otimes {\cal K}}(1_{A \otimes {\cal K}}) = q_{\beta_A, \widehat\alpha, 12} \, q_{\beta, \alpha, 23}$. Pour tout $n\in N$, on a  :
\begin{equation}
\delta_{A \otimes {\cal K}}(\beta_{A \otimes {\cal K}}(n^{\rm o})) =   
\delta_{A \otimes {\cal K}}(1_{A \otimes {\cal K}}) (1_{A \otimes {\cal K}} \otimes \beta(n^{\rm o})).
\end{equation}
\end{lemme}

\begin{proof}
En utilisant (\ref{2.10} b)) et (\ref{2.9} c)), on  a 
\begin{equation}
\delta_{A \otimes {\cal K}}(1_{A \otimes {\cal K}}) ={\cal V}_{23}  \delta_0(1_{A \otimes {\cal K}})
{\cal V}_{23}^* =  {\cal V}_{23}  q_{\beta_A,\alpha, 13}
{\cal V}_{23}^*   = q_{\beta_A, \widehat\alpha, 12} {\cal V}_{23}{\cal V}_{23}^* = q_{\beta_A, \widehat\alpha, 12}  q_{\beta, \alpha, 23}
\end{equation}
et \`a l'aide de (\ref{ac} c)), on d\'eduit  
\begin{equation}\label{d01}
\delta_0(q_{\beta_A,  \widehat\alpha}) = q_{\beta_A,\alpha, 13} \, q_{\widehat\alpha, \beta, 23} = q_{\beta_A,\alpha, 13}{{\cal V}  }_{23}^*{{\cal V}  }_{23}.
\end{equation}
Pour $n\in N$, on a 
\begin{align}\delta_{A \otimes {\cal K}}(\beta_{A \otimes {\cal K}}(n^{\rm o})) &= {\cal V}_{23}  \delta_0(q_{\beta_A,  \widehat\alpha})(1_A \otimes \beta(n^{\rm o}) \otimes 1_S)    {\cal V}_{23}^*  =  {\cal V}_{23}  q_{\beta_A,\alpha, 13}{{\cal V}  }_{23}^*{{\cal V}  }_{23}(1_A \otimes \beta(n^{\rm o}) \otimes 1_S)    {\cal V}_{23}^*\nonumber \\
& = q_{\beta_A, \widehat\alpha, 12}  q_{\beta, \alpha, 23} {\cal V}_{23}(1_A \otimes \beta(n^{\rm o}) \otimes 1_S){\cal V}_{23}^* = q_{\beta_A, \widehat\alpha, 12}q_{\beta, \alpha, 23}(1_{A\otimes \cK}\otimes \beta(n^{\rm o})) \nonumber \\
&= \delta_{A \otimes {\cal K}}(1_{A \otimes {\cal K}}) (1_{A \otimes {\cal K}} \otimes \beta(n^{\rm o})).\nonumber
\end{align}
\end{proof}


\noindent
Nous avons : 
\begin{lemme} Pour tout $x\in M(A \otimes {\cal K})$, on a 
$(\delta_{A \otimes {\cal K}} \otimes {\rm id}_S) \circ\delta_{A \otimes {\cal K}}(x)  = ({\rm id}_A \otimes \delta) \circ \delta_{A \otimes {\cal K}}(x)$.
\begin{proof}
Pour tout $x \in M(A \otimes {\cal K})$,  on a 
\begin{align}
(\delta_{A \otimes {\cal K}} \otimes {\rm id}_S)(\delta_{A \otimes {\cal K}}(x)) &= 
{{\cal V}  }_{23} (\delta_0 \otimes {\rm id}_S)(\delta_{A \otimes {\cal K}}(x)) {{\cal V}  }_{23}^* = 
{{\cal V}  }_{23}{{\cal V}  }_{24} q_{\beta_A, \alpha, 13}(\delta_0 \otimes {\rm id}_S) \delta_0(x)q_{\beta_A, \alpha, 13}
{{\cal V}  }_{24}^*{{\cal V}  }_{23}^*\nonumber \\ 
&=
{{\cal V}  }_{23}{{\cal V}  }_{24}({\rm id}_{A \otimes {\cal K}} \otimes \delta) \delta_0(x) {{\cal V}  }_{24}^*{{\cal V}  }_{23}^* = 
({\rm id}_{A \otimes {\cal K}} \otimes \delta)({{\cal V}  }_{23} \delta_0(x) {{\cal V}  }_{23}^*) \nonumber \\
&= 
({\rm id}_{A \otimes {\cal K}} \otimes \delta) \delta_{A \otimes {\cal K}}(x).\nonumber 
\end{align}
\end{proof}
\end{lemme}
\noindent
Dans ce qui suit, on identifie $M(A \otimes {\cal K})$ (resp. $M(A \otimes {\cal K} \otimes S)$) \`a 
${\cal L}(A \otimes H)$ (resp. ${\cal L}(A \otimes H \otimes S)$). 

\noindent
\begin{proposition}\label{bidu0} Soient $a\in A$, $T\in M(A)$, $ x\in \widehat S$ et $s\in S$.
\begin{enumerate}
\item $({\rm id}_{A \otimes H} \otimes L)\delta_0(\pi_R(a)) = q_{\beta_A, \alpha, 13}(1_A \otimes U\otimes 1_H)
\Sigma_{23} V_{23} (\pi_L(a) \otimes 1_H) V_{23}^*\Sigma_{23} (1_A \otimes U^* \otimes 1_H)q_{\beta_A, \alpha, 13}$.
\item $\delta_{A \otimes {\cal K}}(\pi_R(a)) = q_{\beta_A, \widehat\alpha, 12} q_{\beta, \alpha, 23} (\pi_R(a) \otimes 1_S), \,\, \delta_{A \otimes {\cal K}}(\pi_R(T)) = q_{\beta_A, \widehat\alpha, 12} q_{\beta, \alpha, 23} (\pi_R(T) \otimes 1_S)$.
\item $\begin{aligned}[t]
\delta_{A \otimes {\cal K}}(1_A \otimes \lambda(x)L(s)) & = q_{\beta_A, \widehat\alpha, 12} q_{\beta, \alpha, 23} (1_A \otimes \lambda(x) \otimes 1_S) ( 1_A \otimes [(L \otimes {\rm id}_S) \delta(s)])  
\nonumber\\ 
& = q_{\beta_A, \widehat\alpha, 12}  (1_A \otimes \lambda(x) \otimes 1_S) ( 1_A \otimes [(L \otimes {\rm id}_S) \delta(s)]).
\end{aligned}$
\item $\begin{aligned}[t]
\delta_{A \otimes {\cal K}}(\pi_R(a) (1_A \otimes \lambda(x)L(s)) )  &=
q_{\beta_A, \widehat\alpha, 12} \pi_R(a)_{12} (1_A \otimes \lambda(x) \otimes 1_S) ( 1_A \otimes [(L \otimes {\rm id}_S) \delta(s)]) \nonumber\\
& = (\pi_R(a) \otimes 1_S) (1_A \otimes [(\lambda(x) \otimes 1_S) (L\otimes {\rm id}_S) \delta(s)]).
\end{aligned} $
\end{enumerate}
\end{proposition}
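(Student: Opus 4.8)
The plan is to derive all four formulas by direct computation from the defining relations $\delta_{A\otimes{\cal K}}(x) = {\cal V}_{23}\,\delta_0(x)\,{\cal V}_{23}^*$, $\pi_R(a) = ({\rm id}_A\otimes R)\delta_A(a)$ and $(\rho\otimes{\rm id}_S){\cal V} = V$, feeding in the relations collected in Propositions \ref{2.9} and \ref{2.10}. I would treat a) first, since it is the computational engine, then deduce b) and c), and finally obtain d) as their product.

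For a), I would expand $\delta_0(\pi_R(a))$ from the definitions of $\delta_0$ and $\pi_R$. Since $R = {\rm Ad}\,U\circ L$, one immediately gets $\delta_0(\pi_R(a)) = (1_A\otimes U\otimes 1_S)\,\delta_0(\pi_L(a))\,(1_A\otimes U^*\otimes 1_S)$, reducing matters to $\delta_0(\pi_L(a))$. Comparing legs and using the coassociativity of $\delta_A$ (\ref{ac} b)), which gives $(\delta_A\otimes{\rm id}_S)\delta_A = ({\rm id}_A\otimes\delta)\delta_A$, one finds $({\rm id}_{A\otimes H}\otimes L)\delta_0(\pi_L(a)) = \Sigma_{23}\,({\rm id}_A\otimes\delta)\delta_A(a)\,\Sigma_{23}$. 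Then the coproduct formula $\delta(s) = V(s\otimes 1_H)V^*$ (\ref{2.10} c)) turns the middle factor into $\Sigma_{23}V_{23}(\pi_L(a)\otimes 1_H)V_{23}^*\Sigma_{23}$, and conjugating the middle leg by $U$ together with the flip identity $(U\otimes 1_H)\Sigma = \Sigma(1_H\otimes U)$ produces exactly the asserted right-hand side; the two framing projections $q_{\beta_A,\alpha,13} = \delta_0(1_A)$ simply record the domain of $\delta_0$ (\ref{ac} a)).

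For b) and c) the idea is to conjugate by ${\cal V}_{23}$, performing the computation on $H\otimes H$ through $(\rho\otimes{\rm id}_S){\cal V} = V$ and transferring the outcome to the $S$-leg. In b) I would substitute the expression for $\delta_0(\pi_R(a))$ obtained above and observe that ${\cal V}_{23}(\,\cdot\,){\cal V}_{23}^*$ reverses the coproduct that $\delta_0$ inserted: the partial-isometry relations $V^*V = q_{\widehat\alpha,\beta}$ and $VV^* = q_{\beta,\alpha}$ (\ref{2.9} c)) together with the commutation of $V$ with $\alpha,\beta,\widehat\alpha$ (\ref{2.10} b)) collapse $\delta_0(\pi_R(a))$ back to $\pi_R(a)\otimes 1_S$, the leftover sources and ranges assembling into $q_{\beta_A,\widehat\alpha,12}\,q_{\beta,\alpha,23}$; the case $T\in M(A)$ then follows by strict continuity. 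In c), $\delta_0$ acts only on the (trivial) $A$-slot of $1_A\otimes\lambda(x)L(s)$, so $\delta_0(1_A\otimes\lambda(x)L(s)) = q_{\beta_A,\alpha,13}(1_A\otimes\lambda(x)L(s)\otimes 1_S)$; conjugating by ${\cal V}_{23}$ lets $\lambda(x)$ pass through unchanged (by the appropriate relation of \ref{2.10} b)) while $L(s)$ is carried to $(L\otimes{\rm id}_S)\delta(s)$ via $\delta(s)=V(s\otimes 1_H)V^*$. The two displayed forms of the answer differ only by whether $q_{\beta,\alpha,23}$ is kept or absorbed, which is legitimate since ${\rm id}\otimes[(L\otimes{\rm id}_S)\delta(s)]$ already has range projection $q_{\beta,\alpha}$, because $\delta(1_S)=q_{\beta,\alpha}$.

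Finally d) should come out of multiplying b) and c): as $\delta_{A\otimes{\cal K}}$ is a $*$-morphism one has $\delta_{A\otimes{\cal K}}(\pi_R(a)(1_A\otimes\lambda(x)L(s))) = \delta_{A\otimes{\cal K}}(\pi_R(a))\,\delta_{A\otimes{\cal K}}(1_A\otimes\lambda(x)L(s))$, and the product of the two right-hand sides simplifies via the support-projection identities, using that $q_{\beta_A,\widehat\alpha,12}$ commutes with and is absorbed by $\pi_R(a)_{12}$; reorganizing the legs as in c) then yields the second expression. The hard part will be the cancellation in b): unlike the quantum-group situation of \cite{BaSka2}, where the unit appears, here it is replaced by the two distinct projections $q_{\beta_A,\widehat\alpha,12}$ and $q_{\beta,\alpha,23}$, so I expect the bookkeeping of the sources and ranges of the partial isometries $V$ and ${\cal V}$, and the consistent passage between $L$ and $R$ through $U$, to be where the argument demands the most care.
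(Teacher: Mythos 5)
Your reduction in a) (passing from $\pi_R$ to $\pi_L$ through ${\rm Ad}\,U$, then invoking the coassociativity $(\delta_A\otimes{\rm id}_S)\delta_A=({\rm id}_A\otimes\delta)\delta_A$ and $\delta(s)=V(s\otimes 1_H)V^*$) is the paper's own argument, and your c) and d) likewise coincide with the paper's: c) is the direct conjugation computation with $\lambda(x)$ passing through $V_{23}$, and d) is obtained by multiplying b) and c) and absorbing the support projections.

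The genuine gap is in b). Substituting a) into $\delta_{A\otimes{\cal K}}(\pi_R(a))={\cal V}_{23}\,\delta_0(\pi_R(a))\,{\cal V}_{23}^*$ and using $\widetilde V=\Sigma(1\otimes U)V(1\otimes U^*)\Sigma$ leads to the expression $q_{\beta_A,\widehat\alpha,12}\,V_{23}\widetilde V_{23}\,\pi_L(a)_{13}\,\widetilde V_{23}^*V_{23}^*\,q_{\beta_A,\widehat\alpha,12}$, and the tools you list for the collapse --- the support projections of Proposition \ref{2.9} c) and the commutation relations of Proposition \ref{2.10} b) --- are not sufficient to reduce this to $q_{\beta_A,\widehat\alpha,12}\,q_{\beta,\alpha,23}\,\pi_R(a)_{12}$. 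The indispensable input is Theorem \ref{th13}: since $W^*W=VV^*$, it yields $V\widetilde V=W^*\Sigma(1\otimes U^*)\bigl(\sum_{l}\lambda_l\,\widehat\beta(e^{(l)})\otimes\widehat\alpha(e^{(l)})\bigr)q_{\widehat\beta,\widehat\alpha}$, and only after this substitution does everything commute into place: the central factor commutes with $\pi_L(a)_{13}$ because $\widehat\alpha(N)\subset M'$, the conjugation by $U$ on the third leg converts $\pi_L$ into $\pi_R$ after the flip, $W_{23}$ then commutes with $\pi_R(a)_{12}$ because $[S,R(S)]=0$, and what remains on legs $2,3$ is $W^*q_{\alpha,\widehat\beta}W=q_{\beta,\alpha}$. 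This is precisely the irreducibility result of Section 2 that the paper announces as the crucial ingredient of Takesaki--Takai duality; your plan never invokes it, and without it the cancellation you describe as bookkeeping of sources and ranges cannot be carried out.
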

\begin{proof} 
 Reprenons (\ref{piL} , \ref{DEAR} b)) les repr\'esentations 
 $\pi_R , \pi_L : M(A)\rightarrow {\cal L}(A \otimes H)$. \hfill\break
On a $[V^*V, S \otimes 1_H] = 0$ (\cf \ref{2.9} c) et \ref{2.10} b)). On en d\'eduit que pour tout $a\in A$, on a :
 \begin{equation}
 ({\rm id}_{A \otimes H} \otimes L) \delta_0(\pi_L(a)) = \Sigma_{23}
V_{23} (\pi_L(a) \otimes 1_H) V_{23}^*\Sigma_{23}.
\end{equation} 
On en d\'eduit : 
\begin{equation}
({\rm id}_{A \otimes H} \otimes L)\delta_0(\pi_R(a)) = q_{\beta_A, \alpha, 13}(1_A \otimes U\otimes 1_H)
\Sigma_{23} V_{23} (\pi_L(a) \otimes 1_H) V_{23}^*\Sigma_{23} (1_A \otimes U^* \otimes 1_H)q_{\beta_A, \alpha, 13},
\end{equation}
d'o\`u le a).
Il s'ensuit qu'on a  aussi 
\begin{align} 
({\rm id}_{A \otimes H} \otimes L)\delta_{A \otimes {\cal K}}&(\pi_R(a))\nonumber \\
&= q_{\beta_A, \widehat\alpha, 12}V_{23} (1_A \otimes U\otimes 1_H)
\Sigma_{23} V_{23} (\pi_L(a) \otimes 1_H) V_{23}^*\Sigma_{23} (1_A \otimes U^* \otimes 1_H) V_{23}^*q_{\beta_A, \widehat\alpha, 12}\nonumber \\
& = q_{\beta_A, \widehat\alpha, 12}V_{23}\widetilde V_{23}  \pi_L(a)_{13} \widetilde V_{23}^*V_{23}^*q_{\beta_A, \widehat\alpha, 12}. 
\end{align}
Posons  avec les notations de \ref{th13}, $T_{\widehat\beta, \widehat\alpha} := \sum_{l=1}^k \lambda_l (\widehat\beta(e^{(l)})  \otimes \widehat\alpha(e^{(l)}))$.\index{t@$T_{\widehat\beta, \widehat\alpha}$} \hfill\break 
Il r\'esulte alors de (\ref{th13} ,  \ref{2.9} c)  et  \ref{2.10} b)) que  
 \begin{align}
 ({\rm id}_{A \otimes H} \otimes L)\delta_{A \otimes {\cal K}}&(\pi_R(a))\nonumber\\
 &= q_{\beta_A, \widehat\alpha, 12}W_{23}^*\Sigma_{23} (1_A \otimes 1_H \otimes U^*) 
T_{\widehat\beta, \widehat\alpha, 23}\pi_L(a)_{13} T_{\widehat\beta, \widehat\alpha, 23}^*(1_A \otimes 1_H \otimes U) \Sigma_{23}
W_{23} q_{\beta_A, \widehat\alpha, 12}\nonumber\\
  &= q_{\beta_A, \widehat\alpha, 12} \pi_R(a)_{12} 
W_{23}^*\Sigma_{23} (1_A \otimes 1_H \otimes U^*) 
T_{\widehat\beta, \widehat\alpha, 23} T_{\widehat\beta, \widehat\alpha, 23}^*(1_A \otimes 1_H \otimes U) \Sigma_{23}
W_{23} q_{\beta_A, \widehat\alpha, 12}\nonumber\\
& =   q_{\beta_A, \widehat\alpha, 12} q_{\beta, \alpha, 23}\pi_R(a)_{12},\nonumber
\end{align}
ce qui \'etablit la premi\`ere relation du b). Par continuit\'e stricte, on d\'eduit aussi la seconde relation.
\hfill\break
Pour la preuve du c), on a (\cf \ref{2.10} b) et c))  
\begin{align}
\delta_{A \otimes {\cal K}}(1_A \otimes \lambda(x)L(s))  &= 
{{\cal V}  }_{23} \delta_0(1_A \otimes \lambda(x)L(s)) {{\cal V}  }_{23}^*   
= {{\cal V}  }_{23} q_{\beta_A, \alpha, 13}(1_A \otimes \lambda(x)L(s) \otimes 1_S) {{\cal V}  }_{23}^*  
 \nonumber \\  
&= q_{\beta_A, \widehat\alpha, 12}   
(1_A \otimes \lambda(x) \otimes 1_S) ( 1_A \otimes [(L \otimes {\rm id}_S) \delta(s)]).\nonumber
\end{align}
En remarquant (\ref{2.9}  c)) qu'on a 
\begin{equation}  
({\rm id}_{A \otimes H} \otimes L)\delta_0(\pi_R(a)) = ({\rm id}_{A \otimes H} \otimes L)\delta_0(\pi_R(a)) 
q_{\widehat\alpha, \beta,23} =  ({\rm id}_{A \otimes H} \otimes L)\delta_0(\pi_R(a))  V_{23}^*V_{23}, \nonumber
\end{equation}
on d\'eduit le d)  du b) et du c).
\end{proof}
\noindent
\begin{corollary}\label{bidu} L'action biduale  $(\beta_D, \delta_D)$ (\cf \ref{isofi}) du groupo\"ide m.q.\ ${\cal G}$ dans le double produit crois\'e 
$A \rtimes {\cal G} \rtimes \widehat{\cal  G} \simeq D$ est donn\'ee par :
\begin{equation}
(j_D \otimes {\rm id} _S)\delta_D(d) = \delta_{A \otimes {\cal K}}(d) = {{\cal V}  }_{23}  \delta_0(d) {{\cal V}  }_{23}^*  \,,\,d \in D \;\; ; \;\; j_D \beta_D(n^{\rm o}) = \beta_{A \otimes {\cal K}}(n^{\rm o}) = q_{\beta_A, \widehat\alpha} (1_A \otimes \beta(n^{\rm o})) \,,\, n\in N.\nonumber 
\end{equation}
\begin{proof} Pour $d = \pi_R(a)(1_A \otimes \lambda(x) L(s))$ avec $a\in A$, $x\in \widehat S$ et $s\in S$, il suffit d'appliquer \ref{isofi}. La seconde relation d\'ecoule de \ref{isofi} et de la d\'efinition de $\beta_{A \otimes {\cal K}}$.
\end{proof}
\end{corollary}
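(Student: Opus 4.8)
The plan is to reduce everything to the two explicit computations already at our disposal, namely the equivariance formula for the $*$-isomorphism $\phi$ recorded in Proposition \ref{isofi}, and the computation of $\delta_{A \otimes {\cal K}}$ on generators carried out in Proposition \ref{bidu0}. First I would recall from \ref{DEAR} a) that $D$ is the closed linear span of the elements
$$d = \pi_R(a)(1_A \otimes \lambda(x) L(s)), \qquad a \in A,\ x \in \widehat S,\ s \in S,$$
so that it suffices to establish both displayed identities on such $d$ and then extend by linearity, norm-density and strict continuity. Note also that the middle equality $\delta_{A \otimes {\cal K}}(d) = {\cal V}_{23}\delta_0(d){\cal V}_{23}^*$ is merely the definition (\ref{bak}), so only the first equality genuinely requires proof.

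For the coaction identity I would evaluate the left-hand side on a generator $d$ using Proposition \ref{isofi}. Since $\delta_D = (\phi \otimes {\rm id}_S)\circ \delta_C \circ \phi^{-1}$ and $\phi(\widehat\pi(\pi(a)\widehat\theta(x))\theta(s)) = d$, that proposition yields
$$(j_D \otimes {\rm id}_S)\delta_D(d) = (\pi_R(a) \otimes 1_S)\bigl(1_A \otimes [(\lambda(x)\otimes 1_S)(L \otimes {\rm id}_S)\delta(s)]\bigr).$$
On the other hand, Proposition \ref{bidu0} d) produces exactly the same expression for $\delta_{A \otimes {\cal K}}(d)$. Hence the two maps $d \mapsto (j_D \otimes {\rm id}_S)\delta_D(d)$ and $d \mapsto \delta_{A \otimes {\cal K}}(d)$ agree on the generating set; since both are linear and strictly continuous (the latter because $\delta_0$ and conjugation by ${\cal V}_{23}$ extend strictly to the multiplier algebra, and $j_D$ is strictly continuous by \ref{DEAR} d)), they coincide on all of $D$, which gives the first line of the statement.

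For the anchor map I would argue directly: Proposition \ref{isofi} records $j_D(\beta_D(n^o)) = q_{\beta_A, \widehat\alpha}(1_A \otimes \beta(n^o))$ for every $n \in N$ (this is immediate once $\beta_D = \phi \circ \beta_C$ is invoked), and the right-hand side is by definition $\beta_{A \otimes {\cal K}}(n^o)$. This settles the second line.

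I do not anticipate a genuine obstacle here: all the analytic content — the construction and the equivariance of $\phi$ on the one hand, and the identification of $\delta_{A \otimes {\cal K}}$ through the partial isometry ${\cal V}$ on the other — has already been absorbed into Propositions \ref{isofi} and \ref{bidu0}. The only point deserving a line of care is the passage from the norm-dense $*$-subalgebra spanned by the $d$'s to the multiplier-level identity as stated, i.e. confirming that both sides are strictly continuous so that agreement on generators propagates; this is routine given the strict continuity of $j_D$, $\delta_0$ and of conjugation by ${\cal V}_{23}$.
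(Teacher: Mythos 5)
Your proposal is correct and follows essentially the same route as the paper: both arguments evaluate $(j_D \otimes {\rm id}_S)\delta_D$ on the generators $\pi_R(a)(1_A \otimes \lambda(x)L(s))$ via Proposition \ref{isofi} and match the result against the formula of Proposition \ref{bidu0} d) for $\delta_{A\otimes\cal K}$, then conclude by density, with the second relation read off directly from \ref{isofi}. The only difference is that you spell out the strict-continuity step that the paper leaves implicit, which is a harmless (and welcome) addition.
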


\begin{remark}\label{remc} Par continuit\'e stricte, on a aussi 
$$(j_D \otimes {\rm id} _S)\delta_D(T) = \delta_{A \otimes {\cal K}}(j_D(T)) = {{\cal V}  }_{23}  \delta_0(j_D(T)) {{\cal V}  }_{23}^*, \quad T \in M(D).$$
\end{remark}

\begin{theorem}\label{thbidu}  Soit $(\beta_A, \delta_A)$ une action continue du groupo\"ide mesur\'e r\'egulier ${\cal G}$ dans une C*-alg\`ebre $A$. Alors le double produit crois\'e $(A \rtimes {\cal G}) \rtimes \widehat{\cal  G}$, muni de 
$(\beta_{(A \rtimes {\cal G}) \rtimes \widehat{\cal  G}}, \delta_{(A \rtimes {\cal G}) \rtimes \widehat{\cal  G}})$, est canoniquement  isomorphe  \`a la C*-alg\`ebre $D   = q_{\beta_A,  \widehat\alpha} (A \otimes {\cal K}) q_{\beta_A,  \widehat\alpha}$ munie de l'action continue $(\beta_D, \delta_D)$ de ${\cal G}$ d\'efinie par : 
$$(j_D \otimes {\rm id} _S)\delta_D(d) = \delta_{A \otimes {\cal K}}(d) = {{\cal V}  }_{23}  \delta_0(d) {{\cal V}  }_{23}^*,\,d \in D \;\; ; \;\; j_D(\beta_D(n^{\rm o})) = \beta_{A \otimes {\cal K}}(n^{\rm o}) = q_{\beta_A, \widehat\alpha} (1_A \otimes \beta(n^{\rm o})),\, n\in N.$$
\end{theorem}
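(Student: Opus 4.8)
The plan is to reduce the statement entirely to an identification of the concrete C*-algebra $D$ of Lemme-notations \ref{DEAR} with the corner $q_{\beta_A,\widehat\alpha}(A\otimes{\cal K})q_{\beta_A,\widehat\alpha}$. Indeed, by Proposition \ref{isofi} the double crossed product $C=(A\rtimes{\cal G})\rtimes\widehat{\cal G}$ is already $*$-isomorphic, via $\phi$, to $D=[\{\pi_R(a)(1_A\otimes\lambda(x)L(s))\mid a\in A,\,x\in\widehat S,\,s\in S\}]$, and Corollaire \ref{bidu} identifies the transported bidual action $(\beta_D,\delta_D)$ with the two formulas appearing in the statement. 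Hence the only remaining point, and the only place where regularity of ${\cal G}$ enters, is the equality $D=q_{\beta_A,\widehat\alpha}(A\otimes{\cal K})q_{\beta_A,\widehat\alpha}$. By \ref{DEAR} c) the algebra $D$ acts non-degenerately on the Hilbert $A$-module ${\cal E}_{A,R}=q_{\beta_A,\widehat\alpha}(A\otimes H)$ and satisfies $q_{\beta_A,\widehat\alpha}\,d=d=d\,q_{\beta_A,\widehat\alpha}$; since ${\cal L}({\cal E}_{A,R})=M({\cal K}({\cal E}_{A,R}))$ and ${\cal K}({\cal E}_{A,R})=q_{\beta_A,\widehat\alpha}(A\otimes{\cal K})q_{\beta_A,\widehat\alpha}$, it suffices to show that $D$ is exactly the compacts ${\cal K}({\cal E}_{A,R})$.

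The heart of the matter is a statement about the regular representation of ${\cal G}$ alone. I would first record that $[\lambda(\widehat S)L(S)]={\cal C}(\widetilde V)$: by Remarque \ref{Sl(hatS)} the space $S\lambda(\widehat S)={\cal C}(\widetilde V)$ is a C*-algebra, hence self-adjoint, so taking adjoints gives $[\lambda(\widehat S)L(S)]=[\lambda(\widehat S)S]={\cal C}(\widetilde V)$. Then, conjugating the regularity characterization of Proposition \ref{dualreg} by $U$ and using the identity $S\widehat S=U{\cal C}(V)U^*$ of Corollaire \ref{ShatS} together with the relation $\widetilde V=\Sigma(1\otimes U)V(1\otimes U^*)\Sigma$ of Proposition \ref{2.9}, I would deduce the Key Lemma: \emph{${\cal G}$ is regular if and only if ${\cal C}(\widetilde V)$ equals the reduced compacts $[R_\xi^{\widehat\alpha}(R_\eta^{\widehat\alpha})^*\mid\xi,\eta\in H]$ attached to $\widehat\alpha$}, whose weak closure is $\widehat\alpha(N)'$ and which act non-degenerately on $H$. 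This is precisely the corner of ${\cal K}(H)$ compatible with the $\widehat\alpha$-factor of the projection $q_{\beta_A,\widehat\alpha}$, and the semi-regular case yields the corresponding inclusion $[\lambda(\widehat S)L(S)]\subset{\cal K}(H)$.

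With the Key Lemma in hand both inclusions follow. For $D\subset q_{\beta_A,\widehat\alpha}(A\otimes{\cal K})q_{\beta_A,\widehat\alpha}$, I would use only semi-regularity, i.e.\ $\lambda(x)L(s)\in{\cal K}(H)$: approximating $\lambda(x)L(s)$ in norm by finite sums of $\theta_{\mu,\mu'}$, each generator is a norm limit of operators $\pi_R(a)(1_A\otimes\theta_{\mu,\mu'})$, and since $\pi_R(a)$ is right $A$-linear one checks, replacing $1_A$ by an approximate unit of $A$, that $\pi_R(a)(1_A\otimes\theta_{\mu,\mu'})$ is a norm limit of genuine rank-one module operators $\theta_{\pi_R(a)(u\otimes\mu),\,u'\otimes\mu'}$; hence it lies in ${\cal K}({\cal E}_{A,R})$ after the two-sided compression by $q_{\beta_A,\widehat\alpha}$. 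For the reverse inclusion, I would argue as in the locally compact quantum group case \cite{BaSka2}: replacing $[\lambda(\widehat S)L(S)]$ by the $\widehat\alpha$-reduced compacts through the Key Lemma and running the same rank-one computation, the family $\{\pi_R(a)(1_A\otimes\theta_{\mu,\mu'})\}$ saturates all of ${\cal K}({\cal E}_{A,R})$, the non-degeneracy of $\pi_R$ on ${\cal E}_{A,R}$ supplying the $A$-direction and the non-degenerate action of the $\widehat\alpha$-reduced compacts on $H$ supplying the $H$-direction.

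The main obstacle is the Key Lemma of the second paragraph: proving that regularity of ${\cal G}$ forces $[\lambda(\widehat S)L(S)]$ to be exactly the $\widehat\alpha$-reduced compacts (and not merely a proper sub-C*-algebra of them), and then keeping careful track of the various source and target projections $q_{\beta,\alpha}$, $q_{\widehat\alpha,\beta}$, $q_{\beta_A,\widehat\alpha}$ so that the rank-one operators produced genuinely fill the corner $q_{\beta_A,\widehat\alpha}(A\otimes{\cal K})q_{\beta_A,\widehat\alpha}$. Once this is secured, everything else is a (lengthy but routine) bookkeeping of the pentagonal and commutation relations of Propositions \ref{2.9} and \ref{2.10} already assembled in the preceding results.
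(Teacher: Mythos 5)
Your overall architecture coincides with the paper's: the reduction via \ref{isofi} and \ref{bidu} to the single equality $D=q_{\beta_A,\widehat\alpha}(A\otimes{\cal K})q_{\beta_A,\widehat\alpha}$ is exactly how the proof is organized; your ``Key Lemma'' (regularity of ${\cal G}$ $\Leftrightarrow$ $[\lambda(\widehat S)L(S)]={\cal C}(\widetilde V)=[R_\xi^{\widehat\alpha}(R_\eta^{\widehat\alpha})^*\,|\,\xi,\eta\in H]$) is correct and does follow from \ref{dualreg}, \ref{ShatS} and \ref{Sl(hatS)} as you indicate; and the inclusion $D\subset q_{\beta_A,\widehat\alpha}(A\otimes{\cal K})q_{\beta_A,\widehat\alpha}$ is obtained in the paper essentially as you propose (continuity of the action plus $\lambda(\widehat S)S\subset{\cal K}$).

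The gap is in the reverse inclusion. Knowing that the $\widehat\alpha$-reduced compacts act non-degenerately on $H$ does not make the operators $\pi_R(a)(1_A\otimes\theta_{\mu,\mu'})$ ``saturate'' ${\cal K}({\cal E}_{A,R})$: the algebra $[R_\xi^{\widehat\alpha}(R_\eta^{\widehat\alpha})^*]$ is in general a \emph{proper} subalgebra of ${\cal K}(H)$ (its weak closure is only $\widehat\alpha(N)'$), so you must explain why compressing an \emph{arbitrary} compact operator by $q_{\beta_A,\widehat\alpha}$ lands in the compression of this subalgebra. The missing ingredient is the identity
$$q_{\beta_A,\widehat\alpha}\,(1_A\otimes\theta_{\xi,\eta})\,q_{\beta_A,\widehat\alpha}\;=\;q_{\beta_A,\widehat\alpha}\,\bigl(1_A\otimes R_\xi^{\widehat\alpha}(R_\eta^{\widehat\alpha})^*\bigr),$$
that is, Lemma \ref{reglem}, proved by an explicit matrix-unit computation in the appendix (\ref{proput}); it is precisely the point where the groupoid case differs from the group case of \cite{BaSka2} (there $N=\C$, the projection is $1$ and the identity is trivial, so ``the same rank-one computation'' hides exactly the new difficulty). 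This is not bookkeeping of source and target projections: it is the one genuinely new lemma of the proof, and without it the rank-one argument does not close. Note also that the paper does not apply this identity directly on $A\otimes H$ as you do, but first pushes the corner through the injective morphism $\delta_0$, so that the relevant projection $q_{\widehat\alpha,\beta}$ sits on two genuine copies of $H$ where \ref{reglem} applies verbatim; your variant on $A\otimes H$ can be made to work with the Hilbert-module version of the same identity, but that version must then be stated and justified.
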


{\it D\'emonstration.} Tenant compte de (\ref{isofi} et \ref{bidu}), il nous reste \`a montrer l'\'egalit\'e $D = q_{\beta_A,  \widehat\alpha} (A \otimes {\cal K}) q_{\beta_A,  \widehat\alpha}$.
\hfill\break
Il r\'esulte facilement de la   continuit\'e (\ref{ac} d)) de l'action $(\beta_A, \delta_A)$ et de l'inclusion $ \lambda(\widehat S) S  \subset {\cal K}$ (\cf \ref{Sl(hatS)} et \ref{dualreg}) que
$$D \subset q_{\beta_A,  \widehat\alpha} (A \otimes {\cal K}) q_{\beta_A,  \widehat\alpha}.$$
\noindent
Pour d\'emontrer l'autre inclusion, on va montrer que 
$\delta_0(q_{\beta_A,  \widehat\alpha} (A \otimes {\cal K}) q_{\beta_A,  \widehat\alpha)}) \subset \delta_0(D)$.\hfill\break
Nous avons besoin du lemme suivant :

\begin{lemme}\label{reglem}
Soient $\alpha  :  N  \rightarrow B(H)$, $\beta  : N^{\rm o} \rightarrow B(K)$  des repr\'esentations    unitales. 
\begin{enumerate}
\item  Pour tout $\xi , \eta \in H$, nous avons  $q_{\alpha, \beta}  (R_\xi^\alpha (R_\eta^\alpha)^* \otimes 1_K) = (R_\xi^\alpha (R_\eta^\alpha)^* \otimes 1_K)  q_{\alpha, \beta}  = q_{\alpha, \beta} ( \theta_{\xi, \eta} \otimes 1_K)q_{\alpha, \beta}$.
\item Pour tout $\xi , \eta \in K$, nous avons  $q_{\alpha, \beta}  (1_H \otimes L_\xi^\beta (L_\eta^\beta)^*) =
(1_H \otimes L_\xi^\beta (L_\eta^\beta)^*) q_{\alpha, \beta}
= q_{\alpha, \beta} (1_H \otimes \theta_{\xi, \eta})q_{\alpha, \beta}$.
\end{enumerate}
\end{lemme}
Pour la preuve, voir l'appendice (\cf\ref{proput}).
\begin{proof}[Fin de la d\'emonstration du th\'eor\`eme]
  (\ref{d01})  nous donne $\delta_0(q_{\beta_A,  \widehat\alpha})  = q_{\beta_A, \alpha, 13}q_{\widehat\alpha, \beta, 23}$ et, par continuit\'e de l'action, on a  
$$q_{\beta_A,  \widehat\alpha} (A \otimes {\cal K}) q_{\beta_A,  \widehat\alpha} \subset 
[q_{\beta_A,  \widehat\alpha}\pi_R(a) (1_A \otimes k) q_{\beta_A,  \widehat\alpha}\,\,| \,\,a\in A \,,\,k\in{\cal K}].$$
\noindent
On remarque que $\pi_R(a) = q_{\beta_A,  \widehat\alpha}\pi_R(a) = \pi_R(a)q_{\beta_A,  \widehat\alpha}$.
Pour  $a\in A$ et $k\in{\cal K}$, on a 
$$\delta_0(\pi_R(a) q_{\beta_A,  \widehat\alpha} (1_A \otimes k) q_{\beta_A,  \widehat\alpha}) = 
\delta_0(\pi_R(a))q_{\beta_A, \alpha, 13}q_{\widehat\alpha, \beta, 23} (1_A \otimes k \otimes 1_S)
q_{\widehat\alpha, \beta, 23}q_{\beta_A, \alpha, 13}.$$
En appliquant \ref{reglem} \`a $\widehat\alpha$ et $\beta$, on obtient 
$$q_{\widehat\alpha, \beta} ({\cal K} \otimes 1_S)q_{{\widehat\alpha}, \beta} = q_{\widehat\alpha, \beta} ([R_\xi^{\widehat\alpha} (R_\eta^{\widehat\alpha})^*  \,\,|\,\, \xi , \eta \in H] \otimes 1_S) = q_{\widehat\alpha, \beta} (U {\cal C}(W) U^* \otimes 1_S).$$
\noindent 
Il en r\'esulte que 
$$\delta_0(\pi_R(a) q_{\beta_A,  \widehat\alpha} (1_A \otimes k) q_{\beta_A,  \widehat\alpha})\in 
[\delta_0(\pi_R(a)) q_{\beta_A, \alpha, 13}q_{\widehat\alpha, \beta, 23}(1_A \otimes \lambda(x) L(s) \otimes 1_S)  \,\,| \,\, x\in \widehat S\,,\,s\in S].$$
Finalement, on a 
$$\delta_0(\pi_R(a)) q_{\beta_A, \alpha, 13}q_{\widehat\alpha, \beta, 23}(1_A \otimes \lambda(x) L(s) \otimes 1_S) = \delta_0(\pi_R(a) (1_A \otimes \lambda(x) L(s)) \in \delta_0(D).$$
\end{proof}
\noindent

\noindent
\subsubsection{Structure du double produit crois\'e \texorpdfstring{$A \rtimes {\cal G}_{G_1, G_2} \rtimes \widehat{\cal G}_{G_1, G_2} $}{}}\label{dpco}

\noindent
Dans cette section, nous d\'eveloppons les cons\'equences du th\'eor\`eme \ref{thbidu} dans le cas d'une action continue \ref{acc}  $(A, \beta_A, \delta_A)$,  d'un groupo\"ide de co-liaison \ref{defco} ${\cal G} := {\cal G}_{G_1, G_2}$, associ\'es \`a deux groupes quantiques \lc $G_1$ et $G_2$.

Pour d\'ecrire la ${\cal G}$-alg\`ebre double produit crois\'e $(D, \beta_D, \delta_D)$, nous conservons les notations de la section pr\'ec\'edente et nous  utilisons les notations \ref{not1} et \ref{acc} pour la ${\cal G}$-alg\`ebre  $(A, \beta_A, \delta_A)$. Avec ces notations, on v\'erifie facilement qu'on a 
\begin{equation}\label{betD}  
j_D(\beta_D(\varepsilon_j))  = q_j  \otimes \beta(\varepsilon_j) \in\cL(A \otimes H), \quad j = 1 , 2.  \end{equation}
Par  \ref{acc} appliqu\'ee \`a la ${\cal G}$-alg\`ebre $D$, nous avons 
\begin{equation} 
D = D_1 \oplus D_2, \quad \text{o\`u}\quad  D_j:=\beta_D(\varepsilon_j) D = (q_j \otimes \beta(\varepsilon_j)) D \subset \cL(A \otimes H),\quad j=1,2.\nonumber
\end{equation}
La coaction $\delta_D : D \rightarrow M(D \otimes S)$ est d\'etermin\'ee (\ref{acc} a)) par les *-morphismes 
$\delta_{D_j}^k  : D_j \rightarrow M(D_k \otimes S_{kj})$, $j,k=1,2$,
d\'efinis par 
$$\pi_j^k\circ\delta_{D_j}^k(x) = (\beta_D(\varepsilon_k) \otimes p_{kj})\delta_D(x), \quad x\in D_j,$$
\noindent
 o\`u $\pi_j^k :  M(D_k \otimes S_{kj}) \rightarrow  M(D \otimes S)$ est le prolongement strictement continu de l'inclusion 
$D_k \otimes S_{kj} \subset D \otimes S$ v\'erifiant $\pi_j^k(1_{D_k \otimes S_{kj}})= \beta_D(\varepsilon_k) \otimes p_{kj}$.

\begin{notations} \label{not2}Soient 
$j, k, l = 1,2$.
\begin{enumerate}
\item $U_{jk} : H_{jk} \rightarrow H_{kj}$\index{u@$U_{jk}$} est la restriction de l'unitaire  $U$ au sous-espace $H_{jk}$. On note 
$R_{jk} :  S_{jk}  \rightarrow B(H_{kj})$\index{ra@$R_{jk}$} la repr\'esentation d\'efinie par  $R_{jk}(x) = U_{jk}  L(x) U_{jk}^*$.
\item 
   $\iota_j : M(D_j) \rightarrow M(D)$\index{i@$\iota_j$} est le prolongement strictement continu de l'inclusion $D_j \subset D$ v\'erifiant 
$\iota_j(1_{D_j}) = \beta_D(\varepsilon_j)$.
\item ${\cal E}_{A,R}  :=q_{\beta_A, \widehat\alpha}(A \otimes H) =  (q_1 \otimes\beta(\varepsilon_1) + q_2 \otimes \beta(\varepsilon_2)) (A \otimes H) = {\cal E}_{A, R, 1} \oplus {\cal E}_{A, R, 2}$\index{ej@${\cal E}_{A,R,k}$}
avec 
\begin{equation}
{\cal E}_{A, R,  k} := A_k \otimes  \beta(\varepsilon_k)  H =  (A_k \otimes H_{1k}) \oplus (A_k \otimes H_{2k}),\quad k=1,2.
\end{equation}
\item $q_{lkj} := q_k \otimes p_{lk} \otimes p_{kj}\in {\cal L}(A \otimes H \otimes H)$.\index{qc@$q_{lkj}$}
 \end{enumerate}
\end{notations}

\begin{lemme}\label{etap1} Soient  $a\in A$, $x \in \widehat S$, $s\in S$. Posons :
\begin{equation} d := \pi_R(a)(1_A \otimes \lambda(x) L(s)) = d_1 + d_2 \quad ; \quad d_j = (q_j \otimes \beta(\varepsilon_j) ) d,\quad j=1,2.\nonumber 
\end{equation} 
Nous avons :
\begin{enumerate}
\item $d_j = \sum_{l,l'=1,2} d_{ll',j}$, o\`u  $d_{ll',j} = \pi_R(aq_l)(1_A \otimes p_{lj} \lambda(x)p_{l'j} L(s)p_{l'j})  =  \pi_R(aq_l)(1_A \otimes \widehat\pi^j(\lambda(x_{ll'})) L(s)p_{l'j})$ ; 
\item $(q_k \otimes \beta(\varepsilon_k) \otimes p_{kj})  (j_D \otimes L)\delta_D(d_j)  = $
$$\!\!\! \sum_{l, l'=1,2}
q_{lkj}  V_{23} (q_k \otimes p_{lj} \otimes p_{kj})
\Sigma_{23} (\pi_L \otimes R)\delta_A(a q_l) \Sigma_{23} 
(1_A \otimes p_{lj} \lambda(x) p_{l'j} L(s)\otimes 1_H)
(q_k \otimes p_{l'j} \otimes p_{kj}) (V_{23})^* q_{l'kj}.$$
\end{enumerate}
\end{lemme}

\begin{proof} Par d\'efinition de la repr\'esentation (\ref{notE} b)) $\widehat\pi^j$, on a  
$p_{lj} \lambda(x)p_{l'j} = \widehat\pi^j(\lambda(x_{ll'}))$. En utilisant (\ref{Eq:commut}), on obtient : 
\begin{align} 
d_j = (q_j \otimes\beta(\varepsilon_j)) \pi_R(a)(1_A \otimes \lambda(x) L(s)) &=
(q_j \otimes\beta(\varepsilon_j)) \pi_R(a)(1_A \otimes \beta(\varepsilon_j)\lambda(x) \beta(\varepsilon_j)L(s))  \nonumber\\
& = \sum_{l,l'} (q_j \otimes\beta(\varepsilon_j)) \pi_R(a)(1_A \otimes p_{lj}\lambda(x) p_{l'j}L(s)) p_{l'j}. 
\end{align}
\hfill\break
Par \ref{com1}, on a  $(q_j \otimes\beta(\varepsilon_j)) \pi_R(a)(1_A \otimes p_{lj}) = \pi_R(aq_l)(1_A \otimes p_{lj})$, d'o\`u le a).
\hfill\break
Par \ref{bidu},  nous avons 
\begin{equation}\label{f1}(j_D \otimes L)\delta_D(d_{ll',j}) = V_{23} ({\rm id}_A \otimes L)\delta_0(d_{ll',j})V_{23}^*. 
\end{equation}
\noindent
Par le (\ref{bidu0} a)), nous avons  
\begin{align}({\rm id}_{A \otimes H} \otimes L)\delta_0(\pi_R(a)) &= q_{\beta_A, \alpha, 13}(1_A \otimes U\otimes 1_H)
\Sigma_{23} V_{23} (\pi_L(a) \otimes 1_H) V_{23}^*\Sigma_{23} (1_A \otimes U^* \otimes 1_H)q_{\beta_A, \alpha, 13}\nonumber\\
&= q_{\beta_A, \alpha, 13} \Sigma_{23} (\pi_L \otimes R)\delta_A(a) \Sigma_{23}q_{\beta_A, \alpha, 13}. \nonumber
\end{align}
On en d\'eduit :
\begin{align} ({\rm id}_A \otimes L)\delta_0(d_{ll',j}) & = ({\rm id}_A \otimes L)\delta_0(\pi_R(aq_l))q_{\beta_A, \alpha, 13} (1_A \otimes p_{lj}\lambda(x) p_{l'j}L(s) \otimes 1_H)\nonumber \\ 
&  = q_{\beta_A, \alpha, 13} \Sigma_{23} (\pi_L \otimes R)\delta_A(a q_l) \Sigma_{23}q_{\beta_A, \alpha, 13}
(1_A \otimes p_{lj}\lambda(x) p_{l'j}L(s) \otimes 1_H).\nonumber 
\end{align}
En utilisant (\ref{2.10} b)), la relation (\ref{f1}) devient :
\begin{equation} (j_D \otimes L)\delta_D(d_{ll',j}) = q_{\beta_A, \widehat\alpha, 12} V_{23} \Sigma_{23} (\pi_L \otimes R)\delta_A(a q_l) \Sigma_{23} (1_A \otimes p_{lj}\lambda(x) p_{l'j}L(s) \otimes 1_H) V_{23}^*q_{\beta_A, \widehat\alpha, 12}.\nonumber 
\end{equation}
Mais on a $\widehat\alpha = \beta$ et les projecteurs $q_k , p_{lj}$ sont centraux dans $M(A)$ et $M(S)$ respectivement.  En tenant compte des relations de commutation (\ref{2.10} b)) et de la notation (\ref{not2} d)), on obtient  :
\begin{align} (q_k \otimes \beta(\varepsilon_k) \otimes p_{kj})V_{23} (1_A \otimes p_{lj} \otimes 1_H) & = 
 (q_k \otimes \alpha(\varepsilon_l) \beta(\varepsilon_k) \otimes p_{kj})V_{23} (1_A \otimes p_{lj} \otimes 1_H) \nonumber \\
&= 
q_{lkj}V_{23}(q_k \otimes p_{lj} \otimes p_{kj})\nonumber\end{align}
et aussi  $(q_k \otimes p_{l'j} \otimes p_{kj}) V_{23}^*q_{\beta_A, \widehat\alpha, 12} = (q_k \otimes p_{l'j} \otimes p_{kj})V_{23}^*q_{l'kj}$, d'o\`u le b).
\end{proof}
\noindent
Avec les notations (\ref{not2} c)), introduisons les représentations $\pi_{D_j} $ et $\pi_{D_k} \otimes L_{kj}$ d\'efinies par : 
\begin{equation}\label{equ:piDj}
\pi_{D_j} : D_j \rightarrow  {\cal L}({\cal E}_{A, R, j}) : u \mapsto  \pi_{D_j}(u) = \restr{u}{ {\cal E}_{A, R, j} } 
 \quad ; \quad \pi_{D_k} \otimes L_{kj} : D_k \otimes S_{kj} \rightarrow {\cal L}({\cal E}_{A, R, k} \otimes H_{kj}).
\end{equation}
\noindent
Les repr\'esentations $\pi_{D_j} $ et $\pi_{D_k} \otimes L_{kj}$ sont fid\`eles et non d\'eg\'en\'er\'ees.  Les  prolongements strictement continus aux C*-alg\`ebres des multiplicateurs,  sont donn\'es par :
$$\pi_{D_j} : M(D_j) \rightarrow  {\cal L}({\cal E}_{A, R, j}) : T \mapsto  \restr{j_D(\iota_j(T))}{ {\cal E}_{A, R, j} }   , \quad \iota_j : M(D_j) \rightarrow M(D) ,\quad  \iota_j(1_{D_j}) = \beta_D(\varepsilon_j)\, ;$$
\noindent
$$\pi_{D_k} \otimes L_{kj} : M(D_k \otimes S_{kj}) \rightarrow {\cal L}({\cal E}_{A, R, k} \otimes H_{kj})  : T \mapsto 
 (\pi_{D_k} \otimes L_{kj})(T) =  \restr{(j_D \otimes L) \pi_j^k(T)}{ {\cal E}_{A, R, k} \otimes H_{kj} }. $$  

\begin{proposition} \label{etap2}  Soient  $a\in A \,,\, x \in \widehat S  \,, \, s\in S$.  Posons 
\begin{equation} d = \pi_R(a)(1_A \otimes \lambda(x) L(s)) = d_1 + d_2, \quad {\rm avec} \quad d_j = (q_j \otimes \beta(\varepsilon_j) ) d,\quad j=1,2.\nonumber
\end{equation} 
Nous avons avec les notations    (\ref{notE} b) et   \ref{not2}) :
\begin{enumerate}
\item Dans  ${\cal L}({\cal E}_{A,R,j}) = \bigoplus_{l, l'=1,2} {\cal L}(A_j \otimes H_{l'j}   ,  A_j \otimes H_{lj})$, on a 
$$\pi_{D_j}(d_j) =   \sum_{l, l'=1,2} ({\rm id}_{A_j} \otimes R_{jl})(\delta_{A_l}^j(a q_l))(1_{A_j} \otimes \widehat\pi^j (\lambda(x_{ll'})) L(s)p_{l'j})\quad \text{;}$$
\item  Dans  ${\cal L}({\cal E}_{A, R, k} \otimes H_{kj}) =  \bigoplus_{l, l'=1,2} {\cal L}(A_k \otimes H_{l'k} \otimes H_{kj} ,  A_k \otimes H_{lk} \otimes H_{kj})$, on a : 
$$(\pi_{D_k}  \otimes L_{kj}) \delta_{D_j}^k(d_j) = $$
\noindent 
$$\sum_{l, l'=1,2}
V_{kj, 23}^l  \Sigma_{23} \hfill\break ({\rm id}_{A_k} \otimes L_{kj} \otimes R_{jl})[(\delta_{A_j}^k  \otimes  {\rm id}_{S_{jl}}) \delta_{A_l}^j (aq_l)]
(1_{A_k} \otimes   1_{H_{kj}} \otimes \widehat\pi^j(\lambda(x_{ll'})) L(s) p_{l'j})  \Sigma_{23}  
 (V_{kj, 23}^{l'})^*.
$$
\end{enumerate}
\end{proposition}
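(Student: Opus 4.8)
The plan is to read off both identities from Lemma~\ref{etap1} by restricting the operators produced there to the relevant sub-C*-modules and re-expressing every factor through the corner data $\delta_{A_l}^j$, the pentagonal blocks $V_{kj}^l$, and the corner representations $L_{kj}$, $R_{jl}$ (Notations~\ref{not2}). The structural inputs are the reconstruction $\delta_A(x)=\sum_{k,j}\pi_j^k\delta_{A_j}^k(xq_j)$ of Proposition~\ref{acc}, the block descriptions of $V$ from Notations~\ref{Vind} together with the commutation relations of Proposition~\ref{2.10} b), and the elementary identities $\widehat\pi^j(\lambda(x_{ll'}))=p_{lj}\lambda(x)p_{l'j}$, $\pi_R=({\rm id}_A\otimes R)\delta_A$ and $\pi_L=({\rm id}_A\otimes L)\delta_A$.

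For a) I would restrict the decomposition $d_j=\sum_{l,l'}d_{ll',j}$ of Lemma~\ref{etap1} a) to ${\cal E}_{A,R,j}=\oplus_l A_j\otimes H_{lj}$. The only part of $\pi_R(aq_l)=({\rm id}_A\otimes R)\delta_A(aq_l)$ that meets this module is its $A_j$-corner $(q_j\otimes p_{jl})\delta_A(aq_l)=\pi_l^j\delta_{A_l}^j(aq_l)\in M(A_j\otimes S_{jl})$; since $R$ carries $S_{jl}$ into $B(H_{lj})$ as $R_{jl}$, this corner acts as $({\rm id}_{A_j}\otimes R_{jl})(\delta_{A_l}^j(aq_l))$ on $A_j\otimes H_{lj}$. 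Composing with $(1_A\otimes\widehat\pi^j(\lambda(x_{ll'}))L(s)p_{l'j})$, which maps $A_j\otimes H_{l'j}$ into $A_j\otimes H_{lj}$, produces exactly the $(l,l')$-entry claimed, and summing over $l,l'$ gives a).

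For b) the starting point is that $(\pi_{D_k}\otimes L_{kj})\delta_{D_j}^k(d_j)$ is, by the definitions of $\pi_j^k$, $j_D$ and the relation $j_D(\beta_D(\varepsilon_k))=q_k\otimes\beta(\varepsilon_k)$ of \eqref{betD}, the restriction of $(q_k\otimes\beta(\varepsilon_k)\otimes p_{kj})(j_D\otimes L)\delta_D(d_j)$ to ${\cal E}_{A,R,k}\otimes H_{kj}$, whose value is given by Lemma~\ref{etap1} b). First, the outer factors $q_{lkj}V_{23}(q_k\otimes p_{lj}\otimes p_{kj})$ and $(q_k\otimes p_{l'j}\otimes p_{kj})V_{23}^*q_{l'kj}$ collapse to $V_{kj,23}^l$ and $(V_{kj,23}^{l'})^*$ by the block formula $V_{kj}^l=(p_{lk}\otimes p_{kj})V(p_{lj}\otimes p_{kj})$ (Notations~\ref{Vind}) and the centrality of the projections relative to $V$ (Proposition~\ref{2.10} b)). Next, unfolding $\pi_L$ gives $(\pi_L\otimes R)\delta_A(aq_l)=({\rm id}_A\otimes L\otimes R)(\delta_A\otimes{\rm id}_S)\delta_A(aq_l)$, and applying the corner reconstruction of Proposition~\ref{acc} twice identifies the $(A_k,S_{kj},S_{jl})$-corner of $(\delta_A\otimes{\rm id}_S)\delta_A(aq_l)$ with $(\delta_{A_j}^k\otimes{\rm id}_{S_{jl}})\delta_{A_l}^j(aq_l)$; since $L$ and $R$ restrict on these corners to $L_{kj}$ and $R_{jl}$, the middle block becomes $({\rm id}_{A_k}\otimes L_{kj}\otimes R_{jl})[(\delta_{A_j}^k\otimes{\rm id}_{S_{jl}})\delta_{A_l}^j(aq_l)]$. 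Finally, commuting the factor $1_A\otimes p_{lj}\lambda(x)p_{l'j}L(s)\otimes 1_H$ of Lemma~\ref{etap1} b) through the remaining $\Sigma_{23}$ turns it into $1_{A_k}\otimes 1_{H_{kj}}\otimes\widehat\pi^j(\lambda(x_{ll'}))L(s)p_{l'j}$ in the last leg, and assembling the pieces yields b).

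I expect the bookkeeping in b) to be the main obstacle: one must route the three legs correctly through $\Sigma_{23}$, check that the projections $q_{lkj}$, $p_{lj}$, $p_{kj}$ cut out precisely the corners on which $V_{kj}^l$, $L_{kj}$ and $R_{jl}$ live, and verify that the doubly-nested application of $\delta_A$ matches the corner decomposition without index errors. Once the leg assignments are fixed, each individual rewriting is an immediate consequence of the cited commutation, pentagonal and reconstruction relations.
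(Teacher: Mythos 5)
Your argument is correct and follows essentially the same route as the paper: part a) is read off from the definition of the corner morphisms $\delta_{A_l}^j$ and Notations~\ref{not2}, and part b) is obtained by combining the identity $(\pi_{D_k}\otimes L_{kj})(T)=(j_D\otimes L)\pi_j^k(T)_{|_{{\cal E}_{A,R,k}\otimes H_{kj}}}$ with Lemma~\ref{etap1}~b), then identifying the blocks of $V$ via Notations~\ref{Vind}. The only difference is that you make explicit the identification of the $(A_k,S_{kj},S_{jl})$-corner of $(\pi_L\otimes R)\delta_A(aq_l)$ with $({\rm id}_{A_k}\otimes L_{kj}\otimes R_{jl})[(\delta_{A_j}^k\otimes{\rm id}_{S_{jl}})\delta_{A_l}^j(aq_l)]$, which the paper leaves implicit.
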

\noindent
\begin{proof} La   d\'efinition des *-morphismes \ref{acc} $\delta_{A_l}^j$ entra\^ine avec les notations \ref{not2} :
$$\pi_{D_j}(d_{ll',j}) =  ({\rm id}_{A_j} \otimes R_{jl})(\delta_{A_l}^j(a q_l))(1_{A_j} \otimes \widehat\pi^j (\lambda(x_{ll'})) L(s)p_{l'j})\in {\cal L}(A_j \otimes H_{l'j} , A_j \otimes H_{lj}),$$
\noindent
d'o\`u le a).

Pour tout $T\in M(D_k \otimes S_{kj})$, on a $(\pi_{D_k} \otimes L_{kj})(T) = \restr{(j_D \otimes L) \pi_j^k(T)}{ {\cal E}_{A, R, k} \otimes H_{kj} }$.
\hfill\break
Par d\'efinition \ref{acc} des *-morphismes $\delta_{D_j}^k$  et     (\ref{betD})  de $\beta_D$, on a 
$$(j_D \otimes L) \pi_j^k \delta_{D_j}^k(d_{ll',j}) = (j_D \otimes L)(\beta_D(\varepsilon_k) \otimes p_{kj}) \delta_D(d_{ll',j}) = 
(q_k \otimes \beta(\varepsilon_k) \otimes p_{kj}) (j_D \otimes L)\delta_D(d_{ll',j}).$$
\noindent
Le b) de \ref{etap1} nous donne
$$(j_D \otimes L) \pi_j^k \delta_{D_j}^k(d_{ll',j}) = $$
\noindent 
$$
q_{lkj}  V_{23} (q_k \otimes p_{lj} \otimes p_{kj})
\Sigma_{23} (\pi_L \otimes R)\delta_A(a q_l) \Sigma_{23}
(1_A \otimes p_{lj} \lambda(x) p_{l'j} L(s)\otimes 1_H)
(q_k \otimes p_{l'j} \otimes p_{kj}) (V_{23})^* q_{l'kj}. $$\noindent
Comme $q_{lkj}  = q_k \otimes p_{lk} \otimes p_{kj}$ et en tenant compte des notations \ref{Vind},  
on  d\'eduit  : 
$$(\pi_{D_k}  \otimes L_{kj}) \delta_{D_j}^k(d_j) = $$
\noindent 
$$\sum_{l, l'}
V_{kj, 23}^l  \Sigma_{23} \hfill\break ({\rm id}_{A_k} \otimes L_{kj} \otimes R_{jl})[(\delta_{A_j}^k  \otimes  {\rm id}_{S_{jl}}) \delta_{A_l}^j (aq_l)]
(1_{A_k} \otimes   1_{H_{kj}} \otimes \widehat\pi^j(\lambda(x_{ll'})) L(s) p_{l'j})  \Sigma_{23}  
 (V_{kj, 23}^{l'})^*.
$$\noindent
\end{proof}
Dans ce qui suit, nous allons montrer que chaque $G_j$-alg\`ebre  $(D_j, \delta_{D_j}^j)$ est une alg\`ebre de liaison.

\begin{lemme}\label{liaison1} Soient $l, j = 1 ,2$.
\begin{enumerate}
\item Il existe un unique projecteur   $e_{l, j}\in M(D_j)$ v\'erifiant $j_D(\iota_j(e_{l, j})) = q_j \otimes p_{lj}$.\index{ek@$e_{l,j}$}
\item On a $e_{1, j} + e_{2, j} = 1_{D_j}  , \quad [D_j e_{l, j} D_j] = D_j   , \quad 
\delta_{D_j}^j(e_{l, j}) = e_{l, j} \otimes 1_{S_{jj}}.$
\item Pour $k = 1 , 2$,  on a $\delta_{D_j}^k(e_{l, j}) = e_{l, k} \otimes 1_{S_{kj}}.$
\item 
 $\pi_{D_j}(e_{l, j}) = e_{l, A_j}$, o\`u $e_{l, A_j}\in {\cal L}({\cal E}_{A, R, j})$ est le projecteur sur le sous-module 
$A_j \otimes H_{lj}$.
\end{enumerate}
\end{lemme}

\begin{proof} 
On a $j_D(M(D)) = \{T\in {\cal L}(A \otimes H) \, | \, T D \subset D \,, \, D T \subset D \, , \,T j_D(1_D) = 
j_D(1_D) T = T\}$. 
On d\'eduit alors  de la d\'efinition de $D$ et de $j_D(1_D)$ (\cf\ref{DEAR} a) et d)) que $T :=  q_j \otimes p_{lj} \in j_D(M(D))$.

Comme $T j_D(\beta_{D}(\varepsilon_j)) = 
T (q_j \otimes \beta(\varepsilon_j)) = T$, on a 
$q_j \otimes p_{lj} \in j_D(\beta_{D}(\varepsilon_j) M(D))$. L'existence et l'unicit\'e de $e_{l, j}\in M(D_j)$ r\'esulte alors de l'\'egalit\'e 
$\iota_j(M(D_j)) = \beta_{D}(\varepsilon_j) M(D)$, d'o\`u le a).

Le *-morphisme $j_D \circ \iota_j$ \'etant fid\`ele et prolongeant l'inclusion $D_j \subset D$, il est clair qu'on a 
$e_{1, j} + e_{2, j} = 1_{D_j}$. L'\'egalit\'e $[D_j e_{l, j} D_j] = D_j$ se d\'eduit de (\ref{etap1} a)) et de    $[E_{l'l, \lambda}^j  E_{ll'', \lambda}^j] = E_{l'l'', \lambda}^j$  (\cf\ref{Emorita}). 

L'\'egalit\'e  $\delta_{D_j}^k(e_{l, j}) = e_{l, k} \otimes 1_{S_{kj}}$ est \'equivalente \`a :
\begin{equation}\label{pikj} 
(j_D \otimes L)\pi_j^k \delta_{D_j}^k(e_{l, j}) = q_k \otimes p_{lk} \otimes p_{kj}.
\end{equation}
\noindent 
On a :
\begin{align}
(j_D \otimes L)\pi_j^k \delta_{D_j}^k(e_{l, j})  &= (j_D \otimes L)
(\delta_D(\iota_j(e_{l, j})) (\beta_D(\varepsilon_k)  \otimes p_{kj})) &  (\ref{not2} \,b)) \nonumber \\
& =
V_{23} ({\rm id} \otimes L)\delta_0(j_D(\iota_j(e_{l, j}))) V_{23}^* (q_k \otimes \beta(\varepsilon_k) \otimes p_{kj}) 
& (\ref{remc}\;\text{et} \;\ref{betD}) \nonumber \\
& = 
V_{23} ({\rm id} \otimes L)\delta_0(q_j \otimes p_{l,j}) V_{23}^* (q_k \otimes \beta(\varepsilon_k) \otimes p_{kj}) &  (\ref{liaison1} \, a)) \nonumber \\
&  =
 \sum_{s=1,2} V_{23} q_{\beta_A, \alpha, 13} (q_s \otimes p_{lj} \otimes p_{sj}) V_{23}^* (q_k \otimes \beta(\varepsilon_k) \otimes p_{kj}) 
&  (\ref{delq}) \nonumber \\
& =
  V_{23} (q_k \otimes p_{lj} \otimes p_{kj})V_{23}^* (q_k \otimes \beta(\varepsilon_k) \otimes p_{kj}) 
&   (\ref{qba}) \nonumber \\
&=
  [VV^* (1_H \otimes \beta(\varepsilon_j))VV^*]_{23} (q_k \otimes p_{lk} \otimes p_{kj})  &  (\ref{moritaS} \, a))\nonumber \\
& =
q_{\beta, \alpha, 23} (q_k \otimes p_{lk} \otimes p_{kj}) = q_k \otimes p_{lk} \otimes p_{kj}.  & \nonumber 
\end{align} 
 
Pour la preuve du d), on a $\pi_{D_j}(e_{l, j}) = \restr{j_D(\iota_j(e_{l, j}))}{ {\cal E}_{A, R, j} } =
\restr{(q_j \otimes p_{lj})}{ {\cal E}_{A, R, j} } = e_{l, A_j}$.
\end{proof}

\noindent
\begin{notations} Pour $j , l , l'  = 1, 2$, posons :\index{dg@$D_{ll', j}$, $D_{l,j}$}
$$D_{ll', j}  :=  e_{l,j}  D_j e_{l',j}  
  , \quad  D_{l,j}  := D_{ll,j}.   $$ 
\end{notations}
\begin{corollary}\label{etap3} Soient $j ,k, l , l'  = 1, 2$.
\begin{enumerate}
\item Par restriction de la structure de $G_j$-alg\`ebre de $D_j$, 
$D_{ll', j}$  est un $D_{l,j}-D_{l',j}$   bimodule hilbertien  $G_j$-\'equivariant (\cf \cite{BaSka1}).
\item Dans le cas $j\not= k$, nous avons :
\begin{enumerate}\renewcommand\theenumii{\roman{enumii}}
\renewcommand\labelenumii{\rm ({\theenumii})}
\item  Le *-morphisme $D_j \rightarrow \delta_{D_j}^k(D_j)  : x  \mapsto \delta_{D_j}^k(x)$ est un *-isomorphisme  d'alg\`ebres de liaison ;
\item  $\delta_{D_j}^k(D_{ll', j})$ est un $\delta_{D_j}^k(D_{l, j})-\delta_{D_j}^k(D_{l', j})$ bimodule hilbertien et par restriction de l'isomorphisme du (i),  le  $D_{l,j}-D_{l',j}$   bimodule hilbertien  $G_j$-\'equivariant $D_{ll', j}$ est canoniquement isomorphe au $\delta_{D_j}^k(D_{l, j})-\delta_{D_j}^k(D_{l', j})$ bimodule hilbertien $\delta_{D_j}^k(D_{ll', j})$ au dessus des *-isomorphismes 
$D_{l,j}   \rightarrow \delta_{D_j}^k(D_{l,j})$  et  $D_{l',j}   \rightarrow \delta_{D_j}^k(D_{l',j})$.
 \end{enumerate}
\end{enumerate}
\end{corollary}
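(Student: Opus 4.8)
L'id\'ee directrice est de reconna\^{\i}tre que chaque $G_j$-alg\`ebre $(D_j, \delta_{D_j}^j)$ est, par construction, une alg\`ebre de liaison $G_j$-\'equivariante. En effet, le lemme \ref{liaison1} fournit deux projecteurs compl\'ementaires $e_{1,j}, e_{2,j}\in M(D_j)$ qui sont pleins, au sens $[D_j e_{l,j} D_j] = D_j$, et surtout $G_j$-invariants, puisque $\delta_{D_j}^j(e_{l,j}) = e_{l,j}\otimes 1_{S_{jj}}$. Toute la d\'emonstration reposera sur cette observation combin\'ee aux r\'esultats d\'ej\`a \'etablis (\ref{acc}, \ref{isoGj}, \ref{liaison1}), sans calcul suppl\'ementaire substantiel.

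Pour le a), je munirais d'abord $D_{ll', j} = e_{l,j} D_j e_{l',j}$ de sa structure naturelle de $D_{l,j}-D_{l',j}$ bimodule hilbertien : actions \`a gauche et \`a droite par multiplication, et produits scalaires $\langle\xi,\eta\rangle = \xi^*\eta$ et $\xi\eta^*$ \`a valeurs dans les coins $D_{l',j}$ et $D_{l,j}$ (en utilisant $D_{ll',j}^* = D_{l'l,j}$), la pl\'enitude r\'esultant imm\'ediatement de $[D_j e_{l,j} D_j] = D_j$. J'identifierais ensuite l'alg\`ebre de liaison $\mathcal{K}(D_{ll',j}\oplus D_{l',j})$ au coin $(e_{l,j} + e_{l',j}) D_j (e_{l,j}+e_{l',j})$ de $D_j$, lequel vaut $D_j$ tout entier lorsque $l\neq l'$ et se r\'eduit au cas trivial du bimodule identit\'e lorsque $l=l'$. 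Comme $e_{l,j} + e_{l',j}$ est $G_j$-invariant, ce coin est une sous-C*-alg\`ebre $G_j$-invariante de $D_j$, h\'eritant par restriction de $\delta_{D_j}^j$ d'une action continue de $G_j$, et les *-morphismes structurels $D_{l',j}\to\mathcal{K}(D_{ll',j}\oplus D_{l',j})$ et $D_{l,j}\to\mathcal{L}(D_{ll',j})$ deviennent $G_j$-\'equivariants : c'est exactement la d\'efinition (\cf\cite{BaSka1}) d'un bimodule hilbertien $G_j$-\'equivariant.

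Pour le b), je fixerais $j\neq k$ et j'appliquerais \ref{isoGj} c) \`a la $\mathcal{G}$-alg\`ebre $D$ : le *-morphisme $\delta_{D_j}^k : D_j\to\delta_{D_j}^k(D_j)$ est un *-isomorphisme $G_j$-\'equivariant. Par \ref{liaison1} c), il envoie les projecteurs de liaison $e_{l,j}$ sur $\delta_{D_j}^k(e_{l,j}) = e_{l,k}\otimes 1_{S_{kj}}$, qui sont de nouveau compl\'ementaires, $G_j$-invariants et pleins, car $[\delta_{D_j}^k(D_j)\,\delta_{D_j}^k(e_{l,j})\,\delta_{D_j}^k(D_j)] = \delta_{D_j}^k([D_j e_{l,j} D_j]) = \delta_{D_j}^k(D_j)$ ; ceci fait de $\delta_{D_j}^k(D_j)$ une alg\`ebre de liaison et de $\delta_{D_j}^k$ un isomorphisme d'alg\`ebres de liaison, d'o\`u le (i). Le (ii) s'en d\'eduirait formellement : un isomorphisme d'alg\`ebres de liaison se restreint coin par coin en des isomorphismes des bimodules hilbertiens correspondants, au-dessus des isomorphismes induits sur les coins diagonaux, ce qui identifie $D_{ll',j}$ au bimodule $\delta_{D_j}^k(D_{ll',j}) = \delta_{D_j}^k(e_{l,j})\,\delta_{D_j}^k(D_j)\,\delta_{D_j}^k(e_{l',j})$ au-dessus de $D_{l,j}\to\delta_{D_j}^k(D_{l,j})$ et $D_{l',j}\to\delta_{D_j}^k(D_{l',j})$.

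L'obstacle principal sera la v\'erification soigneuse, au a), que la restriction de $\delta_{D_j}^j$ au coin $G_j$-invariant d\'efinit bien une action \emph{continue} de $G_j$ (condition de Podle\`s) et que les morphismes structurels sont \'equivariants au sens pr\'ecis de \cite{BaSka1} ; c'est l\`a que l'invariance $\delta_{D_j}^j(e_{l,j}) = e_{l,j}\otimes 1_{S_{jj}}$ joue un r\^ole d\'ecisif. Le reste, y compris le caract\`ere purement formel de la restriction d'un isomorphisme d'alg\`ebres de liaison \`a ses coins, ne pr\'esente pas de difficult\'e.
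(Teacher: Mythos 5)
Votre preuve est correcte et suit exactement la m\^eme voie que celle du papier, qui se contente d'affirmer que le corollaire est une cons\'equence imm\'ediate de \ref{liaison1} : vous explicitez les m\^emes ingr\'edients (pl\'enitude, compl\'ementarit\'e et $G_j$-invariance des projecteurs $e_{l,j}$, puis \ref{isoGj}~c) appliqu\'e \`a l'action biduale $(\beta_D,\delta_D)$ combin\'e \`a $\delta_{D_j}^k(e_{l,j})=e_{l,k}\otimes 1_{S_{kj}}$ pour le b)). Rien \`a redire.
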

\begin{proof} Le corollaire est une cons\'equence imm\'ediate de \ref{liaison1}.
\end{proof}
 Nous verrons dans le cas o\`u les groupes quantiques
sont r\'eguliers que le  bimodule hilbertien  $G_j$-\'equivariant $\delta_{D_j}^k(D_{ll', j})$ s'obtient directement \`a l'aide du bimodule $D_{ll', k}$  par un proc\'ed\'e d'induction.

\noindent
{\bf Cas o\`u $G_1$ et $G_2$ sont r\'eguliers}

\noindent  
Dans ce ce cas on a \ref{thbidu}   
$D = q_{\beta_A, \widehat\alpha} ( A \otimes {\cal K} )  q_{\beta_A, \widehat\alpha}  \subset {\cal L}(A \otimes H)$. Comme  $q_{\beta_A, \widehat\alpha}(A \otimes H) = {\cal E}_{A,R} = {\cal E}_{A,R, 1} \oplus {\cal E}_{A,R, 2}$, on en d\'eduit que   
$$\pi_{D_j} :  D_j   \rightarrow  {\cal K}({\cal E}_{A,R, j}),\quad \text{o\`u}\quad   {\cal E}_{A,R, j}  = (A_j \otimes H_{1j}) \oplus (A_j \otimes H_{2j}), \quad  j = 1 , 2,$$
\noindent
est un *-isomorphisme d'alg\`ebres de liaison. 
 
Dans ce qui suit,  
on identifie les C*-alg\`ebres ${\cal K}({\cal E}_{A,R, j})$ et 
$A_j \otimes {\cal K}(H_{1j} \oplus H_{2j})$ et on introduit les notations suivantes :

\begin{notations} \label{not4} Soient $j , k , l , l'  = 1 , 2$.
\begin{enumerate}
 \item  ${\cal B}_k :=\pi_{D_k}(D_k) =  A_k \otimes {\cal K}(H_{1k} \oplus H_{2k})$, ${\cal B}_{ll',k} := A_k \otimes  {\cal K}(H_{l'k}, H_{lk})$, ${\cal B}_{l',k}  := {\cal B}_{ l'l',k}  = A_k \otimes {\cal K}(H_{l'k})$.\index{bc@${\cal B}_k$, ${\cal B}_{ll',k}$, ${\cal B}_{l',k}$}
\item  $\delta_{{\cal B}_j}^k := (\pi_{D_k} \otimes {\rm id}_{S_{kj}}) \circ  \delta_{D_j}^k \circ \pi_{D_j}^{-1} :  {\cal B}_j \rightarrow M({\cal B}_k \otimes S_{kj})$.
\item $\delta_{{\cal B}_{j}, 0}^k : {\cal B}_{j} \rightarrow M(A_{k} \otimes {\cal K}(H_{1j} \oplus H_{2j}) \otimes  S_{kj}) $  est le *-morphisme injectif  d\'efini par : 
$$\delta_{{\cal B}_{j}, 0}^k(a \otimes T):=
\delta_{A_j}^k(a)_{13} (1_{A_k} \otimes T \otimes 1_{S_{kj}}) \quad ; \quad a\in A_j, \quad
T\in {\cal K}(H_{1j} \oplus H_{2j}).$$  
\item $\delta_{{\cal B}_{ll',j}, 0}^k : {\cal B}_{ll',j} \rightarrow {\cal L}(A_k\otimes{\cal K}(H_{l'j})\otimes S_{kj},A_k\otimes{\cal K}(H_{l'j},H_{lj})\otimes S_{kj})$ est l'application lin\'eaire d\'efinie par : 
$$\delta_{{\cal B}_{ll',j}, 0}^k(a \otimes T):=
\delta_{A_j}^k(a)_{13} (1_{A_k} \otimes T \otimes 1_{S_{kj}}) \quad ; \quad a\in A_j, \quad
T\in {\cal K}(H_{l'j}, H_{lj}).$$
 \item $\delta_{{\cal B}_{ll',j}}^k  : {\cal B}_{ll',j} \rightarrow  {\cal L}({\cal B}_{l',k}  \otimes   S_{kj} , {\cal B}_{ll',k}  \otimes S_{kj}) :
T \mapsto V_{kj, 23}^l  \delta_{{\cal B}_{ll',j}, 0}^k(T) (V_{kj, 23}^{l'})^* $. 
 \item $\delta_{{\cal B}_{l,j}}^k :=  \delta_{{\cal B}_{ll,j}}^k :  {\cal B}_{l,j} 
\rightarrow  {\cal L}( {\cal B}_{l,k} \otimes    S_{kj})$.\index{dh@$\delta_{{\cal B}_j}^k$, $\delta_{{\cal B}_{j}, 0}^k$, $\delta_{{\cal B}_{ll',j}, 0}^k$, $\delta_{{\cal B}_{ll',j}}^k$, $\delta_{{\cal B}_{l,j}}^k$} 
\end{enumerate}
\end{notations}
\noindent
\begin{proposition}\label{etap4} Soient $j , k , l , l' = 1 , 2$.
\begin{enumerate}
\item  ${\cal B}_{l,j}$ est une C*-alg\`ebre et ${\cal B}_{ll',j}$, muni des actions \'evidentes de ${\cal B}_{l,j}$ et ${\cal B}_{l',j}$, est un ${\cal B}_{l,j} - {\cal B}_{l',j}$ bimodule hilbertien.
\item     $\pi_{D_j}(e_{l, j} D_j e_{l', j}) = A_j \otimes {\cal K}(H_{l'j}, H_{lj})  = {\cal B}_{ll',j}$.
\item  Pour tout $j , k , l , l' = 1 , 2$, on a :
$$\delta_{{\cal B}_{ll',j}}^k(\xi) =  V_{kj, 23}^l  ({\rm id}_{A_k} \otimes \sigma)(\delta_{A_j}^k  \otimes {\rm id}_{ {\cal K}(H_{l'j}, H_{lj})})(\xi) (V_{kj, 23}^{l'})^*, \quad  \xi\in {\cal B}_{ll',j},$$
\noindent
o\`u $\sigma  : S_{kj} \otimes {\cal K}(H_{l'j}, H_{lj})  \rightarrow {\cal K}(H_{l'j}, H_{lj}) \otimes  S_{kj} : a \otimes T \mapsto  T \otimes  a$.
\end{enumerate}
\end{proposition}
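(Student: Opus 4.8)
この命題は三つの主張 (a), (b), (c) からなるが,本質的な内容は (c) にあり,(a),(b) は表現 $\pi_{D_j}$ が連絡代数の同型を与えるという既に得られている事実からほぼ直ちに従う.まず (b) から片付けたい.補題 \ref{liaison1} の d) により $\pi_{D_j}(e_{l,j}) = e_{l,A_j}$ は部分加群 $A_j \otimes H_{lj}$ への射影である.したがって $\pi_{D_j}$ が $D_j \simeq {\cal K}({\cal E}_{A,R,j})$ を与える同型であることを用いれば,$\pi_{D_j}(e_{l,j} D_j e_{l',j}) = e_{l,A_j}\, {\cal K}({\cal E}_{A,R,j})\, e_{l',A_j}$ となり,この右辺は部分加群 $A_j \otimes H_{l'j}$ から $A_j \otimes H_{lj}$ への compact 作用素全体,すなわち $A_j \otimes {\cal K}(H_{l'j}, H_{lj}) = {\cal B}_{ll',j}$ に一致する.これで (b) が得られ,(a) は連絡代数の角の取り方が C*-加群構造を与えるという標準的事実(\ref{etap3} a) の ${\cal B}$-側への移行)から従う.

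次に (c) が核心である.\ref{not4} の定義により $\delta_{{\cal B}_{ll',j}}^k$ は $\delta_{{\cal B}_{ll',j},0}^k$ を $V_{kj,23}^l$ と $(V_{kj,23}^{l'})^*$ で両側から挟んだものであり,$\delta_{{\cal B}_{ll',j},0}^k(a\otimes T) = \delta_{A_j}^k(a)_{13}(1_{A_k}\otimes T\otimes 1_{S_{kj}})$ である.一方,主張する式の右辺に現れる $({\rm id}_{A_k}\otimes\sigma)(\delta_{A_j}^k \otimes {\rm id}_{{\cal K}(H_{l'j},H_{lj})})(\xi)$ において,$\xi = a\otimes T$ に対して $\delta_{A_j}^k \otimes {\rm id}$ を施すと $\delta_{A_j}^k(a)\otimes T \in M(A_k\otimes S_{kj})\otimes {\cal K}(H_{l'j},H_{lj})$ が得られ,さらに flip $\sigma : S_{kj}\otimes {\cal K} \to {\cal K}\otimes S_{kj}$ を第二・第三成分に作用させると,ちょうど脚の配置が $A_k \otimes {\cal K}(H_{l'j},H_{lj})\otimes S_{kj}$ となって $\delta_{A_j}^k(a)_{13}(1\otimes T\otimes 1)$ に一致する.したがって $\delta_{{\cal B}_{ll',j},0}^k(\xi) = ({\rm id}_{A_k}\otimes\sigma)(\delta_{A_j}^k\otimes {\rm id})(\xi)$ が線型性により全ての $\xi\in {\cal B}_{ll',j}$ で成り立つ.あとはこの等式を $V_{kj,23}^l (\cdot)(V_{kj,23}^{l'})^*$ で挟めば (c) の右辺が $\delta_{{\cal B}_{ll',j}}^k(\xi)$ に等しいことが出る.

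計画の最も注意を要する段階は,脚番号の対応とテンソル因子の並び替えが首尾一貫していることを確認する点である.具体的には $V_{kj}^i : H_{ij}\otimes H_{rj}\to H_{ir}\otimes H_{rj}$ 型の作用素(\ref{Vind})の添字と,${\cal E}_{A,R,k}\otimes H_{kj}$ という加群上での第二・第三脚への作用が整合するかを,\ref{etap2} b) で既に計算された $(\pi_{D_k}\otimes L_{kj})\delta_{D_j}^k(d_j)$ の明示式と突き合わせて検証する必要がある.実際,\ref{etap2} b) の表示には $V_{kj,23}^l \Sigma_{23}(\cdots)\Sigma_{23}(V_{kj,23}^{l'})^*$ という形が既に現れており,そこに $\Sigma_{23}$ が介在しているのに対し,(c) の主張では flip が $\sigma$ として内部に吸収されている.この $\Sigma_{23}$ と $\sigma$ の整合性,すなわち外側の swap を $\delta_{A_j}^k$ の脚の再配置へと移す操作が,証明の技術的な要となる.この対応さえ確定すれば,残りは \ref{etap2} a), b) と \ref{not4} の定義の直接的な照合に帰着する.
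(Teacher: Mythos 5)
Votre démarche pour a) et b) coïncide avec celle du texte : b) découle de l'identification $\pi_{D_j}\colon D_j\to\cK({\cal E}_{A,R,j})$ issue de \ref{thbidu} et de (\ref{liaison1} d)), et a) est immédiat. Pour c), en revanche, le pivot de votre argument est mal placé : l'identité $\delta_{{\cal B}_{ll',j},0}^k(\xi)=({\rm id}_{A_k}\otimes\sigma)(\delta_{A_j}^k\otimes{\rm id})(\xi)$ n'est qu'une réécriture de la num\'erotation des jambes de \ref{not4}, donc la « preuve » principale que vous donnez établit seulement que le membre de droite de c) coïncide avec la formule de (\ref{not4} e)) — ce qui est tautologique. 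Le contenu réel de c) est que $\delta_{{\cal B}_{ll',j}}^k$, entendu comme la restriction au coin ${\cal B}_{ll',j}$ de la coaction transportée $\delta_{{\cal B}_j}^k=(\pi_{D_k}\otimes{\rm id}_{S_{kj}})\circ\delta_{D_j}^k\circ\pi_{D_j}^{-1}$, est donnée par cette formule ; cela s'obtient en évaluant sur les générateurs $\xi=\pi_{D_j}(d_{ll',j})$ et en invoquant (\ref{etap2} a) et b)), ce qui est exactement la preuve du texte. Vous identifiez bien ce point (la réconciliation $\Sigma_{23}$/$\sigma$ et la comparaison avec \ref{etap2} b)) mais vous le reléguez à une « vérification de cohérence » finale alors que c'est là toute la démonstration ; une fois cette étape exécutée plutôt qu'annoncée, votre plan rejoint celui du papier.
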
 

\begin{proof} Le a) est \'evident et b) r\'esulte \ref{thbidu}  de l'\'egalit\'e $D = q_{\beta_A, \widehat\alpha}  (A \otimes {\cal K})   q_{\beta_A, \widehat\alpha} = D_1 \oplus D_2$ et de (\ref{liaison1} d)). On peut aussi d\'eduire directement le b) en utilisant la r\'egularit\'e \ref{reg group}  du groupo\"ide ${\cal G}$, la continuit\'e \ref{acc} de la coaction    $\delta_A$ et (\ref{liaison1} d)).
 
Pour \'etablir le c), Il suffit de v\'erifier l'\'egalit\'e cherch\'ee pour les $\xi\in{\cal B}_{ll',j}$ de la forme $\xi = \pi_{D_j}(d_{ll',j})$, o\`u 
$d_{ll',j}  = \pi_R(aq_l)(1_A \otimes p_{lj} \lambda(x)p_{l'j} L(s)p_{l'j})$
  (\ref{etap1} a)). Dans ce cas le c) est exactement  (\ref{etap2} b)).
\end{proof} 

\begin{corollary} \label{etap5} Soient $j ,   l, l' = 1 , 2$.
\begin{enumerate}
\item  $\delta_{{\cal B}_{l,j}}^j :=  \delta_{{\cal B}_{ll,j}}^j :  A_j \otimes {\cal K}(H_{lj})
\rightarrow  {\cal L}( A_j \otimes {\cal K}(H_{lj}) \otimes    S_{jj})$ est une action continue du groupe quantique $G_j$ dans la C*-alg\`ebre ${\cal B}_{l,j}$.
\item  Dans le cas $l=j$, la coaction $\delta_{{\cal B}_{j,j}}^j$ co\"incide avec la coaction biduale du double produit crois\'e $A_j \rtimes G_j  \rtimes \widehat{G_j}$, apr\`es identification (\cf\cite{BaSka2}) avec  $A_j \otimes {\cal K}(H_{jj})$.
\item  Supposons $l \not=l'$. Alors $({\cal B}_{ll',j} , \delta_{{\cal B}_{ll',j}}^j)$ d\'efinit une \'equivalence de Morita $G_j$-\'equivariante des $G_j$-alg\`ebres  $A_j \otimes {\cal K}(H_{lj})$ et $A_j \otimes {\cal K}(H_{l'j})$.
\end{enumerate}
\end{corollary}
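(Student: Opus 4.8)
Le plan consiste à exploiter que toute la structure provient déjà du théorème de bidualité \ref{thbidu} et de la proposition \ref{etap4}, de sorte que les trois assertions ne sont que des lectures particulières de ces résultats dans les cas $l=l'=j$, $l=l'$ quelconque, et $l\neq l'$.

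Pour le point a), je partirais de la formule explicite de $\delta_{{\cal B}_{l,j}}^j$ donnée par (\ref{etap4} c)) avec $k=j$ et $l=l'$ :
$$\delta_{{\cal B}_{l,j}}^j(\xi)=V_{jj,23}^l\,({\rm id}_{A_j}\otimes\sigma)(\delta_{A_j}^j\otimes{\rm id}_{{\cal K}(H_{lj})})(\xi)\,(V_{jj,23}^l)^*.$$
La coassociativité et la continuité de cette coaction se déduisent alors mécaniquement de celles de $\delta_{A_j}^j$ (qui est une action continue de $G_j$ d'après \ref{grco} a)) et de la relation pentagonale vérifiée par $V_{jj}^j$ (\cf\ref{pent} a)), combinée aux relations de commutation \ref{pent} b) qui assurent que la conjugaison par $V_{jj,23}^l$ est compatible avec le coproduit. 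C'est exactement le schéma de vérification déjà effectué pour $\delta_{A\otimes{\cal K}}$ dans les lemmes précédant \ref{bidu0}, qu'il suffit de restreindre au coin $(l,j)$.

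Pour le point b), l'idée est d'identifier $\delta_{{\cal B}_{j,j}}^j$ à la coaction biduale du double produit croisé $A_j\rtimes G_j\rtimes\widehat{G_j}$. Je remarquerais d'abord que $V_{jj}^j$ est précisément la représentation régulière droite du groupe quantique $G_j$ (c'est l'implémentation canonique de \ref{grco} b) restreinte au coin diagonal), donc la formule de \ref{etap4} c) avec $l=l'=j$ devient littéralement la formule de bidualité de Takesaki-Takai pour $G_j$ rappelée au paragraphe \ref{dbimod}, à savoir $\delta_{{\cal E}\otimes{\cal K}}(\xi)=V_{23}({\rm id}\otimes\sigma)(\delta_{\cal E}\otimes{\rm id})(\xi)V_{23}^*$. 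L'identification de $A_j\rtimes G_j\rtimes\widehat{G_j}$ avec $A_j\otimes{\cal K}(H_{jj})$ étant celle de \cite{BaSka2}, la coïncidence des deux coactions est alors immédiate, le point délicat étant seulement de vérifier que les identifications GNS se recollent correctement, ce que garantit $H_{jj}=L^2(G_j)$ via \ref{grco}.

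Pour le point c) enfin, je partirais du fait, déjà établi en \ref{etap4} a), que ${\cal B}_{ll',j}=A_j\otimes{\cal K}(H_{l'j},H_{lj})$ est un ${\cal B}_{l,j}-{\cal B}_{l',j}$ bimodule hilbertien, donc une équivalence de Morita entre $A_j\otimes{\cal K}(H_{lj})$ et $A_j\otimes{\cal K}(H_{l'j})$ au sens ordinaire. Il reste à voir que $\delta_{{\cal B}_{ll',j}}^j$ en fait une équivalence $G_j$-équivariante, c'est-à-dire que la coaction sur l'algèbre de liaison $A_j\otimes{\cal K}(H_{1j}\oplus H_{2j})$ restreint correctement aux coins et au bimodule, et qu'elle est compatible avec les produits scalaires à gauche et à droite. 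Le principal obstacle sera ici de vérifier que la formule de conjugaison par $V_{jj,23}^l$ à gauche et par $(V_{jj,23}^{l'})^*$ à droite préserve la structure de bimodule : je le ferais en montrant que $\delta_{{\cal B}_{l,j}}^j$, $\delta_{{\cal B}_{l',j}}^j$ et $\delta_{{\cal B}_{ll',j}}^j$ sont toutes les restrictions d'une unique action continue de $G_j$ sur la $G_j$-algèbre ${\cal K}({\cal E}_{A,R,j})\simeq D_j$, ce qui découle de \ref{liaison1} d) et de \ref{etap3} a) ; la cohérence des produits scalaires résulte alors automatiquement du caractère multiplicatif de cette coaction globale, au sens des bimodules équivariants de \cite{BaSka1}.
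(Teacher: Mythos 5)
Votre proposition est correcte et suit pour l'essentiel la d\'emonstration du papier, qui se r\'eduit \`a l'argument de transport de structure que vous donnez au point c) : le *-isomorphisme $\pi_{D_j}$ identifie $D_{ll',j}$ (coin du double produit crois\'e, d\'ej\`a muni de sa structure de bimodule $G_j$-\'equivariant par \ref{etap3} a) et \ref{liaison1}) au bimodule $({\cal B}_{ll',j},\delta_{{\cal B}_{ll',j}}^j)$ via (\ref{etap4} b)), ce qui livre d'un coup les trois assertions. Votre v\'erification directe du a) est redondante mais coh\'erente ; notez seulement que la condition de continuit\'e (densit\'e) n'y est pas purement \og m\'ecanique \fg{} \`a partir du pentagone, c'est pr\'ecis\'ement l'identification avec le double produit crois\'e (cas r\'egulier, \ref{thbidu}) --- que votre point c) fournit --- qui la garantit.
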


\begin{proof} Il d\'ecoule de (\ref{etap3} a), \ref{etap4} b)) que le *-isomorphisme 
$\pi_{D_j} :  D_j   \rightarrow A_j \otimes {\cal K}(H_{1j} \oplus H_{2j})$ 
r\'ealise par restriction au  $D_{l,j}-D_{l',j}$ bimodule hilbertien $G_j$-\'equivariant $D_{ll',j}$, un isomorphisme de bimodule hilbertien $G_j$-\'equivariant de 
$D_{ll',j}$ sur $({\cal B}_{ll',j} , \delta_{{\cal B}_{ll',j}}^j)$.
\end{proof}

\begin{notations} \label{not3} Pour tout $j , l , l'  = 1 , 2$, nous notons
$\gamma_{ll',j} := ({\cal B}_{l,j} ,  {\cal B}_{ll',j} , {\cal B}_{l',j})$,\index{gc@$\gamma_{ll',j}$} 
l'\'equivalence de Morita $G_j$-\'equivariante (\ref{etap5} c)) des $G_j$-alg\`ebres ${\cal B}_{l,j}$ et ${\cal B}_{l',j}$ d\'efinie par le bimodule ${\cal B}_{ll',j}$.
\end{notations}

Pour le produit interne de bimodules $G_j$-\'equivariants, voir \cite{BaSka1}.

\begin{proposition} Pour tout $j , l , l' , l'' = 1 , 2$, nous avons :
\begin{equation}\label{moritaG}\gamma_{ll'',j} =  \gamma_{ll',j} \otimes_{{\cal B}_{l',j}} \gamma_{l'l'',j}.
\end{equation}
\begin{proof}
On v\'erifie facilement  que l'application 
\[
 \pi : {\cal B}_{ll',j} \otimes_{{\cal B}_{l',j}}  {\cal B}_{l'l'',j}  \rightarrow  {\cal B}_{ll'',j}   :  \\
 \xi  \otimes_{{\cal B}_{l',j}} \eta  \mapsto \xi \circ \eta
\]
est un isomorphisme de ${\cal B}_{l,j} - {\cal B}_{l'',j}$ bimodules hilbertiens.\hfill\break
Posons ${\cal E}_1 := {\cal B}_{ll',j}$ et ${\cal E}_2 := {\cal B}_{l'l'',j}$  et soient $\xi_i \in {\cal E}_i \, ,\, i = 1 , 2$. 
 
Avec les notations de (\cite{BaSka1} (2.10)), on v\'erifie sans peine qu'on a 
\[
(\pi \otimes {\rm id}_{S_{jj}})(\Delta(\xi_1 , \xi_2)) = \delta_{{\cal B}_{ll'',j}}^j(\pi(\xi_1 \otimes_{{\cal B}_{l',j}} \xi_2)).
\]
\end{proof}
\end{proposition}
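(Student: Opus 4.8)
The plan is to promote the evident composition-of-operators map to a $G_j$-equivariant isomorphism of bimodules, and then to check that it intertwines the two coactions. Under the identifications of \ref{not4} the data are entirely concrete: ${\cal B}_{ll',j} = A_j \otimes {\cal K}(H_{l'j}, H_{lj})$ is an ${\cal B}_{l,j}-{\cal B}_{l',j}$ bimodule, and ${\cal B}_{ll',j}$, ${\cal B}_{l'l'',j}$, ${\cal B}_{ll'',j}$ all sit inside the single C*-algebra ${\cal B}_j = A_j \otimes {\cal K}(H_{1j} \oplus H_{2j})$ as the corners $e_{l,A_j}\, {\cal B}_j\, e_{l',A_j}$ cut out by the projections of \ref{liaison1}. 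In these terms each $\gamma_{ll',j}$ is the Morita equivalence attached to a corner, and composing two corners is plain operator multiplication in ${\cal B}_j$.

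First I would settle the underlying Hilbert-module isomorphism. The internal tensor product is taken over ${\cal B}_{l',j} = A_j \otimes {\cal K}(H_{l'j})$, and the balancing identifies $\xi \otimes_{{\cal B}_{l',j}} \eta$ with the composite $\xi \circ \eta$; since ${\cal K}(H_{l'j}, H_{lj}) \circ {\cal K}(H_{l''j}, H_{l'j}) = {\cal K}(H_{l''j}, H_{lj})$, the map $\pi : \xi \otimes_{{\cal B}_{l',j}} \eta \mapsto \xi \circ \eta$ is a well-defined surjection, and its compatibility with the left ${\cal B}_{l,j}$-action, the right ${\cal B}_{l'',j}$-action and the ${\cal B}_{ll'',j}$-valued inner product (one checks $(\xi_1 \circ \eta_1)^*(\xi_2 \circ \eta_2) = \eta_1^* \langle \xi_1, \xi_2\rangle \eta_2$) is the standard computation for corners of a compact-operator algebra. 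This yields the announced isomorphism of ${\cal B}_{l,j}-{\cal B}_{l'',j}$ Hilbert bimodules.

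The real content is equivariance. Here I would use the explicit formula \ref{etap4} c), namely $\delta_{{\cal B}_{ll',j}}^j(\xi) = V_{jj,23}^l\,({\rm id}_{A_j} \otimes \sigma)(\delta_{A_j}^j \otimes {\rm id})(\xi)\,(V_{jj,23}^{l'})^*$, together with the fact (cf. \ref{grco}) that each $V_{jj}^{l'}$ is a unitary of $H_{l'j} \otimes H_{jj}$, so that $(V_{jj,23}^{l'})^* V_{jj,23}^{l'} = 1$. For $\xi_1 \in {\cal B}_{ll',j}$ and $\xi_2 \in {\cal B}_{l'l'',j}$ the product $\delta_{{\cal B}_{ll',j}}^j(\xi_1)\,\delta_{{\cal B}_{l'l'',j}}^j(\xi_2)$ produces precisely the middle factor $(V_{jj,23}^{l'})^* V_{jj,23}^{l'} = 1$; and since $\delta_{A_j}^j$ is a $*$-homomorphism and $\sigma$ respects products, $({\rm id}\otimes\sigma)(\delta_{A_j}^j\otimes{\rm id})(\xi_1)\cdot({\rm id}\otimes\sigma)(\delta_{A_j}^j\otimes{\rm id})(\xi_2) = ({\rm id}\otimes\sigma)(\delta_{A_j}^j\otimes{\rm id})(\xi_1 \circ \xi_2)$. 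Thus the product collapses to $V_{jj,23}^l\,({\rm id}\otimes\sigma)(\delta_{A_j}^j\otimes{\rm id})(\xi_1\circ\xi_2)\,(V_{jj,23}^{l''})^* = \delta_{{\cal B}_{ll'',j}}^j(\xi_1\circ\xi_2)$. Conceptually, all the $\delta_{{\cal B}_{ll',j}}^j$ are corners of one genuine coaction $\delta_{{\cal B}_j}^j$ on ${\cal B}_j$, whose multiplicativity $\delta_{{\cal B}_j}^j(\xi_1\xi_2) = \delta_{{\cal B}_j}^j(\xi_1)\delta_{{\cal B}_j}^j(\xi_2)$ already encodes the sought compatibility.

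The remaining, and main, step is to recognise that this product of coactions is exactly the tensor-product coaction $\Delta(\xi_1, \xi_2)$ on ${\cal B}_{ll',j} \otimes_{{\cal B}_{l',j}} {\cal B}_{l'l'',j}$ defined by formula (2.10) of \cite{BaSka1}, so that the required identity reads $(\pi \otimes {\rm id}_{S_{jj}})(\Delta(\xi_1,\xi_2)) = \delta_{{\cal B}_{ll'',j}}^j(\pi(\xi_1 \otimes_{{\cal B}_{l',j}} \xi_2))$. I expect the genuine bookkeeping to live here: one must unwind the balanced tensor product in (2.10), track the support projections coming from the $q_{\beta,\alpha}$-type idempotents $\delta_{A_j}^j(1)$, and verify that the balancing over ${\cal B}_{l',j}$ matches precisely the cancellation $(V_{jj,23}^{l'})^* V_{jj,23}^{l'} = 1$. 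Once this identification is made, the $G_j$-equivariance of $\pi$, and hence the equality $\gamma_{ll'',j} = \gamma_{ll',j} \otimes_{{\cal B}_{l',j}} \gamma_{l'l'',j}$, follows at once.
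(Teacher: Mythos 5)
Your proposal is correct and follows essentially the same route as the paper: reduce to the fact that the composition map $\pi$ is an isomorphism of Hilbert bimodules (a standard corner computation in ${\cal B}_j$), then check that $\pi$ intertwines the coaction $\Delta$ of \cite{BaSka1} (2.10) with $\delta_{{\cal B}_{ll'',j}}^j$. Your explicit cancellation $(V_{jj,23}^{l'})^* V_{jj,23}^{l'} = 1$ together with the multiplicativity of $\delta_{A_j}^j$ is precisely the verification the paper leaves as \og on v\'erifie sans peine \fg, so no gap.
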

\noindent
Pour $j , k =1, 2 \, ,\, j\not= k$, posons 
$${\cal E}_{ll',k}^j := \delta_{{\cal B}_{ll',j}}^k({\cal B}_{ll',j})
\subset {\cal L}({\cal B}_{l',k}  \otimes   S_{kj} , {\cal B}_{ll',k}  \otimes S_{kj}).$$
\noindent
Les relations :
$$\delta_{{\cal B}_{l,j}}^k(a)\delta_{{\cal B}_{ll',j}}^k(\xi)  =  \delta_{{\cal B}_{ll',j}}^k( a\xi), \quad \xi\in {\cal B}_{ll',j}, \quad a\in  {\cal B}_{j, l}\quad ;$$
\noindent
$$\delta_{{\cal B}_{ll',j}}^k(\xi) \delta_{{\cal B}_{l',j}}^k(a) =  \delta_{{\cal B}_{ll',j}}^k(\xi a), \quad \xi\in {\cal B}_{ll',j}, \quad a\in  {\cal B}_{l',j}\quad ;$$
\noindent
$$\langle \delta_{{\cal B}_{ll',j}}^k(\xi) , \delta_{{\cal B}_{ll',j}}^k(\eta) \rangle  := \delta_{{\cal B}_{l'l,j}}^k(\xi^*)\delta_{{\cal B}_{ll',j}}^k(\eta) = \delta_{{\cal B}_{l',j}}^k(\xi^*\circ  \eta), \quad \xi , \eta \in {\cal B}_{ll',j},
$$
\noindent
permettent  de munir ${\cal E}_{ll',k}^j$ d'une structure de $\delta_{{\cal B}_{l,j}}^k({\cal B}_{l,j}) - \delta_{{\cal B}_{l',j}}^k({\cal B}_{l',j})$ bimodule hilbertien.
\hfill\break
En fait, ${\cal E}_{ll',k}^j$ correspond au bimodule hilbertien 
$\delta_{D_j}^k(D_{ll',j})$ par l'identification apparue dans (\ref{not4} a)). On se propose de donner une formule directe de  la coaction de ${\cal E}_{ll',k}^j$ qu'on obtient \`a partir de celle de ${\cal B}_{ll',j}$, par transport de structure (\ie l'isomorphisme de bimodules  $\delta_{{\cal B}_{ll',j}}^k$).
\hfill\break
Rappelons (\ref{coprodS} b)) que la coaction $\delta_{kj}^j : S_{kj} \rightarrow M(S_{kj} \otimes S_{jj})$ est donn\'ee par :
$$\delta_{kj}^j(x) = V_{jj}^k (x \otimes 1_{S_{jj}}) (V_{jj}^k)^*, \quad x\in S_{kj}.$$

\begin{lemme}\label{coact} Soient $j , k , l , l' = 1 , 2$.
\begin{enumerate}
\item L'inclusion $\delta_{{\cal B}_{l',j}}^k({\cal B}_{l',j}) 
\subset M({\cal B}_{l',k}  \otimes   S_{kj})$  d\'efinit un *-morphisme injectif et non d\'eg\'en\'er\'e et on  a 
$$M(\delta_{{\cal B}_{l',j}}^k({\cal B}_{l',j})) 
\subset M({\cal B}_{l',k}  \otimes   S_{kj}).$$
\item On a les deux  inclusions canoniques  :
$${\cal L}(\delta_{{\cal B}_{l',j}}^k({\cal B}_{l',j}) , \delta_{{\cal B}_{ll',j}}^k({\cal B}_{ll',j}))
\subset {\cal L}({\cal B}_{l',k}  \otimes   S_{kj} , {\cal B}_{ll',k}  \otimes   S_{kj}),$$
\noindent
 $${\cal L}(\delta_{{\cal B}_{l',j}}^k({\cal B}_{l',j}) \otimes S_{jj} , \delta_{{\cal B}_{ll',j}}^k({\cal B}_{ll',j}) \otimes S_{jj})
\subset {\cal L}({\cal B}_{l',k}  \otimes   S_{kj}  \otimes S_{jj} , {\cal B}_{ll',k}  \otimes   S_{kj}  \otimes S_{jj}).$$
\end{enumerate}
\begin{proof} On a (\ref{acc} c))  $[\delta_{D_j}^k(D_j)(1_{D_k} \otimes S_{kj})] = D_k \otimes S_{kj}$. On en d\'eduit alors (\ref{liaison1} c)) que
$
[\delta_{D_j}^k(D_{ll',j})(1_{D_k} \otimes S_{kj})] =    D_{ll',k}  \otimes S_{kj}  = [\delta_{D_j}^k(D_{ll',j})(1_{D_{ll',k}} \otimes S_{kj})]. 
$
En composant avec $\pi_{D_k} \otimes {\rm id}_{S_{kj}}$, on obtient 
 $$[\delta_{{\cal B}_{ll',j}}^k({\cal B}_{ll',j})(1_{{\cal B}_{l',k}} \otimes S_{kj})] = {\cal B}_{ll',k} \otimes S_{kj}.$$
\noindent
On en d\'eduit le a) et une inclusion canonique 
$${\cal L}(\delta_{{\cal B}_{l',j}}^k({\cal B}_{l',j}) , \delta_{{\cal B}_{ll',j}}^k({\cal B}_{ll',j}))
\subset {\cal L}({\cal B}_{l',k}  \otimes   S_{kj} , {\cal B}_{ll',k}  \otimes   S_{kj}),$$
\noindent
qui permet d'\'etablir le b) par produit tensoriel avec la C*-alg\`ebre $S_{jj}$.
\end{proof}
\end{lemme}
\noindent
L'inclusion ${\cal E}_{ll',k}^j := \delta_{{\cal B}_{ll',j}}^k({\cal B}_{ll',j}) \subset 
{\cal L}(\delta_{{\cal B}_{l',j}}^k({\cal B}_{l',j}) , \delta_{{\cal B}_{ll',j}}^k({\cal B}_{ll',j}))$ et  (\ref{coact} b)),  nous donnent  une injection   
$${\cal E}_{ll',k}^j \rightarrow {\cal L}({\cal B}_{l',k}  \otimes   S_{kj}  \otimes S_{jj} , {\cal B}_{ll',k}  \otimes   S_{kj}  \otimes S_{jj}) : T \mapsto T_{12},$$
\noindent
qui permet finalement de d\'efinir
 une application lin\'eaire\index{di@$\delta_{ll',k}^j$}
$$\delta_{ll',k}^j : {\cal E}_{ll',k}^j \rightarrow {\cal L}({\cal B}_{l',k}  \otimes   S_{kj} \otimes S_{jj} , {\cal B}_{ll',k}  \otimes S_{kj} \otimes S_{jj}) : 
T \mapsto  V_{jj, 23}^k T_{12}  (V_{jj, 23}^k)^*.$$

\begin{proposition}\label{indbimod} Soient $j , k =1, 2$ tels que $j\not= k$.
\begin{enumerate}
\item  L'application lin\'eaire $\delta_{ll',k}^j$ est \`a valeurs dans ${\cal L}(\delta_{{\cal B}_{l',j}}^k({\cal B}_{l',j}) \otimes S_{jj} , {\cal E}_{ll',k}^j \otimes S_{jj})$.
\item  Le morphisme $({\cal B}_{ll',j} , \delta_{{\cal B}_{ll',j}}^j) \rightarrow ({\cal E}_{ll',k}^j ,  \delta_{ll',k}^j) : \xi \mapsto \delta_{{\cal B}_{ll',j}}^k(\xi)$ est un isomorphisme de bimodules $G_j$-\'equivariants au dessus des *-isomorphismes 
 $$\delta_{{\cal B}_{l,j}}^k :   {\cal B}_{l,j}  \rightarrow \delta_{{\cal B}_{l,j}}^k({\cal B}_{l,j}) 
 \quad \text{et} \quad \delta_{{\cal B}_{l',j}}^k :   {\cal B}_{l',j}  \rightarrow \delta_{{\cal B}_{l',j}}^k({\cal B}_{l',j}). $$
\end{enumerate}
\begin{proof}
En appliquant le corollaire (\ref{isoGj} c)) \`a l'action continue (biduale) $(\beta_D, \delta_D)$ de ${\cal G}$ dans la C*-alg\`ebre $D$, on obtient un *-isomorphisme $G_j$-\'equivariant :  
$$(D_j , \delta_{D_j}^j) \rightarrow \delta_{D_j}^k(D_j) : x \mapsto \delta_{D_j}^k(x).$$
\noindent
La C*-alg\`ebre $\delta_{D_j}^k(D_j)$ \'etant munie  (\ref{isoGj} b)) de la coaction ${\rm id}_{D_k} \otimes \delta_{kj}^j$, on d\'eduit de (\ref{liaison1} c), d))  et  (\ref{etap3} b), i)) que  
$$(\pi_{D_k} \otimes {\rm id}_{S_{kj}}) \delta_{D_j}^k : D_j  \rightarrow M(A_k \otimes {\cal K}(H_{1k} \oplus H_{2k}) \otimes S_{kj})$$
\noindent 
est un *-morphisme injectif d'alg\`ebres de liaison et on a :
$$(\pi_{D_k} \otimes {\rm id}_{S_{kj}}) \delta_{D_j}^k (e_{l, j}de_{l', j}) = \delta_{{\cal B}_{ll',j}}^k(\pi_{D_j}(e_{l, j} de_{l', j})), \quad  d\in D_j.$$
\noindent
En utilisant (\ref{acc} b)), on a aussi :
$$({\rm id}_{D_{ll',k}} \otimes \delta_{kj}^j) \delta_{D_j}^k (e_{l, j}de_{l', j}) = 
(\delta_{D_j}^k  \otimes {\rm id}_{S_{jj}}) \delta_{D_j}^j (e_{l, j}de_{l', j}), \quad  d\in D_j.$$
\noindent
Par composition avec $(\pi_{D_k} \otimes {\rm id}_{S_{kj}} \otimes {\rm id}_{S_{jj}})$, on obtient 
$$({\rm id}_{{\cal B}_{ll',k}} \otimes \delta_{kj}^j)\delta_{{\cal B}_{ll',j}}^k(\pi_{D_j}(e_{l, j} de_{l', j})) =
(\delta_{{\cal B}_{ll',j}}^k \otimes {\rm id}_{S_{jj}})\delta_{{\cal B}_{ll',j}}^j(\pi_{D_j}(e_{l, j}de_{l', j})).
$$
\noindent
Soit $\Phi : {\cal B}_{ll',j} \rightarrow {\cal E}_{ll',k}^j : x \mapsto \Phi(x):= \delta_{{\cal B}_{ll',j}}^k(x)$. Nous avons montr\'e que
$$ \delta_{ll',k}^j(\xi)  = (\Phi \otimes {\rm id}_{S_{jj}}) \delta_{{\cal B}_{ll',j}}^j(\Phi^{-1}(\xi)), \quad \xi\in{\cal E}_k^j,$$
\noindent
ce qui entra\^ine  que $\Phi : {\cal B}_{ll',j} \rightarrow {\cal E}_{ll',k}^j$ est un isomorphisme de bimodules $G_j$-\'equivariants au dessus des C*-isomorphismes 
$$\Psi_{l} : {\cal B}_{l,j} \rightarrow \delta_{{\cal B}_{l,j}}^k({\cal B}_{l,j}) : x \mapsto \delta_{{\cal B}_{l,j}}^k(x)   , \quad \Psi_{l'} : {\cal B}_{l',j} \rightarrow \delta_{{\cal B}_{l',j}}^k({\cal B}_{l',j}) : x \mapsto \delta_{{\cal B}_{l',j}}^k(x).$$
\end{proof}
\end{proposition}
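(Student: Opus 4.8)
The plan is to deduce both assertions from a single \emph{transport of structure} argument, using the fact that the double crossed product $D$ is itself a ${\cal G}$-algebra for the biregular action $(\beta_D, \delta_D)$. The key input is Corollary \ref{isoGj}: applying it to the ${\cal G}$-algebra $D$ in place of $A$ gives, for $j \neq k$, a $G_j$-equivariant $*$-isomorphism
$$(D_j, \delta_{D_j}^j) \longrightarrow (\delta_{D_j}^k(D_j),\ {\rm id}_{D_k} \otimes \delta_{kj}^j),\qquad x \mapsto \delta_{D_j}^k(x).$$
Because the projections $e_{l,j} \in M(D_j)$ satisfy $\delta_{D_j}^k(e_{l,j}) = e_{l,k} \otimes 1_{S_{kj}}$ (Lemme \ref{liaison1}(c)), this isomorphism respects the liaison decomposition of $D_j$ and restricts to isomorphisms of corners $D_{l,j} \to \delta_{D_j}^k(D_{l,j})$ and of off-diagonal bimodules $D_{ll',j} \to \delta_{D_j}^k(D_{ll',j})$. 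Transporting through the faithful liaison representation $\pi_{D_j}$, which identifies $D_{ll',j} \cong {\cal B}_{ll',j}$, then turns this into the statement about $\Phi := \delta_{{\cal B}_{ll',j}}^k$.

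Concretely, I would first record, from \ref{liaison1}(c),(d) together with \ref{etap3}(b)(i), that
$$(\pi_{D_k} \otimes {\rm id}_{S_{kj}})\,\delta_{D_j}^k(e_{l,j}\, d\, e_{l',j}) = \delta_{{\cal B}_{ll',j}}^k\bigl(\pi_{D_j}(e_{l,j}\, d\, e_{l',j})\bigr),$$
so that $\Phi(\xi) = \delta_{{\cal B}_{ll',j}}^k(\xi)$ for $\xi = \pi_{D_j}(e_{l,j}\, d\, e_{l',j}) \in {\cal B}_{ll',j}$. Next, the coassociativity \ref{acc}(b) applied to the pair $\delta_{D_j}^k, \delta_{D_j}^j$ yields $({\rm id}_{D_{ll',k}} \otimes \delta_{kj}^j)\,\delta_{D_j}^k = (\delta_{D_j}^k \otimes {\rm id}_{S_{jj}})\,\delta_{D_j}^j$ on $D_{ll',j}$; composing with $\pi_{D_k} \otimes {\rm id}_{S_{kj}} \otimes {\rm id}_{S_{jj}}$ gives
$$({\rm id}_{{\cal B}_{ll',k}} \otimes \delta_{kj}^j)\,\Phi(\xi) = (\Phi \otimes {\rm id}_{S_{jj}})\,\delta_{{\cal B}_{ll',j}}^j(\xi).$$
Since $\delta_{kj}^j$ is implemented by conjugation with $V_{jj}^k$ (Proposition \ref{coprodS}(b)), the left-hand side is exactly $\delta_{ll',k}^j(\Phi(\xi))$ by the defining formula $\delta_{ll',k}^j(T) = V_{jj,23}^k\, T_{12}\, (V_{jj,23}^k)^*$. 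Hence $\delta_{ll',k}^j \circ \Phi = (\Phi \otimes {\rm id}_{S_{jj}}) \circ \delta_{{\cal B}_{ll',j}}^j$, which is precisely the $G_j$-equivariance of the bimodule isomorphism $\Phi$ over the corner isomorphisms $\Psi_l = \delta_{{\cal B}_{l,j}}^k$ and $\Psi_{l'} = \delta_{{\cal B}_{l',j}}^k$; this is assertion (b). Assertion (a) then drops out, since the right-hand side $(\Phi \otimes {\rm id})\,\delta_{{\cal B}_{ll',j}}^j$ manifestly takes values in ${\cal L}(\delta_{{\cal B}_{l',j}}^k({\cal B}_{l',j}) \otimes S_{jj},\, {\cal E}_{ll',k}^j \otimes S_{jj})$ once the inclusions of Lemme \ref{coact}(b) are invoked.

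I expect the main obstacle to be the leg-numbering bookkeeping that identifies the $V_{jj}^k$-conjugation defining $\delta_{ll',k}^j$ with $({\rm id} \otimes \delta_{kj}^j)$ restricted to ${\cal E}_{ll',k}^j$, and the verification that no support projections are lost when passing between $D_{ll',k} \otimes S_{kj}$ and its image under $\pi_{D_k} \otimes {\rm id}$. This rests on the partial-isometry relations of \ref{2.9}(c) restricted to the corners, and on the continuity identity $[\delta_{D_j}^k(D_{ll',j})(1 \otimes S_{kj})] = D_{ll',k} \otimes S_{kj}$ established in Lemme \ref{coact}. Once this compatibility is secured, the whole statement reduces formally to the equivariance of the single isomorphism $\delta_{D_j}^k$ already provided by Corollaire \ref{isoGj}(c), cut down to the liaison-algebra structure.
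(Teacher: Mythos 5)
Votre démonstration est correcte et suit essentiellement la même démarche que celle de l'article : application du corollaire \ref{isoGj} à l'action biduale $(\beta_D,\delta_D)$, restriction aux coins via $\delta_{D_j}^k(e_{l,j})=e_{l,k}\otimes 1_{S_{kj}}$, puis transport par $\pi_{D_k}\otimes{\rm id}_{S_{kj}}$ combiné à la coassociativité (\ref{acc} b)) pour obtenir l'identité $\delta_{ll',k}^j\circ\Phi=(\Phi\otimes{\rm id}_{S_{jj}})\circ\delta_{{\cal B}_{ll',j}}^j$. Les lemmes invoqués (\ref{liaison1}, \ref{etap3}, \ref{coprodS}, \ref{coact}) sont exactement ceux utilisés dans le texte.
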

\begin{remark} Nous verrons dans le paragraphe suivant une construction du bimodule hilbertien
 ${\cal E}_{ll',k}^j := \delta_{{\cal B}_{ll',j}}^k({\cal B}_{ll',j})$ \`a partir du bimodule ${\cal B}_{ll',k}$ dont il est une d\'eformation.
\end{remark}

\noindent
\section{ Induction d'actions }\label{indaction}

Dans ce paragraphe on se fixe un groupo\"ide de co-liaison  ${\cal G} := {\cal G}_{G_1, G_2}$ associ\'e  \`a deux groupes quantiques \lc $G_1$ et $G_2$ mono\"idalement \'equivalents et r\'eguliers. Nous conservons les   notations des paragraphes  (\ref{Hopff} et \ref{grequivmono}))  pour tous les objets associ\'es \`a ${\cal G}$.
\hfill\break
Nous avons d\'emontr\'e \ref{acc}  qu'\`a toute action continue $(\beta_A, \delta_A)$  du groupo\"ide m.q.\ ${\cal G}$ dans une  C*-alg\`ebre $A = A_1 \oplus A_2$, on associe une action continue $(A_i, \delta_{A_i}^i)$ du   groupe quantique $G_i$.   Dans le cas o\`u le groupo\"ide ${\cal G}$ est r\'egulier, nous allons montrer  que la correspondance $(A, \beta_A, \delta_A) \rightarrow (A_1, \delta_{A_1}^1)$ est biunivoque. Plus pr\'ecis\'ement, \`a toute action continue 
$(A_1, \delta_{A_1})$ du groupe  quantique r\'egulier $G_1$, on associe canoniquement une action continue  $(A_2, \delta_{A_2})$ du groupe  quantique    $G_2$. Nous munirons alors  la C*-alg\`ebre   $A := A_1 \oplus A_2$ d'une action continue du groupo\"ide ${\cal G}$, ce qui permet de construire la correspondance r\'eciproque de la correspondance 
$(A, \beta_A, \delta_A) \rightarrow (A_1, \delta_{A_1}^1)$. 

Notons que la correspondance bijective $(A_1, \delta_{A_1}) \rightarrow (A_2, \delta_{A_2})$ et son application \`a la $KK$-th\'eorie \'equivariante de Kasparov (\cf\cite{BaSka1,K2}), g\'en\'eralisent au cas des groupes quantiques \lc r\'eguliers, deux r\'esultats du cas compact,  dus respectivement \`a \cite{RV} et \cite{V1}. Pour les actions dans une alg\`ebre de von Neumann, la correspondance 
$(A_1, \delta_{A_1}) \rightarrow (A_2, \delta_{A_2})$  a  \'et\'e \'etablie 
dans \cite{DeC2}.

\subsection{\'Equivalence des actions continues de \texorpdfstring{$G_1$}{G1} et \texorpdfstring{$G_2$}{G2}}

Le r\'esultat le plus important de ce paragraphe est le suivant :
\begin{proposition}\label{ind} Soit $\delta_{A_1}  : A_1 \rightarrow M(A_1 \otimes S_{11})$ une action continue du groupe quantique r\'egulier $G_1$.  Posons :\index{dj@$\delta_{A_1}^{(2)}$}   
\begin{equation}
\delta_{A_1}^1 := \delta_{A_1}, \quad 
\delta_{A_1}^{(2)} := ({\rm id}_{A_1} \otimes \delta_{11}^2) \circ \delta_{A_1} : A_1 \rightarrow M(A_1 \otimes S_{12}  \otimes S_{21}), \nonumber 
\end{equation}
\begin{equation}
{\rm Ind }_{G_1}^{G_2}\, A_1 := [({\rm id} \otimes {\rm id} \otimes \omega)\delta_{A_1}^{(2)}(a) \,\,|\,\, a\in A_1 \,,\,\omega\in B(H_{21})_\ast] \subset M(A_1 \otimes S_{12}).\nonumber 
\end{equation}
Alors  ${\rm Ind }_{G_1}^{G_2}\, A_1$\index{i@${\rm Ind }_{G_1}^{G_2}\, A_1$} est une  sous-C*-alg\`ebre de $M(A_1 \otimes S_{12})$.
\end{proposition}

\begin{proof}
Il est clair que ${\rm Ind }_{G_1}^{G_2}\, A_1$ est stable par involution. 
\hfill\break 
Le *-morphisme  $({\rm id} \otimes \delta_{11}^2) \circ \delta_{A_1} : A_1 \rightarrow M(A_1 \otimes S_{12} \otimes S_{21})$ est  non d\'eg\'en\'er\'e et   $S_{21}$ \'etant une sous-C*-alg\`ebre non d\'eg\'en\'er\'ee de $B(H_{21})$, on a :  
$$[({\rm id} \otimes {\rm id} \otimes \omega)\delta_{A_1}^{(2)}(a) \,\,|\,\, a\in A_1 \,,\,\omega\in B(H_{21})_\ast] \subset M(A_1 \otimes S_{12}).$$
\noindent 
Posons  ${\cal A} := {\rm vect}\,\{({\rm id} \otimes {\rm id} \otimes \omega)\delta_{A_1}^{(2)}(a) \,\,|\,\, a\in A_1 \,,\,\omega\in B(H_{21})_\ast\}$, il reste \`a montrer   que 
${\cal A} {\cal A} \subset {\rm Ind }_{G_1}^{G_2}\, A_1$.\hfill\break
Soient  $x =({\rm id}_{A_{1} \otimes S_{12}} \otimes \omega_{\xi, \eta})\delta_{A_1}^{(2)}(a) \,,\, x' =({\rm id}_{A \otimes S_{12}} \otimes \omega_{\xi', \eta'})\delta_{A_1}^{(2)}(a')$ avec $\xi, \eta, \xi', \eta'\in H_{21}$,  on a 
$$x x' = ({\rm id}_{A_1 \otimes S_{12}} \otimes \omega_{\xi, \eta} \otimes   \omega_{\xi', \eta'}) \delta_{A_1}^{(2)}(a)_{123} \delta_{A_1}^{(2)}(a')_{124}.$$\noindent
Comme $H_{11} \not= \{0\}$ et $H_{21} \not= \{0\}$, on peut  supposer que $\eta = k(\eta_1)$, où $k\in{\cal K}(H_{21})$, $\eta_1\in H_{21}$ et 
$\xi' = l(\xi_1')$, où $l\in {\cal K}(H_{11}, H_{21})$, $\xi_1'\in H_{11}$. Il en r\'esulte que
$$x x' = ({\rm id}_{A_{1} \otimes S_{12}} \otimes \omega_{\xi, \eta_1} \otimes   \omega_{\xi_1', \eta'}) \delta_{A_1}^{(2)}(a)_{123} (k \otimes l^*)_{34} \delta_{A_1}^{(2)}(a')_{124}.$$\noindent
Comme $k \otimes l^*\in {\cal K}(H_{21} \otimes H_{21}, H_{21} \otimes H_{11})$, par (\ref{larem} a)), on  peut supposer qu'on a 
$$k \otimes l^* = (1_{H_{21}} \otimes k') (W_{21}^1)^* (k'' \otimes  1_{H_{21}}),\quad {\rm avec}\quad k'\in {\cal K}(H_{11}),\, k''\in {\cal K}(H_{21}).$$\noindent
Finalement, on peut supposer que $x x'$ est limite normique d'\'el\'ements de la forme :
$$y = ({\rm id}_{A_{1} \otimes S_{12}} \otimes \omega \otimes \omega')(\delta_{A_1}^{(2)}(a)_{123}  (W_{21, 34}^1)^*  \delta_{A_1}^{(2)}(a')_{124} W_{21, 34}^1),$$\noindent
avec 
$\omega\in B(H_{21})_\ast$, $\omega'\in B(H_{11})_\ast$ et $a, a' \in A_1$.\hfill\break
On a (\ref{coprodS} b)) :
$$(W_{21, 34}^1)^* \delta_{A_1}^{(2)}(a')_{124} W_{21, 34}^1 =  
  [({\rm id}_{A_1 \otimes S_{12}}  \otimes \delta_{21}^1) 
\delta_{A_1}^{(2)}(a')]_{1234} =  
 [({\rm id}_{A_1 \otimes S_{12}}  \otimes \delta_{21}^1)({\rm id}_{A_1} \otimes \delta_{11}^2) \delta_{A_1}(a')]_{1234}.$$
\noindent
En utilisant (\ref{coprodS} a)), on obtient   
\begin{align*}
(W_{21, 34}^1)^* \delta_{A_1}^{(2)}(a')_{124} W_{21, 34}^1 & = 
 [({\rm id}_{A_1} \otimes \delta_{11}^2 \otimes {\rm id}_{S_{11}}) ({\rm id}_{A_1} \otimes \delta_{11}^1) \delta_{A_1}(a')]_{1234}  \\
 &=[({\rm id}_{A_1} \otimes \delta_{11}^2 \otimes {\rm id}_{S_{11}}) (\delta_{A_1} \otimes {\rm id}_{S_{11}}) \delta_{A_1}(a')]_{1234}.
\end{align*}
Posons $\omega'  = \omega''s$ avec $\omega''\in B(H_{11})_\ast$ et $s\in S_{11}$. Par continuit\'e de la coaction $\delta_{A_1}$, on voit que $y$ est limite normique d'\'el\'ements de la forme  
$$z =  ({\rm id}_{A_{1} \otimes S_{12}} \otimes \omega \otimes \omega')(\delta_{A_1}^{(2)}(a)_{123} [({\rm id}_{A_1} \otimes \delta_{11}^2 \otimes {\rm id}_{S_{11}})
 (\delta_{A_1}(a') \otimes 1_{S_{11}})]_{1234}) = ({\rm id}_{A_{1} \otimes S_{12}} \otimes \omega \otimes \omega')(\delta_{A_1}^{(2)}(aa')_{123})$$\noindent
avec 
$\omega\in B(H_{21})_\ast$, $\omega'\in B(H_{11})_\ast$ et $a, a' \in A_1$, d'o\`u le r\'esultat.
\end{proof}

\begin{remarks}
\begin{enumerate} 
\item En fait nous avons montr\'e que pour tout $T\in M(A_1)$ et tout $a\in A_1$, on a 
\begin{equation}\label{imp}
({\rm id}_{A_1 \otimes S_{12}} \otimes \omega \otimes   \omega') \delta_{A_1}^{(2)}(T)_{123} \delta_{A_1}^{(2)}(a)_{124} \in{\rm Ind }_{G_1}^{G_2}\, A_1,\quad \omega,\,\omega'\in B(H_{21})_{\ast}. 
\end{equation}
\item La proposition \ref{ind} reste vraie pour une action fortement continue d'un groupe quantique \lc semi-r\'egulier.
\item  L'id\'ee de  la preuve de  \ref{ind}, est la m\^eme que celle de (\cite{BaSkaV}, Proposition 5.8).
\end{enumerate}
\end{remarks}

\begin{proposition}\label{ind3} Posons $A_2 := {\rm Ind }_{G_1}^{G_2}\, A_1$.  Nous avons :
\begin{enumerate}
\item  $[A_2(1_{A_1} \otimes  S_{12})]  =  [(1_{A_1} \otimes  S_{12}) A_2] = A_1 \otimes S_{12}$. En particulier, l'inclusion  $A_2 \subset M(A_1 \otimes S_{12})$ d\'efinit  
  un *-morphisme injectif et non d\'eg\'en\'er\'e  et on a $M(A_2) \subset M(A_1 \otimes S_{12})$ ;
\item  Posons $\delta_{A_2} := \restr{({\rm id}_{A_1} \otimes \delta_{12}^2)}{A_2}$. Alors, le *-morphisme   $\delta_{A_2}$ est \`a valeurs dans $M(A_2 \otimes  S_{22})$ et   $\delta_{A_2} : A_2 \rightarrow  M(A_2 \otimes  S_{22})$ est une action continue 
 du groupe quantique $G_2$ ;
\item  La correspondance $A^{  G_1} \rightarrow A^{  G_2}  : (A_1, \delta_{A_1})  \mapsto (A_2, \delta_{A_2})$ est fonctorielle.
\end{enumerate}  
\begin{proof}  Soit  $x = ({\rm id}_{A_1 \otimes S_{12}} \otimes  \omega)\delta_{A_1}^{(2)}(a) \,;\,a\in A_1\,,\, \omega\in B(H_{21})_\ast$. Posons  $\omega = s' \omega'  \,,\,s'\in S_{21}$ et soit $s\in S_{12}$. 
On a   
$$x(1_{A_1} \otimes s) = ({\rm id}_{A_1} \otimes {\rm id}_{S_{12}} \otimes \omega')(\delta_{A_1}^{(2)}(a) (1_{A_1} \otimes s \otimes s')).$$\noindent
Mais il r\'esulte facilement   de (\ref{simp}) qu'on a $[\delta_{11}^2(S_{11}) (S_{12} \otimes 1_{S_{21}})] = S_{12} \otimes S_{21}$.
Comme  $s \otimes s'\in S_{12} \otimes S_{21}$, on d\'eduit   que $x(1_{A_1} \otimes  s)$ est limite normique d'\'el\'ements de la forme :
$$y  =  ({\rm id}_{A_1 \otimes S_{12}} \otimes \omega) [({\rm id}_{A_1} \otimes  \delta_{11}^2) (\delta_{A_1}(a')(1_{A_1} \otimes s'))] (1_{A_1} \otimes s''),\quad a'\in A_1,\, s'\in S_{11},\, s'' \in S_{12}.$$\noindent
Par continuit\'e de la coaction $\delta_{A_1}$, on a alors $y\in A_1 \otimes S_{12}$, donc $[A_2(1_{A_1} \otimes  S_{12})] \subset A_1 \otimes S_{12}$ et aussi $[(1_{A_1} \otimes  S_{12}) A_2] \subset A_1  \otimes  S_{12}$.\hfill\break
Pour montrer l'inclusion $A_1 \otimes S_{12} \subset [A_2(1_{A_1} \otimes  S_{12})]$, remarquons qu'on a  (\ref{simp}) : 
$$[\delta_{11}^2(S_{11}) (1_{S_{12}} \otimes  S_{21} )] = S_{12} \otimes S_{21} \, , \, S_{12} = [({\rm id} \otimes \omega) \delta_{11}^2 (S_{11}) \,|\, \omega\in B(H_{21})_\ast]\, , \,A_1 \otimes S_{11} = [\delta_{A_1}(A_1) (1_{A_1} \otimes S_{11})],$$
\noindent
ce qui donne facilement l'inclusion 
$A_1 \otimes S_{12} \subset [A_2(1_{A_1} \otimes  S_{12})]$, d'o\`u le a).

\noindent
Soit $x = ({\rm id}_{A_1 \otimes S_{12}} \otimes  \omega)\delta_{A_1}^{(2)}(a)$, où $a\in A_1$, $\omega\in B(H_{21})_\ast$. Montrons que 
$$({\rm id}_{A_1} \otimes \delta_{12}^2)(x) \in M(A_2 \otimes S_{22}) \subset M(A_1 \otimes S_{12} \otimes S_{22}).$$
\noindent
En utilisant (\ref{coprodS} a) et b)), on d\'eduit  :
\begin{align} 
({\rm id}_{A_1} \otimes \delta_{12}^2)(x) &=    ({\rm id}_{A_1 \otimes S_{12} \otimes S_{22}} \otimes \omega) ({\rm id}_{A_1} \otimes \delta_{12}^2 \otimes {\rm id}_{S_{21}})({\rm id}_{A_1} \otimes  \delta_{11}^2) \delta_{A_1}(a) \nonumber \\
& =  
  ({\rm id}_{A_1 \otimes S_{12} \otimes S_{22}} \otimes \omega) ({\rm id}_{A_1 \otimes S_{12}}  \otimes \delta_{21}^2)  ({\rm id}_{A_1} \otimes  \delta_{11}^2) \delta_{A_1}(a)  \nonumber \\
&=    
 ({\rm id}_{A_1 \otimes S_{12} \otimes S_{22}} \otimes \omega)((W_{22, 34}^1)^*  \delta_{A_1}^{(2)}(a)_{124} W_{22, 34}^1).\nonumber 
\end{align}
Comme $W_{22}^1 \in M(S_{22} \otimes \cK(H_{21}))$, on a donc $({\rm id}_{A_1} \otimes \delta_{12}^2)(x) \in M(A_1 \otimes S_{12} \otimes S_{22})$.

Posons $\omega  =   \omega' u$ avec $u \in \cK(H_{21})$ et soit  $s\in S_{22}$. En utilisant de nouveau que   $W_{22}^1 \in M(S_{22} \otimes \cK(H_{21}))$, on voit que  
$(1_{A_2} \otimes s) ({\rm id}_{A_1} \otimes \delta_{12}^2)(x)$ est limite normique d'\'el\'ements $y$ de la forme  :
$$y =   ({\rm id}_{A_1 \otimes S_{12} \otimes S_{22}} \otimes \omega) (\delta_{A_1}^{(2)}(a)_{124} [(s \otimes  1_{H_{21}})W_{22}^1 (1_{S_{22}} \otimes v)]_{34}),$$\noindent
avec  $ \omega\in B(H_{21})_\ast \,,\,s\in S_{22} \,,\,v\in \cK(H_{21})$.\hfill\break
La r\'egularit\'e de ${G}_2$  (\cf \cite{BaSka2}, A4) entra\^ine que 
\begin{equation}\label{eq:corep} 
[(s \otimes 1_{H_{21}})W_{22}^1 (1_{S_{22}} \otimes v) \,\,|\,\,s\in S_{22} \,,\,v\in \cK(H_{21})] = S_{22}  \otimes  \cK(H_{21}),
\end{equation}
d'o\`u l'inclusion
$(1_{A_2} \otimes S_{22})\delta_{A_2}(A_2) \subset A_2 \otimes S_{22}$. Comme le morphisme $\delta_{A_2}$ est involutif, on a \'egalement
$\delta_{A_2}(A_2) (1_{A_2} \otimes S_{22}) \subset A_2 \otimes S_{22}$.
\hfill\break
Montrons la continuit\'e de $\delta_{A_2}$, \ie  $ A_2 \otimes S_{22} = [(1_{A_2} \otimes S_{22})\delta_{A_2}(A_2)]$.
\hfill\break
Soit $a\in A_1 , \omega'\in B(H_{21})_\ast\,$ et $s\in S_{22}$. Posons  
$\omega' =  v  \omega  u$ avec $u , v \in \cK(H_{21})$. On a :
$$[({\rm id}_{A_1 \otimes S_{12}} \otimes \omega')(\delta_{A_1}^{(2)}(a))] \otimes s =
({\rm id}_{A_1 \otimes S_{12} \otimes S_{22}} \otimes \omega)((1_{A_1 \otimes S_{12}} \otimes 1_{S_{22}}\otimes u)  \delta_{A_1}^{(2)}(a)_{124} 
(1_{A_1 \otimes S_{12}} \otimes s \otimes v)).$$
En utilisant de nouveau  que $W_{22}^1 \in M(S_{22} \otimes \cK(H_{21}))$ et l'\'egalit\'e (\ref{eq:corep}), 
on voit alors facilement que
$ [({\rm id}_{A_1 \otimes S_{12}} \otimes \omega')(\delta_{A_1}^{(2)}(a))] \otimes s$ est limite normique d'\'el\'ements $y$ de la forme :
$$ y = ({\rm id}_{A_1 \otimes S_{12} \otimes S_{22}} \otimes \omega)((1_{A_1 \otimes S_{12}} \otimes s' \otimes u') (W_{22, 34}^1)^*  \delta_{A_1}^{(2)}(a)_{124} W_{22, 34}^1),$$\noindent
avec   $s'\in S_{22}$, $u'\in 
\cK(H_{21})$ et $\omega\in B(H_{21})_\ast$.

En utilisant (\ref{coprodS} a) et b)), on obtient que   
  $[({\rm id}_{A_1 \otimes S_{12}} \otimes \omega)(\delta_{A_1}^{(2)}(a))] \otimes s\in [(1_{A_2} \otimes S_{22})\delta_{A_2}(A_2)]$, d'o\`u la continuit\'e de $\delta_{A_2}$.

On v\'erifie facilement gr\^ace  \`a (\ref{coprodS} a)), qu'on a $(\delta_{A_2} \otimes {\rm id}_{S_{22}})\delta_{A_2} =({\rm  id}_{A_2} \otimes \delta_{22}^2) \delta_{A_2}$, d'o\`u le b).

Soient $(A_1,  \delta_{A_1}, G_1)$ et $(B_1,  \delta_{B_1}, G_1)$ deux syst\`emes dynamiques. 
Posons $A_2 = {\rm Ind }_{G_1}^{G_2}\, A_1$ et $B_2 = {\rm Ind }_{G_1}^{G_2}\, B_1$.
\hfill\break
Soit $f_1: (A_1, \delta_{A_1})  \rightarrow  (B_1, \delta_{B_1})$ un *-morphisme $G_1$-\'equivariant.   Notons  $f_1 \otimes {\rm id}_{S_{12}}$ le prolongement unital et strictement continu du *-morphisme non d\'eg\'en\'er\'e $A_1 \otimes  S_{12}  \rightarrow M(B_1 \otimes S_{12})  : x \mapsto (f_1 \otimes {\rm id}_{S_{12}})(x)$.\hfill\break
Pour tout $a\in A_1$ et $\omega\in B(H_{21})_\ast$, on a  (\ref{imp}):
$$(f_1 \otimes {\rm id}_{S_{12}})({\rm id}_{A_1 \otimes S_{12}} \otimes  \omega)({\rm id}_{A_1} \otimes \delta_{11}^2) \circ \delta_{A_1}(a) = 
({\rm id}_{B_1 \otimes S_{12}} \otimes  \omega)({\rm id}_{B_1} \otimes \delta_{11}^2)\delta_{B_1}(f_1(a))\in M(B_2).$$
\noindent
Pour tout $x\in A_2$, posons $f_2(x) := (f_1 \otimes {\rm id}_{S_{12}})(x)$. Il est imm\'ediat que $f_2 : A_2 \rightarrow M(B_2)$ est un *-morphisme non d\'eg\'en\'er\'e. Montrons que $f_2$ est $G_2$-\'equivariant.  
Soient $a\in A_1$ et $\omega\in B(H_{21})_\ast$. Posons $x:= ({\rm id}_{A_1 \otimes S_{12}} \otimes  \omega)({\rm id}_{A_1} \otimes \delta_{11}^2) \circ \delta_{A_1}(a)$. Nous avons en utilisant (\ref{coprodS} a)) :
\begin{align} 
(f_2 \otimes {\rm id}_{S_{22}})\delta_{A_2}(x) &= (f_1 \otimes {\rm id}_{S_{12}\otimes S_{22}}) ({\rm id}_{A_1} \otimes \delta_{12}^2  \otimes  \omega)({\rm id}_{A_1} \otimes \delta_{11}^2)  \delta_{A_1}(a) \nonumber \\
&=  
(f_1 \otimes {\rm id}_{S_{12}\otimes S_{22}})
({\rm id}_{A_{1} \otimes S_{12}\otimes S_{22}} \otimes  \omega) ({\rm id}_{A_1} \otimes [(\delta_{12}^2 \otimes {\rm id}_{S_{21}})\delta_{11}^2])\delta_{A_1}(a)) \nonumber \\
&=(f_1 \otimes {\rm id}_{S_{12}\otimes S_{22}})
({\rm id}_{A_{1} \otimes S_{12}\otimes S_{22}} \otimes  \omega) ({\rm id}_{A_1} \otimes [({\rm id}_{S_{12}} \otimes \delta_{21}^2)\delta_{11}^2])\delta_{A_1}(a) \nonumber \\
&= ({\rm id}_{B_{1} \otimes S_{12}\otimes S_{22}} \otimes  \omega) ({\rm id}_{B_1} \otimes [({\rm id}_{S_{12}} \otimes \delta_{21}^2)\delta_{11}^2])\delta_{B_1}(f_1(a)) \nonumber \\
&=({\rm id}_{B_{1} \otimes S_{12}\otimes S_{22}} \otimes  \omega) ({\rm id}_{B_1} \otimes [(\delta_{12}^2 \otimes {\rm id}_{S_{21}})\delta_{11}^2])\delta_{B_1}(f_1(a)) = 
\delta_{B_2}(f_2(x)).\nonumber 
\end{align}
\end{proof}
\end{proposition}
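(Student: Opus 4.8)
The plan is to establish the three assertions (a), (b), (c) in order, the decisive ingredient throughout being the regularity of $G_2$ together with the density relations of (\ref{simp}) and the nested coproduct identities of (\ref{coprodS}).

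For (a), I would begin from a generating element $x=({\rm id}_{A_1\otimes S_{12}}\otimes\omega)\delta_{A_1}^{(2)}(a)$, with $a\in A_1$ and $\omega\in B(H_{21})_\ast$, and compute $x(1_{A_1}\otimes s)$ for $s\in S_{12}$. Writing $\omega=s'\omega'$ with $s'\in S_{21}$ and unfolding $\delta_{A_1}^{(2)}=({\rm id}\otimes\delta_{11}^2)\delta_{A_1}$, the product is controlled by the relation $[\delta_{11}^2(S_{11})(S_{12}\otimes 1_{S_{21}})]=S_{12}\otimes S_{21}$, which follows from (\ref{simp}); applying the slice map and using the strong continuity of the given $G_1$-action one lands inside $A_1\otimes S_{12}$, giving $[A_2(1_{A_1}\otimes S_{12})]\subset A_1\otimes S_{12}$ and, by taking adjoints, the same statement for $[(1_{A_1}\otimes S_{12})A_2]$. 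The reverse inclusion I would obtain by combining the three density facts $[\delta_{11}^2(S_{11})(1_{S_{12}}\otimes S_{21})]=S_{12}\otimes S_{21}$, $\;S_{12}=[({\rm id}\otimes\omega)\delta_{11}^2(S_{11})]$, and $A_1\otimes S_{11}=[\delta_{A_1}(A_1)(1_{A_1}\otimes S_{11})]$, the last being precisely the continuity of $\delta_{A_1}$. Non-degeneracy of $A_2\subset M(A_1\otimes S_{12})$ and the inclusion $M(A_2)\subset M(A_1\otimes S_{12})$ are then formal consequences.

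For (b), the first task is to check that $\delta_{A_2}={\rm id}_{A_1}\otimes\delta_{12}^2$ maps $A_2$ into $M(A_2\otimes S_{22})$. On a generator $x$ I would rewrite $({\rm id}_{A_1}\otimes\delta_{12}^2)(x)$ using the coassociativity (\ref{coprodS}\,a)) together with the implementation formula (\ref{coprodS}\,b)) for $\delta_{21}^2$, bringing it to the form $({\rm id}\otimes\omega)\big((W_{22,34}^1)^*\,\delta_{A_1}^{(2)}(a)_{124}\,W_{22,34}^1\big)$; since $W_{22}^1\in M(S_{22}\otimes{\cal K}(H_{21}))$ this already sits in $M(A_1\otimes S_{12}\otimes S_{22})$. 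To descend into $M(A_2\otimes S_{22})$ and simultaneously to prove continuity, I would invoke the regularity of $G_2$ in the form $[(s\otimes 1)W_{22}^1(1\otimes v)\mid s\in S_{22},\,v\in{\cal K}(H_{21})]=S_{22}\otimes{\cal K}(H_{21})$ (cf.\ \cite{BaSka2}): this allows $(1_{A_2}\otimes S_{22})\delta_{A_2}(A_2)$ to be rewritten as expressions built from $\delta_{A_1}^{(2)}$, which by part (a) lie in $A_2\otimes S_{22}$, and conversely yields the density $A_2\otimes S_{22}=[(1_{A_2}\otimes S_{22})\delta_{A_2}(A_2)]$. Coassociativity $(\delta_{A_2}\otimes{\rm id}_{S_{22}})\delta_{A_2}=({\rm id}_{A_2}\otimes\delta_{22}^2)\delta_{A_2}$ then follows directly from (\ref{coprodS}\,a)).

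For (c), given a $G_1$-equivariant $\ast$-morphism $f_1:(A_1,\delta_{A_1})\to(B_1,\delta_{B_1})$, I would set $f_2:=(f_1\otimes{\rm id}_{S_{12}})|_{A_2}$: the equivariance of $f_1$ combined with (\ref{imp}) shows that $f_2(A_2)\subset M(B_2)$ and that $f_2$ is a non-degenerate $\ast$-morphism, while a computation using (\ref{coprodS}\,a)) and the intertwining $(f_1\otimes{\rm id})\delta_{A_1}=\delta_{B_1}f_1$ gives $(f_2\otimes{\rm id}_{S_{22}})\delta_{A_2}=\delta_{B_2}f_2$. The main obstacle is concentrated entirely in (b): making precise how the regularity identity for $W_{22}^1$ interacts with the nested coproduct relations so as to establish at one stroke that $\delta_{A_2}$ is well defined with values in $M(A_2\otimes S_{22})$ and strongly continuous. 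Everything else reduces to bookkeeping with slice maps and the density relations of (\ref{simp}).
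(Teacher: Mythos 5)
Your proposal follows essentially the same route as the paper's own proof: the same generating elements and the factorization $\omega=s'\omega'$ for (a), the same rewriting of $({\rm id}_{A_1}\otimes\delta_{12}^2)(x)$ as $({\rm id}\otimes\omega)\bigl((W_{22,34}^1)^*\,\delta_{A_1}^{(2)}(a)_{124}\,W_{22,34}^1\bigr)$ combined with the regularity identity $[(s\otimes 1)W_{22}^1(1\otimes v)]=S_{22}\otimes\cK(H_{21})$ for (b), and the same definition $f_2=(f_1\otimes{\rm id}_{S_{12}})|_{A_2}$ with (\ref{imp}) and (\ref{coprodS} a)) for (c). All the key lemmas and density relations you cite are exactly those the paper uses, so the plan is correct and matches the intended argument.
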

\noindent

\noindent
Si on part d'une action continue $\delta_{A_2}  : A_2 \rightarrow M(A_2 \otimes S_{22})$ du groupe quantique $G_2$, posons :\index{dk@$\delta_{A_2}^{(1)}$} 
$$\delta_{A_2}^2 := \delta_{A_2},\quad \delta_{A_2}^{(1)} := ({\rm id} \otimes \delta_{22}^1) \circ \delta_{A_2} : A_2 \rightarrow M(A_2 \otimes S_{21}  \otimes S_{12}).$$\noindent
Avec une preuve similaire,  nous avons :

\noindent
\begin{proposition}\label{ind4} Soit $A_1 := {\rm Ind }_{G_2}^{G_1}\, A_2  := [({\rm id}_{A_2} \otimes {\rm id}_{S_{21}} \otimes \omega)\delta_{A_2}^{(1)}(a) \,\,|\,\, a\in A_2 \,,\,\omega\in B(H_{12})_\ast]$.\index{i@${\rm Ind }_{G_2}^{G_1}\, A_2$}
\begin{enumerate}
\item $A_1$ est une sous-C*-alg\`ebre de $M(A_2 \otimes S_{21})$.
\item $[A_1(1_{A_2} \otimes  S_{21})]  =  [(1_{A_2} \otimes  S_{21}) A_1] = A_2 \otimes S_{21}$. En particulier, l'inclusion  $A_1 \subset M(A_2 \otimes S_{21})$ d\'efinit  
  un *-morphisme injectif et non d\'eg\'en\'er\'e et on a $M(A_1) \subset M(A_2 \otimes S_{21})$.
\item Posons $\delta_{A_1} := \restr{({\rm id}_{A_2} \otimes \delta_{21}^1)}{A_1}$. Alors, le *-morphisme   $\delta_{A_1}$ est \`a valeurs dans $M(A_1 \otimes  S_{11})$ et   $\delta_{A_1} : A_1 \rightarrow  M(A_1 \otimes  S_{11})$ est une action    continue du groupe quantique $G_1$.
\item La correspondance $A^{  G_2} \rightarrow A^{  G_1}  : (A_2, \delta_{A_2})  \mapsto (A_1, \delta_{A_1})$ est fonctorielle. 
\end{enumerate}

\end{proposition}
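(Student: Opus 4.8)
Le plan est de transposer mot pour mot les démonstrations des propositions \ref{ind} et \ref{ind3}, en y échangeant partout les indices $1$ et $2$ (donc les groupes quantiques $G_1$ et $G_2$, les espaces $H_{ij}$, les C*-algèbres $S_{ij}$ et les représentations régulières $V_{jl}^i$, $W_{ij}^k$ correspondantes). Tout repose sur le même mécanisme; je me contente d'indiquer les ingrédients symétriques à mobiliser et l'endroit précis où la régularité intervient.

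Pour le a), la stabilité de $A_1$ par involution est immédiate; il reste, comme dans \ref{ind}, à montrer que ${\cal A}{\cal A} \subset A_1$, où ${\cal A}$ désigne l'espace vectoriel engendré par les $({\rm id}_{A_2 \otimes S_{21}} \otimes \omega)\delta_{A_2}^{(1)}(a)$, $a\in A_2$, $\omega\in B(H_{12})_\ast$. Je formerais un produit $xx'$ de deux tels éléments, que j'écrirais
$$xx' = ({\rm id}_{A_2 \otimes S_{21}} \otimes \omega_{\xi, \eta} \otimes \omega_{\xi', \eta'})\,\delta_{A_2}^{(1)}(a)_{123}\,\delta_{A_2}^{(1)}(a')_{124}\quad , \quad \xi, \eta, \xi', \eta'\in H_{12}$$
Utilisant $H_{22}\not=\{0\}$, je me ramènerais à $\eta = k(\eta_1)$ avec $k\in{\cal K}(H_{12})$ et $\xi' = l(\xi_1')$ avec $l\in{\cal K}(H_{22}, H_{12})$, de sorte que $k \otimes l^*\in{\cal K}(H_{12} \otimes H_{12}, H_{12} \otimes H_{22})$. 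Le point crucial est alors l'analogue symétrique de (\ref{larem} a)), à savoir l'inclusion
$${\cal K}(H_{12} \otimes H_{12}, H_{12} \otimes H_{22}) \subset [(1_{H_{12}} \otimes {\cal K}(H_{22}))(W_{12}^2)^*({\cal K}(H_{12}) \otimes 1_{H_{12}})]$$
qui découle de la régularité de ${\cal G}$ (\cf \ref{cor-reg}, \ref{reg group} et \ref{thregu}). Elle permet de faire apparaître $W_{12}^2$; il suffit ensuite de transformer $(W_{12, 34}^2)^*\delta_{A_2}^{(1)}(a')_{124}W_{12, 34}^2$ au moyen de (\ref{coprodS} a) et b)) et de la coassociativité, puis d'invoquer la continuité de $\delta_{A_2}$, pour conclure que $xx'$ est limite normique d'éléments $({\rm id}_{A_2 \otimes S_{21}} \otimes \omega \otimes \omega')\delta_{A_2}^{(1)}(aa')_{123}\in A_1$.

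Les parties b), c), d) se traiteraient de même. Pour le b), l'égalité $[\delta_{22}^1(S_{22})(S_{21} \otimes 1_{S_{12}})] = S_{21} \otimes S_{12}$ (conséquence de (\ref{simp})) et la continuité de $\delta_{A_2}$ donnent $[A_1(1_{A_2} \otimes S_{21})]\subset A_2 \otimes S_{21}$; l'inclusion réciproque reposerait sur $S_{21} = [({\rm id} \otimes \omega)\delta_{22}^1(S_{22}) \mid \omega\in B(H_{12})_\ast]$ et $A_2 \otimes S_{22} = [\delta_{A_2}(A_2)(1_{A_2} \otimes S_{22})]$. Pour le c), le fait que $\delta_{A_1} := {\rm id}_{A_2} \otimes \delta_{21}^1{}_{|_{A_1}}$ soit à valeurs dans $M(A_1 \otimes S_{11})$ résulterait de $W_{11}^2\in M(S_{11} \otimes {\cal K}(H_{12}))$ (\ref{mult}); la continuité de $\delta_{A_1}$ s'obtiendrait grâce à la régularité de $G_1$, sous la forme $[(s \otimes 1_{H_{12}})W_{11}^2(1_{S_{11}} \otimes v) \mid s\in S_{11}, v\in{\cal K}(H_{12})] = S_{11} \otimes {\cal K}(H_{12})$ (\cf \cite{BaSka2}), combinée à (\ref{coprodS} a) et b)); la coassociativité $(\delta_{A_1} \otimes {\rm id}_{S_{11}})\delta_{A_1} = ({\rm id}_{A_1} \otimes \delta_{11}^1)\delta_{A_1}$ se vérifie via (\ref{coprodS} a)). Enfin le d) reprendrait la fonctorialité de \ref{ind3}: à $f_2 : (A_2, \delta_{A_2}) \rightarrow (B_2, \delta_{B_2})$ j'associerais $f_1 := (f_2 \otimes {\rm id}_{S_{21}})_{|_{A_1}}$, dont l'équivariance découle de (\ref{coprodS} a)).

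L'obstacle principal serait, exactement comme dans \ref{ind}, la stabilité multiplicative du a): c'est là qu'intervient de manière essentielle la régularité, via l'inclusion de type (\ref{larem} a)) ci-dessus, qui seule autorise le remplacement des opérateurs compacts par des produits faisant intervenir $W_{12}^2$. Le reste de la preuve n'est qu'une manipulation systématique des relations de commutation et de coassociativité de (\ref{coprodS}), la régularité de $G_1$ ne resurgissant que pour établir la continuité de $\delta_{A_1}$ au c).
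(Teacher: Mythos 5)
Votre démonstration est correcte et suit exactement la voie prévue par le papier : la preuve de \ref{ind4} y est laissée vide précisément parce qu'elle s'obtient, comme vous le faites, en transposant mot pour mot celles de \ref{ind} et \ref{ind3} par l'échange des indices $1$ et $2$. Vos substitutions sont les bonnes aux endroits sensibles — l'inclusion ${\cal K}(H_{12} \otimes H_{12}, H_{12} \otimes H_{22}) \subset [(1_{H_{12}} \otimes {\cal K}(H_{22}))(W_{12}^2)^*({\cal K}(H_{12}) \otimes 1_{H_{12}})]$ issue de \ref{cor-reg} et \ref{thregu} pour la stabilité multiplicative, puis $W_{11}^2\in M(S_{11}\otimes{\cal K}(H_{12}))$ et la régularité de $G_1$ pour la continuité de $\delta_{A_1}$.
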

\noindent
Dans les deux propositions suivantes, \'etudions les compos\'ees de correspondances 
$$A^{  G_1} \rightarrow A^{  G_2} \rightarrow A^{  G_1}\quad {\rm et} \quad  
A^{  G_2} \rightarrow A^{  G_1} \rightarrow A^{  G_2}.$$

\begin{proposition}\label{ind5}
Soit  $\delta_{A_1}  : A_1 \rightarrow M(A_1 \otimes S_{11})$ une action continue du groupe quantique $G_1$ dans la C*-alg\`ebre $A_1$. Posons $A_2 = {\rm Ind }_{G_1}^{G_2}\, A_1$, $\delta_{A_2} = \restr{({\rm id}_{A_1} \otimes \delta_{12}^2)}{A_1}$ et 
$C = {\rm Ind }_{G_2}^{G_1}\, A_2 \subset M(A_2 \otimes S_{21})$ munie de $\delta_C = \restr{({\rm id}_{A_2} \otimes \delta_{21}^1)}{C}$. \hfill\break
Avec ces notations, on a :
\begin{enumerate}
\item  Pour $\omega\in B(H_{12})_\ast$ et $\omega'\in B(H_{21})_\ast$,  on a  
$$({\rm id}_{A_1 \otimes S_{12} \otimes S_{21}} \otimes \omega)  ({\rm id}_{A_1 \otimes  S_{12}} \otimes \delta_{22}^1) ({\rm id}_{A_1} \otimes \delta_{12}^2) ({\rm id}_{A_1 \otimes S_{12}} \otimes \omega')
 \delta_{A_1}^{(2)}   =   
\delta_{A_1}^{(2)} ({\rm id}_{A_1} \otimes \omega \otimes \omega')\delta_{A_1}^{(2)}.$$\noindent
\item $C \subset M(A_2 \otimes S_{21}) \subset M(A_1 \otimes S_{12} \otimes S_{21})$ et $C= \delta_{A_1}^{(2)} (A_1)$.
\item Le *-morphisme
$$\pi_1 : (A_1, \delta_{A_1})  \rightarrow  (C, \delta_C) :  a \mapsto  \delta_{A_1}^{(2)} (a) = ({\rm id}_{A_1} \otimes \delta_{11}^2)\delta_{A_1}(a)$$
est un isomorphisme $G_1$-\'equivariant.\index{pl@$\pi_1$, $\pi_2$}
\item Le *-morphisme   $\delta_{A_1}^{2}  : A_1 \rightarrow M(A_2 \otimes S_{21}) : a \mapsto  \delta_{A_1}^{(2)} (a)$ est   injectif, non d\'eg\'en\'er\'e et on a $[\delta_{A_1}^{2}(A_{1})(1_{A_2} \otimes S_{21})] = A_2 \otimes S_{21}$.
\end{enumerate}
\end{proposition}

\begin{proof} Pour $\omega\in B(H_{12})_\ast$ et $\omega'\in B(H_{21})_\ast$,  on a  en utilisant (\ref{coprodS} a)) :
\begin{align}({\rm id}_{A_1 \otimes S_{12} \otimes S_{21}} \otimes \omega) & ({\rm id}_{A_1 \otimes  S_{12}} \otimes \delta_{22}^1) ({\rm id}_{A_1} \otimes \delta_{12}^2) ({\rm id}_{A_1 \otimes S_{12}} \otimes \omega')
 \delta_{A_1}^{(2)} \nonumber \\
&  =  
({\rm id}_{A_1 \otimes S_{12} \otimes S_{21}} \otimes \omega \otimes \omega') 
  ({\rm id}_{A_1 \otimes  S_{12}} \otimes \delta_{22}^1 \otimes {\rm id}_{S_{21}}) ({\rm id}_{A_1} \otimes \delta_{12}^2\otimes {\rm id}_{S_{21}}) \delta_{A_1}^{(2)} \nonumber \\
& =  ({\rm id}_{A_1 \otimes S_{12} \otimes S_{21}} \otimes \omega \otimes \omega') 
  ({\rm id}_{A_1} \otimes [ ({\rm id}_{S_{12}} \otimes \delta_{22}^1 \otimes {\rm id}_{S_{21}}) (\delta_{12}^2\otimes {\rm id}_{S_{21}})]) \delta_{A_1}^{(2)} \nonumber \\
& = 
({\rm id}_{A_1 \otimes S_{12} \otimes S_{21}} \otimes \omega \otimes \omega') ({\rm id}_{A_1} \otimes \delta_{11}^2 \otimes {\rm id}_{S_{12} \otimes S_{21}})
({\rm id}_{A_1} \otimes \delta_{12}^1\otimes {\rm id}_{S_{21}}) \delta_{A_1}^{(2)} \nonumber \\
&=   ({\rm id}_{A_1 \otimes S_{12} \otimes S_{21}} \otimes \omega \otimes \omega') ({\rm id}_{A_1} \otimes \delta_{11}^2 \otimes {\rm id}_{S_{12} \otimes S_{21}})
({\rm id}_{A_1} \otimes [(\delta_{12}^1\otimes {\rm id}_{S_{21}})\delta_{11}^2]) \delta_{A_1}\nonumber \\ 
&=   ({\rm id}_{A_1 \otimes S_{12} \otimes S_{21}} \otimes \omega \otimes \omega') ({\rm id}_{A_1} \otimes \delta_{11}^2 \otimes {\rm id}_{S_{12} \otimes S_{21}})
({\rm id}_{A_1} \otimes [({\rm id}_{S_{11}}\otimes \delta_{11}^2)\delta_{11}^1]) \delta_{A_1} \nonumber \\
& =  
({\rm id}_{A_1 \otimes S_{12} \otimes S_{21}} \otimes \omega \otimes \omega') ({\rm id}_{A_1} \otimes \delta_{11}^2 \otimes \delta_{11}^2)
(\delta_{A_1} \otimes {\rm id}_{S_{11}})\delta_{A_1} \nonumber \\
& = \delta_{A_1}^{(2)} ({\rm id}_{A_1} \otimes \omega \otimes \omega')\delta_{A_1}^{(2)},
\nonumber 
\end{align} d'o\`u le a).
\hfill\break
Par d\'efinition, la C*-alg\`ebre $C$ est le sous-espace vectoriel ferm\'e engendr\'e par les \'el\'ements de la forme : 
$$({\rm id}_{A_1 \otimes S_{12} \otimes S_{21}} \otimes \omega)  ({\rm id}_{A_1 \otimes  S_{12}} \otimes \delta_{22}^1) ({\rm id}_{A_1} \otimes \delta_{12}^2) ({\rm id}_{A_1 \otimes S_{12}} \otimes \omega')
 \delta_{A_1}^{(2)}(a),$$
o\`u $a\in A_1$, $\omega\in B(H_{12})_\ast$ et $\omega'\in B(H_{21})_\ast$. Donc $C\subset M(A_1 \otimes S_{12} \otimes S_{21})$, et pour montrer  le b), il suffit, en utilisant le a),  d'\'etablir  : 
$$A_1 = [({\rm id}_{A_1} \otimes \omega \otimes \omega')\delta_{A_1}^{(2)}(a)\,\,|\,\,a\in A_1  ,   \omega\in B(H_{12})_\ast , \omega'\in B(H_{21})_\ast].$$ \noindent
Pour $a\in A_1$, $\omega\in B(H_{12})_\ast$ et $\omega'\in B(H_{21})_\ast$, on a en utilisant (\ref{coprodS} b)) :
\hfill\break
$({\rm id}_{A_1} \otimes \omega \otimes \omega')\delta_{A_1}^{(2)}(a)  = ({\rm id}_{A_1} \otimes \omega \otimes \omega')
((W_{12, 23}^1)^*\delta_{A_1}(a)_{13} W_{12, 23}^1) = ({\rm id}_{A_1} \otimes W_{12}^1(\omega \otimes \omega')(W_{12}^1)^*)
(\delta_{A_1}(a)_{13})$.\hfill\break
Comme $W_{12}^1 : H_{12} \otimes H_{21}  \rightarrow H_{12} \otimes H_{11}$ est un unitaire, on a 
\begin{align*}
[({\rm id}_{A_1} \otimes \omega \otimes \omega')\delta_{A_1}^{(2)}(a)\,\,|\,\,a\in A_1  ,   \omega\in B(H_{12})_\ast , \omega'\in B(H_{21})_\ast] &=   [({\rm id}_{A_1} \otimes \omega) \delta_{A_1} (a)\,\,,\,\,a\in A_1  ,   \omega\in B(H_{11})_\ast ]\\
& =: A_1.
\end{align*}
Il en r\'esulte que  
$$C = \delta_{A_1}^{(2)} ([({\rm id}_{A_1} \otimes \omega \otimes \omega')\delta_{A_1}^{2}(A_1)\,\,|\,\,\omega\in B(H_{12})_\ast , \omega'\in B(H_{21})_\ast]) = \delta_{A_1}^{(2)}(A_1).$$
\noindent
On a donc que $\pi_1  : (A_1, \delta_{A_1})  \rightarrow  (C, \delta_C) :  a \mapsto \delta_{A_1}^{(2)}(a)$ est un *-isomorphisme.
\hfill\break 
Pour montrer que cet isomorphisme   est $G_1$-\'equivariant, il suffit d'\'etablir pour tout $a\in A_1$, l'\'egalit\'e 
$$ \delta_C(\delta_{A_1}^{(2)}(a)) = (\delta_{A_1}^{(2)} \otimes {\rm id}_{S_{11}})\delta_{A_1}(a)$$
\noindent
dans  $M(A_1 \otimes S_{12} \otimes S_{21} \otimes S_{11})$. On a en utilisant (\ref{coprodS} a)) :
\begin{align} 
\delta_C(\delta_{A_1}^{(2)}(a)) &= ({\rm id}_{A_1 \otimes S_{12}} \otimes \delta_{21}^1)({\rm id}_{A_1} \otimes \delta_{11}^2) \delta_{A_1}(a) \nonumber \\
&  =  ({\rm id}_{A_1} \otimes \delta_{11}^2 \otimes {\rm id}_{S_{11}})({\rm id}_{A_1} \otimes \delta_{11}^1) \delta_{A_1}(a) \nonumber \\
&=({\rm id}_{A_1} \otimes \delta_{11}^2 \otimes {\rm id}_{S_{11}})(\delta_{A_1} \otimes {\rm id}_{S_{11}}) \delta_{A_1}(a) = (\delta_{A_1}^{(2)} \otimes {\rm id}_{S_{11}})\delta_{A_1}(a). \nonumber 
\end{align}
Le d) se d\'eduit du b) et de (\ref{ind4} b)).
\end{proof}

\noindent
En permutant les r\^oles de $G_1$ et $G_2$, on a avec une preuve similaire, en partant d'une action continue du groupe $G_2$:

\noindent
\begin{proposition}\label{ind6} Soit  $\delta_{A_2}  : A_2 \rightarrow M(A_2 \otimes S_{22})$ une action continue du groupe quantique $G_2$ dans la C*-alg\`ebre $A_2$. Posons  $A_1 := {\rm Ind }_{G_2}^{G_1}\, A_2$,  $\delta_{A_1} := \restr{({\rm id}_{A_2} \otimes \delta_{21}^1)}{A_1}$ et 
$D = {\rm Ind }_{G_1}^{G_2}\, A_1 \subset M(A_1 \otimes S_{12})$ munie de $\delta_D = \restr{({\rm id}_{A_1} \otimes \delta_{12}^2)}{D}$.
\begin{enumerate}
\item  $D \subset M(A_1 \otimes S_{12}) \subset M(A_2 \otimes S_{21} \otimes S_{12})$ et $D= \delta_{A_2}^{(1)} (A_2)$.
\item Le *-morphisme
$$\pi_2 : (A_2, \delta_{A_2})  \rightarrow  (D, \delta_D) :  a \mapsto  \delta_{A_2}^{(1)} (a) = ({\rm id}_{A_2} \otimes \delta_{22}^1)\delta_{A_2}(a)$$
est un isomorphisme $G_2$-\'equivariant.\index{pl@$\pi_1$, $\pi_2$}
\item Le *-morphisme   $\delta_{A_2}^{1}  : A_2 \rightarrow M(A_1 \otimes S_{12}) : a \mapsto  \delta_{A_2}^{(1)} (a)$ est injectif, non d\'eg\'en\'er\'e et on a $[\delta_{A_2}^{1}(A_{2})(1_{A_1} \otimes S_{12})] = A_1 \otimes S_{12}$.
\end{enumerate}
\end{proposition}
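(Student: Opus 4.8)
The plan is to obtain Proposition \ref{ind6} from Proposition \ref{ind5} by exchanging the two indices $1\leftrightarrow 2$ throughout, the regularity of $G_1$ and $G_2$ being symmetric in the two groups. Concretely, there are three things to establish: the identification $D=\delta_{A_2}^{(1)}(A_2)$ inside $M(A_2\otimes S_{21}\otimes S_{12})$ (which yields both inclusions of a)); that $\pi_2:a\mapsto\delta_{A_2}^{(1)}(a)$ is a $G_2$-equivariant $*$-isomorphism onto $(D,\delta_D)$ (part b)); and the injectivity, non-degeneracy and density statement for $\delta_{A_2}^{1}$ (part c)). Recall that by definition $D={\rm Ind}_{G_1}^{G_2}A_1=[({\rm id}_{A_1\otimes S_{12}}\otimes\omega)\delta_{A_1}^{(2)}(a)\,|\,a\in A_1,\ \omega\in B(H_{21})_\ast]$, where now $A_1={\rm Ind}_{G_2}^{G_1}A_2$ and $\delta_{A_1}={\rm id}_{A_2}\otimes\delta_{21}^1$, so the whole point is to show that this double induction collapses back onto $A_2$.

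First I would record the symmetric intertwining identity, the exact analogue of (\ref{ind5} a)): for $\omega\in B(H_{21})_\ast$ and $\omega'\in B(H_{12})_\ast$,
\begin{align*}
({\rm id}_{A_2\otimes S_{21}\otimes S_{12}}\otimes\omega)({\rm id}_{A_2\otimes S_{21}}\otimes\delta_{11}^2)({\rm id}_{A_2}\otimes\delta_{21}^1)({\rm id}_{A_2\otimes S_{21}}\otimes\omega')\,\delta_{A_2}^{(1)}
=\delta_{A_2}^{(1)}({\rm id}_{A_2}\otimes\omega\otimes\omega')\delta_{A_2}^{(1)}.
\end{align*}
This is proved by the same chain of applications of the coassociativity relations (\ref{coprodS} a)) as in \ref{ind5}, each coproduct $\delta_{ij}^k$ being replaced by the one with indices swapped, the final step using the coassociativity of $\delta_{A_2}$, i.e. $(\delta_{A_2}\otimes{\rm id}_{S_{22}})\delta_{A_2}=({\rm id}_{A_2}\otimes\delta_{22}^2)\delta_{A_2}$. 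Combined with the definition of $D$, this identity rewrites $D$ as $\delta_{A_2}^{(1)}\big([({\rm id}_{A_2}\otimes\omega\otimes\omega')\delta_{A_2}^{(1)}(a)\,|\,a\in A_2,\ \omega\in B(H_{21})_\ast,\ \omega'\in B(H_{12})_\ast]\big)$, so everything reduces to the equality $A_2=[({\rm id}_{A_2}\otimes\omega\otimes\omega')\delta_{A_2}^{(1)}(a)\,|\,\ldots]$.

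To prove that last equality I would use (\ref{coprodS} b)) to write $\delta_{A_2}^{(1)}(a)=(W_{21,23}^2)^*\,\delta_{A_2}(a)_{13}\,W_{21,23}^2$, whence $({\rm id}_{A_2}\otimes\omega\otimes\omega')\delta_{A_2}^{(1)}(a)=({\rm id}_{A_2}\otimes W_{21}^2(\omega\otimes\omega')(W_{21}^2)^*)(\delta_{A_2}(a)_{13})$. Since $W_{21}^2:H_{21}\otimes H_{12}\to H_{21}\otimes H_{22}$ is unitary, the functionals $W_{21}^2(\omega\otimes\omega')(W_{21}^2)^*$ span a dense subset of $B(H_{22})_\ast$, and the continuity of $\delta_{A_2}$ gives $[({\rm id}_{A_2}\otimes\theta)\delta_{A_2}(a)\,|\,\theta\in B(H_{22})_\ast]=A_2$; this yields $D=\delta_{A_2}^{(1)}(A_2)$, i.e. a). The $G_2$-equivariance in b) reduces, exactly as in \ref{ind5}, to checking $\delta_D(\delta_{A_2}^{(1)}(a))=(\delta_{A_2}^{(1)}\otimes{\rm id}_{S_{22}})\delta_{A_2}(a)$ in $M(A_2\otimes S_{21}\otimes S_{12}\otimes S_{22})$, which follows from one more use of (\ref{coprodS} a)); and c) then follows from a), b) and \ref{ind3} b) applied to $D={\rm Ind}_{G_1}^{G_2}A_1$. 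The main obstacle is purely the bookkeeping of the coassociativity chain: because the coproducts $\delta_{ij}^k$ do not share a common source and target, one must track carefully on which leg each relation acts so that the index-swapped computation closes up; conceptually nothing new beyond the symmetry with \ref{ind5} and the regularity of $G_2$ (used through \ref{ind4} to ensure $A_1={\rm Ind}_{G_2}^{G_1}A_2$ carries a genuine continuous $G_1$-action) is required.
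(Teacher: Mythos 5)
Votre démonstration est correcte et suit exactement la voie que l'article lui-même indique : la Proposition \ref{ind6} y est énoncée « avec une preuve similaire » obtenue en permutant les rôles de $G_1$ et $G_2$ dans la preuve de \ref{ind5}, et c'est précisément ce que vous faites — identité d'entrelacement symétrique via (\ref{coprodS} a)), réécriture $\delta_{A_2}^{(1)}(a)=(W_{21,23}^2)^*\delta_{A_2}(a)_{13}W_{21,23}^2$ et unitarité de $W_{21}^2$ pour faire « retomber » la double induction sur $A_2$, puis l'équivariance et le point c) par \ref{ind3}. Seule broutille : dans \ref{ind4} c'est la régularité de $G_1$ (et non de $G_2$) qui intervient pour l'induction de $G_2$ vers $G_1$, mais les deux groupes étant supposés réguliers dans tout le paragraphe, cela ne change rien.
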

\noindent
On a donc \'etabli : 

\noindent 
\begin{theorem}  Les correspondances $$(A_1, \delta_{A_1}) \mapsto (A_2 := {\rm Ind }_{G_1}^{G_2}\, A_1 ,\, \delta_{A_2} := \restr{({\rm id}_{A_1} \otimes \delta_{12}^2)}{A_2} ),$$ 
$$(A_2, \delta_{A_2}) \mapsto (A_1 := {\rm Ind }_{G_2}^{G_1}\, A_2,\, \delta_{A_1} := \restr{({\rm id}_{A_2} \otimes \delta_{21}^1)}{A_1})$$
\noindent 
sont r\'eciproques l'une de l'autre.
\end{theorem}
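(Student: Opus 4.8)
The plan is to deduce the theorem directly from Propositions \ref{ind5} and \ref{ind6}, which already contain all of the analytic content; what remains is only to organise these statements into a pair of mutually inverse functors and to check that the isomorphisms they produce are natural. Recall that by Propositions \ref{ind3} and \ref{ind4} the two assignments define functors $F : A^{G_1} \rightarrow A^{G_2}$, $(A_1, \delta_{A_1}) \mapsto ({\rm Ind}_{G_1}^{G_2} A_1,\ {\rm id}_{A_1} \otimes \delta_{12}^2)$, and $F' : A^{G_2} \rightarrow A^{G_1}$, $(A_2, \delta_{A_2}) \mapsto ({\rm Ind}_{G_2}^{G_1} A_2,\ {\rm id}_{A_2} \otimes \delta_{21}^1)$. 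To say that the two correspondences are inverse to one another amounts to exhibiting natural isomorphisms $F' \circ F \simeq {\rm id}_{A^{G_1}}$ and $F \circ F' \simeq {\rm id}_{A^{G_2}}$.

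First I would treat the composite $F' \circ F$. Fix a $G_1$-algebra $(A_1, \delta_{A_1})$ and write $A_2 = {\rm Ind}_{G_1}^{G_2} A_1$, so that $C := {\rm Ind}_{G_2}^{G_1} A_2 = (F' \circ F)(A_1)$. Proposition \ref{ind5} (b) identifies $C$ with $\delta_{A_1}^{(2)}(A_1) \subset M(A_1 \otimes S_{12} \otimes S_{21})$, and Proposition \ref{ind5} (c) asserts precisely that $\pi_1 = \delta_{A_1}^{(2)} : (A_1, \delta_{A_1}) \rightarrow (C, \delta_C)$ is a $G_1$-equivariant $*$-isomorphism. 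Thus for every object $A_1$ we obtain a canonical $G_1$-equivariant isomorphism $(F' \circ F)(A_1) \simeq A_1$. Symmetrically, Proposition \ref{ind6} (a) and (b) provide, for every $G_2$-algebra $(A_2, \delta_{A_2})$, a canonical $G_2$-equivariant $*$-isomorphism $\pi_2 = \delta_{A_2}^{(1)} : (A_2, \delta_{A_2}) \rightarrow (F \circ F')(A_2)$.

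It then remains to check that $\pi_1$ (and symmetrically $\pi_2$) is natural in its argument. Given a $G_1$-equivariant morphism $f_1 : A_1 \rightarrow M(B_1)$, the proofs of \ref{ind3} and \ref{ind4} describe the induced morphisms explicitly as tensoring by identities on the relevant corners, whence $(F' \circ F)(f_1) = f_1 \otimes {\rm id}_{S_{12} \otimes S_{21}}$. Naturality is the identity $(f_1 \otimes {\rm id}_{S_{12} \otimes S_{21}}) \circ \delta_{A_1}^{(2)} = \delta_{B_1}^{(2)} \circ f_1$, which is immediate: since $f_1$ acts on the algebra leg while $\delta_{11}^2$ acts on an $S$-leg the two commute, so the $G_1$-equivariance $(f_1 \otimes {\rm id}_{S_{11}}) \delta_{A_1} = \delta_{B_1} \circ f_1$ yields $(f_1 \otimes {\rm id})({\rm id}_{A_1} \otimes \delta_{11}^2)\delta_{A_1}(a) = ({\rm id}_{B_1} \otimes \delta_{11}^2)\delta_{B_1}(f_1(a)) = \delta_{B_1}^{(2)}(f_1(a))$ for all $a \in A_1$. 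Hence $\pi_1$ defines a natural isomorphism ${\rm id}_{A^{G_1}} \simeq F' \circ F$, and likewise $\pi_2$ a natural isomorphism ${\rm id}_{A^{G_2}} \simeq F \circ F'$, so the two correspondences are inverse equivalences of categories.

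I do not expect any serious obstacle at this stage, since the two induction/deformation computations that could have failed have already been carried out in Propositions \ref{ind5} and \ref{ind6} (and ultimately rest on the regularity input of Remark \ref{larem} used in the proof of Proposition \ref{ind}). The only genuinely new point is the naturality verification above, and this is routine precisely because induction on morphisms is tensoring with the identity on a corner $S_{ij}$, which commutes with the coproduct maps $\delta_{ij}^k$ acting on the $S$-legs.
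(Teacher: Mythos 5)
Your proposal is correct and follows exactly the route the paper intends: the theorem is stated with an empty proof precisely because Propositions \ref{ind5} and \ref{ind6} already furnish the $G_1$-equivariant isomorphism $\pi_1=\delta_{A_1}^{(2)}:A_1\rightarrow {\rm Ind}_{G_2}^{G_1}{\rm Ind}_{G_1}^{G_2}A_1$ and its $G_2$-counterpart $\pi_2$. Your additional naturality verification (that induction acts on morphisms by tensoring with identities on the $S$-legs, hence commutes with $\delta_{11}^2$) is the only detail the paper leaves implicit, and you carry it out correctly.
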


\noindent
En utilisant le proc\'ed\'e d'induction d\'evelopp\'e   pr\'ec\'edemment, nous allons  construire   une correspondance  fonctorielle   $A^{G_1} \rightarrow A^{{\cal G}}$ r\'eciproque de      la correspondance  \ref{cor1} :
$$A^{\cal G} \rightarrow A^{G_1} : (A, \beta_A, \delta_A) \mapsto (A_1, \delta_{A_1}^1).$$
\noindent
Commen\c cons par d\'efinir une correspondance $ A^{G_1} \rightarrow A^{{\cal G}}  : (B_1, \delta_{B_1}) \mapsto (B, \beta_B, \delta_B)$.

\noindent
Soit  $\delta_{B_1}  : B_1 \rightarrow M(B_1 \otimes S_{11})$ une action continue du groupe quantique $G_1$. 

On a associ\'e \`a $(B_1, \delta_{B_1})$, une action continue  $(B_2, \delta_{B_2})$   du groupe quantique $G_2$,  avec $B_2 = {\rm Ind }_{G_1}^{G_2}\, B_1$ et $\delta_{B_2} = \restr{(\id_{B_1} \otimes \delta_{12}^2)}{B_2}$,  et nous avons  les quatre *-morphismes\index{dc@$\delta_{A_j}^k$/$\delta_{B_j}^k$} 
$$\delta_{B_j}^k : B_j  \rightarrow M(B_k \otimes S_{kj}),\quad j,k=1,2,$$
\noindent
suivants :\hfill\break
 -  $\delta_{B_1}^1 := \delta_{B_1}, \quad \delta_{B_2}^2:= \delta_{B_2}$ ;
\hfill\break 
 - $\delta_{B_1}^2 : B_1 \rightarrow M(B_2 \otimes S_{21})$ est donn\'e   par (\ref{ind5} d)) : 
\begin{equation} b \mapsto \delta_{B_1}^2(b) = \delta_{B_1}^{(2)}(b) :=({\rm id}_{B_1} \otimes \delta_{11}^2)\delta_{B_1}^1(b) \in M(B_2 \otimes S_{21})\,\text{;}\nonumber 
\end{equation}
\hfill\break 
- $\delta_{B_2}^1   : B_2 \rightarrow M(B_1 \otimes S_{12})$ est d\'etermin\'e (\ref{ind6} c)) et  la relation  (\cf \ref{ind5} c)) :
\begin{equation}\label{pi1}\delta_{B_2}^{(1)}  = (\pi_1 \otimes {\rm id}_{S_{12}}) \delta_{B_2}^1  , \quad \pi_1 : B_1  \rightarrow  {\rm Ind }_{G_2}^{G_1}\, B_2 : b \mapsto \pi_1(b) := \delta_{B_1}^{(2)}(b),
\end{equation}
\noindent
avec $\delta_{B_2}^{(1)} := ({\rm id}_{B_2} \otimes \delta_{22}^1) \circ \delta_{B_2}^2$.

\begin{lemme}\label{lesmorph} Soient $j, k, l=1,2$. 
\begin{enumerate}
\item  $(\delta_{B_k}^l \otimes {\rm id}_{S_{kj}}) \delta_{B_j}^k = ({\rm id}_{B_l} \otimes \delta_{lj}^k) \delta_{B_j}^l$.
\item  $[\delta_{B_k}^l(B_k)(1_{B_l} \otimes S_{lk})] = B_l \otimes S_{lk}$.
\item $B_l = [ ({\rm id}_{B_l} \otimes \omega) (\delta_{B_k}^l(B_k)) \,\,| \,\,\omega \in B(H_{lk})_\ast ]$.
\end{enumerate}
\end{lemme}

\begin{proof}
La preuve du a) s'appuie uniquement sur (\ref{coprodS} a)) et les d\'efinitions des *-morphismes $\delta_{B_k}^l$. Plus pr\'ecis\'ement :
 \hfill\break
- Si $j= 1$, seul le cas $k=2$ et $l=1$ n'est pas imm\'ediat. On l'\'etablit par composition avec 
$\delta_{B_1}^2 \otimes {\rm id}_{S_{12}\otimes S_{21}}$ ;
\hfill\break
 - Si $j=2$. Dans le cas $k=l=1$, on \'etablit la formule par composition avec 
$\delta_{B_1}^2 \otimes {\rm id}_{S_{11} \otimes S_{12}}$. Si  $k=2$ et $l=1$, on \'etablit la formule par composition avec 
$\delta_{B_1}^2 \otimes {\rm id}_{S_{12} \otimes S_{22}}$. Les autres cas sont imm\'ediats.
\hfill\break
Pour montrer le b) rappelons que $B_2 = {\rm Ind }_{G_1}^{G_2}\, B_1$. Posons $C = {\rm Ind }_{G_2}^{G_1}\, B_2$   et $D={\rm Ind }_{G_1}^{G_2}\, C$.  On a  :
\hfill\break
 - $[C(1_{B_2} \otimes S_{21})] = B_2 \otimes S_{21}$ (\ref{ind4} b)) et    $C =  \delta_{B_1}^2(B_1)$, donc 
$[\delta_{B_1}^2(B_1)(1_{B_2} \otimes S_{21})] = B_2 \otimes S_{21}$ ;\hfill\break
 -  $[D(1_{C} \otimes S_{12})] = C \otimes S_{12}$ (\ref{ind3} a)) et   $D =  \delta_{B_2}^{(1)}(B_2)$. 
\hfill\break
Par composition avec le *-isomorphisme $\pi_1^{-1} \otimes {\rm id}_{S_{12}}$ et en remarquant qu'on a  $(\pi_1^{-1} \otimes {\rm id}_{S_{12}}) \delta_{B_2}^{(1)} = \delta_{B_2}^{1}$, on d\'eduit l'\'egalit\'e $[\delta_{B_2}^1(B_2)(1_{B_1} \otimes S_{12})] = B_1 \otimes S_{12}$.\hfill\break
Les deux autres relations restantes d\'ecoulent de la   continuit\'e   (\ref{ind3} b)) des coactions  $\delta_{B_1}$ et $\delta_{B_2}$. Le c) est une cons\'equence du b).
\end{proof}

En appliquant \ref{recacc}, on  a donc montr\'e :
\begin{proposition}\label{reccorresp} Soit  $\delta_{B_1}  : B_1 \rightarrow M(B_1 \otimes S_{11})$ une action continue du groupe quantique $G_1$.
Posons $B_2 = {\rm Ind }_{G_1}^{G_2}\, B_1$ et  $B := B_1 \oplus B_2$. D\'efinissons les deux *-morphismes :
$$\delta_B : B \rightarrow M(B \otimes S) : b =(b_1,b_2) \mapsto \delta_B(b) := \sum_{k,j} \pi_j^k \delta_{B_j}^k(b_j) \quad ; \quad 
\beta_B : \C^2 \rightarrow M(B) : (\lambda, \mu) \mapsto  \begin{pmatrix}\lambda & 0\\
0 & \mu \end{pmatrix},$$\noindent
o\`u   $\pi_j^k : M(B_k \otimes  S_{kj}) \rightarrow M(B \otimes S)$ d\'esigne le prolongement strictement continu de l'injection canonique $B_k \otimes  S_{kj} \rightarrow B \otimes S$.
\hfill\break 
Nous avons :
\begin{enumerate}
\item $(\beta_B, \delta_B)$ est une action continue du groupo\"ide ${\cal G}$ ;
\item La correspondance $A^{G_1} \rightarrow A^{\cal G}  : (B_1, \delta_{B_1})  \mapsto  (B,  \beta_B, \delta_B)$ est fonctorielle.
\end{enumerate}
\end{proposition}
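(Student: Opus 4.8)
Le plan consiste à ramener les deux assertions aux deux lemmes de reconstruction déjà établis, \ref{recacc} pour a) et \ref{recmor} pour b).

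Pour a), l'idée est d'appliquer le Lemme \ref{recacc} aux C*-algèbres $B_1$ et $B_2 = {\rm Ind}_{G_1}^{G_2}\, B_1$, munies des quatre *-morphismes $\delta_{B_j}^k : B_j \rightarrow M(B_k \otimes S_{kj})$ construits par induction. Ce lemme exige uniquement que ces morphismes vérifient les conditions de coassociativité croisée (\ref{acc} b)) et de continuité (\ref{acc} c)). Je commencerais donc par vérifier ces deux points, à savoir l'identité $(\delta_{B_k}^l \otimes {\rm id}_{S_{kj}}) \delta_{B_j}^k = ({\rm id}_{B_l} \otimes \delta_{lj}^k) \delta_{B_j}^l$ et la densité $[\delta_{B_k}^l(B_k)(1_{B_l} \otimes S_{lk})] = B_l \otimes S_{lk}$. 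Or ces deux relations constituent précisément le contenu du Lemme \ref{lesmorph} a) et b). Une fois ces hypothèses acquises, le Lemme \ref{recacc} fournit directement que $(\beta_B, \delta_B)$ est une action continue du groupoïde $\mathcal{G}$ dans $B = B_1 \oplus B_2$, ce qui établit a).

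Pour b), soit $f_1 : (B_1, \delta_{B_1}) \rightarrow (C_1, \delta_{C_1})$ un *-morphisme non dégénéré $G_1$-équivariant. Le plan est de produire les deux composantes $f_1$ et $f_2$ vérifiant les relations (\ref{morph}) pour les quatre couples $(j,k)$, puis d'invoquer le Lemme \ref{recmor} b). La composante $f_2 : B_2 \rightarrow M(C_2)$ est celle donnée par la fonctorialité de l'induction (\ref{ind3} c)), qui garantit déjà la relation (\ref{morph}) pour $(j,k)=(2,2)$ ; le cas $(j,k)=(1,1)$ est simplement l'hypothèse de $G_1$-équivariance de $f_1$. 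Restent les deux relations croisées, pour $(j,k)=(1,2)$ et $(j,k)=(2,1)$.

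Le point délicat sera précisément l'établissement de ces deux relations croisées. Pour $(1,2)$, j'utiliserais la définition $\delta_{B_1}^2 = ({\rm id}_{B_1} \otimes \delta_{11}^2) \delta_{B_1}^1$ et appliquerais ${\rm id} \otimes \delta_{11}^2$ à la relation de $G_1$-équivariance de $f_1$, en combinant avec l'égalité $f_2 = f_1 \otimes {\rm id}_{S_{12}}$ sur $B_2$ déjà exploitée dans la preuve de \ref{ind3} c) ; c'est essentiellement l'intertwinement (\ref{imp}) qui intervient. Pour $(2,1)$, je passerais par la caractérisation (\ref{pi1}) de $\delta_{B_2}^1$ au moyen de l'isomorphisme $\pi_1$ et de sa naturalité vis-à-vis des morphismes de $A^{G_1}$, c'est-à-dire par la fonctorialité croisée des deux foncteurs d'induction ${\rm Ind}_{G_1}^{G_2}$ et ${\rm Ind}_{G_2}^{G_1}$ (Propositions \ref{ind3} et \ref{ind4}). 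Les quatre relations (\ref{morph}) étant acquises, le Lemme \ref{recmor} b) fournit le *-morphisme $\mathcal{G}$-équivariant $f : B \rightarrow M(C)$ défini par $f(b) = \sum_j \iota_j f_j(b q_j)$. La préservation de la composition et des identités, donc la fonctorialité de la correspondance $A^{G_1} \rightarrow A^{\mathcal{G}}$, découle alors de ce que toutes les constructions intermédiaires (induction, passage aux quatre morphismes, recollement) sont données par des formules explicites compatibles avec la composition des morphismes équivariants.
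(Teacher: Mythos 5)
Votre démonstration est correcte et suit essentiellement la même voie que celle de l'article : le a) par application de \ref{recacc} dont les hypothèses sont fournies par \ref{lesmorph}, et le b) par \ref{recmor} combiné à la fonctorialité de l'induction (\ref{ind3} c)) et à la naturalité de l'isomorphisme $\pi_1$ de (\ref{pi1}) pour les deux relations croisées. Vous explicitez simplement davantage la vérification des quatre relations (\ref{morph}) que la preuve, très concise, du texte laisse au lecteur.
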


\begin{proof}
Le a) d\'ecoule de \ref{recacc}, le b) de \ref{recmor}; notons que les isomorphismes $\pi_1$ et $\pi_2$ ((\ref{ind5} c)) et (\ref{ind6} b)) respectivement), v\'erifient une propri\'et\'e de naturalit\'e vis-\`a-vis des morphismes de $A^{G_1}$ et $A^{G_2}$. 
\end{proof}

\begin{theorem}
La correspondance $A^{ G_1} \rightarrow A^{\cal G}$ est la correspondance r\'eciproque de la correspondance 
$A^{\cal G} \rightarrow A^{G_1} : (A, \beta_A, \delta_A) \mapsto (A_1, \delta_{A_1}^1)$.
\end{theorem}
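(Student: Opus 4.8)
The plan is to treat the two composites $G\circ F$ and $F\circ G$ separately, where $F\colon A^{\cal G}\to A^{G_1}$ is the functor of \ref{cor1} sending $(A,\beta_A,\delta_A)$ to $(A_1,\delta_{A_1}^1)$, and $G\colon A^{G_1}\to A^{\cal G}$ is the functor of \ref{reccorresp}.

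The composite $F\circ G$ is the identity on the nose. Indeed, given $(B_1,\delta_{B_1})$, the ${\cal G}$-algebra $G(B_1,\delta_{B_1})=(B,\beta_B,\delta_B)$ is built in \ref{reccorresp} precisely by the reconstruction recipe of \ref{recacc} from the four maps $\delta_{B_j}^k$, with $\delta_{B_1}^1=\delta_{B_1}$. Since \ref{acc} recovers these same maps from $\delta_B$ (so that \ref{acc} and \ref{recacc} are mutually inverse), applying $F$ returns the first corner $\beta_B(\varepsilon_1)B=B_1$ together with $\delta_{B_1}^1=\delta_{B_1}$. Functoriality of this equality is then \ref{recmor} together with \ref{cor1}.

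The substance is in $G\circ F\cong{\rm id}_{A^{\cal G}}$. Starting from $(A,\beta_A,\delta_A)$, I decompose it by \ref{acc} into corners $A_1,A_2$ with structure maps $\delta_{A_j}^k$, so that $F(A)=(A_1,\delta_{A_1}^1)$ and $G(F(A))=(B,\beta_B,\delta_B)$ with $B=A_1\oplus B_2$ and $B_2={\rm Ind}_{G_1}^{G_2}A_1$. The first key point is the identification $B_2=\delta_{A_2}^1(A_2)$: the cocycle relation \ref{acc} b) in the instance $(j,k,l)=(1,2,1)$ gives $(\delta_{A_2}^1\otimes{\rm id}_{S_{21}})\delta_{A_1}^2=({\rm id}_{A_1}\otimes\delta_{11}^2)\delta_{A_1}^1=\delta_{A_1}^{(2)}$, and slicing the last leg, together with $A_2=[({\rm id}\otimes\omega)\delta_{A_1}^2(A_1)]$ from \ref{acc} c), yields $\delta_{A_2}^1(A_2)={\rm Ind}_{G_1}^{G_2}A_1$. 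Moreover $\delta_{A_2}^1\colon A_2\to B_2$ is a $G_2$-equivariant $*$-isomorphism by \ref{isoGj} c), the action on the image being exactly $\delta_{B_2}={\rm id}\otimes\delta_{12}^2$.

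I then verify, via \ref{recmor}, that the pair $(f_1,f_2)=({\rm id}_{A_1},\delta_{A_2}^1)$ is a ${\cal G}$-equivariant isomorphism $A\to B$, i.e. the four relations $(f_k\otimes{\rm id}_{S_{kj}})\delta_{A_j}^k=\delta_{B_j}^k\circ f_j$. The cases $(1,1)$ and $(2,2)$ are, respectively, the equality $\delta_{B_1}^1=\delta_{A_1}^1$ and the $G_2$-equivariance just noted, while the case $(1,2)$ is exactly the cocycle relation $(\delta_{A_2}^1\otimes{\rm id}_{S_{21}})\delta_{A_1}^2=\delta_{A_1}^{(2)}=\delta_{B_1}^2$ recorded above. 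The hard case is $(2,1)$, because $\delta_{B_2}^1$ is only defined implicitly in \ref{reccorresp}, through the isomorphism $\pi_1$ of \ref{ind5} c) and the relation $\delta_{B_2}^{(1)}=(\pi_1\otimes{\rm id}_{S_{12}})\delta_{B_2}^1$. Here I would apply the injective map $\pi_1\otimes{\rm id}_{S_{12}}$ to both sides, reduce $(\pi_1\otimes{\rm id})\delta_{B_2}^1(f_2(a))$ to $(f_2\otimes{\rm id})\delta_{A_2}^{(1)}(a)$ using $\delta_{B_2}^{(1)}=({\rm id}\otimes\delta_{22}^1)\delta_{B_2}$ and the $(2,2)$-equivariance, and finally match this with $(\pi_1\otimes{\rm id})\delta_{A_2}^1(a)$ using the cocycle relation \ref{acc} b) in the instance $(j,k,l)=(2,1,2)$, namely $\delta_{A_2}^{(1)}=(\delta_{A_1}^2\otimes{\rm id}_{S_{12}})\delta_{A_2}^1$, together with the definition of $\pi_1$. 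This last identification is the main obstacle and the only place where the mutual-inverse statements \ref{ind5}, \ref{ind6} (and the regularity of $G_1,G_2$ underlying them) are genuinely used. Naturality of the whole isomorphism in $(A,\beta_A,\delta_A)$ then follows from the functoriality already established in \ref{cor1}, \ref{ind3}, \ref{ind4} and \ref{reccorresp}, and from the naturality of $\pi_1,\pi_2$.
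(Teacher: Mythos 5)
Your proposal is correct and follows essentially the same route as the paper: the composite $A^{G_1}\to A^{\cal G}\to A^{G_1}$ is the identity on the nose, and the other composite is handled by the isomorphism $(\,{\rm id}_{A_1},\,\widetilde\pi_2=\delta_{A_2}^1\,)$ (the paper's \ref{isoind}), with the four equivariance relations checked case by case and the $(2,1)$ case resolved by composing with $\pi_1\otimes{\rm id}_{S_{12}}$ and invoking the cocycle identities, exactly as in the paper's demonstration.
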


Il est clair que la compos\'ee de corrrespondances $A^{G_1} \rightarrow A^{\cal G} \rightarrow A^{G_1}$ est la correspondance identique.
Pour montrer qu'il en est de m\^eme pour  la compos\'ee $A^{\cal G} \rightarrow A^{G_1} \rightarrow A^{\cal G}$, nous avons besoin du r\'esultat suivant :

\begin{proposition}\label{isoind} Soit $(\beta_A, \delta_A)$ une action continue  du groupo\"ide ${\cal G}$ dans une C*-alg\`ebre $A$. Avec les notations de \ref{not1} et \ref{acc}, posons $\widetilde{A_2} := {\rm Ind}_{G_1}^{G_2} (A_1, \delta_{A_1}^1)$ et 
$\widetilde{A_1} := {\rm Ind}_{G_2}^{G_1} (A_2, \delta_{A_2}^2)$. Alors on a :
\begin{enumerate}
\item  Pour tout $x\in A_2$, on a $\delta_{A_2}^1(x) \in \widetilde{A_2}$. De plus, 
$\widetilde\pi_2 : (A_2, \delta_{A_2}^2) \rightarrow (\widetilde{A_2}, \delta_{\widetilde{A_2}}) : x \mapsto  \delta_{A_2}^1(x)$ est un *-isomorphisme $G_2$-\'equivariant. 
\item Pour tout $x\in A_1$, on a $\delta_{A_1}^2(x) \in \widetilde{A_1}$. De plus, 
$\widetilde\pi_1 : (A_1, \delta_{A_1}^1) \rightarrow (\widetilde{A_1}, \delta_{\widetilde{A_1}}) : x \mapsto  \delta_{A_1}^2(x)$ est un *-isomorphisme $G_1$-\'equivariant.\index{pm@$\widetilde{\pi}_1$, $\widetilde{\pi}_2$}
\end{enumerate}
\end{proposition}

\begin{proof}
 Pour la preuve du a), on a (\ref{acc} c)), 
   $A_2 =[({\rm id}_{A_2} \otimes  \omega)\delta_{A_1}^2(a) \,\,|\,\, a\in A_1 \,,\,\omega\in B(H_{21})_\ast]$. 
\hfill\break
Soit $a\in A_1$ et $\omega\in B(H_{21})_\ast$, on a 
$$\delta_{A_2}^1(({\rm id}_{A_2} \otimes  \omega)\delta_{A_1}^2(a)) = 
({\rm id}_{{A_1} \otimes S_{12}} \otimes \omega)(\delta_{A_2}^1 \otimes {\rm id}_{S_{21}}) \delta_{A_1}^2(a) = 
({\rm id}_{{A_1} \otimes S_{12}} \otimes \omega)({\rm id}_{A_1} \otimes \delta_{11}^2) \delta_{A_1}^1(a)\in \widetilde {A_2}.$$
On en d\'eduit que $\widetilde\pi_2  :  A_2   \rightarrow    \widetilde {A_2}  : x \mapsto \delta_{A_2}^1(x)$ est un *-isomorphisme. 
Pour tout $x\in A_2$, on a 
$$\delta_{\widetilde{A_2}}(\widetilde\pi_2(x))  = \delta_{\widetilde{A_2}}(\delta_{A_2}^1(x)) = ({\rm id}_{A_1} \otimes \delta_{12}^2)(\delta_{A_2}^1(x)) = 
(\delta_{A_2}^1 \otimes {\rm id}_{S_{22}})(\delta_{A_2}^2(x)) = (\widetilde\pi_2  \otimes {\rm id}_{S_{22}})(\delta_{A_2}^2(x)),$$ 
d'o\`u l'\'equivariance de l'isomorphisme. La preuve du b) est similaire.
\end{proof}

\begin{proof}[D\'emonstration du th\'eor\`eme] 
Partons d'une action continue $(\beta_A, \delta_A)$ du groupo\"ide ${\cal G}$. Avec les notations de \ref{not1} et \ref{acc}, on a 
$A = A_1 \oplus A_2$ avec $A_i:= \beta_A(\varepsilon_i) A$ et les *-morphismes 
$\delta_{A_j}^k : A_j \rightarrow  M(A_k \otimes  S_{kj})$ v\'erifiant les conditions (\ref{acc} b) et c)).
\hfill\break
Posons $(B_1, \delta_{B_1}) := (A_1, \delta_{A_1}^1)$ et  
$B_2 := {\rm Ind}_{G_1}^{G_2} (A_1, \delta_{A_1}^1)$, et soit $(\beta_B, \delta_B)$ l'action continue du groupo\"ide  ${\cal G}$ 
dans la   C*-alg\`ebre $B := B_1 \oplus B_2$ associ\'ee \ref{reccorresp}  \`a $(B_1, \delta_{B_1})$. Reprenons le *-isomorphisme  $G_2$-\'equivariant $\widetilde\pi_2 : (A_2, \delta_{A_2}^2) \rightarrow (B_2, \delta_{B_2}^2)$ donn\'e par le a) de \ref{isoind}.\hfill\break
Pour tout $a = a_1 + a_2\in A$ , posons $f(a) = (a_1, \widetilde\pi_2(a_2))$. \hfill\break 
Il est clair que $f : A  \rightarrow B$ est un *-isomorphisme v\'erifiant $f \circ \beta_A = \beta_B$. Notons $f_j : A_j \rightarrow B_j$ le *-isomorphisme obtenu par restriction de $f$ \`a $A_j$.\hfill\break 
Avec les notations de \ref{lesmorph}, il est clair que la relation 
$(f \otimes {\rm id}_S) \circ \delta_A = \delta_B \circ f$ est \'equivalente aux relations  
$$(f_k \otimes {\rm id}_{S_{kj}}) \delta_{A_j}^k = 
\delta_{B_j}^k \circ f_j  , \quad j, k =1,2.$$
\noindent
Pour $j, k = 1$, il n'y a rien \`a d\'emontrer.  Pour $j = 1$ et $k = 2$, dans $M(A_1 \otimes S_{12} \otimes S_{21})$, on a 
$$\delta_{B_1}^2 = \delta_{A_1}^{(2)} :=  ({\rm id}_{A_1} \otimes \delta_{11}^2) \delta_{A_1}^1 = (\delta_{A_2}^1 \otimes {\rm id}_{S_{21}}) \delta_{A_1}^2 = (\widetilde\pi_2  \otimes {\rm id}_{S_{21}}) \delta_{A_1}^2 = (f_2  \otimes {\rm id}_{S_{21}}) \delta_{A_1}^2.$$
Pour $j, k = 2$,  la relation $(f_k \otimes {\rm id}_{S_{kj}}) \delta_{A_j}^k = 
\delta_{B_j}^k \circ f_j$ est exactement la $G_2$-\'equivariance de l'isomorphisme (\ref{isoind} a)) $\widetilde\pi_2$.
\hfill\break
Pour $j = 2$ et $k = 1$,  il faut montrer que  $\delta_{A_2}^1 = \delta_{B_2}^1 \circ \widetilde\pi_2$. 
En composant avec l'isomorphisme $\pi_1 \otimes {\rm id}_{S_{12}}$ introduit dans (\ref{pi1}), il revient au m\^eme  de montrer 
$$(\pi_1 \otimes {\rm id}_{S_{12}}) \circ \delta_{A_2}^1 = 
 ({\rm id}_{B_2} \otimes \delta_{22}^1) \circ \delta_{B_2}^2 \circ \widetilde\pi_2.$$
Dans $M(B_2 \otimes S_{21} \otimes S_{12}) \subset M(A_1 \otimes S_{12}  \otimes S_{21} \otimes S_{12})$, on a  par d\'efinition de $\pi_1$ : 
$$(\pi_1 \otimes {\rm id}_{S_{12}}) \circ \delta_{A_2}^1 = ([({\rm id}_{A_1} \otimes \delta_{11}^2) \delta_{A_1}^1] \otimes {\rm id}_{S_{12}}) \circ \delta_{A_2}^1.$$
Par \'equivariance de $\widetilde\pi_2$ (et aussi sa d\'efinition), on a    dans $M(B_2 \otimes S_{21} \otimes S_{12}) \subset M(A_1 \otimes S_{12}  \otimes S_{21} \otimes S_{12})$ : 
\begin{align}({\rm id}_{B_2} \otimes \delta_{22}^1) \circ \delta_{B_2}^2 \circ \widetilde\pi_2 &= 
({\rm id}_{B_2} \otimes \delta_{22}^1) (\widetilde\pi_2 \otimes {\rm id}_{S_{22}}) \delta_{A_2}^2 \nonumber \\
&  = (\widetilde\pi_2 \otimes {\rm id}_{S_{21} \otimes S_{12}}) ({\rm id}_{A_2} \otimes \delta_{22}^1) \delta_{A_2}^2 \nonumber \\
&= (\widetilde\pi_2 \otimes {\rm id}_{S_{21} \otimes S_{12}}) (\delta_{A_1}^2 \otimes {\rm id}_{S_{12}}) \delta_{A_2}^1\nonumber \\
&  = ([(\delta_{A_2}^1 \otimes {\rm id}_{S_{21}}) \delta_{A_1}^2] \otimes {\rm id}_{S_{12}}) \delta_{A_2}^1\nonumber\\
& =
([({\rm id}_{A_1} \otimes \delta_{11}^2) \delta_{A_1}^1] \otimes {\rm id}_{S_{12}}) \circ \delta_{A_2}^1.\nonumber\qedhere 
\end{align}
\end{proof}

\subsection {Induction et \'equivalence de Morita}

\noindent
Nous allons montrer que les \'equivalences de cat\'egories  
$$A^{\cal G} \rightarrow  A^{G_1}, \quad A^{ G_1} \rightarrow A^{G_2} \quad \text{et} \quad A^{\cal G} \rightarrow  A^{G_2}$$
\noindent  
\'echangent les \'equivalences de Morita.

\noindent
\begin{definition}  On appelle ${\cal G}$-alg\`ebre de liaison,  une C*-alg\`ebre $J$, munie d'une action continue $(\beta_J,  \delta_J)$ du groupo\"ide ${\cal G}$ et de deux projecteurs non nuls $e_1,\, e_2 \in M(J)$ v\'erifiant les conditions :
\begin{enumerate}
\item  $e_1 + e_2 = 1_J \quad ; \quad [J e_i J] = J$,\quad $i = 1, 2$.
\item $\delta_J(e_i) = q_{\beta_J,  \alpha } (e_i \otimes 1_S)$,\quad $i=1,2$.
\end{enumerate}
\end{definition}
Reprenons les notations de (\ref{not1}) :
$$q_j = \beta_J(\varepsilon_j), \quad  J_j = q_j J, \quad j=1,2 \quad ; \quad \delta_{J_j}^k : J_j \rightarrow M(J_k \otimes S_{kj}),\quad j,k=1,2.$$
\noindent
Rappelons (\ref{centre}) que $\beta_J$ prend ses valeurs dans le centre de $M(J)$. D'où $[q_j, e_i] = 0$, pour tout $i,j=1,2$. Pour $j=1,2$, notons  $\iota_j : M(J_j) \rightarrow M(J)$ le prolongement strictement continu de l'inclusion $J_j \subset J$ v\'erifiant $\iota_j(1_{J_j}) = q_j$. 
Pour $i, j = 1, 2$, notons $e_{i, j}$, l'unique projecteur de $M(J_j)$ v\'erifiant 
$\iota_j(e_{i, j}) = e_i q_j$.

Avec ces notations nous avons  facilement :

\begin{lemme} Soient  $i, j, k =1, 2$.
\begin{enumerate}
\item $e_{1, j}  +  e_{2, j}   = 1_{J_j}, \quad [J_j e_{i, j} J_j] = J_j$.
\item  $\delta_{J_j}^k(e_{i, j}) = e_{i, k} \otimes 1_{S_{kj}}$.
\end{enumerate}
\end{lemme}

Il en r\'esulte que  la $G_j$-alg\`ebre $(J_j, \delta_{J_j}^j)$, munie de $(e_{1, j}, e_{2, j})$ est une \'equivalence de Morita $G_j$-\'equivariante et l'image par le foncteur    
$$A^{\cal G} \rightarrow A^{G_j} : (J, \beta_J, \delta_J) \mapsto (J_j, \delta_{J_j}^j)$$ 
d'une \'equivalence de Morita est aussi une \'equivalence de Morita.

R\'eciproquement, soit $(J_1, \delta_{J_1}, e_{i, 1})$ une $G_1$-alg\`ebre de liaison. Notons $(J, \delta_J, \beta_J)$ la ${\cal G}$-alg\`ebre associ\'ee \ref{reccorresp} par le foncteur $A^{G_1} \rightarrow A^{\cal G}$.

Rappelons que $J= J_1 \oplus  J_2$ avec  $J_2 : ={\rm Ind}_{G_1}^{G_2} (J_1, \delta_{J_1})$ munie de la coaction $\delta_{J_2}^2:= \restr{({\rm id}_{J_1}  \otimes \delta_{12}^2)}{J_2}$. La C*-alg\`ebre 
    $M(J_2)$ est identifi\'ee \`a une sous-C*-alg\`ebre de  $M(J_1 \otimes S_{12})$ et le coproduit 
$\delta_J : J  \rightarrow M(J \otimes S)$ est donn\'e pour tout $a = (a_1, a_2)$ par la formule :
$$\delta_J(a) = \sum_{k,j} \pi_j^k(\delta_{J_j}^k( a_j)) \quad ; \quad \delta_{J_j}^k : J_j \rightarrow M(J_k \otimes S_{kj}),\quad j,k=1,2.$$
Pour  $i = 1, 2$, posons   $e_{i,2} := e_{i,1} \otimes 1_{S_{12}} \in {\rm Proj}(M(J_1 \otimes S_{12}))$.

Avec ces notations, nous avons :

\begin{proposition} \label{indu2}
Le quintuplet $(J_2, \delta_{J_2}^2, e_{1, 2},e_{2, 2})$ est une  $G_2$-alg\`ebre de liaison. De plus, pour tout 
$i, j, k =1, 2$,  nous avons 
$$\delta_{J_j}^k(e_{i, j}) = e_{i, k} \otimes 1_{S_{kj}}.$$
\begin{proof} Montrons que $e_{i, 2}\in M(J_2)$. Soient $a\in J_1$ et $\omega\in B(H_{21})_\ast$. Nous avons 
$$e_{i, 2} ({\rm id}_{J_1 \otimes S_{12}} \otimes \omega) ({\rm id}_{J_1} \otimes \delta_{11}^2) \delta_{J_1}(a) = 
({\rm id}_{J_1 \otimes S_{12}} \otimes \omega) ({\rm id}_{J_1} \otimes \delta_{11}^2) \delta_{J_1}( e_{i, 1} a).$$
Il en r\'esulte que $e_{i, 2} \in {\rm Proj}(M(J_2))$ et  on a :
$$e_{1, 2}  + e_{2, 2} = 1_{J_2} \quad ; \quad \delta_{J_j}^2(e_{i, j}) = e_{i, 2} \otimes 1_{S_{2j}}, \quad i, j = 1, 2.$$
Pour montrer   l'\'egalit\'e $[J_2 e_{i, 2} J_2] = J_2$, il suffit   de montrer \ref{ind} que pour tout $\omega \in B(H_{21})_\ast$ et pour tout  $a_1, b_1\in J_1$, on a $({\rm id}_{J_2} \otimes \omega)\delta_{J_1}^2(a_1 e_{i, 1} b_1) \in [J_2 e_{i, 2} J_2]$. On a 
$$({\rm id}_{J_2} \otimes \omega)\delta_{J_1}^2(a_1 e_{i, 1} b_1) =  ({\rm id}_{J_2} \otimes \omega) [\delta_{J_1}^2(a_1)( e_{i, 2} \otimes 1_{S_{21}}) \delta_{J_1}^2(b_1)].$$ 
Par (\ref{ind5} d)), on a $[\delta_{J_1}^2(J_1)(1_{J_2} \otimes S_{21})] = J_2 \otimes S_{21}$. Il en r\'esulte que  
 $({\rm id}_{J_2} \otimes \omega)\delta_{J_1}^2(a_1 e_{i, 1} b_1)$ est limite normique de sommes finies d'\'el\'ements de la forme 
\[
x  e_{i, 2} ({\rm id}_{J_2} \otimes \omega')\delta_{J_1}^2(b_1), \quad \text{avec} \quad x \in J_2,\; \omega'\in B(H_{21})_*.
\]
\noindent
Pour voir que $\delta_{J_2}^1(e_{i, 2}) = e_{i, 1} \otimes 1_{S_{12}}$, il suffit de composer cette \'egalit\'e avec le *-isomorphisme $\pi_1 \otimes {\rm id}_{S_{12}}$, o\`u $\pi_1$ est le *-isomorphisme (\ref{pi1}).
\end{proof}
\end{proposition}
\noindent
Pour $i = 1,2$, posons $e_i := (e_{i, 1}, e_{i, 2})\in M(J)$. Les d\'efinitions  des *-morphismes \ref{reccorresp}, $(\beta_B , \delta_B)$ et ce qui pr\'ec\`ede,  entra\^inent imm\'ediatement qu'on a :

\begin{corollary}\label{indugr} Le quintuplet $(J,  \beta_J, \delta_J, e_1, e_2)$ est une ${\cal G}$-alg\`ebre de liaison.
L'image par le foncteur  $A^{G_1} \rightarrow A^{\cal G}$ d'une $G_1$-alg\`ebre de liaison est aussi une ${\cal G}$-alg\`ebre de liaison.
\end{corollary}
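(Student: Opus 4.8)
Le plan est de v\'erifier directement, pour le quintuplet $(J, \beta_J, \delta_J, e_1, e_2)$ o\`u $e_i := (e_{i,1}, e_{i,2}) \in M(J)$, les deux conditions de la d\'efinition d'une ${\cal G}$-alg\`ebre de liaison. L'essentiel du travail a d\'ej\`a \'et\'e effectu\'e en amont : l'hypoth\`ese fournit la structure de $G_1$-alg\`ebre de liaison sur $(J_1, \delta_{J_1}, e_{i,1})$, tandis que la proposition \ref{indu2} fournit la structure analogue sur $(J_2, \delta_{J_2}^2, e_{i,2})$, ainsi que les \'egalit\'es cl\'es $\delta_{J_j}^k(e_{i,j}) = e_{i,k} \otimes 1_{S_{kj}}$ pour tous $i,j,k = 1,2$. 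Il ne reste donc qu'\`a recoller ces donn\'ees \`a travers la d\'ecomposition $J = J_1 \oplus J_2$ et la d\'efinition \ref{reccorresp} des *-morphismes $(\beta_J, \delta_J)$.

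Pour la condition (a), j'utiliserais la d\'ecomposition en id\'eaux $J = J_1 \oplus J_2$ et le fait que $e_i$ agit composante par composante. L'\'egalit\'e $e_1 + e_2 = 1_J$ r\'esulte alors imm\'ediatement de $e_{1,1} + e_{2,1} = 1_{J_1}$ (hypoth\`ese) et de $e_{1,2} + e_{2,2} = 1_{J_2}$ (\ref{indu2}). De m\^eme, $[J e_i J] = [J_1 e_{i,1} J_1] \oplus [J_2 e_{i,2} J_2] = J_1 \oplus J_2 = J$, o\`u l'on a utilis\'e $[J_1 e_{i,1} J_1] = J_1$ (hypoth\`ese) et $[J_2 e_{i,2} J_2] = J_2$ (\ref{indu2}).

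Le c\oe ur du calcul est la condition (b). Partant de la formule $\delta_J(e_i) = \sum_{k,j} \pi_j^k \delta_{J_j}^k(e_{i,j})$ (\cf\ref{reccorresp}) et en injectant $\delta_{J_j}^k(e_{i,j}) = e_{i,k} \otimes 1_{S_{kj}}$ (\ref{indu2}), j'obtiendrais $\delta_J(e_i) = \sum_{k,j} \pi_j^k(e_{i,k} \otimes 1_{S_{kj}})$. Comme $\pi_j^k$ prolonge l'injection canonique et v\'erifie $\pi_j^k(1_{J_k \otimes S_{kj}}) = q_k \otimes p_{kj}$, et comme $\iota_k(e_{i,k}) = e_i q_k$ par d\'efinition de $e_{i,k}$, chaque terme s'\'ecrit $\pi_j^k(e_{i,k} \otimes 1_{S_{kj}}) = (e_i q_k) \otimes p_{kj}$ dans $M(J \otimes S)$. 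En sommant d'abord sur $j$ \`a l'aide de $\sum_j p_{kj} = \alpha(\varepsilon_k)$, puis sur $k$, il vient $\delta_J(e_i) = \sum_k (e_i q_k) \otimes \alpha(\varepsilon_k)$. Gr\^ace \`a la centralit\'e de $\beta_J$ (\cf\ref{centre}), qui donne $[e_i, q_k] = 0$, et \`a l'expression $q_{\beta_J, \alpha} = \sum_k q_k \otimes \alpha(\varepsilon_k)$ (\cf\ref{qba}), on reconna\^it $\delta_J(e_i) = (e_i \otimes 1_S) q_{\beta_J, \alpha} = q_{\beta_J, \alpha}(e_i \otimes 1_S)$, ce qui est exactement la condition (b).

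Ceci ach\`eve la v\'erification que $(J, \beta_J, \delta_J, e_1, e_2)$ est une ${\cal G}$-alg\`ebre de liaison ; la deuxi\`eme assertion du corollaire n'en est que la reformulation fonctorielle, puisque le foncteur $A^{G_1} \rightarrow A^{\cal G}$ de \ref{reccorresp} envoie pr\'ecis\'ement la donn\'ee $(J_1, \delta_{J_1}, e_{i,1})$ sur $(J, \beta_J, \delta_J, e_1, e_2)$. Je ne m'attends \`a aucun obstacle s\'erieux ici : la seule difficult\'e v\'eritable — \'etablir que l'induit $J_2$ reste une alg\`ebre de liaison, en particulier l'\'egalit\'e de pl\'enitude $[J_2 e_{i,2} J_2] = J_2$ qui repose sur la r\'egularit\'e et le proc\'ed\'e d'induction \ref{ind} — a d\'ej\`a \'et\'e surmont\'ee dans la proposition \ref{indu2}, de sorte qu'il ne subsiste qu'un recollement combinatoire via les projecteurs centraux $q_k$.
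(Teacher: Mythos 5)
Votre preuve est correcte et suit exactement la d\'emarche du texte : le papier ne donne pas de d\'emonstration explicite mais affirme que le r\'esultat d\'ecoule imm\'ediatement des d\'efinitions de $(\beta_J,\delta_J)$ dans \ref{reccorresp} et de la proposition \ref{indu2}, et votre v\'erification composante par composante (via $J=J_1\oplus J_2$, les \'egalit\'es $\delta_{J_j}^k(e_{i,j})=e_{i,k}\otimes 1_{S_{kj}}$ et la formule $q_{\beta_J,\alpha}=\sum_k q_k\otimes\alpha(\varepsilon_k)$) est pr\'ecis\'ement le d\'etail de ce recollement.
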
 

\subsection{Induction de C*-modules}

\noindent
Pour \'etendre le proc\'ed\'e d'induction  aux C*-modules \'equivariants \cite{BaSka1}, nous avons besoin du lemme suivant :
\begin{lemme}\label{indu1} Soient $(B_1, \delta_{B_1})$ et $(J_1, \delta_{J_1})$ deux $G_1$-alg\`ebres  et 
$f_1  : B_1 \rightarrow M(J_1)$ un *-morphisme v\'erifiant :
\begin{enumerate}
\item  Pour une unit\'e approch\'ee $(u_\lambda)$ de $B_1$, on suppose que $f_1(u_\lambda)  \rightarrow e_1\in M(J_1)$ pour la topologie stricte ;
\item  $(f_1\otimes {\rm id}_{S_{11}}) \delta_{B_1} = \delta_{J_1}  \circ f_1$. 
\end{enumerate}
Alors nous avons :
\begin{enumerate}[label={\rm(\roman*)}]
\item  $e_1$ est un projecteur de $M(J_1)$, qui ne d\'epend pas de l'unit\'e approch\'ee $(u_\lambda)$ de $B_1$. En outre, on a $\delta_{J_1}(e_1) = e_1 \otimes 1_{S_{11}}$ ; 
\item  Pour tout $T\in M(B_1 \otimes S_{12})$, on a $(f_1\otimes {\rm id}_{S_{12}})(T) = (e_1 \otimes 1_{S_{12}}) (f_1\otimes {\rm id}_{S_{12}})(T) (e_1 \otimes 1_{S_{12}})$.
\end{enumerate}
\end{lemme}

Remarquons que la condition b) a un sens gr\^ace \`a la condition a), $f_1 \otimes {\rm id}_{S_{11}}$ \'etant le prolongement strictement continu \`a $M(B_1 \otimes S_{11})$ du *-morphisme $f_1 \otimes {\rm id}_{S_{11}}  : B_1 \otimes S_{11}\rightarrow M(J_1 \otimes S_{11})$, v\'erifiant $(f_1 \otimes {\rm id}_{S_{11}})(1_{B_1 \otimes S_{11}}) = e_1 \otimes 1_{S_{11}}$.\begin{proof}
Pour voir que   $e_1$ est un projecteur de $M(J_1)$ qui ne d\'epend pas de l'unit\'e approch\'ee, et que pour toute unit\'e approch\'ee $(u_\lambda')$ de $B_1$, on a  $f_1(u_\lambda')  \rightarrow e_1$ pour la topologie stricte dans   $M(J_1)$,  \cf\ref{efin}.\hfill\break  
On a $(f_1 \otimes {\rm id}_{S_{11}}) \delta_{B_1}(1_{B_1}) = e_1 \otimes 1_{S_{11}}$ et $(f_1 \otimes {\rm id}_{S_{11}}) \delta_{B_1}(u_\lambda) = 
\delta_{J_1}(f_1(u_\lambda)) \rightarrow \delta_{J_1}(e_1)$ pour la topologie stricte de $M(J_1 \otimes S_{11})$, donc $\delta_{J_1}(e_1) = e_1 \otimes 1_{S_{11}}$, d'o\`u le a).
\hfill\break
Le   *-morphisme     $M(B_1 \otimes  S_{12})  \rightarrow M(J_1 \otimes  S_{12}) : T \mapsto (e_1 \otimes 1_{S_{12}}) (f_1 \otimes {\rm id}_{S_{12}})(T) (e_1 \otimes 1_{S_{12}})$  est strictement continu et co\"incide avec $f_1 \otimes {\rm id}_{S_{12}}$ sur $B_1 \otimes  S_{12}$, d'o\`u le b).
\end{proof}
\noindent
Avec les hypoth\`eses et les notations de \ref{indu1}, posons  : 
$$B_2 := {\rm Ind}_{G_1}^{G_2} (B_1, \delta_{B_1}) \subset M(B_1 \otimes S_{12}), \quad J_2 := 
{\rm Ind}_{G_1}^{G_2} (J_1, \delta_{J_1}) \subset  M(J_1 \otimes S_{12}), \quad e_2 := e_1 \otimes 1_{S_{12}}.$$

%


\begin{proposition}\label{indu3}
Avec les notations précédentes, nous avons :
\begin{enumerate}
\item  $e_2 \in M(J_2)$  et $(f_1 \otimes  {\rm id}_{S_{12}})(B_2) \subset e_2  M(J_2) e_2$.   
\item Posons $f_2 := {\rm Ind}_{G_1}^{G_2} f_1  : B_2 \rightarrow M(J_2) :  x \mapsto (f_1 \otimes {\rm id}_{S_{12}})(x)$\index{i@${\rm Ind}_{G_1}^{G_2} f_1$}. Alors $f_2$ est un *-morphisme  v\'erifiant : 
\begin{enumerate}\renewcommand\theenumii{\roman{enumii}}
 \renewcommand\labelenumii{\rm ({\theenumii})}
\item Pour toute unit\'e approch\'ee $(v_\lambda)$ de $B_2$, on a $f_2(v_\lambda)  \rightarrow e_2$ pour la topologie stricte de $M(J_2)$ ;
\item   $(f_2 \otimes {\rm id}_{S_{22}}) \delta_{B_2} = \delta_{J_2}  \circ f_2$ ; 
\item Soit $F_1 \in e_1 M(J_1) e_1$ v\'erifiant pour tout $b\in B_1$ :
\begin{equation}
[F_1, f_1(b)]  \in e_1 J_1 e_1 \,\, , \,\, f_1(b)(F_1 - F_1^*)\in e_1 J_1 e_1 \,\, , \,\,
 f_1(b)(F_1^2 - 1)\in e_1 J_1 e_1 \,\, , \,\, \delta_{J_1}(F_1)  = F_1 \otimes 1_{S_{11}}.\nonumber
\end{equation}
Alors $F_2 := F_1 \otimes 1_{S_{12}}\in e_2 M(J_2) e_2$ et 
pour tout $b\in B_2$ on a : 
\begin{equation}
[F_2, f_2(b)]  \in e_2 J_2 e_2 \,\, , \,\, f_2(b)(F_2 - F_2^*)\in e_2 J_2 e_2 \,\, , \,\,
 f_2(b)(F_2^2 - 1)\in e_2 J_2 e_2\,\, , \,\, \delta_{J_2}(F_2)  = F_2 \otimes 1_{S_{22}}.\nonumber
 \end{equation} 
\end{enumerate}
\end{enumerate}
\end{proposition}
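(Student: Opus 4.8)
Le plan est de tout d\'eduire de la proposition \ref{indu1} par produit tensoriel avec $S_{12}$, en transportant syst\'ematiquement chaque propri\'et\'e de $f_1$ vers $f_2$ via l'inclusion $f_2 = (f_1 \otimes {\rm id}_{S_{12}})_{|_{B_2}}$. D'abord, je montrerais le a). Par \ref{indu1} a), on a $\delta_{J_1}(e_1) = e_1 \otimes 1_{S_{11}}$, donc $e_1$ v\'erifie exactement la condition qui d\'efinit l'appartenance \`a l'induit : en composant avec ${\rm id}_{J_1} \otimes \delta_{11}^2$ et en utilisant la d\'efinition de $J_2 = {\rm Ind}_{G_1}^{G_2} J_1$, l'\'el\'ement $e_2 = e_1 \otimes 1_{S_{12}}$ appartient \`a $M(J_2) \subset M(J_1 \otimes S_{12})$. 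L'inclusion $(f_1 \otimes {\rm id}_{S_{12}})(B_2) \subset e_2 M(J_2) e_2$ r\'esulte alors de \ref{indu1} b) appliqu\'e \`a chaque $T = ({\rm id}_{B_1} \otimes \delta_{11}^2)\delta_{B_1}(b)$, car tout \'el\'ement de $B_2$ est de la forme $({\rm id} \otimes \omega)$ d'un tel $T$, et l'\'equivariance b) de \ref{indu1} assure que $(f_1 \otimes {\rm id})$ commute au passage \`a l'induit.

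Pour le b), l'\'etablissement de ii) est le point central : il faut v\'erifier $(f_2 \otimes {\rm id}_{S_{22}}) \delta_{B_2} = \delta_{J_2} \circ f_2$, o\`u $\delta_{B_2} = {\rm id}_{B_1} \otimes \delta_{12}^2$ et $\delta_{J_2} = {\rm id}_{J_1} \otimes \delta_{12}^2$ (\cf \ref{ind3} b)). Comme $f_2$ agit comme $f_1 \otimes {\rm id}_{S_{12}}$ et que $\delta_{12}^2$ porte uniquement sur le facteur $S_{12}$, les deux membres co\"{\i}ncident par un calcul direct utilisant l'hypoth\`ese \ref{indu1} b) (\'equivariance de $f_1$ pour $\delta_{B_1}$ et $\delta_{B_2}$) et la coassociativit\'e (\ref{coprodS} a)). Le point i) (convergence stricte $f_2(v_\lambda) \to e_2$) se ram\`ene \`a \ref{indu1} a) : une unit\'e approch\'ee $(v_\lambda)$ de $B_2$ induit, via $[B_2(1_{B_1} \otimes S_{12})] = B_1 \otimes S_{12}$ (\ref{ind3} a)), une convergence stricte de $(f_1 \otimes {\rm id}_{S_{12}})(v_\lambda)$ vers la limite stricte de $(f_1 \otimes {\rm id})$ appliqu\'e \`a $1_{B_2}$, qui est $e_2$ par le a). Ici j'invoquerais l'appendice \ref{efin} comme pour \ref{indu1}, l'ind\'ependance de la limite par rapport \`a l'unit\'e approch\'ee \'etant le point technique \`a soigner.

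Enfin, iii) s'obtient par la m\^eme strat\'egie de produit tensoriel. Puisque $\delta_{J_1}(F_1) = F_1 \otimes 1_{S_{11}}$, l'\'el\'ement $F_2 := F_1 \otimes 1_{S_{12}}$ appartient \`a $M(J_2)$ (argument identique \`a celui utilis\'e pour $e_2$), et $F_2 = e_2 F_2 e_2$ est imm\'ediat. Pour les quatre relations, je tensoriserais chaque relation sur $J_1$ par $1_{S_{12}}$ : par exemple $[F_1, f_1(b_1)] \in e_1 J_1 e_1$ donne $[F_2, (f_1 \otimes {\rm id})(T)] \in e_2 (J_1 \otimes S_{12}) e_2$ pour $T \in B_2 \subset M(B_1 \otimes S_{12})$, puis l'\'egalit\'e $e_2 (J_1 \otimes S_{12}) e_2 \cap M(J_2) = e_2 J_2 e_2$ — qui d\'ecoule de $[J_2(1_{J_1} \otimes S_{12})] = J_1 \otimes S_{12}$ (\ref{ind3} a) pour $J_1$) — force l'appartenance voulue \`a $e_2 J_2 e_2$. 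Les relations pour $F_2 - F_2^*$ et $F_2^2 - 1$ se traitent de m\^eme. La derni\`ere, $\delta_{J_2}(F_2) = F_2 \otimes 1_{S_{22}}$, r\'esulte de $\delta_{J_2} = {\rm id}_{J_1} \otimes \delta_{12}^2$ et de $\delta_{12}^2(1) = p_{22}$ appliqu\'e au seul facteur $S_{12}$.

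La principale difficult\'e sera, \`a mon sens, de justifier proprement que les intersections du type $e_2 (J_1 \otimes S_{12}) e_2 \cap M(J_2) = e_2 J_2 e_2$ sont correctes, car les \'el\'ements compacts $F_2(\cdot) - (\cdot)$ vivent a priori dans $J_1 \otimes S_{12}$ et il faut garantir qu'ils retombent dans la sous-alg\`ebre induite $J_2$ et non seulement dans son multiplicateur ; c'est l\`a que l'\'egalit\'e de densit\'e $[J_2(1 \otimes S_{12})] = J_1 \otimes S_{12}$ de \ref{ind3} a) joue un r\^ole d\'ecisif.
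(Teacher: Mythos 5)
Your global strategy --- transporting everything through $f_2=(f_1\otimes{\rm id}_{S_{12}})_{|_{B_2}}$ --- is the right one, and you correctly sense where the difficulty lies, but the mechanism you propose at that point does not work, and a more basic ingredient is missing throughout. First, membership in $M(J_2)$ (for $e_2$, for $(f_1\otimes{\rm id}_{S_{12}})(B_2)$, for $F_2$) is not ``by definition of the induced algebra'': the definition of $J_2$ only involves slices $({\rm id}\otimes\omega)\delta_{J_1}^{(2)}(x)$ with $x\in J_1$, and to see that a slice of $\delta_{J_1}^{(2)}(T)$ with $T\in M(J_1)$ idealizes $J_2$ you need precisely the multiplication property (\ref{imp}) recorded after \ref{ind} (itself resting on regularity). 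The paper's proof of a) consists exactly of writing $e_2\,({\rm id}\otimes\omega')\delta_{J_1}^{(2)}(x_1)$ as $({\rm id}\otimes\omega\otimes\omega')\,\delta_{J_1}^{(2)}(e_1)_{123}\,\delta_{J_1}^{(2)}(x_1)_{124}$ and invoking (\ref{imp}); the same device handles $f_1(b_1)\in M(J_1)$.

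Second, and more seriously, the relations in iii) cannot be obtained by ``tensoring by $1_{S_{12}}$''. An element $T\in B_2$ is only a multiplier of $B_1\otimes S_{12}$, not an element of it, so $[F_1\otimes 1,(f_1\otimes{\rm id})(T)]$ lives a priori in $M(J_1\otimes S_{12})$ and not in $J_1\otimes S_{12}$: strict approximation of $T$ by elements of $B_1\otimes S_{12}$ only yields strict, not norm, convergence of the commutators. The premise of your intersection argument is therefore unavailable, so that argument never gets started (independently of whether the identity $e_2(J_1\otimes S_{12})e_2\cap M(J_2)=e_2J_2e_2$ holds). The paper avoids this entirely: it identifies $f_2(B_2)=[({\rm id}\otimes\omega)\delta_{J_1}^{(2)}(f_1(b))]$ and $e_2J_2e_2=[({\rm id}\otimes\omega)\delta_{J_1}^{(2)}(e_1ye_1)]$, then uses $\delta_{J_1}(F_1)=F_1\otimes 1_{S_{11}}$ --- hence $\delta_{J_1}^{(2)}(F_1)=(F_1\otimes 1\otimes 1)\delta_{J_1}^{(2)}(1)$ --- to commute $F_2$ past the slices, giving $[F_2,f_2(b)]=({\rm id}\otimes\omega)\delta_{J_1}^{(2)}([F_1,f_1(b_1)])\in e_2J_2e_2$ since $[F_1,f_1(b_1)]\in e_1J_1e_1$; the other three relations are identical. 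The same objection applies to your treatment of i): strict convergence of $f_2(v_\lambda)$ in $M(J_1\otimes S_{12})$ does not imply strict convergence in $M(J_2)$; what is needed, and what the paper actually proves (this is the bulk of its argument), is the density $[f_2(B_2)\,J_2]=e_2J_2$, obtained from $[\delta_{J_1}^{2}(J_1)(1\otimes S_{21})]=J_2\otimes S_{21}$ of (\ref{ind5} d)).
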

\begin{proof}
Soit $\omega\in B(H_{21})_\ast$ un \'etat normal. Pour tout $x_1 \in J_1$ et tout $\omega'\in B(H_{21})_\ast$, on  a 
$$(e_1 \otimes 1_{S_{12}}) ({\rm id}_{J_1 \otimes S_{12}} \otimes \omega') \delta_{J_1}^{(2)}(x_1) =  ({\rm id}_{J_1 \otimes S_{12}} \otimes \omega \otimes \omega')\delta_{J_1}^{(2)}( e_1)_{123}\delta_{J_1}^{(2)}(x_1)_{124},$$
\noindent
avec $\delta_{J_1}^{(2)} = ({\rm id}_{J_1} \otimes \delta_{11}^2) \delta_{J_1}$.
 Comme $e_1\in M(J_1)$, on a (\ref{imp}) : 
$$({\rm id}_{J_1 \otimes S_{12}} \otimes \omega \otimes \omega')\delta_{J_1}^{(2)}( e_1)_{123}\delta_{J_1}^{(2)}(x_1)_{124} \in J_2,$$
\noindent
donc $e_2 :=e_1 \otimes 1_{S_{12}} \in M(J_2)$.
\hfill\break
Montrons  que $(f_1 \otimes  {\rm id}_{S_{12}})(B_2) \subset   M(J_2)$. Pour  tout $b_1\in B_1$, $x_1 \in J_1$ et tout $\omega,\,\omega'\in B(H_{21})_\ast$, on a 
$$[(f_1 \otimes  {\rm id}_{S_{12}})({\rm id}_{B_1 \otimes S_{12}} \otimes \omega) \delta_{B_1}^{(2)}(b_1)] ({\rm id}_{J_1 \otimes S_{12}} \otimes \omega') \delta_{J_1}^{(2)}(x_1) = 
({\rm id}_{J_1 \otimes S_{12}} \otimes \omega \otimes \omega')\delta_{J_1}^{(2)}( f_1(b_1))_{123}\delta_{J_1}^{(2)}(x_1)_{124},$$
avec  $\delta_{B_1}^{(2)} = ({\rm id}_{B_1} \otimes \delta_{11}^2) \delta_{B_1}\,$.
On a $ f_1(b_1) \in M(J_1)$ et  par  (\ref{imp}), on d\'eduit 
$$({\rm id}_{J_1 \otimes S_{12}} \otimes \omega \otimes \omega')\delta_{J_1}^{(2)}( f_1(b_1))_{123}\delta_{J_1}^{(2)}(x_1)_{124} \in J_2,$$
\noindent
donc $(f_1 \otimes {\rm id}_{S_{12}})(B_2) \subset   M(J_2)$. Tenant compte de (\ref{indu1} b)), on a prouv\'e le a).
\hfill\break
Pour   la preuve du (i) il suffit de montrer que $e_2 J_2 = [f_2(b_2) x_2 \,\,|\,\,b_2 \in B_2 , x_2 \in J_2]$.
L'inclusion 
$$[\pi_2(b_2) x_2 \,\,|\,\,b_2 \in B_2 , x_2 \in J_2] \subset e_2 J_2$$ 
r\'esulte du a).
Pour montrer l'\'egalit\'e, montrons d'abord que 
$$e_2  J_2 = [({\rm id}_{J_1 \otimes S_{12}} \otimes \omega)\delta_{J_1}^{(2)}(f_1(b_1) x_1) 
\,\,|\,\,b_1 \in B_1,\, x_1 \in J_1,\, \omega \in B(H_{21})_\ast].$$
On a d'abord pour tout $b_1 \in B_1$, $x_1 \in J_1$   et  $\omega \in B(H_{21})_\ast$ :
$$({\rm id}_{J_1 \otimes S_{12}} \otimes \omega)\delta_{J_1}^{(2)}(f_1(b_1) x_1)  = ({\rm id}_{J_1 \otimes S_{12}} \otimes \omega)\delta_{J_1}^{(2)}(f_1(e_1b_1) x_1)  =
e_2 ({\rm id}_{J_1 \otimes S_{12}} \otimes \omega)\delta_{J_1}^{(2)}(f_1(b_1) x_1) \in e_2 J_2.$$
\noindent
Soit $(u_\lambda)$ une unit\'e approch\'ee de $B_1$, $x_1\in J_1$ et $\omega \in B(H_{21})_\ast$. On a 
$$({\rm id}_{J_1 \otimes S_{12}} \otimes \omega)\delta_{J_1}^{(2)}(f_1(u_\lambda) x_1)  \rightarrow  e_2
({\rm id}_{J_1 \otimes S_{12}} \otimes \omega)\delta_{J_1}^{(2)}(x_1)\in e_2 J_2\quad ({\rm normiquement}).$$
Il reste \`a montrer que pour tout $b_1 \in B_1$, $x_1 \in J_1$ et tout $\omega \in B(H_{21})_\ast$, on  a 
$$({\rm id}_{J_1 \otimes S_{12}} \otimes \omega)\delta_{J_1}^{(2)}(f_1(b_1) x_1) \in  [f_2(b_2) x_2 \,\,|\,\,b_2 \in B_2 , x_2 \in J_2].$$
\noindent
En effet, on sait (\ref{ind5} d) que $[\delta_{J_1}^{2}(J_1)(1_{J_1 \otimes S_{12}} \otimes S_{21})] = J_2 \otimes S_{21}$. Posons alors $\omega = s \,\omega'$ avec $s\in S_{21}$. On a alors 
$$({\rm id}_{J_1 \otimes S_{12}} \otimes \omega)\delta_{J_1}^{(2)}(f_1(b_1) x_1) = ({\rm id}_{J_1 \otimes S_{12}} \otimes \omega')\delta_{J_1}^{(2)}(f_1(b_1))
\delta_{J_1}^{(2)}(x_1)(1_{J_1 \otimes S_{12}} \otimes s)) \in [f_2(B_2) J_2],$$
\noindent
 d'o\`u le (i). Le (ii) r\'esulte de la d\'efinition  (\ref{ind3} b)) des coactions $\delta_{B_2}$ et $\delta_{J_2}$.
\hfill\break
Le (iii) se d\'eduit facilement des \'egalit\'es  
 $f_2(B_2) = [({\rm id}_{J_1 \otimes S_{12}}  \otimes \omega)({\rm id}_{J_1} \otimes \delta_{11}^2)\delta_{J_1}(f_1(x)) | x\in B_1  , \omega \in B(H_{21})_\ast ]$ et 
$e_2 J_2 e_2 = [({\rm id}_{J_1 \otimes S_{12}}  \otimes \omega)({\rm id}_{J_1} \otimes \delta_{11}^2)\delta_{J_1}(e_1y e_1) | y\in J_1 , \omega \in B(H_{21})_\ast ].$  
\end{proof}

Soit ${\cal E}_1$ un $B_ 1$-module  hilbertien $G_1$-\'equivariant.  On munit   la $G_1$-alg\`ebre 
$J_1 := {\cal K}({\cal E}_1 \oplus B_1)$ de la coaction compatible avec  celle de $B_1$ et ${\cal E}_1$ et  des deux  projecteurs :
$$e_{1, 1} = \begin{pmatrix}1_{{\cal E}_1} & 0 \\
0 & 0 \end{pmatrix}, \quad \,e_{2, 1} = \begin{pmatrix} 0 & 0 \\
0 & 1_{B_1} \end{pmatrix}.$$
On rappelle que $(J_1, \delta_{J_1}, e_{l, 1})$ est une \'equivalence de Morita $G_1$-\'equivariante. Par d\'efinition de la coaction $\delta_{J_1}$, le *-morphisme canonique $\iota_{B_1} : B_1 \rightarrow  J_1$ v\'erifie les hypoth\`eses de \ref{indu1}. 
\hfill\break        
Posons  
$J_2 = {\rm Ind}_{G_1}^{G_2} (J_1, \delta_{J_1})$. D'apr\`es  \ref{indu2}, on a une \'equivalence de Morita $G_2$-\'equivariante 
$(J_2, \delta_{J_2}, e_{l, 2})$  avec $e_{l,2}:= e_{l,1} \otimes 1_{S_{12}} \in M(J_2)$, $l=1,2$. En particulier $e_{2, 2} J_2 e_{2, 2}$, munie de la restriction de la coaction $\delta_{J_2}$, est une $G_2$-alg\`ebre. 

Nous avons :

\begin{proposition} Posons
$B_2 = {\rm Ind}_{G_1}^{G_2} (B_1, \delta_{B_1})$. Alors ${\rm Ind}_{G_1}^{G_2} \iota_{B_1} : B_2 \rightarrow e_{2, 2} J_2 e_{2, 2}$ est un *-isomorphis\-me $G_2$-\'equivariant.
\end{proposition}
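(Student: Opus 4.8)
The plan is to reduce everything to the two preceding lemmas \ref{indu1} and \ref{indu3}, and then to check that the induced morphism $f_2:={\rm Ind}_{G_1}^{G_2}\iota_{B_1}$ is bijective onto the corner $e_{2,2}J_2e_{2,2}$. First I would record that $\iota_{B_1}:B_1\to J_1$ satisfies the hypotheses of \ref{indu1} with $e_1=e_{2,1}$: for an approximate unit $(u_\lambda)$ of $B_1$ one has $\iota_{B_1}(u_\lambda)\to e_{2,1}$ strictly (this is exactly the definition of the linking algebra $J_1={\cal K}({\cal E}_1\oplus B_1)$ and of its projection $e_{2,1}$), and $(\iota_{B_1}\otimes{\rm id}_{S_{11}})\delta_{B_1}=\delta_{J_1}\circ\iota_{B_1}$ since $\delta_{J_1}$ was chosen compatible with $\delta_{B_1}$. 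Applying \ref{indu3} then gives at once that $f_2$ is a $*$-morphism with $f_2(B_2)\subset e_{2,2}M(J_2)e_{2,2}$, that $f_2$ is $G_2$-equivariant (this is \ref{indu3} (b)(ii)), and that $f_2$ carries an approximate unit of $B_2$ strictly to $e_{2,2}$. Hence only injectivity and surjectivity of $f_2$ onto $e_{2,2}J_2e_{2,2}$ remain to be established.

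For injectivity I would argue directly. Since $\iota_{B_1}$ is an isomorphism of $B_1$ onto the full corner $e_{2,1}J_1e_{2,1}$, the $*$-homomorphism $\iota_{B_1}\otimes{\rm id}_{S_{12}}$ is injective on $B_1\otimes S_{12}$. Let $x\in B_2\subset M(B_1\otimes S_{12})$ with $f_2(x)=0$. For every $b\in B_1$ and $s\in S_{12}$ one has $x(b\otimes s)\in B_1\otimes S_{12}$ and, by multiplicativity of the strict extension, $(\iota_{B_1}\otimes{\rm id}_{S_{12}})(x(b\otimes s))=f_2(x)(\iota_{B_1}(b)\otimes s)=0$; injectivity on $B_1\otimes S_{12}$ forces $x(b\otimes s)=0$, and as $b,s$ vary this yields $x=0$.

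Surjectivity is the heart of the matter, and the step I expect to be the main obstacle. Using the equivariance relation $(\iota_{B_1}\otimes{\rm id})\delta_{B_1}=\delta_{J_1}\iota_{B_1}$ and composing with ${\rm id}\otimes\delta_{11}^2$ and a functional $\omega\in B(H_{21})_\ast$, I would show that for $b\in B_1$ one has $f_2\big(({\rm id}\otimes\omega)\delta_{B_1}^{(2)}(b)\big)=({\rm id}\otimes\omega)\delta_{J_1}^{(2)}(\iota_{B_1}(b))$, so that $f_2(B_2)$ is precisely the closed span of the $({\rm id}\otimes\omega)\delta_{J_1}^{(2)}(y)$ with $y$ running over $\iota_{B_1}(B_1)=e_{2,1}J_1e_{2,1}$; in other words $f_2(B_2)={\rm Ind}_{G_1}^{G_2}(e_{2,1}J_1e_{2,1})$. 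It then remains to identify the induction of this $G_1$-invariant full corner with the corner of the induction, namely $e_{2,2}J_2e_{2,2}$. The point is that $e_{2,1}$ is $G_1$-invariant, $\delta_{J_1}(e_{2,1})=e_{2,1}\otimes 1_{S_{11}}$ (one of the defining conditions of a linking algebra), whence $\delta_{J_1}^{(2)}(e_{2,1})=e_{2,1}\otimes\delta_{11}^2(1_{S_{11}})$ factors through the projection governing the range of $\delta_{J_1}^{(2)}$; multiplying the generators of ${\rm Ind}_{G_1}^{G_2}(J_1)=J_2$ on both sides by $e_{2,1}\otimes 1_{S_{12}}=e_{2,2}$ and using this factorisation together with \ref{coprodS} gives $e_{2,2}J_2e_{2,2}=[({\rm id}\otimes\omega)\delta_{J_1}^{(2)}(e_{2,1}ye_{2,1}):y\in J_1,\ \omega\in B(H_{21})_\ast]={\rm Ind}_{G_1}^{G_2}(e_{2,1}J_1e_{2,1})$. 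Combined with the previous identity this shows $f_2(B_2)=e_{2,2}J_2e_{2,2}$, completing the proof; alternatively the same conclusion can be reached from the identity $e_{2,2}J_2=[f_2(B_2)J_2]$ established in the proof of \ref{indu3} (b)(i), by passing to adjoints and corners. The delicate bookkeeping lies entirely in the commutation of induction with the full corner, i.e. in controlling $\delta_{J_1}^{(2)}(e_{2,1})$.
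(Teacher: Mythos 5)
Votre preuve est correcte et suit essentiellement la m\^eme route que celle de l'article : les deux reposent sur l'identit\'e $e_{2,2}J_2e_{2,2}=[({\rm id}_{J_1\otimes S_{12}}\otimes\omega)\delta_{J_1}^{(2)}(\iota_{B_1}(b))\mid b\in B_1,\ \omega\in B(H_{21})_\ast]$ extraite de la d\'emonstration de \ref{indu3}, combin\'ee au fait que ces \'el\'ements sont exactement les images $f_2\bigl(({\rm id}\otimes\omega)\delta_{B_1}^{(2)}(b)\bigr)$, l'\'equivariance venant de (\ref{indu3} ii)). L'article est simplement plus laconique, tenant pour acquises l'injectivit\'e et l'identit\'e du coin que vous explicitez.
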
 
\begin{proof}
Comme $ \iota_{B_1}$ prend ses valeurs dans $J_1$, le  *-morphisme injectif
${\rm Ind}_{G_1}^{G_2} \iota_{B_1}$ est \`a valeurs dans  
$$e_{2, 2} J_2 e_{2, 2} =
 [({\rm id}_{J_1 \otimes S_{12}}  \otimes \omega)({\rm id}_{J_1} \otimes \delta_{11}^2)\delta_{J_1}(\iota_{B_1}(b))\, |\, b\in B_1,\, \omega \in B(H_{21})_\ast ].$$
\noindent
Il en r\'esulte que  ${\rm Ind}_{G_1}^{G_2} \iota_{B_1}$ est un *-isomorphisme. La $G_2$-\'equivariance r\'esulte facilement  de (\ref{indu3} (ii)).
\end{proof}
\noindent
Dans la suite, nous identifions les $G_2$-alg\`ebres $B_2 = {\rm Ind}_{G_1}^{G_2} e_{2, 1} J_1 e_{2, 1}$ et $e_{2, 2}J_2 e_{2, 2}= 
 e_{2, 2}({\rm Ind}_{G_1}^{G_2} J_1) e_{2, 2}$
\begin{definition} On appelle module induit du $B_1$-module hilbertien $G_1$-\'equivariant ${\cal E}_1$, le $B_2$-module hilbertien $G_2$-\'equivariant $ {\rm Ind}_{G_1}^{G_2} {\cal E}_1 := 
e_{1, 2} J_2 e_{2, 2} = e_{1, 2}({\rm Ind}_{G_1}^{G_2} J_1) e_{2, 2}$.\index{i@${\rm Ind}_{G_1}^{G_2} {\cal E}_1$}
\end{definition}
\noindent

\noindent
\subsection{Induction et bidualit\'e }

\noindent
Soit $(A_1,  \delta_{A_1})$ une $G_1$-alg\`ebre. Posons $(A_2 , \delta_{A_2}):=  {\rm Ind}_{G_1}^{G_2}(A_1,  \delta_{A_1})$. Dans ce paragrahe,   nous montrons  que la $G_2$-alg\`ebre ${\rm Ind}_{G_1}^{G_2}(A_1 \rtimes G_1 \rtimes \widehat{G_1})$ est $G_2$-Morita \'equivalente \`a la $G_2$-alg\`ebre $A_2 \rtimes G_2 \rtimes \widehat{G_2}$. Ce r\'esultat est obtenu par application du th\'eor\`eme \ref{thbidu}  au double  produit  crois\'e  $A \rtimes {\cal G} \rtimes \widehat{\cal G}$, o\`u $(A, \beta_A, \delta_A)$ est l'image  de 
$(A_1, \delta_{A_1})$ par la correspondance  \ref{reccorresp}  $A^{G_1} \rightarrow A^{\cal G}$. 
 \hfill\break
 Nous examinerons ensuite le cas o\`u $A_1 = J_1$ est une $G_1$-alg\`ebre de liaison.

\noindent
Fixons une  $G_1$-alg\`ebre $(A_1,  \delta_{A_1})$.    Posons $(A_2,  \delta_{A_2}) : = {\rm Ind}_{G_1}^{G_2}(A_1,  \delta_{A_1})$. Rappelons que $A:= A_1 \oplus A_2$ est munie de l'action   $(\beta_A, \delta_A)$ d\'efinie dans \ref{reccorresp}.
\hfill\break
 Identifions alors les ${\cal G}$-alg\`ebres $A  \rtimes {\cal G}  \rtimes \widehat{\cal G}$ et   $(D , \beta_D , \delta_D)$ comme dans \ref{thbidu} et reprenons int\'egralement les notations  du paragraphe \ref{dpco} et en particulier les notations \ref{not4}.
 
\begin{rappels}
Soient $j,l,l'=1,2$ avec $l\not=l'$. Par identification des bimodules hilbertiens \'equivariants $D_{ll',j} :=e_{l,j}D_j e_{l',j}$ et 
${\cal B}_{ll',j} := A_j \otimes {\cal K}(H_{l'j}, H_{lj})$, nous savons \ref{etap4} que :
\begin{enumerate}  
\item  le    
$A_j \otimes {\cal K}(H_{lj})-A_j \otimes {\cal K}(H_{l'j})$  bimodule hilbertien $G_j$-\'equivariant $A_j \otimes {\cal K}(H_{l'j}, H_{lj})$ est une \'equivalence de Morita $G_j$-\'equivariante des $G_j$-alg\`ebres  $A_j \otimes {\cal K}(H_{lj})$ et $A_j \otimes {\cal K}(H_{l'j})$ ;
\item  la coaction de $A_j \otimes {\cal K}(H_{jj})$ co\"incide avec la coaction biduale du double produit crois\'e $A_j \rtimes G_j    \rtimes \widehat{G_j}$, apr\`es identification  \cite{BaSka2}  avec  $A_j \otimes {\cal K}(H_{jj})$ ; 
\item  ${\cal E}_{ll',k}^j := \delta_{{\cal B}_{ll',j}}^k({\cal B}_{ll',j})$, muni de la coaction 
$$\delta_{ll',k}^j : \xi  \mapsto  V_{jj, 23}^k \xi_{12}  (V_{jj, 23}^k)^* $$
\noindent
est un $\delta_{{\cal B}_{l,j}}^k({\cal B}_{j, l}) - \delta_{{\cal B}_{j,l'}}^k({\cal B}_{j, l'})$ bimodule hilbertien $G_j$-\'equivariant.
\end{enumerate}
\end{rappels}

\begin{theorem}\label{thindbimod}  Soient  $j , k , l , l' = 1 , 2\,,\, j\not= k$. 
\begin{enumerate}
\item  ${\cal E}_{ll',k}^j := \delta_{{\cal B}_{ll',j}}^k({\cal B}_{ll',j}) = {\rm Ind}_{G_k}^{G_j} {\cal B}_{ll',k}$. En particulier, 
$\delta_{{\cal B}_{l,j}}^k({\cal B}_{l,j})  = {\rm Ind}_{G_k}^{G_j} {\cal B}_{l,k}$.
\item La coaction $\delta_{ll',k}^j$ du bimodule ${\cal E}_{ll',k}^j $ co\"incide avec la coaction induite par celle de ${\cal B}_{ll',k}$.
\item Le morphisme ${\cal B}_{ll',j}  \rightarrow {\rm Ind}_{G_k}^{G_j} {\cal B}_{ll',k}    : \xi \mapsto \delta_{{\cal B}_{ll',j}}^k(\xi)$ est un isomorphisme de bimodules $G_j$-\'equivariants au dessus des *-isomorphismes 
 $$\delta_{{\cal B}_{l,j}}^k :   {\cal B}_{l,j}  \rightarrow {\rm Ind}_{G_k}^{G_j }{\cal B}_{l,k} \quad \text{et} \quad \delta_{{\cal B}_{l',j}}^k :   {\cal B}_{l',j}  \rightarrow {\rm Ind}_{G_k}^{G_j }{\cal B}_{l',k}. $$
\end{enumerate}
\end{theorem}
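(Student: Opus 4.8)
La partie essentielle est l'assertion a) ; b) et c) s'en d\'eduisent aussit\^ot. Le plan est d'appliquer la proposition \ref{isoind} \`a la ${\cal G}$-alg\`ebre biduale $(D, \beta_D, \delta_D)$ (\cf \ref{thbidu}), puis de transporter le r\'esultat le long des identifications $\pi_{D_j} : D_j \to {\cal B}_j$ et de le restreindre aux coins. Rappelons que, d'apr\`es \ref{acc} appliqu\'ee \`a $D$, chaque $D_k$ est une $G_k$-alg\`ebre pour la coaction $\delta_{D_k}^k$, et que $\pi_{D_k} : (D_k, \delta_{D_k}^k) \to ({\cal B}_k, \delta_{{\cal B}_k}^k)$ est un *-isomorphisme $G_k$-\'equivariant par construction (\ref{not4} b)).

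Pour $j \neq k$, la proposition \ref{isoind} appliqu\'ee \`a $D$ fournit un *-isomorphisme $G_j$-\'equivariant $D_j \to {\rm Ind}_{G_k}^{G_j}(D_k, \delta_{D_k}^k) : x \mapsto \delta_{D_j}^k(x)$ ; en particulier $\delta_{D_j}^k(D_j) = {\rm Ind}_{G_k}^{G_j} D_k$. En composant avec la fonctorialit\'e de l'induction (\ref{ind3} c), \ref{ind4} d)) appliqu\'ee \`a $\pi_{D_k}$, et en utilisant la relation $\delta_{{\cal B}_j}^k = (\pi_{D_k} \otimes {\rm id}_{S_{kj}}) \circ \delta_{D_j}^k \circ \pi_{D_j}^{-1}$, on obtient que
$$\delta_{{\cal B}_j}^k : ({\cal B}_j, \delta_{{\cal B}_j}^j) \longrightarrow {\rm Ind}_{G_k}^{G_j}({\cal B}_k, \delta_{{\cal B}_k}^k)$$
est un *-isomorphisme $G_j$-\'equivariant, soit $\delta_{{\cal B}_j}^k({\cal B}_j) = {\rm Ind}_{G_k}^{G_j} {\cal B}_k$, o\`u ${\cal B}_k$ d\'esigne la C*-alg\`ebre de liaison munie de sa coaction $\delta_{{\cal B}_k}^k$.

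Il reste \`a restreindre cette \'egalit\'e aux coins. Par (\ref{liaison1} c), d)), on a $\delta_{D_j}^k(e_{l,j}) = e_{l,k} \otimes 1_{S_{kj}}$, donc l'isomorphisme $\delta_{{\cal B}_j}^k$ envoie le projecteur $e_{l, A_j}$ de ${\cal B}_j$ sur le projecteur induit $e_{l, A_k} \otimes 1_{S_{kj}}$ de $M({\rm Ind}_{G_k}^{G_j} {\cal B}_k)$ (\ref{indu2}). Compte tenu de (\ref{etap4} b)) et de la construction du module induit (\ref{indu2}, \ref{indu3}), on obtient
$$\delta_{{\cal B}_{ll',j}}^k({\cal B}_{ll',j}) = \delta_{{\cal B}_j}^k(e_{l, A_j}\, {\cal B}_j \, e_{l', A_j}) = {\rm Ind}_{G_k}^{G_j} {\cal B}_{ll',k},$$
ce qui \'etablit a). L'assertion c) est alors exactement (\ref{indbimod} b)) relue \`a travers cette identification. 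Enfin b) suit de l'\'equivariance $G_j$ du pr\'ec\'edent isomorphisme : la coaction induite sur ${\rm Ind}_{G_k}^{G_j} {\cal B}_{ll',k}$ est, par construction, la restriction au coin de la coaction correspondant \`a $\delta_{{\cal B}_j}^j$ sur ${\cal B}_j$, c'est-\`a-dire le transport de $\delta_{{\cal B}_{ll',j}}^j$ par $\delta_{{\cal B}_{ll',j}}^k$, lequel vaut $\delta_{ll',k}^j$ d'apr\`es (\ref{indbimod} b)).

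Le principal obstacle est la gestion soigneuse de ces identifications successives, et surtout la v\'erification que la coaction explicite $\delta_{ll',k}^j : \xi \mapsto V_{jj, 23}^k\, \xi_{12}\, (V_{jj, 23}^k)^*$ co\"incide avec celle produite par le foncteur d'induction de modules. Ce point se ram\`ene \`a confronter les deux descriptions de la coaction induite ; il r\'esulte de (\ref{indbimod} b)) jointe \`a l'\'equivariance \'etablie ci-dessus et aux relations de coassociativit\'e (\ref{coprodS} a)).
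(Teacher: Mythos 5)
Votre preuve est correcte et suit essentiellement la même démarche que celle du papier : appliquer \ref{isoind} à la ${\cal G}$-algèbre biduale $D$, utiliser $\delta_{D_j}^k(e_{l,j}) = e_{l,k} \otimes 1_{S_{kj}}$ (\ref{liaison1}) pour restreindre aux coins, identifier $D_j$ avec ${\cal B}_j$, puis conclure b) et c) via \ref{indbimod}. Les détails supplémentaires que vous donnez sur le transport par $\pi_{D_k}$ et la fonctorialité de l'induction explicitent ce que le papier résume par \og{}par identification\fg{}, sans changer l'argument.
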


\begin{proof}  Par    restriction de l'isomorphisme \ref{isoind}  $G_j$-\'equivariant 
$$\widetilde\pi_j : D_j \rightarrow  {\rm Ind}_{G_k}^{G_j}  D_k : x \mapsto \delta_{D_j}^k(x),$$
\noindent
on obtient   des  isomorphismes  
$$ D_{ll',j}  \rightarrow  {\rm Ind}_{G_k}^{G_j}  D_{ll',k},\quad l,l'=1,2,$$
\noindent
de bimodules hilbertiens  $G_j$-\'equivariants au dessus des isomorphismes $G_j$-\'equivariants 
$$D_{l,j}   \rightarrow {\rm Ind}_{G_k}^{G_j}  D_{l,k}, \quad D_{l',j}   \rightarrow {\rm Ind}_{G_k}^{G_j}  D_{l',k},\quad l,l'=1,2. $$
En effet, on a  $\delta_{D_j}^k(e_{l,j}) = e_{l,k} \otimes 1_{S_{kj}}$ (\cf\ref{liaison1} c)). Il en r\'esulte    qu'on a :
$$\delta_{D_j}^k(D_{ll',j}) = (e_{l,k} \otimes 1_{S_{kj}})({\rm Ind}_{G_k}^{G_j}  D_k) (e_{l',k} \otimes 1_{S_{kj}}) =
{\rm Ind}_{G_k}^{G_j} D_{ll',k}, \quad \delta_{D_j}^k(D_{l,j})  = {\rm Ind}_{G_k}^{G_j}  D_{l,k}. $$

\noindent
Par identification des $G_j$-alg\`ebres $D_j$ et ${\cal B}_j := A_j \otimes {\cal K}(H_{1j} \oplus H_{2j})$, on d\'eduit le a).
\hfill\break
Il est clair que la coaction $\delta_{ll',k}^j : \xi  \mapsto  V_{jj, 23}^k \xi_{12}  (V_{jj, 23}^k)^* $ du ${\rm Ind}_{G_k}^{G_j} {\cal B}_{l,k}$-module  
${\cal E}_{ll',k}^j$ est la coaction induite du   ${\cal B}_{l',k}$-module ${\cal B}_{ll',k}$.
\hfill\break
Le c) r\'esulte de  a) , b) et (\ref{indbimod} b)).
\end{proof}
\noindent
\begin{corollary}\label{ind+iso} Pour $j ,k , l , l'= 1,2$ avec $j\not=k$, nous avons :
\begin{enumerate}
\item  Les $G_j$-alg\`ebres  $A_j \otimes {\cal K}(H_{lj})$ et 
${\rm Ind}_{G_k}^{G_j}  A_k \otimes {\cal K}(H_{lk})$   sont canoniquement isomorphes. En particulier, la 
$G_j$-alg\`ebre  $A_j \otimes {\cal K}(H_{jj})$ est canoniquement isomorphe \`a la $G_j$-alg\`ebre
${\rm Ind}_{G_k}^{G_j}  A_k \otimes {\cal K}(H_{jk})$.
\item  Les $A_j \otimes {\cal K}(H_{lj})-A_j \otimes {\cal K}(H_{l'j})$ bimodules hilbertiens $G_j$-\'equivariants ${\rm Ind}_{G_k}^{G_j}  A_k \otimes {\cal K}(H_{l'k} ,H_{lk})$ et $A_j \otimes {\cal K}(H_{l'j} ,H_{lj})$ sont canoniquement isomorphes. 
\item  La $G_2$-alg\`ebre $A_2  \rtimes G_2  \rtimes \widehat{G_2}$ est $G_2$-Morita \'equivalente \`a la 
$G_2$-alg\`ebre ${\rm Ind}_{G_1}^{G_2} A_1  \rtimes G_1  \rtimes \widehat{G_1}$.
\end{enumerate}
\end{corollary}

\begin{proof} Les  \'enonc\'es  a) et   b) d\'ecoulent de l'\'enonc\'e (plus g\'en\'eral) (\ref{thindbimod} c)). 
\hfill\break
En prenant $j = l =2$ et $l' = 1$ dans (\ref{etap5} c)), on obtient une $G_2$-\'equivalence de Morita des $G_2$-alg\`ebres 
$A_2 \otimes {\cal K}(H_{22})$  et $A_2 \otimes {\cal K}(H_{12})$.
\hfill\break
En prenant $j =2 , l = k = 1$ dans le a),  on d\'eduit que les   $G_2$ -alg\`ebres  $A_2 \otimes {\cal K}(H_{12})$ et ${\rm Ind}_{G_1}^{G_2}  A_1 \otimes {\cal K}(H_{11})$ sont canoniquement isomorphes. 
\end{proof}

\noindent
{\bf Cas d'une alg\`ebre de liaison}

\noindent
Soit $(J_1, \delta_{J_1}^1, e_1^1, e_1^2)$ une $G_1$-alg\`ebre de liaison. Posons 
$(J_2 , \delta_{J_2}^2) = {\rm Ind}_{G_1}^{G_2} (J_1, \delta_{J_1}^1)$, qu'on munit de sa structure de $G_2$-alg\`ebre de liaison d\'efinie  \ref{indu2} par les projecteurs :
$$e_2^1 := e_1^1 \otimes 1_{S_{12}}, \quad e_2^2 := e_1^2 \otimes 1_{S_{12}}.$$
\noindent
Soit $J= (J_1 \oplus J_2,  \beta_J, \delta_J)$  la ${\cal G}$-alg\`ebre de liaison  \ref{indugr}  correspondant \`a $J_1$ et $J_2$, d\'efinie par    les deux projecteurs de $M(J)$ :
$$e^1 := (e_1^1, e_2^1), \quad e^2 := (e_1^2, e_2^2).$$
\noindent
Pour expliciter la structure de ${\cal G}$-alg\`ebre de liaison du double produit crois\'e $J  \rtimes {\cal G}  \rtimes \widehat{\cal G} $ {\it correspondant aux projecteurs $e^1 , e^2$}, nous reprenons les notations du paragraphe \ref{dpco},  {\it en rempla\c cant dans toutes ces notations $A$ par $J$.}

Comme dans \ref{thbidu}, on  identifie $J  \rtimes {\cal G}  \rtimes \widehat{\cal G} $ \`a  la C*-alg\`ebre $D \subset {\cal L}(J \otimes H)$.
\begin{lemme}\label{esD} Pour $i = 1 , 2$, il existe un unique projecteur $e_D^i\in M(D)$\index{el@$e_D^i$} v\'erifiant 
$$j_D(e_D^i) = q_{\beta_J, \widehat\alpha}(e^i \otimes 1_H)\in {\cal L}(J \otimes H) \quad \quad (\widehat\alpha = \beta).$$
\noindent
On a : $e_D^1 + e_D^2 = 1_D \quad ;\quad [D e_D^i D]  =D, \,\,i = 1 , 2  \quad ; \quad \delta_D(e_D^i) = \delta_D(1_D)(e_D^i \otimes 1_S)$.
\end{lemme}
\begin{proof} 
Rappelons qu'on a  
$$D  =  [ ({\rm id}_J \otimes R)\delta_J(a)(1_J \otimes \lambda(x) L(s)) \,\,|\,\, a\in J , x\in \widehat S , s\in S ]  = D_1 \oplus D_2 \subset {\cal L}(J \otimes H).$$\noindent
Il est clair que $q_{\beta_J, \widehat\alpha}(e^i \otimes 1_H)\in j_D(M(D))$, o\`u :
$$j_D(M(D)) = \{T\in {\cal L}(A \otimes H) \,|\, T D \subset D \,, \, D T \subset D \, , \,T j_D(1_D) = 
j_D(1_D) T = T\} \quad ; \quad  j_D(1_D) =q_{\beta_J, \widehat\alpha}. $$
\noindent
On en d\'eduit l'existence des projecteurs $e_D^i$, $i=1,2$.
 Les assertions restantes du lemme se d\'eduisent de la structure de ${\cal G}$-alg\`ebre de liaison de $(J,  \beta_J, \delta_J, e^1, e^2)$.
\end{proof}
\noindent
Il apparait clairement que la ${\cal G}$-alg\`ebre $D$ a en fait deux structures d'alg\`ebres de liaison compatibles : celle d\'efinie par les projecteurs $e_D^i$ et celle qui correspond aux projecteurs \ref{liaison1}  $e_{l, j}$ de $D_1$ et $D_2$ : 

\noindent
\begin{lemme}\label{eslj}Pour tout $s , j , l  = 1 , 2$, notons $e_{l,j}^s \in M(D_j) $\index{em@$e_{l,j}^s$}, l'unique projecteur v\'erifiant  
$$\iota_j(e_{l,j}^s) = e_D^s \iota_j(e_{l,j}), \quad \text{o\`u} \quad 
\iota_j : M(D_j)  \rightarrow M(D),\,\, \iota_j(1_{D_j}) = \beta_D(\varepsilon_j).$$
\noindent
Pour $s , j , k ,l  = 1 , 2$, on a $\delta_{D_j}^k(e_{l,j}^s) = e_{l,k}^s \otimes 1_{S_{kj}}$
(pour la  d\'efinition des projecteurs $e_{l,j}$, \cf\ref{liaison1}).
\end{lemme}
\begin{proof} La d\'efinition \ref{esD} des projecteurs $e_D^s$ entra\^ine clairement qu'on a 
$$[e_D^s , \iota_j(e_{l,j})]  = 0, \quad e_D^s \iota_j(e_{l,j})\beta_D(\varepsilon_j) =   e_D^s \iota_j(e_{l,j}),\quad j,l=1,2,$$
\noindent
d'o\`u  l'existence et l'unicit\'e des projecteurs $e_{l,j}^s$.

Notons maintenant par $\pi_j^k : M(D_k \otimes S_{kj}) \rightarrow M(D \otimes S)$ le *-morphisme canonique correspondant \`a l'inclusion $D_k \otimes S_{kj}  \subset D \otimes S$.  L'\'egalit\'e \`a \'etablir est \'equivalente \`a :
$$(j_D \otimes {\rm id}_S) \pi_j^k \delta_{D_j}^k(e_{l,j}^s) = (j_D \otimes {\rm id}_S) \pi_j^k(e_{l,k}^s \otimes 1_{S_{kj}}).$$
\noindent
Nous avons 
\begin{align} (j_D \otimes {\rm id}_S) \pi_j^k \delta_{D_j}^k(e_{l,j}^s) &= (j_D \otimes {\rm id}_S) (\delta_D(\iota_j(e_{l,j}^s)) (\beta_D(\varepsilon_k) \otimes p_{kj}))&\nonumber\\
& = (j_D \otimes {\rm id}_S)  (\delta_D( e_D^s \iota_j(e_{l,j})) (\beta_D(\varepsilon_k) \otimes p_{kj})) &\nonumber \\
&  = (j_D \otimes {\rm id}_S)  (\delta_D(1_D)(e_D^s \otimes 1_S)  \delta_D(\iota_j(e_{l,j}))(\beta_D(\varepsilon_k) \otimes p_{kj})) & (\ref{esD})  \nonumber  \\
 & = (j_D \otimes {\rm id}_S)  (\delta_D(1_D)(\beta_D(\varepsilon_k) \otimes p_{kj})(e_D^s \otimes 1_S)  \delta_D(\iota_j(e_{l,j}))(\beta_D(\varepsilon_k) \otimes   p_{kj})) \nonumber  &\\
& = (j_D \otimes {\rm id}_S)((\beta_D(\varepsilon_k) \otimes p_{kj})(e_D^s \otimes 1_S))  \pi_j^k (\delta_{D_j}^k(e_{l,j})) 
\nonumber & \\
 & = 
q_{\beta_J, \widehat\alpha, 12}(1_J \otimes \beta(\varepsilon_k) \otimes p_{kj})  (e^s \otimes 1_H \otimes 1_S)(q_k \otimes p_{lk}  \otimes p_{kj}).\nonumber & (\ref{pikj}) 
\end{align}
Par ailleurs, nous avons
\begin{align}
(j_D \otimes {\rm id}_S) \pi_j^k(e_{l,k}^s \otimes 1_{S_{kj}}) &= (j_D \otimes {\rm id}_S)((\iota_k(e_{l,k}^s)\otimes 1_S) (\beta_D(\varepsilon_k) \otimes p_{kj})) & \nonumber \\
& = (j_D \otimes {\rm id}_S)((e_D^s \iota_k(e_{l,k}) \otimes 1_S) (\beta_D(\varepsilon_k) \otimes p_{kj})) &\nonumber \\
& = 
q_{\beta_J, \widehat\alpha, 12} (e^s \otimes 1_H \otimes 1_S)(q_k \otimes p_{lk} \otimes 1_S) q_{\beta_J, \widehat\alpha, 12}(1_J \otimes \beta(\varepsilon_k) \otimes p_{kj}) & (\ref{liaison1} a))  \nonumber \\
& = q_{\beta_J, \widehat\alpha, 12}(1_J \otimes \beta(\varepsilon_k) \otimes p_{kj})  (e^s \otimes 1_H \otimes 1_S)(q_k \otimes p_{lk}  \otimes p_{kj}).\nonumber 
\end{align}
\end{proof}
\noindent
\begin{notations} Pour $j , l , l' , s , s' = 1, 2$, posons :\index{dl@$D_{ll', j}^{ss'}$, $D_{l,j}^{s}$}
$$D_{ll', j}^{ss'} :=  e_{l,j}^s D_j e_{l',j}^{s'}, \quad  D_{l,j}^{s} := D_{ll,j}^{ss}.$$ 
\noindent
\end{notations}
On d\'eduit de \ref{eslj} que par  restriction de la structure de $G_j$-alg\`ebre de $D_j$, on obtient que 
 $D_{ll',j}^{ss'}$ est  un 
$D_{l,j}^{s}-D_{l',j}^{s'}$ bimodule hilbertien  $G_j$-\'equivariant.

\noindent
\begin{proposition}\label{isoindF} Soient  $j , k = 1 , 2,\, j\not= k$. Par restriction de l'isomorphisme \ref{isoind} $G_j$-\'equivariant 
$$\widetilde\pi_j : D_j \rightarrow  {\rm Ind}_{G_k}^{G_j}  D_k : x \mapsto \delta_{D_j}^k(x),$$
\noindent
on obtient des  isomorphismes  : 
$$ D_{ll',j}^{ss'}  \rightarrow {\rm Ind}_{G_k}^{G_j}  D_{ll',k}^{ss'},\quad l,l',s,s'=1,2,$$
\noindent
de bimodules hilbertiens  $G_j$-\'equivariants au dessus des isomorphismes $G_j$-\'equivariants :
$$D_{l,j}^{s}  \rightarrow {\rm Ind}_{G_k}^{G_j}  D_{l,k}^{s}, \quad D_{l',j}^{s'}  \rightarrow {\rm Ind}_{G_k}^{G_j}  D_{l',k}^{s'},\quad l,l',s,s'=1,2.$$
\end{proposition}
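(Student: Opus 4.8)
The plan is to follow verbatim the strategy of Theorem~\ref{thindbimod}, simply refining the family of projectors that cut out the corners. There, the bimodules $D_{ll',j}$ were obtained by compressing $D_j$ by the liaison projectors $e_{l,j}$ of Lemma~\ref{liaison1}, and the induction isomorphism respected them because $\delta_{D_j}^k(e_{l,j}) = e_{l,k}\otimes 1_{S_{kj}}$. Here the finer bimodules $D_{ll',j}^{ss'} = e_{l,j}^s D_j e_{l',j}^{s'}$ are compressions by the doubly indexed projectors $e_{l,j}^s$ of Lemma~\ref{eslj}, and the decisive input is precisely the compatibility relation $\delta_{D_j}^k(e_{l,j}^s) = e_{l,k}^s\otimes 1_{S_{kj}}$ already established there.

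First I would apply Proposition~\ref{isoind} to the biduale action $(\beta_D,\delta_D)$ of ${\cal G}$ on $D$, which yields the $G_j$-equivariant $*$-isomorphism
$$\widetilde\pi_j : (D_j,\delta_{D_j}^j) \longrightarrow {\rm Ind}_{G_k}^{G_j} D_k,\qquad x\longmapsto \delta_{D_j}^k(x).$$
Extending $\widetilde\pi_j$ strictly continuously to the multiplier algebras, Lemma~\ref{eslj} says that $\widetilde\pi_j$ carries each projector $e_{l,j}^s\in M(D_j)$ to the projector $e_{l,k}^s\otimes 1_{S_{kj}}\in M({\rm Ind}_{G_k}^{G_j} D_k)$. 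Consequently $\widetilde\pi_j$ restricts to a linear isomorphism
$$D_{ll',j}^{ss'} = e_{l,j}^s D_j e_{l',j}^{s'} \;\longrightarrow\; (e_{l,k}^s\otimes 1_{S_{kj}})\,\big({\rm Ind}_{G_k}^{G_j} D_k\big)\,(e_{l',k}^{s'}\otimes 1_{S_{kj}}).$$

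Next I would identify the target with the induced bimodule. Exactly as in the computation concluding the proof of Theorem~\ref{thindbimod}, compression of an induced $G_j$-algebra by a projector of the form $\bar e\otimes 1_{S_{kj}}$ with $\bar e\in M(D_k)$ commutes with the induction functor; applied to $\bar e = e_{l,k}^s$ and $\bar e' = e_{l',k}^{s'}$ this gives
$$(e_{l,k}^s\otimes 1_{S_{kj}})\,\big({\rm Ind}_{G_k}^{G_j} D_k\big)\,(e_{l',k}^{s'}\otimes 1_{S_{kj}}) = {\rm Ind}_{G_k}^{G_j}\big(e_{l,k}^s D_k e_{l',k}^{s'}\big) = {\rm Ind}_{G_k}^{G_j} D_{ll',k}^{ss'}.$$
Since $\widetilde\pi_j$ is a $G_j$-equivariant $*$-isomorphism, its restriction automatically intertwines the left and right module actions and the inner products, so it is an isomorphism of $G_j$-equivariant Hilbert bimodules above the induced $*$-isomorphisms $D_{l,j}^s\to {\rm Ind}_{G_k}^{G_j} D_{l,k}^s$ and $D_{l',j}^{s'}\to {\rm Ind}_{G_k}^{G_j} D_{l',k}^{s'}$, which are recovered by taking $s=s'$ and $l=l'$.

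The argument is essentially formal once Lemma~\ref{eslj} is in hand, so there is no genuine obstacle; the only point requiring care is to confirm that the induction procedure really does commute with compression by projectors $\bar e\otimes 1_{S_{kj}}$, that is, that the induced coaction $\delta_{D_j}^k$ restricts correctly to the corners. This is guaranteed by the relation $\delta_{D_j}^k(e_{l,j}^s)=e_{l,k}^s\otimes 1_{S_{kj}}$ together with the continuity identity $[\delta_{D_j}^k(D_j)(1_{D_k}\otimes S_{kj})] = D_k\otimes S_{kj}$ from Proposition~\ref{acc}~c), exactly as the liaison-projector version was handled in Theorem~\ref{thindbimod}.
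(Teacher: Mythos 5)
Votre preuve suit exactement la même démarche que celle du texte : on restreint l'isomorphisme $G_j$-équivariant $\widetilde\pi_j$ de \ref{isoind}, et les égalités $\delta_{D_j}^k(e_{l,j}^s)=e_{l,k}^s\otimes 1_{S_{kj}}$ de \ref{eslj} donnent directement $\delta_{D_j}^k(e_{l,j}^s D_j e_{l',j}^{s'})=(e_{l,k}^s\otimes 1_{S_{kj}})({\rm Ind}_{G_k}^{G_j}D_k)(e_{l',k}^{s'}\otimes 1_{S_{kj}})={\rm Ind}_{G_k}^{G_j}D_{ll',k}^{ss'}$. L'argument est correct et coïncide avec celui du papier, qui se réduit lui aussi à cette observation calquée sur \ref{thindbimod}.
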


\begin{proof} 
Il r\'esulte   \ref{eslj} des  \'egalit\'es     $\delta_{D_j}^k(e_{l,j}^s) = e_{l,k}^s \otimes 1_{S_{kj}}$, qu'on a :
$$\delta_{D_j}^k(e_{l,j}^sD_j e_{l',j}^{s'}) = (e_{l,k}^s \otimes 1_{S_{kj}})({\rm Ind}_{G_k}^{G_j}  D_k) (e_{l',k}^{s'} \otimes 1_{S_{kj}}) =
{\rm Ind}_{G_k}^{G_j} e_{l,k}^s D_k e_{l',k}^{s'}  , \quad \delta_{D_j}^k(D_{l,j}^{s})  = {\rm Ind}_{G_k}^{G_j}  D_{l,k}^{s}. $$
\noindent
\end{proof}
\noindent
Nous allons appliquer \ref{isoindF} \`a l'alg\`ebre de liaison correspondant \`a un module hilbertien \'equivariant, \cf\ref{dbimod}.

Soient $(A_1, \delta_{A_1})$ et $(B_1, \delta_{B_1})$ deux $G_1$-alg\`ebres et $({\cal E}_1, \delta_{{\cal E}_1})$ un $A_1-B_1$ bimodule hilbertien $G_1$-\'equivariant.  Posons :
$$A_2 =  {\rm Ind}_{G_1}^{G_2} (A_1, \delta_{A_1}), \quad B_2 =  {\rm Ind}_{G_1}^{G_2} (B_1, \delta_{B_1}), \quad  
{\cal E}_2 =  {\rm Ind}_{G_1}^{G_2} ({\cal E}_1, \delta_{{\cal E}_1}). $$
\noindent
Prenons pour $J_1$, la $G_1$-alg\`ebre 
$$J_1 = \begin{pmatrix} {\cal K}({\cal E}_1) & {\cal E}_1 \\
{\cal E}_1^* &  B_1\end{pmatrix},$$ 
qu'on munit de la coaction $\delta_{J_1}$ qui d\'efinit la structure de $B_1$-module $G_1$-\'equivariant 
de ${\cal E}_1$, \cf\ref{dbimod}. Alors,
$$J_2 := {\rm Ind}_{G_1}^{G_2}  J_1 = \begin{pmatrix}{\cal K}({\cal E}_2) & {\cal E}_2 \\
{\cal E}_2^* &  B_2\end{pmatrix}$$ 
est une $G_2$-alg\`ebre de liaison.
\hfill\break
Consid\'erons alors la ${\cal G}$-alg\`ebre de liaison $J := J_1\oplus J_2$  d\'efinie par $J_1$ et $J_2$, \cf\ref{indugr}. 
Identifions   les ${\cal G}$-alg\`ebres $J  \rtimes {\cal G}  \rtimes \widehat{\cal G}$ et   $(D , \beta_D , \delta_D)$ comme dans \ref{thbidu} et reprenons int\'egralement les notations  du paragraphe \ref{dpco} et en particulier les notations \ref{not4}.

Pour $j = 1 ,2$ et  par  l'identification $D_j = J_j \otimes {\cal K}(H_{1j} \oplus H_{2j})$, observons que  pour  $l , l' = 1 ,2$,  on a :
\hfill\break
1) En prenant $s=1$, la $G_j$-alg\`ebre $D_{l,j}^{1}$ s'identifie  \`a la $G_j$-alg\`ebre ${\cal K}({\cal E}_j) \otimes {\cal K}(H_{lj})$, dont la coaction est donn\'ee par la formule :
$$x \mapsto  V_{jj, 23}^l  ({\rm id}_{{\cal K}({\cal E}_j)} \otimes \sigma)(\delta_{{\cal K}({\cal E}_j)}^j  \otimes {\rm id}_{ {\cal K}(H_{lj})})(x) (V_{jj, 23}^{l})^*, \quad  x\in {\cal K}({\cal E}_j) \otimes {\cal K}(H_{lj}).$$
\noindent
2) En prenant $s=2$, la $G_j$-alg\`ebre $D_{l,j}^{2}$ s'identifie  \`a la $G_j$-alg\`ebre $B_j \otimes {\cal K}(H_{lj})$, dont la coaction est donn\'ee par la formule :
$$x \mapsto  V_{jj, 23}^l  ({\rm id}_{B_J} \otimes \sigma)(\delta_{B_j)}^j  \otimes {\rm id}_{ {\cal K}(H_{lj})})(x) (V_{jj, 23}^{l})^*, \quad  x\in B_j \otimes {\cal K}(H_{lj}).$$
\noindent
3) En prenant $s=1$ et $s'=2$, le $D_{l,j}^{1}-D_{l',j}^{2}$ bimodule hilbertien $G_j$-\'equivariant  $D_{ll',j}^{12}$
s'identifie au ${\cal K}({\cal E}_j) \otimes {\cal K}(H_{lj})- B_j \otimes {\cal K}(H_{l'j})$ bimodule hilbertien  $G_j$-\'equivariant  ${\cal E}_j \otimes {\cal K}(H_{l'j} ,H_{lj})$, dont la coaction 
est donn\'ee par la formule :
$$\xi \mapsto V_{jj, 23}^l  ({\rm id}_{{\cal E}_j} \otimes \sigma)(\delta_{{\cal E}_j}^j  \otimes {\rm id}_{ {\cal K}(H_{l'j}, H_{lj})})(\xi) (V_{jj, 23}^{l'})^*, \quad  \xi\in {\cal E}_j \otimes {\cal K}(H_{l'j} ,H_{lj}). $$
\noindent
(Tous les $\sigma$ sont des voltes \'evidentes.)

Nous avons :

\begin{theorem}\label{thliaison} Soit ${\cal E}_1$ un $A_1-B_1$ bimodule hilbertien $G_1$-\'equivariant dont l'action  \`a gauche $A_1 \rightarrow {\cal L}({\cal E}_1)$ est suppos\'ee non d\'eg\'en\'er\'ee.
Posons :
$$A_2 = {\rm Ind}_{G_1}^{G_2} A_1, \quad  B_2 =  {\rm Ind}_{G_1}^{G_2} B_1, \quad {\cal E}_2 =  {\rm Ind}_{G_1}^{G_2} {\cal E}_1.$$  
\noindent
Pour $j , k , l , l' = 1 , 2$ avec $j \not= k$, le morphisme 
$$\delta_{ll',j}^k  : {\cal E}_j \otimes {\cal K}(H_{l'j} ,H_{lj})  \rightarrow {\rm Ind}_{G_k}^{G_j}  {\cal E}_k \otimes {\cal K}(H_{l'k} ,H_{lk}) : 
\xi \mapsto    V_{kj, 23}^l  ({\rm id}_{{\cal E}_k} \otimes \sigma)(\delta_{{\cal E}_j}^k  \otimes {\rm id}_{ {\cal K}(H_{l'j}, H_{lj})})(\xi) (V_{kj, 23}^{l'})^*$$
est un isomorphisme de bimodules hilbertiens $G_j$-\'equivariants au dessus des *-isomorphismes :
$$A_j \otimes {\cal K}(H_{lj}) \rightarrow {\rm Ind}_{G_k}^{G_j}  A_k \otimes {\cal K}(H_{lk}) : x \mapsto  
V_{kj, 23}^{l}  ({\rm id}_{A_k} \otimes \sigma)(\delta_{A_j}^k  \otimes {\rm id}_{{\cal K}(H_{lj})}(x) (V_{kj, 23}^{l})^*,$$
\noindent
$$ B_j \otimes {\cal K}(H_{l'j}) \rightarrow {\rm Ind}_{G_k}^{G_j}  B_k \otimes {\cal K}(H_{l'k}) : x \mapsto  
V_{kj, 23}^{l'}  ({\rm id}_{B_k} \otimes \sigma)(\delta_{B_j}^k  \otimes {\rm id}_{{\cal K}(H_{l'j})}(x) (V_{kj, 23}^{l'})^*.$$
\end{theorem}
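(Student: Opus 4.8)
Le plan est de ramener l'\'enonc\'e \`a la Proposition \ref{isoindF}, appliqu\'ee \`a l'alg\`ebre de liaison $J_1$ du bimodule ${\cal E}_1$, puis de transporter le r\'esultat abstrait obtenu sur les coins de $D$ vers les bimodules ${\cal E}_j \otimes {\cal K}(H_{l'j}, H_{lj})$ au moyen des identifications 1), 2), 3) d\'ecrites juste avant l'\'enonc\'e. Concr\`etement, je pars de la $G_1$-alg\`ebre de liaison $J_1 = \begin{pmatrix} {\cal K}({\cal E}_1) & {\cal E}_1 \\ {\cal E}_1^* & B_1\end{pmatrix}$, munie de la coaction $\delta_{J_1}$ compatible avec $\delta_{A_1}$ (via l'action non d\'eg\'en\'er\'ee et $G_1$-\'equivariante $A_1 \rightarrow {\cal L}({\cal E}_1) = M({\cal K}({\cal E}_1))$) et $\delta_{B_1}$, ainsi que des projecteurs $e_1^1, e_1^2$. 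Par induction on obtient la $G_2$-alg\`ebre de liaison $J_2 = {\rm Ind}_{G_1}^{G_2} J_1 = \begin{pmatrix} {\cal K}({\cal E}_2) & {\cal E}_2 \\ {\cal E}_2^* & B_2\end{pmatrix}$ avec projecteurs $e_2^s = e_1^s \otimes 1_{S_{12}}$ (\cf \ref{indu2}, \ref{indu3}), et l'on assemble la ${\cal G}$-alg\`ebre de liaison $J := J_1 \oplus J_2$ comme dans \ref{indugr}.

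Ensuite, j'identifie le double produit crois\'e $J \rtimes {\cal G} \rtimes \widehat{\cal G}$ \`a la ${\cal G}$-alg\`ebre $(D, \beta_D, \delta_D)$ gr\^ace au Th\'eor\`eme \ref{thbidu}, en reprenant int\'egralement les notations du paragraphe \ref{dpco}, $A$ \'etant remplac\'ee par $J$. La C*-alg\`ebre $D = D_1 \oplus D_2$ porte alors deux structures d'alg\`ebre de liaison compatibles, donn\'ees respectivement par les projecteurs $e_D^s$ (\ref{esD}) provenant de la structure de module et par les projecteurs $e_{l,j}$ (\ref{liaison1}) provenant de la bidualit\'e, d'o\`u les coins $D_{ll',j}^{ss'} := e_{l,j}^s D_j e_{l',j}^{s'}$ du Lemme \ref{eslj}. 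Les identifications 1), 2), 3) rappel\'ees dans la mise en place donnent $D_{l,j}^1 \simeq {\cal K}({\cal E}_j) \otimes {\cal K}(H_{lj})$, $D_{l,j}^2 \simeq B_j \otimes {\cal K}(H_{lj})$ et, pour le coin hors diagonale, $D_{ll',j}^{12} \simeq {\cal E}_j \otimes {\cal K}(H_{l'j}, H_{lj})$, chacune munie de la coaction $G_j$-\'equivariante explicit\'ee. J'applique alors la Proposition \ref{isoindF} avec $s=1$ et $s'=2$ : la restriction de l'isomorphisme $G_j$-\'equivariant $\widetilde\pi_j : D_j \rightarrow {\rm Ind}_{G_k}^{G_j} D_k,\ x \mapsto \delta_{D_j}^k(x)$ au coin $D_{ll',j}^{12}$ fournit un isomorphisme $G_j$-\'equivariant $D_{ll',j}^{12} \rightarrow {\rm Ind}_{G_k}^{G_j} D_{ll',k}^{12}$ au-dessus des isomorphismes de coins $D_{l,j}^1 \rightarrow {\rm Ind}_{G_k}^{G_j} D_{l,k}^1$ et $D_{l',j}^2 \rightarrow {\rm Ind}_{G_k}^{G_j} D_{l',k}^2$. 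Traduit via 1), 2), 3), ceci donne exactement l'isomorphisme de $G_j$-modules hilbertiens cherch\'e, de ${\cal E}_j \otimes {\cal K}(H_{l'j}, H_{lj})$ sur ${\rm Ind}_{G_k}^{G_j} {\cal E}_k \otimes {\cal K}(H_{l'k}, H_{lk})$, au-dessus des C*-isomorphismes de coefficients annonc\'es.

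Il reste \`a v\'erifier que, sous ces identifications, l'application abstraite $x \mapsto \delta_{D_j}^k(x)$ co\"incide avec la formule explicite $\xi \mapsto V_{kj, 23}^l ({\rm id}_{{\cal E}_k} \otimes \sigma)(\delta_{{\cal E}_j}^k \otimes {\rm id})(\xi) (V_{kj, 23}^{l'})^*$ du th\'eor\`eme : c'est un calcul direct, enti\`erement parall\`ele \`a \ref{etap2} b) et \ref{etap4} c), qui constitue le point technique principal. Enfin, le fait que le coefficient \`a gauche puisse s'\'ecrire $A_j \otimes {\cal K}(H_{lj})$ (et non seulement ${\cal K}({\cal E}_j) \otimes {\cal K}(H_{lj})$) r\'esulte de ce que l'inclusion $A_j \rightarrow {\cal L}({\cal E}_j)$ est non d\'eg\'en\'er\'ee et $G_j$-\'equivariante, donc se prolonge aux multiplicateurs de mani\`ere compatible avec l'induction; l'\'enonc\'e relatif \`a $B_j$ est imm\'ediat. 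La difficult\'e essentielle est donc moins l'argument structurel, qui se d\'eduit m\'ecaniquement de \ref{isoindF} et de la th\'eorie des alg\`ebres de liaison \'equivariantes (\cf \ref{dbimod}), que le suivi pr\'ecis des identifications 1), 2), 3) et la v\'erification que les coactions induites sur les deux membres de l'isomorphisme se correspondent bien via les unitaires $V_{kj}^l$.
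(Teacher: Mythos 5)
Votre d\'emarche est correcte et co\"incide pour l'essentiel avec celle du papier : r\'eduction \`a la Proposition \ref{isoindF} appliqu\'ee \`a la ${\cal G}$-alg\`ebre de liaison $J = J_1 \oplus J_2$ du bimodule, lecture des coins $D_{ll',j}^{12}$ via les identifications 1), 2), 3), et passage de ${\cal K}({\cal E}_j)$ \`a $A_j$ gr\^ace \`a la non-d\'eg\'en\'erescence et l'\'equivariance de l'action \`a gauche. Le seul point que le papier rend plus explicite est ce dernier passage : il construit le *-morphisme ${\cal G}$-\'equivariant $A \rightarrow M(J)$, puis le morphisme induit $A \rtimes {\cal G} \rtimes \widehat{\cal G} \rightarrow M(J \rtimes {\cal G} \rtimes \widehat{\cal G})$ et les $f_j$ v\'erifiant $(f_k \otimes {\rm id}_{S_{kj}}) \circ \delta_{{\cal A}_j}^k = \delta_{{\cal B}_j}^k \circ f_j$, relation dont d\'ecoule l'\'equivariance annonc\'ee.
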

\begin{proof}  Il r\'esulte   de \ref{isoindF} que les assertions   qu'on obtient,  en rempla\c cant dans celles    du th\'eor\`eme,  la $G_j$-alg\`ebre $A_j$ par la   $G_j$-alg\`ebre ${\cal K}({\cal E}_j)$, sont \'etablies.
\hfill\break
L'\'equivariance de l'action \`a gauche $A_1 \rightarrow {\cal L}({\cal E}_1)$ (\cf\ref{dbimod}), d\'efinit   une action  \ref{ind3}  non d\'egen\'er\'ee $G_2$-\'equivariante $A_2 \rightarrow {\cal L}({\cal E}_2)$. Il en r\'esulte qu'on a un *-morphisme ${\cal  G}$-\'equivariant 
$A = A_1 \oplus A_2  \rightarrow M(J)$. \hfill\break
Comme dans le cas d'une action  continue d'un groupe quantique \lc dans une C*-alg\`ebre, on d\'eduit un *-morphisme ${\cal  G}$-\'equivariant   $A  \rtimes {\cal G}  \rtimes \widehat{\cal G} \rightarrow M(J  \rtimes {\cal G}  \rtimes \widehat{\cal G})$.
\hfill\break
Pour $j = 1 , 2$, posons ${\cal A}_j = A_j \otimes {\cal K}(H_{1j} \oplus H_{2j})$. 
En identifiant \ref{dpco}  les ${\cal G}$-alg\`ebres $A  \rtimes {\cal G}  \rtimes \widehat{\cal G}$ et 
${\cal A}_1 \oplus {\cal A}_2$, on obtient pour tout $j  = 1, 2$, un *-morphisme 
\begin{equation}\label{eq:morphf1}f_j : {\cal A}_j  \rightarrow M({\cal B}_j), \quad {\cal B}_j := J_j \otimes {\cal K}(H_{1j} \oplus H_{2j}),
\end{equation}
\noindent
et pour tout $j , k$, on a 
\begin{equation}\label{eq:morphf2}(f_k \otimes {\rm id}_{S_{kj}}) \circ \delta_{{\cal A}_j}^k = \delta_{{\cal B}_j}^k \circ f_j.
\end{equation}
Par restriction de $f_j$, on obtient pour $l , l'= 1 , 2$, un  *-morphisme  non d\'eg\'en\'er\'e et $G_j$-\'equivariant :   
$$A_j \otimes {\cal K}(H_{lj}) \rightarrow {\cal L}({\cal E}_j \otimes {\cal K}(H_{l'j} ,H_{lj})).$$
\noindent
L'\'equivariance de l'isomorphisme $\delta_{ll',j}^k  : {\cal E}_j \otimes {\cal K}(H_{l'j} ,H_{lj})  \rightarrow {\rm Ind}_{G_k}^{G_j}  {\cal E}_k \otimes {\cal K}(H_{l'k} ,H_{lk})$ r\'esulte alors de \ref{isoindF} et de  (\ref{eq:morphf2}).
 \end{proof}
\noindent
Pour d\'efinir  une  \'equivalence des cat\'egories $KK^{G_1}$ et $KK^{G_2}$, nous avons besoin d'expliciter quelques notations suppl\'ementaires       et d'\'etablir un lemme utile.
\begin{notations}\label{notR}
\hspace{2em}
\begin{enumerate}
\item  Pour tout $j , k , l, l' = 1 , 2$ avec $j\not=k$,  on a :
$${\cal B}_{l,j} :=  J_j \otimes {\cal K}(H_{lj}) =   \begin{pmatrix} {\cal K}({\cal E}_j) \otimes {\cal K}(H_{lj}) & {\cal E}_j \otimes {\cal K}(H_{lj}) \\
{\cal E}_j \otimes {\cal K}(H_{lj}) ^* & B_j \otimes      {\cal K}(H_{lj})  \end{pmatrix}.$$
\item  Le ${\cal B}_{l,j}-{\cal B}_{l',j}$ bimodule hilbertien  ${\cal B}_{ll',j}$ est de la forme : 
$${\cal B}_{ll',j} = \begin{pmatrix}{\cal K}({\cal E}_j) \otimes {\cal K}(H_{l'j}, H_{lj}) & {\cal E}_j \otimes {\cal K}(H_{l'j}, H_{lj}) \\
{\cal E}_j \otimes {\cal K}(H_{l'j}, H_{lj}) ^* & B_j \otimes      {\cal K}(H_{l'j}, H_{lj})  \end{pmatrix}.$$
\item Par restriction de $\delta_{{\cal B}_j}^k$, on a  les isomorphismes $G_j$-\'equivariants (\ref{isoind}, \ref{not4}) :
$$\delta_{ll',j}^k : {\cal K}({\cal E}_j) \otimes {\cal K}(H_{l'j}, H_{lj}) \rightarrow {\rm Ind}_{G_k}^{G_j} {\cal K}({\cal E}_k) \otimes {\cal K}(H_{l'k}, H_{lk})  :
x \mapsto  V_{kj, 23}^l ({\rm id}  \otimes \sigma)(\delta_{{\cal K}({\cal E}_j)}^k \otimes  {\rm id})(x) (V_{kj, 23}^{l'})^*,$$
$$\delta_{ll',j}^k : {\cal E}_j \otimes {\cal K}(H_{l'j}, H_{lj}) \rightarrow {\rm Ind}_{G_k}^{G_j} {\cal E}_k \otimes {\cal K}(H_{l'k}, H_{lk})  :
\xi \mapsto  V_{kj, 23}^l ({\rm id}  \otimes \sigma)(\delta_{{\cal E}_j}^k \otimes  {\rm id})(\xi) (V_{kj, 23}^{l'})^*,$$
$$\delta_{ll',j}^k : B_j \otimes {\cal K}(H_{l'j}, H_{lj}) \rightarrow {\rm Ind}_{G_k}^{G_j} B_k \otimes {\cal K}(H_{l'k}, H_{lk})  :
x \mapsto  V_{kj, 23}^l ({\rm id}  \otimes \sigma)(\delta_{B_j}^k \otimes  {\rm id})(x) (V_{kj, 23}^{l'})^*.$$
\end{enumerate}
\end{notations}

\begin{lemme}\label{operF} Pour tout $j , k , l = 1 , 2$ et tout $T \in M({\cal K}({\cal E}_l))$, nous avons 
$$\delta_{ll,j}^k({\rm id}_{{\cal K}({\cal E}_j)} \otimes  R_{jl})\delta_{{\cal K}({\cal E}_l)}^j(T) = ({\rm id}_{{\cal K}({\cal E}_k)} \otimes  R_{kl})\delta_{{\cal K}({\cal E}_l)}^k(T) \otimes 1_{S_{kj}}.$$
\end{lemme}
\begin{proof} Il s'agit d'un cas particulier  de (\ref{bidu0} b))  appliqu\'ee \`a l'action du groupo\"ide ${\cal G}$ dans la C*-alg\`ebre $J$.
\end{proof}

\noindent
\subsection{Application \`a la \texorpdfstring{$KK$}{KK}-th\'eorie \'equivariante}

\noindent
Soient $G_1$ et $G_2$ deux groupes quantiques l.c., mono\"idalement \'equivalents et r\'eguliers.  Fixons  $(A_1, \delta_{A_1})$ et 
$(B_1, \delta_{B_1})$   deux $G_1$-alg\`ebres et  posons :
$$(A_2 ,\delta_{A_2}) = {\rm Ind}_{G_1}^{G_2}\,(A_1, \delta_{A_1}), \quad (B_2  , \delta_{B_2}) = {\rm Ind}_{G_1}^{G_2}\,(B_1, \delta_{B_1})$$
\noindent
\`A tout $A_1-B_1$ bimodule de Kasparov $({\cal E}_1, F_1)$ nous   associons  un  $A_2-B_2$ bimodule de Kasparov $({\cal E}_2, F_2)$, ce qui nous permet  de d\'efinir un isomorphisme de groupes ab\'eliens   
$$J_{G_2, G_1} : KK^{G_1}(A_1, B_1)  \rightarrow KK^{G_2}(A_2, B_2) : x= [({\cal E}_1, F_1)] \mapsto J_{G_2, G_1}(x) := [({\cal E}_2, F_2)]$$
\noindent
dont l'isomorphisme r\'eciproque  $J_{G_1, G_2} : KK^{G_2}(A_2,  B_2)  \rightarrow KK1^{G_1}(A_1 B_1)$ est obtenu  de la m\^eme mani\`ere en permutant les r\^oles de $G_1$ et $G_2$.

\medbreak
\noindent
{\bf Rappels sur la $KK$-th\'eorie \'equivariante de Kasparov et  notations}
\medbreak
1) Soit $G$ un groupe quantique \lc r\'egulier.  \`A tout couple de $G$-alg\`ebres $A$ et $B$, on associe  \cite{BaSka1} un groupe ab\'elien not\'e $KK^G(A,B)$, dont les g\'en\'erateurs sont  les classes  de $A-B$ bimodules de Kasparov $({\cal E} , F)$  o\`u ${\cal E}$ est un $A-B$ bimodule hilbertien $G$-\'equivariant et $F\in {\cal L}({\cal E})$ v\'erifie  : 
$$[F , x] \in{\cal K}({\cal E}), \quad x(F -F^*) \in{\cal K}({\cal E}), \quad x(F^2 - 1)\in{\cal K}({\cal E}), \quad x\in A\quad;$$
$$   x(\delta_{{\cal K}({\cal E})}(F)  - F \otimes  1_{S}) \in{\cal K}({\cal E}) \otimes S, \quad x\in A \otimes S,$$
\noindent
o\`u on a pos\'e $S = C_0(G)$.
\smallbreak
2) Si $A , D , B$ sont des $G$-alg\`ebres, on a  un produit (produit interne de Kasparov) :
$$KK^G(A,D)  \times KK^G(D,B)  \rightarrow  KK^G(A,B)    : (x, y) \mapsto x\otimes_D y.$$
\smallbreak
3) Soit $A$ une $G$-alg\`ebre, identifions  \cite{BaSka2} les $G$-alg\`ebres $A  \rtimes G  \rtimes \widehat G$   et $A \otimes {\cal K}(L^2(G))$.
On note $\beta_A$ (resp. $\alpha _A$), la classe du bimodule de Kasparov  $(A \otimes L^2(G) , 0)$ (resp. $((A \otimes L^2(G))^* , 0)$). 
On a 
$\beta_A \in KK^G(A \otimes {\cal K}(L^2(G)) , A)$ et  $ \alpha _A \in KK^G(A , A \otimes {\cal K}(L^2(G)))$.
\smallbreak
4) Soient $A$ et $B$ deux $G$-alg\`ebres, on rappelle \cite{BaSka1}  que pour tout $x = [({\cal E} , F)]\in KK^G(A,B)$, on a 
$$\beta_A \otimes_A    x \otimes_B  \alpha _B = [({\cal E} \otimes  {\cal K}(L^2(G)) , ({\rm id} \otimes R)\delta_{\cal E}(F))]\in 
KK^G(A \otimes {\cal K}(L^2(G)) , B \otimes {\cal K}(L^2(G)))$$
\noindent
et que l'application $KK^G(A,B) \rightarrow KK^G(A \otimes {\cal K}(L^2(G)) , B \otimes {\cal K}(L^2(G))) : x \mapsto \beta_A \otimes_A    x \otimes_B  \alpha _B$ est un isomorphisme de groupes ab\'eliens (\cf \cite{BaSka1}).
\smallbreak
5) Pour tout $x = [({\cal E}, F)] \in KK^G(A,B)$, quitte \`a remplacer $x$ par $1_A \otimes_A x$, on peut supposer que 
l'action \`a gauche $A  \rightarrow {\cal L}({\cal E})$ est non d\'eg\'en\'er\'ee.

\noindent
{\it Notons \cite{BaSka1}  que l'op\'erateur $({\rm id}_{{\cal K}({\cal E})} \otimes R)\delta_{{\cal K}({\cal E})}(F)$ est fixe pour la coaction biduale de
${\cal K}({\cal E}) \otimes  {\cal K}(L^2(G))$.}

\medbreak

\noindent
Fixons un groupo\"ide de co-liaison ${\cal G} := {\cal G}_{G_1, G_2}$  et deux  $G_1$-alg\`ebres 
  $(A_1, \delta_{A_1})$ et 
$(B_1, \delta_{B_1})$.

Pour tout $A_1-B_1$ bimodule de Kasparov $({\cal E}_1, F_1)$, o\`u on suppose que l'action \`a gauche $A_1 \rightarrow {\cal L}({\cal E}_1)$ est non d\'eg\'en\'er\'ee, posons  \ref{thliaison} :
$$(A_2 ,\delta_{A_2}) = {\rm Ind}_{G_1}^{G_2}\,(A_1, \delta_{A_1}), \quad (B_2  , \delta_{B_2}) = {\rm Ind}_{G_1}^{G_2}\,(B_1, \delta_{B_1}), \quad ({\cal E}_2 , \delta_{{\cal E}_2}):= {\rm Ind}_{G_1}^{G_2}\,({\cal E}_1 , \delta_{{\cal E}_1}).$$
\noindent
Avec les notations de \ref{thliaison}, nous avons  :

\begin{proposition}\label{homo1} $({\cal E}_2 \otimes {\cal K}(H_{12}) , ({\rm id}_{{\cal K}({\cal E}_2)} \otimes R_{21})\delta_{{\cal K}({\cal E}_1)}^2(F_1))$ est un 
$A_2 \otimes {\cal K}(H_{12}) - B_2 \otimes {\cal K}(H_{12})$ bimodule de Kasparov $G_2$-\'equivariant.
\end{proposition}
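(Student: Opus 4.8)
L'objectif est de montrer que le couple
$$\bigl({\cal E}_2 \otimes {\cal K}(H_{12})\, , \,({\rm id}_{{\cal K}({\cal E}_2)} \otimes R_{21})\delta_{{\cal K}({\cal E}_1)}^2(F_1)\bigr)$$
est un $A_2 \otimes {\cal K}(H_{12}) - B_2 \otimes {\cal K}(H_{12})$ bimodule de Kasparov $G_2$-\'equivariant. L'id\'ee directrice est de n'\^etre qu'un cas particulier ($j=2, k=1, l=l'=1$) du formalisme d\'evelopp\'e dans les sections pr\'ec\'edentes. D'abord je poserais $F_2 := ({\rm id}_{{\cal K}({\cal E}_2)} \otimes R_{21})\delta_{{\cal K}({\cal E}_1)}^2(F_1)$, op\'erateur  qui vit dans ${\cal L}({\cal E}_2 \otimes {\cal K}(H_{12}))$ gr\^ace aux identifications de \ref{notR}. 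La structure de bimodule $G_2$-\'equivariant de ${\cal E}_2 \otimes {\cal K}(H_{12})$ a d\'ej\`a \'et\'e \'etablie par le th\'eor\`eme \ref{thliaison}, qui identifie ce bimodule \`a ${\rm Ind}_{G_1}^{G_2}({\cal E}_1 \otimes {\cal K}(H_{11}))$ via l'isomorphisme $\delta_{11,2}^1$ ; les actions \`a gauche et \`a droite sont donc bien d\'efinies au-dessus des $C^*$-isomorphismes de \ref{ind+iso}. Il ne reste donc qu'\`a v\'erifier les quatre conditions de Kasparov (rappels 1) sur $F_2$.

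\textbf{\'Etapes principales.} Je proc\'ederais en deux temps. Premier temps : partir du fait que $\beta_{A_1} \otimes_{A_1} [({\cal E}_1, F_1)] \otimes_{B_1} \alpha_{B_1}$ est repr\'esent\'e (rappels 4) par le bimodule $({\cal E}_1 \otimes {\cal K}(H_{11}) , ({\rm id} \otimes R_{11})\delta_{{\cal K}({\cal E}_1)}^1(F_1))$, qui est un bimodule de Kasparov $G_1$-\'equivariant valable. Les conditions de Kasparov pour l'op\'erateur $F_1^{(1)} := ({\rm id}_{{\cal K}({\cal E}_1)} \otimes R_{11})\delta_{{\cal K}({\cal E}_1)}^1(F_1)$ sur ${\cal E}_1 \otimes {\cal K}(H_{11})$ sont donc acquises, ainsi que sa $G_1$-invariance (la remarque finale des rappels 5 pr\'ecise que $({\rm id} \otimes R)\delta(F)$ est fixe pour la coaction biduale). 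Deuxi\`eme temps : appliquer le lemme \ref{operF} avec $j=2, k=1, l=1$, qui donne pr\'ecis\'ement
$$\delta_{11,2}^1\bigl(({\rm id}_{{\cal K}({\cal E}_2)} \otimes R_{21})\delta_{{\cal K}({\cal E}_1)}^2(F_1)\bigr) = ({\rm id}_{{\cal K}({\cal E}_1)} \otimes R_{11})\delta_{{\cal K}({\cal E}_1)}^1(F_1) \otimes 1_{S_{12}}.$$
Autrement dit, $F_2$ correspond, via l'isomorphisme induit $\delta_{11,2}^1$ de \ref{thliaison}, \`a $F_1^{(1)} \otimes 1_{S_{12}}$, c'est-\`a-dire \`a l'op\'erateur induit sur ${\rm Ind}_{G_1}^{G_2}({\cal E}_1 \otimes {\cal K}(H_{11}))$. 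Comme l'induction est r\'ealis\'ee par des $*$-isomorphismes $G_j$-\'equivariants d'alg\`ebres de liaison (proposition \ref{indu3}, assertion iii), qui pr\'eserve explicitement les quatre relations de Kasparov et la condition d'invariance), chacune des conditions $[F_2, a]\in {\cal K}$, $a(F_2 - F_2^*)\in{\cal K}$, $a(F_2^2 - 1)\in{\cal K}$ et $a(\delta(F_2) - F_2 \otimes 1_{S_{22}})\in{\cal K}\otimes S_{22}$ se d\'eduit de la condition correspondante pour $F_1^{(1)}$ par transport de structure.

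\textbf{L'obstacle principal.} La difficult\'e n'est pas dans la v\'erification des trois premi\`eres relations de Kasparov (continuit\'e, autoadjonction et inversibilit\'e modulo les compacts), qui passent m\'ecaniquement par l'isomorphisme de bimodules ; elle r\'eside dans la condition d'\'equivariance $G_2$, c'est-\`a-dire montrer que $a(\delta_{{\cal K}({\cal E}_2)\otimes{\cal K}(H_{12})}(F_2) - F_2 \otimes 1_{S_{22}})$ est dans ${\cal K}\otimes S_{22}$. Le point d\'elicat est d'articuler correctement la coaction biduale sur ${\cal E}_2\otimes{\cal K}(H_{12})$ (donn\'ee par la formule de \ref{thliaison} avec les corep\'esentations $V_{22}^1$) avec la formule d'induction $\delta_{11,2}^1$, en exploitant la relation pentagonale et les relations de commutation de \ref{pent} et \ref{coprodS}. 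Concr\`etement, il faudra v\'erifier que l'op\'erateur $F_1^{(1)}\otimes 1_{S_{12}}$ est invariant pour la coaction du bimodule induit, ce qui r\'esulte de l'invariance de $F_1^{(1)}$ d\'ej\`a not\'ee (remarque des rappels 5) combin\'ee au fait que l'induction envoie op\'erateurs invariants sur op\'erateurs invariants (proposition \ref{indu3}, point iii, derni\`ere relation $\delta_{J_2}(F_2) = F_2 \otimes 1_{S_{22}}$). Une fois cette compatibilit\'e \'etablie, la proposition suit imm\'ediatement.
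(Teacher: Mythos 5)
Votre preuve est correcte et suit essentiellement le même chemin que celle du texte : on constate que $({\rm id}_{{\cal K}({\cal E}_1)} \otimes R_{11})\delta_{{\cal K}({\cal E}_1)}(F_1)$ est exactement fixe pour la coaction biduale, on vérifie que le bimodule $({\cal E}_1 \otimes {\cal K}(H_{11}), F_1^{(1)})$ satisfait les hypothèses de \ref{indu3} b) iii) pour en déduire que l'opérateur induit $F_1^{(1)} \otimes 1_{S_{12}}$ définit un bimodule de Kasparov $G_2$-équivariant sur le module induit, puis on identifie ce dernier à $({\cal E}_2 \otimes {\cal K}(H_{12}), F_2)$ via l'isomorphisme $\delta_{11,2}^1$ grâce à \ref{thliaison} et \ref{operF}. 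Les deux arguments coïncident jusque dans le choix des références.
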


\begin{proof}
 L'op\'erateur $({\rm id}_{{\cal K}({\cal E}_1)} \otimes R_{11})\delta_{{\cal K}({\cal E}_1)}(F_1)$ \'etant  fixe pour la coaction biduale de $G_1$ dans ${\cal K}({\cal E}_1) \otimes  {\cal K}(H_{11})$, posons $F' = ({\rm id}_{{\cal K}({\cal E}_1)} \otimes R_{11})\delta_{{\cal K}({\cal E}_1)}(F_1) \otimes  1_{S_{12}} \in {\cal L}({\cal E}_1 \otimes {\cal K}(H_{11}) \otimes  S_{12})$.
\hfill\break  
Il est clair que le  $A_1 \otimes   {\cal K}(H_{11})-B_1 \otimes   {\cal K}(H_{11})$ bimodule de Kasparov $({\cal E}_1 \otimes {\cal K}(H_{11}) , ({\rm id}_{{\cal K}({\cal E}_1)} \otimes R_{11})\delta_{{\cal K}({\cal E}_1)}(F_1))$ satisfait les conditions de \ref{indu3} b) (iii).  On en  d\'eduit que   
$({\rm Ind}_{G_1}^{G_2}\,{\cal E}_1 \otimes {\cal K}(H_{11}) , F')$ est un ${\rm Ind}_{G_1}^{G_2}\, A_1 \otimes   {\cal K}(H_{11}) - {\rm Ind}_{G_1}^{G_2}\,B_1 \otimes   {\cal K}(H_{11}) $ bimodule de Kasparov $G_2$-\'equivariant.\hfill\break
En utilisant \ref{thliaison} et \ref{operF}, on obtient que $({\cal E}_2 \otimes {\cal K}(H_{12}) , ({\rm id}_{{\cal K}({\cal E}_2)} \otimes R_{21})\delta_{{\cal K}({\cal E}_1)}^2(F_1))$ est l'ant\'ec\'edent de $({\rm Ind}_{G_1}^{G_2}\,{\cal E}_1 \otimes {\cal K}(H_{11}) , F')$ par l'isomorphisme 
$G_2$-\'equivariant $\delta_{11,2}^1$.
\end{proof}

\begin{notations} Pour $j , l , l'  = 1 ,2$, notons $\gamma_{ll',j,{\rm g}}$ (resp  $\gamma_{ll',j,{\rm d}}$), l'\'equivalence de Morita $G_j$-\'equivariante  \ref{not3}
$\gamma_{ll',j}$ relative  \`a la ${\cal G}$-alg\`ebre $A = A_1 \oplus A_2$ (resp. $B = B_1 \oplus B_2$). Nous notons aussi 
$\gamma_{ll',j,{\rm g}}$ (resp. $\gamma_{ll',j,{\rm d}}$), l'\'el\'ement du groupe de Kasparov correspondant \`a cette \'equivalence de Morita.\index{gd@$\gamma_{ll',j,{\rm g}}$, $\gamma_{ll',j,{\rm d}}$}
\end{notations}

\begin{proposition}\label{debut} Conservons les hypoth\`eses et les notations de \ref{homo1}.
\hfill\break
Il existe un op\'erateur $F_2\in {\cal L}({\cal E}_2)$ v\'erifiant :
\begin{enumerate}
\item  $({\cal E}_2, F_2)$ est un $A_2-B_2$ bimodule de Kasparov $G_2$-\'equivariant ;
\item $\beta_{A_2} \otimes_{A_2} [({\cal E}_2, F_2)] \otimes_{B_2}  \alpha _{B_2} = 
\gamma_{21,2,{\rm g}} \otimes_{A_2 \otimes \cK(H_{12})} [({\cal E}_2 \otimes {\cal K}(H_{12}) , ({\rm id}_{{\cal K}({\cal E}_2)} \otimes R_{21})\delta_{{\cal K}({\cal E}_1)}^2(F_1))] \otimes_{B_2 \otimes \cK(H_{12})} \gamma_{12,2,{\rm d}}$.
\end{enumerate}
De plus, si $F_2, F_2'\in{\cal L}({\cal E}_2)$ v\'erifient les conditions a) et b) pr\'ec\'edentes, alors on a :
$$[({\cal E}_2, F_2)] = [({\cal E}_2, F_2')] \in KK^{G_2}(A_2, B_2).$$
\end{proposition}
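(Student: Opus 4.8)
Le plan est de construire l'opérateur $F_2$ en transportant par l'isomorphisme de bimodules $G_2$-équivariants $\delta_{11,2}^1$ (\cf \ref{thliaison}) un opérateur déjà connu du côté $G_1$, puis de vérifier les axiomes de Kasparov et l'identité b). D'abord, je pose $F_1' := ({\rm id}_{{\cal K}({\cal E}_1)} \otimes R_{11})\delta_{{\cal K}({\cal E}_1)}(F_1) \in {\cal L}({\cal E}_1 \otimes {\cal K}(H_{11}))$, qui est l'opérateur rendant $(\beta_{A_1} \otimes_{A_1} [({\cal E}_1, F_1)] \otimes_{B_1} \alpha_{B_1})$ représentable (point 4 des rappels) et qui est fixe pour la coaction biduale. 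Comme cet opérateur satisfait les hypothèses de (\ref{indu3} b) iii)), le bimodule $({\rm Ind}_{G_1}^{G_2}\,{\cal E}_1 \otimes {\cal K}(H_{11}) , F_1' \otimes 1_{S_{12}})$ est un bimodule de Kasparov $G_2$-équivariant (c'est essentiellement \ref{homo1}). Via le $G_2$-isomorphisme $\delta_{11,2}^1$ de \ref{thliaison}, ce bimodule s'identifie à $({\cal E}_2 \otimes {\cal K}(H_{12}), ({\rm id}_{{\cal K}({\cal E}_2)} \otimes R_{21})\delta_{{\cal K}({\cal E}_1)}^2(F_1))$.

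Ensuite, pour définir $F_2$ lui-même, j'utilise l'isomorphisme de groupes abéliens du point 4 des rappels appliqué à $G_2$ : l'application $KK^{G_2}(A_2,B_2) \to KK^{G_2}(A_2 \otimes {\cal K}(H_{22}) , B_2 \otimes {\cal K}(H_{22}))$, $x \mapsto \beta_{A_2} \otimes_{A_2} x \otimes_{B_2} \alpha_{B_2}$, est bijective. Le bimodule de Kasparov construit ci-dessus vit au-dessus de $A_2 \otimes {\cal K}(H_{12})$ et $B_2 \otimes {\cal K}(H_{12})$ ; en le composant avec les équivalences de Morita $G_2$-équivariantes $\gamma_{21,2,g}$ et $\gamma_{12,2,d}$ (\cf \ref{etap5} c) et \ref{not3}), qui relient $A_2 \otimes {\cal K}(H_{12})$ à $A_2 \otimes {\cal K}(H_{22})$ et de même pour $B_2$, j'obtiens un élément de $KK^{G_2}(A_2 \otimes {\cal K}(H_{22}), B_2 \otimes {\cal K}(H_{22}))$. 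Son antécédent par l'isomorphisme du point 4 est la classe $[({\cal E}_2, F_2)]$ cherchée ; l'existence d'un représentant concret $({\cal E}_2, F_2)$ (avec action à gauche non dégénérée, quitte à remplacer par $1_{A_2} \otimes_{A_2} \cdot$) découle de la surjectivité. L'identité b) est alors exactement la façon dont $F_2$ est défini.

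Pour l'unicité (dernière assertion), si $F_2$ et $F_2'$ vérifient tous deux a) et b), alors leurs images par $x \mapsto \beta_{A_2} \otimes_{A_2} x \otimes_{B_2} \alpha_{B_2}$ coïncident dans $KK^{G_2}(A_2 \otimes {\cal K}(H_{22}), B_2 \otimes {\cal K}(H_{22}))$, puisque le membre de droite de b) ne dépend que de $({\cal E}_1, F_1)$ et des équivalences de Morita fixées. Comme cette application est injective (point 4 des rappels), on conclut $[({\cal E}_2, F_2)] = [({\cal E}_2, F_2')]$ dans $KK^{G_2}(A_2, B_2)$.

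L'obstacle principal sera de vérifier soigneusement que les identifications de bimodules fournies par \ref{thliaison} et \ref{operF} sont bien compatibles avec le produit de Kasparov par les équivalences de Morita $\gamma_{ll',2}$, c'est-à-dire que la composition des isomorphismes $G_2$-équivariants de \ref{notR} et \ref{thindbimod} réalise exactement le passage de $A_2 \otimes {\cal K}(H_{12})$ à $A_2 \otimes {\cal K}(H_{22})$ au niveau des opérateurs et non seulement des modules sous-jacents. Le fait que $({\rm id} \otimes R)\delta_{{\cal K}({\cal E})}(F)$ soit fixe pour la coaction biduale (note du point 5) est ce qui rend l'opérateur transportable ; tout le soin consiste à suivre les flips $\sigma$ et les coins $V_{kj,23}^l$ à travers \ref{operF}.
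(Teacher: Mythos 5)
Votre démonstration est correcte et suit essentiellement la même route que celle du papier : on identifie le module sous-jacent du triple produit de Kasparov $\gamma_{21,2,g} \otimes [({\cal E}_2 \otimes {\cal K}(H_{12}), \cdot)] \otimes \gamma_{12,2,d}$ comme étant ${\cal E}_2 \otimes {\cal K}(H_{22})$, puis on obtient $F_2$ par l'antécédent sous la bijection $x \mapsto \beta_{A_2} \otimes_{A_2} x \otimes_{B_2} \alpha_{B_2}$ de \cite{BaSka1}, l'unicité résultant de l'injectivité de cette même application. Le seul point que le papier rend un peu plus explicite est le calcul des produits internes de modules ($A_2 \otimes {\cal K}(H_{12},H_{22})) \otimes_{A_2 \otimes {\cal K}(H_{12})} ({\cal E}_2 \otimes {\cal K}(H_{12})) \cong {\cal E}_2 \otimes {\cal K}(H_{12},H_{22})$, etc.), mais votre argument est le même.
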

\begin{proof} 
Nous avons :
$$\gamma_{21,2,{\rm g}} = [(A_2 \otimes {\cal K}(H_{12}, H_{22}) ,0)], \quad \gamma_{12,2,{\rm d}} = [(B_2 \otimes {\cal K}(H_{22}, H_{12}) ,0)].$$\noindent
Il est facile de v\'erifier qu'on a :
$$A_2 \otimes {\cal K}(H_{12}, H_{22}) \otimes_{A_2 \otimes {\cal K}(H_{12})} {\cal E}_2 \otimes {\cal K}(H_{12}) = 
{\cal E}_2 \otimes   {\cal K}(H_{12}, H_{22}),$$
$${\cal E}_2 \otimes  {\cal K}(H_{12}, H_{22}) \otimes_{B_2 \otimes {\cal K}(H_{12})} 
B_2 \otimes {\cal K}(H_{22}, H_{12}) = {\cal E}_2  \otimes {\cal K}(H_{22}).$$
\noindent
Soit $({\cal E}_2  \otimes {\cal K}(H_{22}) , T_2)$ un $A_2 \otimes {\cal K}(H_{22})-B_2 \otimes {\cal K}(H_{22})$ bimodule de Kasparov $G_2$-\'equivariant v\'erifiant 
$$\gamma_{21,2,{\rm g}} \otimes [({\cal E}_2 \otimes {\cal K}(H_{12}) , ({\rm id}_{{\cal K}({\cal E}_2)} \otimes R_{21})\delta_{{\cal K}({\cal E}_1)}^2(F_1))] \otimes \gamma_{12,2,{\rm d}} =  
[({\cal E}_2  \otimes {\cal K}(H_{22}) , T_2)].$$
\noindent
L'isomorphisme \cite{BaSka1} de groupes ab\'eliens 
$$KK^{G_2}(A_2, B_2)  \rightarrow  KK^{G_2}(A_2 \otimes {\cal K}(H_{22}), B_2 \otimes {\cal K}(H_{22})) : 
x \mapsto \beta_{A_2} \otimes_{A_2} x \otimes_{B_2}  \alpha _{B_2}$$
\noindent
permet d'obtenir un op\'erateur $F_2\in {\cal L}({\cal E}_2)$ v\'erifiant a) et b). L'unicit\'e de l'\'el\'ement  du groupe de Kasparov $[({\cal E}_2,F_2)]$  corrrespondant  est claire.
\end{proof}

\begin{corollary}\label{homo2} Pour tout $x =[({\cal E}_1 , F_1)]\in KK^{G_1}(A_1, B_1)$, notons $J_{G_2, G_1} (x)\in KK^{G_2}(A_2, B_2)$, l'unique \'el\'ement du groupe $KK^{G_2}(A_2, B_2)$, v\'erifiant :
$$\beta_{A_2} \otimes_{A_2} J_{G_2, G_1}(x) \otimes_{B_2}  \alpha _{B_2} = 
\gamma_{21,2,{\rm g}} \otimes_{A_2 \otimes \cK(H_{12})} [({\cal E}_2 \otimes {\cal K}(H_{12}) , ({\rm id}_{{\cal K}({\cal E}_2)} \otimes
 R_{21})\delta_{{\cal K}({\cal E}_1)}^2(F_1))] \otimes_{B_2 \otimes \cK(H_{12})} \gamma_{12,2,{\rm d}}$$
\noindent
Alors $J_{G_2, G_1}  : KK^{G_1}(A_1, B_1) \rightarrow KK^{G_2}(A_2, B_2)$ est un morphisme de groupes ab\'eliens.
\end{corollary}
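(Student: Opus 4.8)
The plan is to reduce the homomorphism property to the additivity of a single, more transparent assignment. By point 4) of the preliminaries on equivariant $KK$-theory, the map
$$\Phi : KK^{G_2}(A_2, B_2) \longrightarrow KK^{G_2}(A_2 \otimes {\cal K}(H_{22}), B_2 \otimes {\cal K}(H_{22})) \, , \, x \mapsto \beta_{A_2} \otimes_{A_2} x \otimes_{B_2} \alpha_{B_2}$$
is an isomorphism of abelian groups (\cf\cite{BaSka1}). Since $J_{G_2, G_1}(x)$ is by definition the unique element whose image under $\Phi$ equals
$$\Psi(x) := \gamma_{21,2,g} \otimes_{A_2 \otimes {\cal K}(H_{12})} [({\cal E}_2 \otimes {\cal K}(H_{12}) , ({\rm id}_{{\cal K}({\cal E}_2)} \otimes R_{21})\delta_{{\cal K}({\cal E}_1)}^2(F_1))] \otimes_{B_2 \otimes {\cal K}(H_{12})} \gamma_{12,2,d} \, ,$$
one has $J_{G_2, G_1} = \Phi^{-1} \circ \Psi$. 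As $\Phi^{-1}$ is a group isomorphism, it suffices to prove that $\Psi$ is additive.

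Because the interior Kasparov product is biadditive and the classes $\gamma_{21,2,g}$, $\gamma_{12,2,d}$ are fixed, $\Psi$ is additive as soon as the map
$$m : x = [({\cal E}_1, F_1)] \longmapsto [({\cal E}_2 \otimes {\cal K}(H_{12}) , ({\rm id}_{{\cal K}({\cal E}_2)} \otimes R_{21})\delta_{{\cal K}({\cal E}_1)}^2(F_1))]$$
is additive into $KK^{G_2}(A_2 \otimes {\cal K}(H_{12}), B_2 \otimes {\cal K}(H_{12}))$. By point 5) of the preliminaries I may assume throughout that the left actions $A_1 \to {\cal L}({\cal E}_1)$ are non-degenerate, so that the module induction of \ref{thliaison} applies; this changes nothing in $KK^{G_1}(A_1, B_1)$. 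The sum of two cycles is $({\cal E}_1 \oplus {\cal E}_1', F_1 \oplus F_1')$, with the diagonal actions of $A_1$ and $B_1$. First I would verify that induction is additive on modules: the linking algebra attached to ${\cal E}_1 \oplus {\cal E}_1'$ over $B_1$ carries a canonical block decomposition, and since the module induction of \ref{thliaison} is built functorially from the induction of linking algebras (\ref{isoindF}), itself defined by the manifestly linear formulas $\delta_{A_1}^{(2)}$, $\delta_{B_1}^{(2)}$, $\delta_{{\cal E}_1}^{(2)}$, induction carries this block decomposition to the corresponding one over $B_2$. Hence ${\rm Ind}_{G_1}^{G_2}({\cal E}_1 \oplus {\cal E}_1')$ is canonically isomorphic, as an $A_2 - B_2$ equivariant bimodule, to ${\rm Ind}_{G_1}^{G_2}({\cal E}_1) \oplus {\rm Ind}_{G_1}^{G_2}({\cal E}_1')$.

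It then remains to check that the operator assignment respects this direct sum. The coaction $\delta_{{\cal K}({\cal E}_1)}^2$ and the representation $R_{21}$ preserve the $2 \times 2$ block structure of ${\cal K}({\cal E}_1 \oplus {\cal E}_1')$ determined by the two summands, because $F_1 \oplus F_1'$ is diagonal and the equivariant structure respects the decomposition; consequently $({\rm id} \otimes R_{21})\delta_{{\cal K}({\cal E}_1 \oplus {\cal E}_1')}^2(F_1 \oplus F_1')$ is identified, under the isomorphism of the preceding step, with $({\rm id} \otimes R_{21})\delta_{{\cal K}({\cal E}_1)}^2(F_1) \oplus ({\rm id} \otimes R_{21})\delta_{{\cal K}({\cal E}_1')}^2(F_1')$. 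This yields $m(x + x') = m(x) + m(x')$, and therefore $\Psi$, hence $J_{G_2, G_1}$, is a homomorphism of abelian groups.

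The main obstacle I anticipate is the bookkeeping in the third paragraph: one must make the identification ${\rm Ind}_{G_1}^{G_2}({\cal E}_1 \oplus {\cal E}_1') \cong {\rm Ind}_{G_1}^{G_2}({\cal E}_1) \oplus {\rm Ind}_{G_1}^{G_2}({\cal E}_1')$ fully compatible with the linking-algebra picture of \ref{isoindF}, tracking the projectors $e_{l,j}^s$ through the double crossed product so that the induced operator genuinely decomposes as a diagonal sum. Once this compatibility is pinned down, additivity of $m$ — and hence the statement — follows formally from the biadditivity of the Kasparov product and the fact that $\Phi$ is a group isomorphism.
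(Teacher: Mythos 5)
Your proposal is correct and follows essentially the same route as the paper: the paper's proof likewise reduces additivity to the compatibility of the induction $\mathrm{Ind}_{G_1}^{G_2}$ with direct sums of Kasparov bimodules, combined with the biadditivity of the Kasparov product and the stabilization isomorphism $x \mapsto \beta_{A_2} \otimes_{A_2} x \otimes_{B_2} \alpha_{B_2}$. Your third paragraph merely spells out in more detail what the paper compresses into the sentence that induction is compatible with direct sums.
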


\begin{proof}
Pour tout $A_1-B_1$ bimodule de Kasparov $({\cal E}_1 , F_1)$, avec une action \`a gauche non d\'eg\'en\'er\'ee, il est clair que 
$[({\cal E}_2 \otimes {\cal K}(H_{12}) , ({\rm id}_{{\cal K}({\cal E}_2)} \otimes
 R_{21})\delta_{{\cal K}({\cal E}_1)}^2(F_1))]\in KK^{G_2}(A_2 \otimes \cK(H_{12}), B_2 \otimes \cK(H_{12}))$ ne d\'epend que de la classe de $({\cal E}_1 , F_1)$ dans $KK^{G_1}(A_1, B_1)$.
\hfill\break
L'induction de $G_1$ \`a $G_2$ \'etant compatible  avec les sommes directes, $J_{G_2, G_1}$ est un morphisme de  groupes ab\'eliens.
\end{proof}

Montrons maintenant qu'on a aussi un morphisme de groupes ab\'eliens :
$$J_{G_1, G_2} :  KK^{G_2}(A_2, B_2) \rightarrow KK^{G_1}(A_1, B_1).$$\noindent
Pour tout  $A_2-B_2$ bimodule de Kasparov $G_2$-\'equivariant $({\cal E}_2, F_2)$, posons :
$$\widetilde A_1 := {\rm Ind}_{G_2}^{G_1} A_2,\quad \widetilde B_1 := {\rm Ind}_{G_2}^{G_1} B_2,\quad J_2:= \cK({\cal E}_2 \oplus B_2),\quad \widetilde J_1 := {\rm Ind}_{G_2}^{G_1} J_2 = \cK(\widetilde{\cal E}_1 \oplus \widetilde B_1).$$\noindent
D\'efinissons d'abord un morphisme de groupes ab\'eliens 
$$\widetilde J_{G_1,G_2} : KK^{G_2}(A_2, B_2) \rightarrow KK^{G_1}(\widetilde A_1  \otimes \cK(H_{21}), \widetilde B_1 \otimes \cK(H_{21})).$$\noindent
Notons que l'action de $G_1$ dans $\widetilde A_1  \otimes \cK(H_{21})$ (resp. $\widetilde B_1  \otimes \cK(H_{21})$),  s'obtient par restriction de l'action de ${\cal G}$ dans le double produit crois\'e 
$(\widetilde A_1\oplus A_2) \rtimes {\cal G} \rtimes \widehat{\cal G}$ (resp.\ $(\widetilde B_1\oplus B_2) \rtimes {\cal G} \rtimes \widehat{\cal G}$), qui permet aussi d'obtenir un isomorphisme $G_1$-\'equivariant de 
$\widetilde A_1  \otimes \cK(H_{21})$ sur ${\rm Ind}_{G_2}^{G_1} (A_2 \otimes \cK(H_{22}))$ (resp. $\widetilde B_1  \otimes \cK(H_{21})$ sur ${\rm Ind}_{G_2}^{G_1} (B_2 \otimes \cK(H_{22}))$) (\cf\ref{reccorresp} {\it  o\`u il faut permuter les r\^oles de $G_1$ et $G_2$ et aussi  \ref{ind+iso}}).

En permutant les r\^oles de $G_1$ et $G_2$ et en utilisant une version de \ref{thliaison} et \ref{operF}, pour  la ${\cal  G}$-alg\`ebre de liaison 
$\widetilde J := \widetilde J_1 \oplus J_2$, on \'etablit  :

\begin{proposition}\label{homo3} Pour tout $A_2-B_2$ bimodule de Kasparov $G_2$-\'equivariant $({\cal E}_2, F_2)$, o\`u on suppose que l'action \`a gauche 
$A_2 \rightarrow {\cal L}({\cal E}_2)$ est non d\'eg\'en\'er\'ee, 
$(\widetilde{\cal E}_1 \otimes {\cal K}(H_{21}) , ({\rm id}_{{\cal K}(\widetilde{\cal E}_1)} \otimes R_{12})\delta_{{\cal K}({\cal E}_2)}^1(F_2))$ est un 
$\widetilde A_1 \otimes {\cal K}(H_{21}) - \widetilde B_1 \otimes {\cal K}(H_{21})$ bimodule de Kasparov $G_1$-\'equivariant.
\end{proposition}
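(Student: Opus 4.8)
The plan is to mirror the proof of Proposition \ref{homo1}, exchanging the roles of $G_1$ and $G_2$ throughout. First I would record, as in \cite{BaSka1}, that the operator $({\rm id}_{{\cal K}({\cal E}_2)} \otimes R_{22})\delta_{{\cal K}({\cal E}_2)}(F_2)$ is fixed for the bidual coaction of the $G_2$-algebra ${\cal K}({\cal E}_2) \otimes {\cal K}(H_{22})$, so that the element $F' := ({\rm id}_{{\cal K}({\cal E}_2)} \otimes R_{22})\delta_{{\cal K}({\cal E}_2)}(F_2) \otimes 1_{S_{21}} \in {\cal L}({\cal K}({\cal E}_2) \otimes {\cal K}(H_{22}) \otimes S_{21})$ is well defined. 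The pair $({\cal E}_2 \otimes {\cal K}(H_{22}), ({\rm id}_{{\cal K}({\cal E}_2)} \otimes R_{22})\delta_{{\cal K}({\cal E}_2)}(F_2))$ is an $A_2 \otimes {\cal K}(H_{22})-B_2 \otimes {\cal K}(H_{22})$ Kasparov bimodule $G_2$-equivariant, and it satisfies the hypotheses of \ref{indu3} b) iii) read with $(G_1,G_2)$ replaced by $(G_2,G_1)$.

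Applying the induction functor ${\rm Ind}_{G_2}^{G_1}$ through \ref{indu3}, I would then conclude that $({\rm Ind}_{G_2}^{G_1}({\cal E}_2 \otimes {\cal K}(H_{22})), F')$ is an ${\rm Ind}_{G_2}^{G_1}(A_2 \otimes {\cal K}(H_{22}))-{\rm Ind}_{G_2}^{G_1}(B_2 \otimes {\cal K}(H_{22}))$ Kasparov bimodule $G_1$-equivariant. Finally, using the versions of \ref{thliaison} and \ref{operF} obtained by taking $j=1$, $k=2$ in those (already index-general) statements, applied to the ${\cal G}$-algebra of liaison $\widetilde J := \widetilde J_1 \oplus J_2$, I would identify $(\widetilde{\cal E}_1 \otimes {\cal K}(H_{21}), ({\rm id}_{{\cal K}(\widetilde{\cal E}_1)} \otimes R_{12})\delta_{{\cal K}({\cal E}_2)}^1(F_2))$ with the preimage of $({\rm Ind}_{G_2}^{G_1}({\cal E}_2 \otimes {\cal K}(H_{22})), F')$ under the $G_1$-equivariant isomorphism $\delta_{22,1}^2$ of \ref{thliaison}. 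Since a $G_1$-equivariant isomorphism of bimodules carries Kasparov bimodules to Kasparov bimodules, this gives the claim.

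The main obstacle will be the index bookkeeping when passing to the symmetric setting: one must verify that $\delta_{22,1}^2$ indeed maps $\widetilde{\cal E}_1 \otimes {\cal K}(H_{21})$ onto ${\rm Ind}_{G_2}^{G_1}({\cal E}_2 \otimes {\cal K}(H_{22}))$ and intertwines $({\rm id}_{{\cal K}(\widetilde{\cal E}_1)} \otimes R_{12})\delta_{{\cal K}({\cal E}_2)}^1(F_2)$ with $F'$. The intertwining is exactly the content of \ref{operF} taken with $l=2$, which ensures the compatibility of the $R$-twisted induced operators; once this is in place, everything else is a formal transcription, the groupoid of co-liaison ${\cal G}_{G_1,G_2}$ and the induction theorems of the previous sections being symmetric in the two legs $G_1$ and $G_2$.
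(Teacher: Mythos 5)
Votre démonstration est correcte et suit essentiellement la même voie que l'article, qui établit \ref{homo3} précisément « en permutant les rôles de $G_1$ et $G_2$ » dans la preuve de \ref{homo1}, via \ref{indu3}, puis les versions symétriques de \ref{thliaison} et \ref{operF} pour la $\mathcal{G}$-algèbre de liaison $\widetilde J = \widetilde J_1 \oplus J_2$. Le repérage des indices ($F' = ({\rm id}\otimes R_{22})\delta_{\mathcal{K}(\mathcal{E}_2)}(F_2)\otimes 1_{S_{21}}$, isomorphisme $\delta_{22,1}^2$) est le bon.
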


Conservons les notations pr\'ec\'edentes. Introduisons les ${\cal G}$-alg\`ebres :
$$A :=A_1  \oplus A_2, \quad \widetilde A := \widetilde A_1  \oplus A_2\quad ; \quad 
B :=B_1  \oplus B_2, \quad \widetilde B := \widetilde B_1  \oplus B_2.$$
\noindent
On d\'eduit de (\ref{ind5}, c)) qu'on a des isomorphismes ${\cal G}$-\'equivariants :
$$f : A  \rightarrow \widetilde A  : (a_1, a_2) \mapsto  (\delta_{A_1}^{(2)}(a_1) , a_2) \quad ; \quad 
g : B  \rightarrow \widetilde B  : (b_1, b_2) \mapsto   (\delta_{B_1}^{(2)}(b_1) , b_2).$$
\noindent
Notons que $f$ (resp. $g$) {\it op\`ere par l'identit\'e} sur $A_2$ (resp. $B_2$).
\hfill\break
On d\'eduit  alors des isomorphismes ${\cal G}$-\'equivariants : 
\begin{equation}\label{eq:fg} 
f : A \rtimes {\cal G} \rtimes\widehat{\cal G} \rightarrow \widetilde A \rtimes {\cal G} \rtimes\widehat{\cal G} \quad ; \quad 
g : B \rtimes {\cal G} \rtimes\widehat{\cal G}   \rightarrow \widetilde B \rtimes {\cal G} \rtimes\widehat{\cal G},
\end{equation}
et en utilisant (\ref{etap2} a)), on obtient pour tout $l =  1,2$ des isomorphismes $G_1$-\'equivariants :
$$f_{l,1} : A_1 \otimes \cK(H_{l1}) \rightarrow \widetilde A_1 \otimes \cK(H_{l1}) \quad ; \quad 
g_{l,1} : B_1 \otimes \cK(H_{l1}) \rightarrow \widetilde B_1 \otimes \cK(H_{l1}).$$
\noindent
Comme dans \ref{homo2} et en utilisant \ref{homo3}, nous avons :
\begin{proposition}\label{homo4}
Pour tout $y =[({\cal E}_2 , F_2)]\in KK^{G_2}(A_2, B_2)$, notons $J_{G_1, G_2} (y)\in KK^{G_1}(A_1, B_1)$, l'unique \'el\'ement du groupe $KK^{G_1}(A_1, B_1)$, v\'erifiant :
$$\beta_{A_1} \otimes_{A_1} J_{G_1, G_2}(y) \otimes_{B_1}  \alpha _{B_1} = 
\gamma_{12,1,{\rm g}} \otimes f_{2,1} \otimes [(\widetilde{\cal E}_1 \otimes {\cal K}(H_{21}) , ({\rm id}_{{\cal K}(\widetilde{\cal E}_1)} \otimes R_{12})\delta_{{\cal K}({\cal E}_2)}^1(F_2))]\otimes g_{2,1}^{-1} \otimes \gamma_{21,1,{\rm d}}.$$
\noindent
Alors $J_{G_1, G_2}  : KK^{G_2}(A_2, B_2) \rightarrow KK^{G_1}(A_1, B_1)$ est un morphisme de groupes ab\'eliens.
\end{proposition}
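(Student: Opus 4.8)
The plan is to argue exactly as in Proposition~\ref{debut} and Corollary~\ref{homo2}, interchanging the roles of $G_1$ and $G_2$; in this symmetric situation Proposition~\ref{homo3} plays the part that Proposition~\ref{homo1} played before. Fix $y=[({\cal E}_2,F_2)]\in KK^{G_2}(A_2,B_2)$. By the last of the recalled facts on equivariant $KK$-theory, replacing $y$ by $1_{A_2}\otimes_{A_2}y$ leaves the class unchanged and lets us assume that the left action $A_2\rightarrow{\cal L}({\cal E}_2)$ is nondegenerate. Proposition~\ref{homo3} then applies and produces the $G_1$-equivariant Kasparov bimodule $(\widetilde{\cal E}_1\otimes{\cal K}(H_{21}),({\rm id}_{{\cal K}(\widetilde{\cal E}_1)}\otimes R_{12})\delta_{{\cal K}({\cal E}_2)}^1(F_2))$, whose class lies in $KK^{G_1}(\widetilde A_1\otimes{\cal K}(H_{21}),\widetilde B_1\otimes{\cal K}(H_{21}))$.

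First I would check that the right-hand side of the defining relation is a legitimate iterated Kasparov product with matching source and target. The Morita class $\gamma_{12,1,g}$ connects $A_1\otimes{\cal K}(H_{11})$ to $A_1\otimes{\cal K}(H_{21})$; the $G_1$-isomorphism $f_{2,1}$ connects $A_1\otimes{\cal K}(H_{21})$ to $\widetilde A_1\otimes{\cal K}(H_{21})$; the bimodule class above connects $\widetilde A_1\otimes{\cal K}(H_{21})$ to $\widetilde B_1\otimes{\cal K}(H_{21})$; then $g_{2,1}^{-1}$ connects $\widetilde B_1\otimes{\cal K}(H_{21})$ to $B_1\otimes{\cal K}(H_{21})$; and finally $\gamma_{21,1,d}$ connects $B_1\otimes{\cal K}(H_{21})$ to $B_1\otimes{\cal K}(H_{11})$. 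Composing these five factors, the right-hand side is a well-defined element of $KK^{G_1}(A_1\otimes{\cal K}(H_{11}),B_1\otimes{\cal K}(H_{11}))$.

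Since $A_1\otimes{\cal K}(H_{11})\simeq A_1\rtimes G_1\rtimes\widehat{G_1}$ with $H_{11}=L^2(G_1)$, I would then invoke the isomorphism of abelian groups $KK^{G_1}(A_1,B_1)\rightarrow KK^{G_1}(A_1\otimes{\cal K}(H_{11}),B_1\otimes{\cal K}(H_{11}))$, $x\mapsto\beta_{A_1}\otimes_{A_1}x\otimes_{B_1}\alpha_{B_1}$, of \cite{BaSka1}. Its surjectivity yields an element $J_{G_1,G_2}(y)$ solving the defining relation and its injectivity makes that element unique, which settles existence and uniqueness.

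Finally I would show that $J_{G_1,G_2}$ is a well-defined group homomorphism. The factors $\gamma_{12,1,g}$, $f_{2,1}$, $g_{2,1}^{-1}$ and $\gamma_{21,1,d}$ do not depend on $y$, so only the middle class $[(\widetilde{\cal E}_1\otimes{\cal K}(H_{21}),({\rm id}\otimes R_{12})\delta_{{\cal K}({\cal E}_2)}^1(F_2))]$ must be examined. The hard part is precisely to see that this class depends only on $y\in KK^{G_2}(A_2,B_2)$ and is additive in it: this is where the functoriality of the induction procedure ${\rm Ind}_{G_2}^{G_1}$ (Propositions~\ref{indu3} and \ref{thliaison}) together with the fixed-point property of $({\rm id}\otimes R)\delta_{{\cal K}({\cal E})}(F)$ for the bidual coaction (Lemma~\ref{operF}) are used, so that induction carries homotopies to homotopies, degenerate bimodules to degenerate bimodules, and direct sums to direct sums. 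Granting this, $J_{G_1,G_2}$ respects the Kasparov equivalence relation and is additive, hence is a morphism of abelian groups, exactly as in Corollary~\ref{homo2}.
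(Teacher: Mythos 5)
Your proposal is correct and follows essentially the same route as the paper: the paper likewise defines $\widetilde J_{G_1,G_2}(y)=[(\widetilde{\cal E}_1\otimes{\cal K}(H_{21}),({\rm id}\otimes R_{12})\delta_{{\cal K}({\cal E}_2)}^1(F_2))]$, notes (by symmetry with \ref{homo2}) that it is a group morphism, and then pins down $J_{G_1,G_2}(y)$ via the stabilization isomorphism $x\mapsto\beta_{A_1}\otimes_{A_1}x\otimes_{B_1}\alpha_{B_1}$ of \cite{BaSka1}. Your version merely spells out the source/target bookkeeping and the well-definedness of the middle class that the paper leaves implicit.
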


\begin{proof}
Il est clair que 
$\widetilde J_{G_1,G_2} : KK^{G_2}(A_2, B_2) \rightarrow KK^{G_1}(\widetilde A_1  \otimes \cK(H_{21}), \widetilde B_1 \otimes \cK(H_{21})) :$
 $$y = [({\cal E}_2, F_2)] \mapsto [(\widetilde{\cal E}_1 \otimes {\cal K}(H_{21}) , ({\rm id}_{{\cal K}(\widetilde{\cal E}_1)} \otimes R_{12})\delta_{{\cal K}({\cal E}_2)}^1(F_2))]$$
\noindent
est un morphisme de groupes et on a 
$$\beta_{A_1} \otimes_{A_1} J_{G_1, G_2}(y) \otimes_{B_1}  \alpha _{B_1} = \gamma_{12,1,{\rm g}} \otimes f_{2,1} \otimes
\widetilde J_{G_1,G_2}(y) \otimes g_{2,1}^{-1} \otimes \gamma_{21,1,{\rm d}} \,\,, \,\, y\in KK^{G_2}(A_2, B_2).$$
\end{proof}

Finalement, nous avons :

\begin{theorem}  $J_{G_2, G_1} :  KK^{G_1}(A_1, B_1) \rightarrow KK^{G_2}(A_2, B_2)$ est un isomorphisme de groupes ab\'eliens  et on a $J_{G_1, G_2}  =  (J_{G_2, G_1})^{-1}$.
\end{theorem}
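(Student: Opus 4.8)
The plan is to prove that the two composites $J_{G_1,G_2}\circ J_{G_2,G_1}$ and $J_{G_2,G_1}\circ J_{G_1,G_2}$ are the identity maps of $KK^{G_1}(A_1,B_1)$ and $KK^{G_2}(A_2,B_2)$ respectively. Since the definition of $J_{G_1,G_2}$ in \ref{homo4} is obtained from that of $J_{G_2,G_1}$ in \ref{homo2} simply by exchanging the roles of $G_1$ and $G_2$ (the only asymmetry being the correction isomorphisms $f_{2,1},g_{2,1}$, which repair the non-canonical identification $\mathrm{Ind}_{G_2}^{G_1}A_2=\widetilde A_1\simeq A_1$), it suffices to treat one composite, say $J_{G_1,G_2}\circ J_{G_2,G_1}=\mathrm{id}$. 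The reverse equality, hence $J_{G_1,G_2}=(J_{G_2,G_1})^{-1}$ and the bijectivity of both, will follow by the same argument with $G_1$ and $G_2$ interchanged. Writing $\Theta_{G_i}:x\mapsto \beta_{A_i}\otimes_{A_i}x\otimes_{B_i}\alpha_{B_i}$ for the biduality isomorphism (a group isomorphism by the recalled properties of the Kasparov product), and since both $J_{G_2,G_1}(x)$ and $J_{G_1,G_2}(y)$ are \emph{characterized} as the unique classes with prescribed $\Theta$-image (\ref{debut}, \ref{homo4}), it is enough to prove the equality of the corresponding $\Theta_{G_1}$-images.

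First I would fix $x=[({\cal E}_1,F_1)]\in KK^{G_1}(A_1,B_1)$ with non-degenerate left action and set $z:=(\mathrm{id}_{{\cal K}({\cal E}_1)}\otimes R_{11})\delta_{{\cal K}({\cal E}_1)}(F_1)$, which is fixed by the bidual coaction of ${\cal K}({\cal E}_1)\otimes{\cal K}(H_{11})$. By \ref{homo1}, \ref{debut} and \ref{thindbimod}, the $G_2$-bimodule representing $J_{G_2,G_1}(x)$, once the Morita equivalences $\gamma_{21,2,g}$ and $\gamma_{12,2,d}$ are applied, is exactly the induced bimodule $\mathrm{Ind}_{G_1}^{G_2}({\cal E}_1\otimes{\cal K}(H_{11}),z)$, transported by the $G_2$-equivariant isomorphism $\delta_{11,2}^1$ of \ref{homo1}. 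Thus $\Theta_{G_2}(J_{G_2,G_1}(x))$ is entirely expressed in terms of the single induction of the fixed operator $z$.

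Next I would feed this representative into the recipe of \ref{homo3}–\ref{homo4} and compute $\Theta_{G_1}(J_{G_1,G_2}(J_{G_2,G_1}(x)))$. The crux is that inducing the bimodule of $J_{G_2,G_1}(x)$ from $G_2$ back to $G_1$, and re-applying $(\mathrm{id}\otimes R_{12})\delta^1_{{\cal K}}$, returns the \emph{same} fixed operator $z$ up to the canonical natural isomorphisms, so that the result coincides with $\Theta_{G_1}(x)$. This rests on two inputs already available: the functorial equivalence $\mathrm{Ind}_{G_2}^{G_1}\circ\mathrm{Ind}_{G_1}^{G_2}\simeq\mathrm{id}$ on $A^{G_1}$ established in \S\ref{indaction} together with its module form \ref{isoindF} and \ref{ind+iso}, and the compatibility of the bidual fixed operator with induction encoded in \ref{operF}. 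Concretely, \ref{isoindF} identifies the double-induced liaison algebra with the original one over the isomorphisms $\pi_1$ (\ref{pi1}, \ref{ind5}), while \ref{operF} shows that $(\mathrm{id}\otimes R)\delta_{{\cal K}}$ intertwines these identifications; the correction isomorphisms $f_{2,1},g_{2,1}$ of \ref{homo4} are precisely the instances of $\pi_1$ needed, so the telescope of $\gamma$'s, of $f_{2,1},g_{2,1}$ and of the two biduality pairs $(\beta,\alpha)$ collapses, leaving $\Theta_{G_1}(x)$. Uniqueness in \ref{debut} then gives $J_{G_1,G_2}(J_{G_2,G_1}(x))=x$.

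The hard part will be the bookkeeping of this last collapse: one must check that the four Morita classes $\gamma_{ll',j,g/d}$, the correction isomorphisms $f_{2,1},g_{2,1}$, and the two biduality pairs cancel in pairs, and—most delicately—that the fixed operator produced by the \emph{second} induction genuinely agrees with $z$ through \ref{operF}, rather than being some a priori different operator fixed by the bidual coaction. Verifying this identity of operators, compatibly with the module isomorphisms of \ref{thliaison} and \ref{isoindF}, is where the argument must be carried out with care; everything else is the formal functoriality of induction and the group-isomorphism property of $\Theta_{G_1},\Theta_{G_2}$.
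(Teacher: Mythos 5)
Your overall strategy (reduce to the $\Theta$-images via the uniqueness clauses of \ref{debut} and \ref{homo4}, then exploit the induction machinery of \S\ref{indaction}, \ref{isoindF}, \ref{thliaison} and \ref{operF}) is the same as the paper's, and you have correctly located the crux. But the way you propose to resolve that crux is a genuine gap: you want to show that ``the fixed operator produced by the second induction genuinely agrees with $z$ up to the canonical natural isomorphisms.'' This cannot be carried out as stated, because the operator $F_2$ with $y_2=[({\cal E}_2,F_2)]$ is produced by \ref{debut} only through the stabilization isomorphism $\Theta_{G_2}$: it is determined solely up to the equivalence relation of $KK^{G_2}(A_2,B_2)$, and there is no formula for $F_2$ in terms of $F_1$ to feed into $({\rm id}\otimes R_{12})\delta^1_{{\cal K}({\cal E}_2)}$. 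The paper circumvents this by never comparing operators directly. It observes that the defining identity (\ref{Eq3}) says exactly that $({\rm id}\otimes R_{22})\delta^2_{{\cal K}({\cal E}_2)}(F_2)$ is a $({\rm id}\otimes R_{21})\delta^2_{{\cal K}({\cal E}_1)}(F_1)$-\emph{connection} for the product $\gamma_{21,2,g}\otimes x_2$ (condition (\ref{C1})); it then applies $\delta^1_{{\cal B}_2}$ to this compactness condition and uses \ref{operF}, \ref{notR} and the continuity condition (\ref{acc} c)) to transport it into the connection condition (\ref{C2}) over $G_1$; the definition of $x_1'$ gives the analogous condition (\ref{C3}) with $F_1'$ in place of $F_1$; and the \emph{difference} (\ref{C2})$-$(\ref{C3}) yields $f_1(b)\bigl(({\rm id}\otimes R_{11})\delta^1(F_1')-({\rm id}\otimes R_{11})\delta^1(F_1)\bigr)\in{\cal K}$, which is enough to conclude equality of the two classes after stabilization. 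Without this connection argument (or an equivalent replacement), your ``telescope collapse'' does not close.

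A secondary divergence: you claim the second composite follows ``by the same argument with $G_1$ and $G_2$ interchanged.'' The paper does not do this, and for good reason: the two maps are not mirror images of each other, since $J_{G_1,G_2}$ carries the correction isomorphisms $f_{2,1},g_{2,1}$ built from $\pi_1=\delta_{A_1}^{(2)}$. Instead, the paper deduces surjectivity of $J_{G_1,G_2}$ and injectivity of $J_{G_2,G_1}$ from the first composite, and then proves injectivity of $J_{G_1,G_2}$ directly, by composing $\widetilde J_{G_1,G_2}$ with a further (injective) induction morphism $J'_{G_1,G_2}$ and identifying the composite with the stabilization isomorphism $KK^{G_2}(A_2,B_2)\rightarrow KK^{G_2}(A_2\otimes\cK(H_{22}),B_2\otimes\cK(H_{22}))$ of \cite{BaSka1}. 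Your symmetry route might be salvageable after carefully tracking the identifications $\widetilde A_1\simeq A_1$, $\widetilde B_1\simeq B_1$, but it is not the argument of the paper and would require its own verification.
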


\begin{proof} Montrons    que $J_{G_1, G_2} \circ J_{G_2, G_1} = {\rm id}_{KK^{G_1}(A_1, B_1)}$.
\hfill\break
Soit $({\cal E}_1, F_1)$ un $A_1-B_1$ bimodule de Kasparov $G_1$-\'equivariant (avec l'action \`a gauche non d\'eg\'en\'er\'ee). Posons $x_1:=[({\cal E}_1, F_1)]\,,\, y_2:=  J_{G_2, G_1}(x_1)$ et $x_1':=  J_{G_1, G_2}(y_2)$.
\hfill\break
On sait \ref{debut} qu'il existe $F_2\in{\cal L}({\cal E}_2)$, o\`u ${\cal E}_2 := {\rm Ind}_{G_1}^{G_2}\,{\cal E}_1$, tel que
$y_2=[({\cal E}_2, F_2)] $.
\hfill\break
On a avec les notations de \ref{homo4} :
$$\beta_{A_1} \otimes_{A_1} x_1'  \otimes_{B_1}  \alpha _{B_1} = 
\gamma_{12,1,{\rm g}} \otimes f_{2,1} \otimes [(\widetilde{\cal E}_1 \otimes {\cal K}(H_{21}) , ({\rm id}_{{\cal K}(\widetilde{\cal E}_1)} \otimes R_{12})\delta_{{\cal K}({\cal E}_2)}^1(F_2))]\otimes g_{2,1}^{-1} \otimes \gamma_{21,1,{\rm d}}.$$
\noindent
Pour expliciter $J_{G_1, G_2}(y_2)$, introduisons les notations :
$$J_1:= \cK({\cal E}_1 \oplus B_1),\quad J_2:= \cK({\cal E}_2 \oplus B_2),\quad \widetilde J_1 := {\rm Ind}_{G_2}^{G_1} J_2 = \cK(\widetilde{\cal E}_1 \oplus \widetilde B_1).$$
\noindent
Nous avons alors deux ${\cal G}$-alg\`ebres $J:= J_1 \oplus J_2$ et $\widetilde J := \widetilde J_1 \oplus J_2$ avec un isomorphisme ${\cal G}$-\'equivariant \ref{ind5} :
$$h : J \rightarrow \widetilde J  : (x_1, x_2) \mapsto (\delta_{J_1}^{(2)}(x_1), x_2),$$
\noindent
qui {\it op\`ere par l'identit\'e sur $J_2$}.
En appliquant     \ref{etap2} \`a l'isomorphisme ${\cal G}$-\'equivariant
$$h : J \rtimes {\cal G} \rtimes\widehat{\cal G}   \rightarrow \widetilde J \rtimes {\cal G} \rtimes\widehat{\cal G},$$
\noindent
qui est {\it compatible  avec les ${\cal G}$-isomorphismes (\ref{eq:fg}) $f$ et $g$}, on obtient :
\begin{align}\gamma_{12,1,{\rm g}}& \otimes f_{2,1} \otimes   [(\widetilde{\cal E}_1 \otimes {\cal K}(H_{21}) , ({\rm id}_{{\cal K}(\widetilde{\cal E}_1)} \otimes R_{12})\delta_{{\cal K}({\cal E}_2)}^1(F_2))]  \otimes g_{2,1}^{-1} \otimes \gamma_{21,1,{\rm d}} \nonumber \\
&= 
\gamma_{12,1,{\rm g}}  \otimes [({\cal E}_1 \otimes {\cal K}(H_{21}) , ({\rm id}_{{\cal K}({\cal E}_1)} \otimes R_{12})\delta_{{\cal K}({\cal E}_2)}^1(F_2))]  \otimes \gamma_{21,1,{\rm d}}.\nonumber 
\end{align}
On en d\'eduit l'existence  d'un op\'erateur  $F'_1 \in {\cal L}({\cal E}_1)$   tel que $x_1' = [({\cal E}_1, F_1')]$. 
\hfill\break
Pour montrer que $ x_1 =   x_1' $, il suffit d'\'etablir dans      
 $KK^{G_1}(A_1 \otimes {\cal K}(H_{11}) , B_1 \otimes {\cal K}(H_{11}))$  :
$$[({\cal E}_1 \otimes {\cal K}(H_{11}) , ({\rm id}_{{\cal K}({\cal E}_1)} \otimes R_{11})\delta_{{\cal K}({\cal E}_1)}^1(F_1))] = 
[({\cal E}_1 \otimes {\cal K}(H_{11}) , ({\rm id}_{{\cal K}({\cal E}_1)} \otimes R_{11})\delta_{{\cal K}({\cal E}_1)}^1(F'_1))]$$
\noindent 
On a  : 
\begin{equation}\label{Eq1}\beta_{A_2} \otimes_{A_2} y_2 \otimes_{B_2}  \alpha _{B_2} = 
\gamma_{21,2,{\rm g}} \otimes x_{2} \otimes \gamma_{12,2,{\rm d}} \,, \quad x_{2}  : = [({\cal E}_2 \otimes {\cal K}(H_{12}) , ({\rm id}_{{\cal K}({\cal E}_2)} \otimes R_{21})\delta_{{\cal K}({\cal E}_1)}^2(F_1))],
\end{equation}
\begin{equation}\label{Eq2}\beta_{A_1} \otimes_{A_1} x'_1 \otimes_{B_1}  \alpha _{B_1} = 
\gamma_{12,1,{\rm g}} \otimes y_{1} \otimes \gamma_{21,1,{\rm d}} \, , \quad y_{1} := [({\cal E}_1 \otimes {\cal K}(H_{21}) , ({\rm id}_{{\cal K}({\cal E}_1)} \otimes R_{12})\delta_{{\cal K}({\cal E}_2)}^1(F_2))].
\end{equation}
\noindent
Les deux \'egalit\'es (\ref{Eq1})  et (\ref{Eq2}) sont \'equivalentes \`a :
\begin{equation}\label{Eq3}[({\cal E}_2 \otimes {\cal K}(H_{22}) , ({\rm id}_{{\cal K}({\cal E}_2)} \otimes R_{22})\delta_{{\cal K}({\cal E}_2)}^2(F_2))]   \otimes  \gamma_{21,2,{\rm d}} = 
\gamma_{21,2,{\rm g}} \otimes x_{2},
\end{equation}
\begin{equation}\label{Eq4}y_{1} \otimes \gamma_{21,1,{\rm d}}  = \gamma_{21,1,{\rm g}} \otimes [({\cal E}_1\otimes\cK(H_{11}),({\rm id}_{{\cal K}({\cal E}_1)} \otimes R_{11})\delta_{{\cal K}({\cal E}_1)}^1(F'_1))].
\end{equation}
\noindent
Pour (\ref{Eq3}), on a :
\begin{equation}\label{Eq3'}[({\cal E}_2 \otimes {\cal K}(H_{22}) , ({\rm id}_{{\cal K}({\cal E}_2)} \otimes R_{22})\delta_{{\cal K}({\cal E}_2)}^2(F_2))]   \otimes  \gamma_{21,2,{\rm d}} = [({\cal E}_2 \otimes {\cal K}(H_{12}, H_{22}) , ({\rm id}_{{\cal K}({\cal E}_2)} \otimes R_{22})\delta_{{\cal K}({\cal E}_2)}^2(F_2))].
\end{equation}
De m\^eme, pour (\ref{Eq4}), on a    :
\begin{equation}\label{Eq4'}y_{1} \otimes \gamma_{21,1,{\rm d}} = [({\cal E}_1 \otimes {\cal K}(H_{11},H_{21}),({\rm id}_{{\cal K}({\cal E}_1)} \otimes R_{12})\delta_{{\cal K}({\cal E}_2)}^1(F_2))].
\end{equation}
\noindent
Pour $j = 1 , 2$, reprenons  (\ref{eq:morphf1}) et  (\ref{eq:morphf2}) : 
$$f_j : {\cal A}_j \rightarrow {\cal L}({\cal B}_j) \quad ({\cal A}_j:= A_j \otimes {\cal K}(H_{1j} \oplus H_{2j}),\quad  {\cal B}_j:= J_j \otimes {\cal K}(H_{1j} \oplus H_{2j})),$$
\noindent 
le *-morphisme d\'efini par l'action \`a  gauche de $A_j$ dans ${\cal E}_j$.
\hfill\break 
Il r\'esulte de (\ref{Eq3}) et (\ref{Eq3'})  que  $({\rm id}_{{\cal K}({\cal E}_2)} \otimes R_{22})\delta_{{\cal K}({\cal E}_2)}^2(F_2)$ est une $({\rm id}_{{\cal K}({\cal E}_2)} \otimes R_{21})\delta_{{\cal K}({\cal E}_1)}^2(F_1)$-connexion dans le produit de Kasparov $\gamma_{21,2,{\rm g}} \otimes x_{2}$, \ie
pour tout $a\in A_2 \otimes {\cal K}(H_{12} , H_{22})$, on  a :
\begin{equation}\label{C1} ({\rm id} \otimes R_{22})\delta_{{\cal K}(E_2)}(F_2) f_2(a)  -  f_2(a) ({\rm id} \otimes R_{21})\delta_{{\cal K}({\cal E}_1)}^2(F_1)\in {\cal K}({\cal E}_2 \otimes {\cal K}(H_{12}), {\cal E}_2 \otimes {\cal K}(H_{12} , H_{22})).
\end{equation}
\noindent
Posons $d := ({\rm id} \otimes R_{22})\delta_{{\cal K}(E_2)}(F_2) f_2(a)  -  f_2(a) ({\rm id} \otimes R_{21})\delta_{{\cal K}({\cal E}_1)}^2(F_1)$. Nous avons :
\hfill\break
$\bullet$ $d\in M({\cal B}_2)$ et d\'efinit clairement un \'el\'ement $d'\in {\cal L}({\cal E}_2 \otimes {\cal K}(H_{12}), {\cal E}_2 \otimes {\cal K}(H_{12} , H_{22}))$.
\hfill\break
$\bullet$    (\ref{C1})  signifie que $d\in {\cal B}_2$ et plus exactement $d'\in {\cal K}({\cal E}_2 \otimes {\cal K}(H_{12}), {\cal E}_2 \otimes {\cal K}(H_{12} , H_{22}))$.
\hfill\break
Posons $c := \delta_{{\cal B}_2}^1(d)\in M({\cal B}_1 \otimes S_{12})$. En utilisant  (\ref{notR} c)), \ref{operF}, on obtient :
$$c = (({\rm id}_{{\cal K}({\cal E}_1)} \otimes R_{12}) \delta_{{\cal K}({\cal E}_2)}^1(F_2) \otimes 1_{S_{12}}) (f_1 \otimes {\rm id}_{S_{12}}) \delta_{{\cal A}_2}^1(a)  - 
(f_1 \otimes {\rm id}_{S_{12}}) \delta_{{\cal A}_2}^1(a) (({\rm id}_{{\cal K}({\cal E}_1)} \otimes R_{11}) \delta_{{\cal K}({\cal E}_1)}^1(F_1) \otimes 1_{S_{12}}).$$
\noindent
On d\'eduit de     (\ref{acc} c))   que pour tout $a\in A_2 \otimes {\cal K}(H_{12} , H_{22})$ et tout $\omega\in B(H_{12})_\ast$, on  a  :
$$({\rm id}_{{\cal K}({\cal E}_1)} \otimes R_{12}) \delta_{{\cal K}({\cal E}_2)}^1(F_2) f_1({\rm id}_{{\cal A}_1} \otimes \omega)\delta_{{\cal A}_2}^1(a)  - 
f_1({\rm id}_{{\cal A}_1} \otimes \omega)\delta_{{\cal A}_2}^1(a) ({\rm id}_{{\cal K}({\cal E}_1)} \otimes R_{11}) \delta_{{\cal K}({\cal E}_1)}^1(F_1) \in {\cal B}_1.$$
\noindent
Mais, il d\'ecoule de (\ref{acc} c)) et (\ref{isoindF}),  qu'on a 
$$A_1 \otimes {\cal K}(H_{11} , H_{21})  = [({\rm id}_{{\cal A}_1} \otimes \omega)\delta_{{\cal A}_2}^1(a) \,\,|\,\, a \in A_2 \otimes {\cal K}(H_{12} , H_{22}) \,,\,\omega\in B(H_{12})_\ast].$$
\noindent
Finalement, nous avons obtenu que pour tout  $b\in A_1 \otimes {\cal K}(H_{11} , H_{21})$, on a 
\begin{equation}\label{C2} ({\rm id}_{{\cal K}({\cal E}_1)} \otimes R_{12}) \delta_{{\cal K}({\cal E}_2)}^1(F_2)  f_1(b) - f_1(b)({\rm id}_{{\cal K}({\cal E}_1)} \otimes R_{11}) \delta_{{\cal K}({\cal E}_1)}^1(F_1)\in {\cal K}({\cal E}_1 \otimes {\cal K}(H_{11}) , {\cal E}_1 \otimes {\cal K}(H_{11} , H_{21})).
\end{equation}
\noindent
De m\^eme, dans (\ref{Eq4}) et (\ref{Eq4'}), on interpr\`ete $({\rm id}_{{\cal K}({\cal E}_1)} \otimes R_{12})\delta_{{\cal K}({\cal E}_2)}^1(F_2)$ comme une $({\rm id}_{{\cal K}({\cal E}_1)} \otimes R_{11})\delta_{{\cal K}({\cal E}_1)}^1(F_1')$-connexion dans le produit de Kasparov $\gamma_{21,1,{\rm g}} \otimes [({\cal E}_1 \otimes \cK(H_{11}), ({\rm id}_{{\cal K}({\cal E}_1)} \otimes R_{11})\delta_{{\cal K}({\cal E}_1)}^1(F'_1))]$, \ie 
pour tout $b\in A_1 \otimes {\cal K}(H_{11} , H_{21})$, on  a :
\begin{equation}\label{C3}({\rm id}_{{\cal K}({\cal E}_1)} \otimes R_{12})\delta_{{\cal K}({\cal E}_2)}^1(F_2) f_1(b)  -  f_1(b) ({\rm id}_{{\cal K}({\cal E}_1)} \otimes R_{11})\delta_{{\cal K}({\cal E}_1)}^1(F_1')\in {\cal K}({\cal E}_1 \otimes {\cal K}(H_{11}), {\cal E}_1 \otimes {\cal K}(H_{11} , H_{21})).
\end{equation}
\noindent
Par la diff\'erence (\ref{C2}) - (\ref{C3}), on obtient que pour tout $b\in A_1 \otimes {\cal K}(H_{11} , H_{21})$ :
$$f_1(b) ({\rm id}_{{\cal K}({\cal E}_1)} \otimes R_{11})\delta_{{\cal K}({\cal E}_1)}^1(F_1') - f_1(b)({\rm id}_{{\cal K}({\cal E}_1)} \otimes R_{11}) \delta_{{\cal K}({\cal E}_1)}^1(F_1) \in {\cal K}({\cal E}_1 \otimes {\cal K}(H_{11}), {\cal E}_1 \otimes {\cal K}(H_{11} , H_{21})).$$
\noindent
Comme $H_{21} \not= \{0\}$, on d\'eduit facilement que pour  tout  $b\in A_1 \otimes {\cal K}(H_{11})$, on a 
$$f_1(b) ({\rm id}_{{\cal K}({\cal E}_1)} \otimes R_{11})\delta_{{\cal K}({\cal E}_1)}^1(F_1') - f_1(b)({\rm id}_{{\cal K}({\cal E}_1)} \otimes R_{11}) \delta_{{\cal K}({\cal E}_1)}^1(F_1) \in {\cal K}({\cal E}_1 \otimes {\cal K}(H_{11})),$$\noindent
ce qui entra\^ine  dans      
 $KK^{G_1}(A_1 \otimes {\cal K}(H_{11}) , B_1 \otimes {\cal K}(H_{11}))$  :
$$[({\cal E}_1 \otimes {\cal K}(H_{11}) , ({\rm id}_{{\cal K}({\cal E}_1)} \otimes R_{11})\delta_{{\cal K}({\cal E}_1)}^1(F_1))] = 
[({\cal E}_1 \otimes {\cal K}(H_{11}) , ({\rm id}_{{\cal K}({\cal E}_1)} \otimes R_{11})\delta_{{\cal K}({\cal E}_1)}^1(F'_1))].$$
\noindent
On a donc \'etabli $J_{G_1, G_2} \circ J_{G_2, G_1} = {\rm id}_{KK^{G_1}(A_1, B_1)}$, ce qui montre  que $J_{G_1, G_2}$ est surjective et que $J_{G_2, G_1}$ est injective. 
\hfill\break
Pour voir  directement que $J_{G_1, G_2}$ est injective, on remarque d'abord que \ref{homo4}   d\'efinit un morphisme de groupes ab\'eliens : $\widetilde J_{G_1,G_2} : KK^{G_2}(A_2, B_2) \rightarrow KK^{G_1}(\widetilde A_1  \otimes \cK(H_{21}), \widetilde B_1 \otimes \cK(H_{21}))$.
\hfill\break
En le composant avec le morphisme (injectif)
$$J_{G_1,G_2}' : KK^{G_1}(\widetilde A_1  \otimes \cK(H_{21}), \widetilde B_1 \otimes \cK(H_{21})) \rightarrow KK^{G_2}({\rm Ind}_{G_1}^{G_2}(\widetilde A_1  \otimes \cK(H_{21})), {\rm Ind}_{G_1}^{G_2}(\widetilde B_1  \otimes \cK(H_{21})))$$
\noindent
et en identifiant ${\rm Ind}_{G_1}^{G_2}(\widetilde A_1  \otimes \cK(H_{21}))$ et ${\rm Ind}_{G_1}^{G_2}(\widetilde B_1  \otimes \cK(H_{21}))$ avec $A_2 \otimes \cK(H_{22})$ et $B_2 \otimes \cK(H_{22})$ respectivement, on voit que la compos\'ee 
$J_{G_1,G_2}' \circ \widetilde J_{G_1,G_2}$ est l'isomorphisme \cite {BaSka1}
$$KK^{G_2}(A_2, B_2) \rightarrow KK^{G_2}(A_2  \otimes \cK(H_{22}), B_2 \otimes \cK(H_{22})).$$
\end{proof}

\section{Appendice}

\subsection{Produit tensoriel relatif d'espaces de Hilbert}

Dans ce paragraphe, on se fixe une alg\`ebre de von Neumann $N$, munie d'un poids nsff $\nu$. On note $(H_\nu, \pi_\nu, \Lambda_\nu)$ la repr\'esentation GNS du poids $\nu$, et on d\'esigne par $J_\nu$, $\Delta_\nu$ et $(\sigma_t^\nu)$ respectivement l'op\'erateur de Tomita, l'op\'erateur modulaire et le groupe d'automorphismes modulaires, canoniquement associ\'es au poids $\nu$.
\begin{notations}
\begin{enumerate}
\item  On note $N^{\rm o}$ l'alg\`ebre de von Neumann oppos\'ee de $N$ qu'on munit   du poids $\nu^{\rm o}$ d\'efini par $\nu^{\rm o}(x^{\rm o}) = \nu(x)$ pour $x\in N^+$. On rappelle que la repr\'esentation GNS du poids $\nu^{\rm o}$ est $(H_\nu, \pi_{\nu^{\rm o}}, \Lambda_{\nu^{\rm o}})$, o\`u :
$$\pi_{\nu^{\rm o}}(x^{\rm o}) := J_\nu \pi_\nu(x)^* J_\nu , \quad x\in N \quad ; \quad   \Lambda_{\nu^{\rm o}}x^{\rm o} := J_\nu \Lambda_\nu x^* , \quad x\in{\mathfrak  N}_\nu^*.$$
\item   $C_N : N^{\rm o} \rightarrow \pi_\nu(N)' : x^{\rm o} \mapsto J_\nu \pi_\nu(x)^* J_\nu$ est un *-isomorphisme d'alg\`ebres de von Neumann. Il en r\'esulte que $H_\nu$ est un bimodule sur $N$ pour les actions d\'efinies par :
$$x \xi y := \pi_\nu(x) J_\nu \pi_\nu(y)^* J_\nu \xi, \quad x , y \in N, \quad \xi\in H_\nu.$$
\end{enumerate}
\end{notations}

\noindent
Dans ce qui suit, on se fixe $H , K$ des espaces de Hilbert, $\alpha : N \rightarrow B(H)$ et $\beta :  N^{\rm o}   \rightarrow B(K)$ des 
repr\'esentations normales  et unitales. Le *-morphisme $\alpha$ (resp. $\beta$) munit $H$ (resp. $K$) d'une structure de $N$-module \`a gauche (resp.  \`a droite).
\hfill\break
Pour tout  $\xi\in H$ (resp. $\xi\in K$), notons $R_\xi^\alpha$ (resp. $L_\xi^\beta$)\index{rb@$R_\xi^\alpha$, $L_\xi^\beta$}, l'op\'erateur d\'efini par : 
$$R_\xi^\alpha : \Lambda_\nu{\mathfrak N}_\nu  \rightarrow H :   \Lambda_\nu x \mapsto \alpha(x) \xi  \quad ({\rm resp.} \,\,
L_\xi^\beta : \Lambda_{\nu^{\rm o}}{\mathfrak N}_{\nu^{\rm o}}  \rightarrow K :   \Lambda_{\nu^{\rm o}}x^{\rm o} \mapsto \beta(x^{\rm o}) \xi).$$\noindent

\noindent
\begin{definition}\cite{E,DeC2}
 Le vecteur   $\xi \in H$  (resp. $\xi\in K$) est dit born\'e \`a droite (resp. \`a gauche), si l'op\'erateur 
$R_\xi^\alpha$ (resp. $L_\xi^\beta$) se prolonge en un op\'erateur born\'e $R_\xi^\alpha : H_\nu  \rightarrow H$ (resp. $L_\xi^\beta : H_\nu  \rightarrow K$).
\end{definition}
On note ${}_\nu(\alpha, H)$ (resp. $(K, \beta)_\nu$) l'espace vectoriel form\'e par les vecteurs born\'es \`a droite (resp. \`a gauche).
Il est facile de voir que  pour tout $\xi \in  {}_\nu(\alpha, H)$ (resp. $\xi \in (K, \beta)_\nu$),  l'op\'erateur 
$R_\xi^\alpha$ (resp.  $L_\xi^\beta$) est $N$-lin\'eaire \`a gauche (resp. \`a droite). Il en r\'esulte que   
  pour tout 
$\xi , \eta \in {}_\nu(\alpha, H)$ (resp. $\xi , \eta \in (K, \beta)_\nu$), on a :
$$(R_\xi^\alpha)^*R_\eta^\alpha \in \pi_\nu(N)'  ,  \quad 
R_\xi^\alpha (R_\eta^\alpha)^* \in \alpha(N)' \quad ({\rm resp.} \,\, (L_\xi^\beta)^*L_\eta^\beta \in \pi_\nu(N)  ,  \quad  L_\xi^\beta (L_\eta^\beta)^*\in \beta(N^{\rm o})').$$
 \noindent
\begin{notations} Pour tout $\xi , \eta \in {}_\nu(\alpha, H)$ (resp. $\xi , \eta \in (K, \beta)_\nu$),  posons :
$$\langle  \xi , \eta \rangle  _{N^{\rm o}} := C_N^{-1} ((R_\xi^\alpha)^*R_\eta^\alpha) \in N^{\rm o} \quad ({\rm resp.} \,\, 
\langle  \xi , \eta \rangle  _{N} := \pi_\nu^{-1} ((L_\xi^\beta)^*L_\eta^\beta) \in N).$$\noindent
\end{notations}

\begin{proposition} \cite{E}  Pour tout $\xi , \eta \in {}_\nu(\alpha, H)$ (resp. $\xi , \eta \in (K, \beta)_\nu$), et tout $y\in N$ analytique pour $(\sigma  _t^\nu)$,  nous avons :
\begin{enumerate}
\item  $\langle  \xi , \eta \rangle  _{N^{\rm o}}^* = \langle  \eta, \xi\rangle_{N^{\rm o}}  \quad ({\rm resp.} \,\, 
\langle  \xi , \eta \rangle  _{ N}^* = \langle  \eta, \xi\rangle_{N})$.
\item  $\langle  \xi , \eta y^{\rm o}\rangle_{N^{\rm o}} =  \langle  \xi , \eta \rangle  _{N^{\rm o}} (\sigma  _{i/2}^\nu(y))^{\rm o}
 \quad ({\rm resp.} \,\,  \langle  \xi , \eta y \rangle  _{N } =  \langle  \xi , \eta \rangle  _{N } \sigma  _{-{i/2}}^\nu(y))$.
\end{enumerate}
\end{proposition}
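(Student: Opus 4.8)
The plan is to treat the two assertions separately, the first being purely formal and the second resting on a single Tomita--Takesaki computation about right multiplication in the GNS space.

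For assertion a) I would simply invoke that $C_N^{-1}:\pi_\nu(N)'\to N^o$ and $\pi_\nu^{-1}:\pi_\nu(N)\to N$ are $*$-isomorphisms. Since $\big((R_\xi^\alpha)^*R_\eta^\alpha\big)^*=(R_\eta^\alpha)^*R_\xi^\alpha$ and likewise $\big((L_\xi^\beta)^*L_\eta^\beta\big)^*=(L_\eta^\beta)^*L_\xi^\beta$, applying the involution-preserving maps $C_N^{-1}$ and $\pi_\nu^{-1}$ gives $\langle\xi,\eta\rangle_{N^o}^*=\langle\eta,\xi\rangle_{N^o}$ and $\langle\xi,\eta\rangle_N^*=\langle\eta,\xi\rangle_N$ at once.

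For assertion b) I would first make the module actions explicit: via $\alpha$, $H$ is a right $N^o$-module with $\eta y^o=\alpha(y)\eta$, and via $\beta$, $K$ is a right $N$-module with $\eta y=\beta(y^o)\eta$. The strategy is then to compute $R_{\eta y^o}^\alpha$ and $L_{\eta y}^\beta$ on the canonical dense domains. In the right-bounded case, for $x\in{\frak N}_\nu$ one has $R_{\eta y^o}^\alpha(\Lambda_\nu x)=\alpha(xy)\eta=R_\eta^\alpha(\Lambda_\nu(xy))$, so everything reduces to re-expressing $\Lambda_\nu(xy)$ by pulling the right factor $y$ onto the commutant. The key input, and the crux of the argument, is the standard right-multiplication formula of modular theory: for $y\in N$ analytic for $(\sigma_t^\nu)$ and $x\in{\frak N}_\nu$,
$$\Lambda_\nu(xy)=J_\nu\,\pi_\nu(\sigma_{i/2}^\nu(y))^*\,J_\nu\,\Lambda_\nu(x)=C_N\big((\sigma_{i/2}^\nu(y))^o\big)\,\Lambda_\nu(x).$$
I would derive this (with the correct half-period shift $\sigma_{i/2}^\nu$) from $J_\nu\Lambda_\nu(a)=\Lambda_\nu(\sigma_{i/2}^\nu(a)^*)$ together with $\sigma_z^\nu(a)^*=\sigma_{\bar z}^\nu(a^*)$, checking the sign in the tracial case as a sanity test. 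Granting it, $R_{\eta y^o}^\alpha=R_\eta^\alpha\,C_N((\sigma_{i/2}^\nu(y))^o)$, whence $(R_\xi^\alpha)^*R_{\eta y^o}^\alpha=(R_\xi^\alpha)^*R_\eta^\alpha\,C_N((\sigma_{i/2}^\nu(y))^o)$; since both factors lie in $\pi_\nu(N)'$ and $C_N^{-1}$ is multiplicative, applying $C_N^{-1}$ yields $\langle\xi,\eta y^o\rangle_{N^o}=\langle\xi,\eta\rangle_{N^o}(\sigma_{i/2}^\nu(y))^o$. The left-bounded case is handled identically: using $\Lambda_{\nu^o}(x^o)=J_\nu\Lambda_\nu x^*$ and the same formula applied to $x^*y^*$, one finds $L_{\eta y}^\beta=L_\eta^\beta\,\pi_\nu(\sigma_{-i/2}^\nu(y))$, and applying $\pi_\nu^{-1}$ gives $\langle\xi,\eta y\rangle_N=\langle\xi,\eta\rangle_N\,\sigma_{-i/2}^\nu(y)$; the shift changes sign precisely because of the conjugation by $J_\nu$ and the passage $\sigma_{i/2}^\nu(y^*)^*=\sigma_{-i/2}^\nu(y)$.

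The main obstacle is exactly that modular right-multiplication formula: getting the direction of the half-period shift right and correctly matching the opposite-algebra conventions ($x^oy^o=(yx)^o$ and $C_N(z^o)=J_\nu\pi_\nu(z)^*J_\nu$). A secondary technical point is justifying that $\eta y^o$ (resp. $\eta y$) is again a bounded vector and that the operator identities, first obtained on analytic elements, extend by density and closability to all of ${\frak N}_\nu$ (resp. ${\frak N}_{\nu^o}$); this follows since the right-hand operators $R_\eta^\alpha\,C_N(\cdots)$ and $L_\eta^\beta\,\pi_\nu(\cdots)$ are already bounded on $H_\nu$.
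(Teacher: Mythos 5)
Your proof is correct. Note that the paper itself gives no argument here: the proposition is quoted from \cite{E} without proof, so there is nothing to compare against except the standard modular-theory derivation, which is exactly what you reproduce. Both halves check out: a) is immediate from $C_N^{-1}$ and $\pi_\nu^{-1}$ being $*$-isomorphisms, and for b) your key formula $\Lambda_\nu(xy)=J_\nu\,\pi_\nu(\sigma_{i/2}^\nu(y))^*\,J_\nu\,\Lambda_\nu(x)$ has the half-period shift in the right direction (one can rederive it in the state case from $\Delta^{-1/2}y\Omega=\sigma_{i/2}^\nu(y)\Omega$ and $J\Delta^{1/2}a\Omega=a^*\Omega$), the identity $\sigma_{i/2}^\nu(y^*)^*=\sigma_{-i/2}^\nu(y)$ correctly produces the sign change in the $(K,\beta)_\nu$ case, and the multiplicativity of $C_N$ on $N^o$ (i.e. $C_N(a^ob^o)=C_N(a^o)C_N(b^o)$) is used correctly. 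The only point worth making explicit is that the membership $xy\in\frak N_\nu$ for $x\in\frak N_\nu$ and $y$ analytic is not automatic from $\frak N_\nu$ being a left ideal; it is part of the same standard lemma that yields the displayed formula, and your closing remark about boundedness of $R_\eta^\alpha\,C_N(\cdots)$ is the right way to justify that $\eta y^o$ and $\eta y$ are again bounded vectors.
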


\begin{corollary} Supposons que $N$ soit de dimension finie et que $\nu$ soit une trace. Alors $(K, \langle \cdot,\cdot \rangle_N)$ est un pr\'e-C*-module hilbertien sur $N$.
\end{corollary}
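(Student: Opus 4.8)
The plan is to reduce everything to the algebraic axioms of a Hilbert $C^*$-module, exploiting that finite-dimensionality of $N$ forces every vector of $K$ to be left-bounded. Indeed, since $N$ is finite-dimensional and $\nu$ is a (necessarily finite and faithful) trace, one has $\mathfrak{N}_\nu = N$ and $\mathfrak{N}_{\nu^o} = N^o$, so that $\Lambda_{\nu^o}(\mathfrak{N}_{\nu^o}) = \Lambda_{\nu^o}(N^o)$ already equals the finite-dimensional space $H_\nu$. Consequently, for every $\xi\in K$ the operator $L_\xi^\beta$ is defined on the whole of $H_\nu$ and is automatically bounded; thus $(K,\beta)_\nu = K$ and the $N$-valued pairing $\langle\cdot,\cdot\rangle_N$ is defined on all of $K$. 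I would then record that $K$ carries its canonical right $N$-module structure $\xi\cdot y := \beta(y^o)\xi$.

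Next I would verify the inner-product axioms. Sesquilinearity is immediate from the formula $\langle\xi,\eta\rangle_N = \pi_\nu^{-1}\big((L_\xi^\beta)^*L_\eta^\beta\big)$ together with the linearity of $\xi\mapsto L_\xi^\beta$ and the fact that $\pi_\nu^{-1}$ is a $*$-isomorphism: the pairing is conjugate-linear in the first variable and linear in the second. The Hermitian symmetry $\langle\xi,\eta\rangle_N^* = \langle\eta,\xi\rangle_N$ is exactly part (a) of the preceding Proposition. For the crucial compatibility with the module action I invoke part (b), which gives $\langle\xi,\eta\,y\rangle_N = \langle\xi,\eta\rangle_N\,\sigma_{-i/2}^\nu(y)$ for every $y\in N$ analytic for the modular group; since $\nu$ is a trace one has $\sigma_t^\nu = \mathrm{id}$ for all $t$, hence $\Delta_\nu = 1$, every element of $N$ is analytic, and $\sigma_{-i/2}^\nu(y) = y$. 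This yields precisely $\langle\xi,\eta\,y\rangle_N = \langle\xi,\eta\rangle_N\,y$, the right $N$-linearity of the pairing.

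Finally I would treat positivity and definiteness. Positivity is clear, since $(L_\xi^\beta)^*L_\xi^\beta$ is a positive element of $\pi_\nu(N)$ and $\pi_\nu^{-1}$, being a $*$-isomorphism, carries it to a positive element $\langle\xi,\xi\rangle_N \ge 0$ of $N$. For definiteness, note that $\beta$ is unital and $1^o\in\mathfrak{N}_{\nu^o}$, so $L_\xi^\beta\,\Lambda_{\nu^o}(1^o) = \beta(1^o)\xi = \xi$; hence $\langle\xi,\xi\rangle_N = 0$ forces $(L_\xi^\beta)^*L_\xi^\beta = 0$, i.e. $L_\xi^\beta = 0$, and therefore $\xi = 0$. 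Since a pre-Hilbert $C^*$-module requires only these algebraic axioms together with the positivity and definiteness of the inner product, completeness need not be addressed. The only genuinely delicate point is the module-compatibility identity, and it becomes transparent precisely once the triviality of the modular automorphism group of a trace is used to eliminate the twist $\sigma_{-i/2}^\nu$ that appears in the general formula.
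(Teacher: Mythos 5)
Your proof is correct and follows exactly the route the paper intends (the corollary is stated without proof, as an immediate consequence of the preceding proposition): finite-dimensionality makes every vector of $K$ left-bounded so the pairing is everywhere defined, the trace hypothesis trivializes the modular group and removes the twist $\sigma_{-i/2}^\nu$ in part (b) to give genuine right $N$-linearity, and positivity and definiteness follow from $(L_\xi^\beta)^*L_\xi^\beta\ge 0$ together with $L_\xi^\beta\Lambda_{\nu^o}(1^o)=\xi$. Nothing is missing.
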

\noindent
La d\'efinition du   produit tensoriel relatif \cite{E, DeC1} d'espaces de Hilbert $\reltens{K}{\beta}{\alpha}{H}$ r\'esulte du r\'esultat suivant :

\noindent
\begin{lemme}\cite{E, DeC1} Pour tout $\xi_1 , \xi_2 \in {}_\nu(\alpha, H)$  et tout $\eta_1 , \eta_2 \in (K, \beta)_\nu$, on a 
$$\langle  \eta_1 , \beta(\langle  \xi_1 ,  \xi_2 \rangle  _{N^{\rm o}})\eta_2 \rangle  _K = 
\langle  \xi_1 , \alpha(\langle  \eta_1 ,  \eta_2 \rangle  _{N })\xi_2 \rangle  _H. $$
\end{lemme}

\begin{definition} Le produit tensoriel relatif d'espaces de Hilbert $\reltens{K}{\beta}{\alpha}{H}$\index{rc@$\reltens{K}{\beta}{\alpha}{H}$} est  le s\'epar\'e-compl\'et\'e de l'espace pr\'ehilbertien $ (K, \beta)_\nu \odot {}_\nu(\alpha, H)$ pour  :
$$\langle  \eta_1 \otimes \xi_1 , \eta_2 \otimes \xi_2 \rangle   := \langle  \eta_1 , \beta(\langle  \xi_1 ,  \xi_2 \rangle  _{N^{\rm o}})\eta_2 \rangle  _K = 
\langle  \xi_1 , \alpha(\langle  \eta_1 ,  \eta_2 \rangle  _{N })\xi_2 \rangle  _H. $$\noindent
\end{definition}

\begin{remarks}
\begin{enumerate}
\item En rempla\c cant $(N, \nu)$ par $(N^{\rm o},\nu^{\rm o})$, on obtient la d\'efinition du produit tensoriel relatif 
$\reltens{H}{\alpha}{\beta}{K}$.
\item L'espace de Hilbert  $\reltens{K}{\beta}{\alpha}{H}$  est  aussi le s\'epar\'e-compl\'et\'e de l'espace pr\'ehilbertien $K \odot {}_\nu(\alpha, H)$ pour :
$$ \langle  \eta_1 \otimes \xi_1 , \eta_2 \otimes \xi_2 \rangle   := \langle  \xi_1 , \alpha(\langle  \eta_1 ,  \eta_2 \rangle  _{N })\xi_2 \rangle  _H. $$\noindent
De plus, pour tout $\xi\in K , \eta\in {}_\nu(\alpha, H)$ et $y\in N$ analytique pour  $(\sigma  _t^\nu)$, on a 
$$\reltens{\beta(y^{\rm o})\xi}{\beta}{\alpha}{\eta} =  \reltens{\xi}{\beta}{\alpha}{\alpha(\sigma_{-{i/2}}^\nu(y))\eta}.$$\noindent
\end{enumerate}
\end{remarks}

\begin{proposition}\label{pr}Supposons que $N$ soit de dimension finie et que $\nu$ soit une trace. Soit ${\cal E}$ le C*-module  sur $N$ compl\'et\'e de $(K, \langle \cdot,\cdot \rangle_N)$. Alors $\reltens{K}{\beta}{\alpha}{H}$ est le produit interne de Kasparov ${\cal E} \otimes_{\alpha} H$.
\end{proposition}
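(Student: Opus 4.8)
The plan is to show that, under the standing hypotheses, the defining data of the two tensor products coincide up to the canonical identification of $K \odot H$ with itself, so that the identity map on $K\odot H$ extends to the asserted unitary. First I would record two consequences of the hypotheses. Since $N$ is finite-dimensional, $H_\nu = \Lambda_\nu(\mathfrak{N}_\nu)$ is finite-dimensional, so every operator $R_\xi^\alpha$ (resp. $L_\xi^\beta$), being defined on the finite-dimensional space $\Lambda_\nu\mathfrak{N}_\nu$, is automatically bounded; hence ${}_\nu(\alpha, H) = H$ and $(K,\beta)_\nu = K$, and no boundedness or density subtlety survives. Moreover, $\nu$ being a trace gives $\sigma_t^\nu = \id$ for all $t$, so $\sigma_{-\frac{i}{2}}^\nu = \id$ on $N$. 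By the corollary preceding the statement, $(K, \langle\,,\,\rangle_N)$ is a pre-C*-module over $N$; let ${\cal E}$ denote its completion, with $N$-valued inner product $\langle\xi,\xi'\rangle_N = \pi_\nu^{-1}((L_\xi^\beta)^* L_{\xi'}^\beta)$ and right action of $N$ induced by $\beta$, namely $\xi\cdot n = \beta(n^o)\xi$, which is the action for which $\langle\,,\,\rangle_N$ is right $N$-linear in the trace case.

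Next I would place the two constructions side by side on the algebraic tensor product $K \odot H$. The Kasparov internal tensor product ${\cal E}\otimes_\alpha H$, formed from the correspondence ${\cal E}$ and the left action $\alpha : N \to B(H) = {\cal L}(H)$, is the Hilbert space completion of ${\cal E}\odot H$ for the form
$$\langle \xi_1 \otimes \zeta_1,\ \xi_2 \otimes \zeta_2\rangle = \langle \zeta_1,\ \alpha(\langle \xi_1, \xi_2\rangle_N)\zeta_2\rangle_H, \qquad \xi_i \in K \text{ (dense in } {\cal E}),\ \zeta_i \in H,$$
after passing to the quotient identifying $\xi\cdot n \otimes \zeta$ with $\xi \otimes \alpha(n)\zeta$. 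On the other hand, by the Lemma and Remarks defining the relative tensor product, $K _\beta \otimes _\alpha H$ is the separated-completion of $K \odot {}_\nu(\alpha, H) = K\odot H$ for exactly the same sesquilinear form, the $N$-valued bracket $\langle \xi_1, \xi_2\rangle_N$ appearing there being literally the one built from $L^\beta$. Thus the two forms agree verbatim on $K \odot H$.

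The key step, and the only point requiring genuine verification, is that the two balancing (quotient) relations coincide, so that the identity of $K\odot H$ is compatible with both quotients. Here the second Remark gives, for the relative tensor product, $\beta(y^o)\xi _\beta \otimes _\alpha \zeta = \xi _\beta \otimes _\alpha \alpha(\sigma_{-\frac{i}{2}}^\nu(y))\zeta$; since $\sigma_{-\frac{i}{2}}^\nu = \id$ in the trace case this reduces to $(\xi\cdot y)\otimes \zeta = \xi \otimes \alpha(y)\zeta$, which is precisely the $N$-balancing defining ${\cal E}\otimes_\alpha H$. Consequently the identity map on $K \odot H$ descends to a well-defined linear bijection between the dense balanced subspaces, and it is isometric because the two forms coincide; I would then extend it by continuity to a unitary ${\cal E}\otimes_\alpha H \to K _\beta \otimes _\alpha H$ which is the identity on elementary tensors, giving the asserted identification. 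I expect the main obstacle to be purely bookkeeping: keeping straight that the right $N$-action on ${\cal E}$ coming from $\beta$ is the same action that appears in the balancing of the Remarks, and checking the right $N$-linearity $\langle \xi, \beta(y^o)\xi'\rangle_N = \langle\xi,\xi'\rangle_N\, y$ recorded in Proposition (b). Once the trace hypothesis collapses every modular automorphism to the identity and finite-dimensionality makes all vectors bounded, no analytic difficulty remains.
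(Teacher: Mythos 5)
Your proof is correct. The paper states this proposition without giving any proof (it is treated as a routine verification in the appendix), and your argument is exactly the intended one: in the finite-dimensional tracial case all vectors are bounded and $\sigma^\nu_t=\mathrm{id}$, so the sesquilinear form $\langle \eta_1\otimes\xi_1,\eta_2\otimes\xi_2\rangle=\langle \xi_1,\alpha(\langle\eta_1,\eta_2\rangle_N)\xi_2\rangle_H$ defining $K\,{}_\beta\!\otimes_\alpha H$ coincides verbatim with the Kasparov form on ${\cal E}\odot H$, the balancing relation $\beta(y^o)\xi\otimes\eta=\xi\otimes\alpha(y)\eta$ reduces to the $N$-balancing of the internal tensor product, and the identity on $K\odot H$ extends to the asserted unitary.
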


\subsection{Produit fibr\'e d'alg\`ebres de von Neumann}\label{prodfib}

Conservons les m\^emes notations.
Soient $x\in \beta(N^{\rm o})'$ et $y\in \alpha(N)'$. Il est facile de voir que l'op\'erateur 
$$(K, \beta)_\nu \odot {}_\nu(\alpha, H)  \rightarrow \reltens{K}{\beta}{\alpha}{H} : \xi \otimes \eta  \mapsto \reltens{x(\xi)}{\beta}{\alpha}{y(\eta)}$$
\noindent
se prolonge en un op\'erateur born\'e, not\'e $\reltens{x}{\beta}{\alpha}{y} \in B(\reltens{K}{\beta}{\alpha}{H})$.

\noindent
Soit $A  \subset B(H)$ (resp. $B \subset B(K)$) une alg\`ebre de von Neumann. Supposons de plus que la repr\'esentation   
$\alpha  : N  \rightarrow B(H)$ (resp. $\beta : N^{\rm o} \rightarrow B(K)$) soit \`a valeurs dans $A$ (resp. $B$).

\noindent
\begin{definition}\cite{E}  Le produit fibr\'e 
$\fprod { B } { \beta } { \alpha } { A }$\index{rd@$\fprod { B } { \beta } { \alpha } { A }$} des 
alg\`ebres de von Neumann $A$ et $B$,    est le commutant dans $B(\reltens{K}{\beta}{\alpha}{H})$ de l'ensemble 
$\{\reltens{x}{\beta}{\alpha}{y}  \,\,;\,\,  x\in B' \,,\, y\in A'\}$.
\end{definition}

{\bf Cas o\`u $N$ est de dimension finie}

 \noindent  
Supposons que  $N = \oplus_{l=1}^k M_{n_l}$ soit de dimension finie et $\nu = \oplus_{l=1}^k  {\rm Tr}\,(F_l\, \cdot)$.  Pour tout $l\in\{1, \cdots,k\}$, notons  $(F_{l, i})$ les valeurs propres de la matrice $F_l\in M_{n_l}$.
\hfill\break
Nous avons :

\begin{proposition}\cite{DeC1} Supposons que pour tout $l\in\{1, \cdots,k\}$, on ait $\sum_{i=1}^{n_l}  F_{l, i}^{-1} =1$.
\hfill\break
Soit $v : K \otimes H \rightarrow \reltens{K}{\beta}{\alpha}{H}  : \xi \otimes \eta \mapsto \reltens{\xi}{\beta}{\alpha}{\eta}$. Nous avons :
\begin{enumerate}
\item L'op\'erateur $v$ est une co-isom\'etrie.
\hfill\break
Posons $p = v^* v\in{\rm Proj}\,(B(K \otimes  H))$. Soient $A \subset \alpha(N)'$ et $B\subset \beta(N^{\rm o})'$ des alg\`ebres de von Neumann.   
\item L'application  $\fprod{B}{\beta}{\alpha}{A} \rightarrow p (B \otimes A) p : x \mapsto v^* x v $ est un *-isomorphisme d'alg\`ebres de von Neumann.
\end{enumerate}
\end{proposition}

\begin{proof}[D\'emonstration abr\'eg\'ee] Pour la preuve du a), on \'etablit que pour tout $\xi, \eta\in K$,  on a 
$$\langle  \xi , \eta \rangle  _N =  \sum_{l=1}^k \sum_{i,j  = 1}^{n_l} F_{l, i}^{-{1/2}}F_{l, j}^{-{1/2}}
\langle  \xi , \beta(e_{ji}^{(l){\rm o}}) \eta \rangle  _K\, e_{ij}^{(l)},$$\noindent
o\`u $(e_{ij}^{(l)})$ est un s.u.m.\ qui diagonalise la matrice $F_l$, \ie $F_l = \sum_{i=1}^l F_{l, i} e_{ii}^{(l)}$.
\hfill\break
Pour la preuve du b), on applique le th\'eor\`eme du commutant d'un produit tensoriel  d'alg\`ebres de von Neumann.
\end{proof}

\noindent
Conservons  $N = \oplus_{l=1}^k M_{n_l}$  et prenons pour  $\nu$, la trace de Markov non normalis\'ee 
$\epsilon :=   \oplus_{l=1}^k n_l {\rm Tr}\,_{M_{n_l}}$.   
Dans ce cas le $N$-module \`a droite $K$ muni du produit scalaire $\langle \cdot , \cdot \rangle_N$, est un C*-module sur $N$ et  $\reltens{K}{\beta}{\alpha}{H}$ est le produit de Kasparov $K \otimes_N H$. De plus, pour tout s.u.m.\ $(e_{ij}^{(l)})$, on a :
$$p = \sum_{l=1}^k \frac{1}{n_l} \sum_{i,j  = 1} \, \beta(e_{ji}^{(l){\rm o}}) \otimes \alpha(e_{ij}^{(l)}).$$
\noindent
La  repr\'esentation GNS $(H_{\epsilon}, \pi_{\epsilon}, \Lambda_{\epsilon})$ de la trace $\epsilon$ est donn\'ee par :
 $$H_{\epsilon} := \oplus_{l=1}^k \, \C^{n_l} \otimes \overline{ \C^{n_l}}, \quad \pi_{\epsilon}(x)  = \oplus _{l=1}^k\, x_l \otimes    \overline{ 1_{\C^{n_l}}}, \quad \Lambda_{\epsilon} x = \pi_{\epsilon}(x) \xi_{\epsilon}, \quad x= \oplus_{l=1}^k \, x_l\in N,$$
avec  $\displaystyle \xi_{\epsilon} = \oplus_{l=1}^k   \sqrt{n_l} \sum_{i=1}^{n_l}
\varepsilon_i^l \otimes \overline{ \varepsilon_i^l}$ et $(\varepsilon_i^l)_{1\leqslant i\leqslant n_l}$ une base orthonorm\'ee pour chaque  espace de Hilbert $\C^{n_l}$.
\hfill\break
Remarquons que si $(e_{ij}^{(l)})$ est le  s.u.m.\ d\'efini par la base orthonorm\'ee $(\varepsilon_i^l)$ de  l'espace de Hilbert $\C^{n_l}$, alors 
$(1 / \sqrt {n_l} \pi_{\epsilon}(e_{ij}^{(l)})\xi_{\epsilon})_{ 1 \leqslant i,\, j \leqslant n_l  \,; \, 1\leqslant l \leqslant k}$
est une base orthonorm\'ee pour $H_{\epsilon}$. Nous avons :
\medbreak
\begin{proposition}\label{proput}
Soient $\alpha  :  N  \rightarrow B(H)$ et $\beta  : N^{\rm o} \rightarrow B(K)$  des repr\'esentations    unitales. 
\begin{enumerate}
\item  Pour tout $\xi , \eta \in H$, nous avons  $q_{\alpha, \beta}  (R_\xi^\alpha (R_\eta^\alpha)^* \otimes 1_K) = q_{\alpha, \beta} ( \theta_{\xi, \eta} \otimes 1_K)q_{\alpha, \beta}$.
\item Pour tout $\xi , \eta \in K$, nous avons  $ q_{\alpha, \beta}  (1_H \otimes L_\xi^\beta (L_\eta^\beta)^*) = q_{\alpha, \beta} (1_H \otimes \theta_{\xi, \eta})q_{\alpha, \beta}$.
\end{enumerate}
\begin{proof} Montrons le a). 
Soient $\zeta , \zeta' \in H$. D'une part, on a
\begin{align}
\langle \zeta \otimes 1, q_{\alpha, \beta}  (R_\xi^\alpha (R_\eta^\alpha)^* \otimes 1_H)(\zeta' \otimes 1)\rangle & = 
\sum_{l=1}^k \frac{1}{n_l} \sum_{i,j=1}^{n_l} \langle (R_\xi^\alpha)^*\zeta, (R_\eta^\alpha)^* \alpha(e_{ji}^{(l)})\zeta'\rangle \beta(e_{ij}^{(l)\rm o}) \nonumber \\
& =
\sum_{l, l'=1}^k \frac{1}{n_l n_{l'}} \sum_{i,j=1}^{n_l} 
\sum_{u,v=1}^{n_{l'}}\langle (R_\xi^\alpha)^*\zeta, e_{uv}^{(l')}\xi_{\epsilon} \rangle \langle   e_{uv}^{(l')}\xi_{\epsilon}, (R_\eta^\alpha)^* \alpha(e_{ji}^{(l)})\zeta'\rangle \beta(e_{ij}^{(l)\rm o}) \nonumber \\
 & = 
\sum_{l, l'=1}^k \frac{1}{n_l n_{l'}} \sum_{i,j=1}^{n_l}\sum_{u,v=1}^{n_{l'}}\langle \zeta, \alpha(e_{uv}^{(l')})\xi\rangle \langle   \alpha(e_{uv}^{(l')})\eta,   \alpha(e_{ji}^{(l)})\zeta'\rangle \beta(e_{ij}^{(l)\rm o}) \nonumber \\
 & =  
\sum_{l =1}^k \frac{1}{n_l^2} \sum_{i,j=1}^{n_l}\sum_{ v=1}^{n_{l}}\langle \zeta, \alpha(e_{jv}^{(l)})\xi\rangle \langle   \alpha(e_{iv}^{(l)})\eta,   \zeta'\rangle \beta(e_{ij}^{(l)\rm o}).  \nonumber 
\end{align}
D'autre part, on a aussi
\begin{align}
\langle \zeta \otimes 1, q_{\alpha, \beta}(\theta_{\xi, \eta} \otimes 1) q_{\alpha, \beta}(\zeta' \otimes 1)\rangle & =  
\sum_{l, l'=1}^k \frac{1}{n_l n_{l'}} \sum_{i,j=1}^{n_l} 
 \sum_{u,v=1}^{n_{l'}}\langle \zeta, \alpha(e_{uv}^{(l')})\xi \rangle \langle \alpha(e_{ji}^{(l)})\eta,   \zeta'\rangle \beta(e_{vu}^{(l')\rm o})\beta(e_{ji}^{(l)\rm o}) \nonumber \\  
 &=  
\sum_{l =1}^k \frac{1}{n_l^2} \sum_{i,j=1}^{n_l}\sum_{ u=1}^{n_{l}}\langle \zeta, \alpha(e_{ui}^{(l)})\xi\rangle \langle   \alpha(e_{ji}^{(l)})\eta,   \zeta'\rangle \beta(e_{ju}^{(l)\rm o}) \nonumber \\
 & = 
\sum_{l =1}^k \frac{1}{n_l^2} \sum_{i,j=1}^{n_l}\sum_{ v=1}^{n_{l}}\langle \zeta, \alpha(e_{jv}^{(l)})\xi\rangle \langle   \alpha(e_{iv}^{(l)})\eta,   \zeta'\rangle \beta(e_{ij}^{(l)\rm o}).  \nonumber 
\end{align}
La preuve du b) s'obtient de fa\c con similaire.
\end{proof}

\end{proposition}
On d\'emontre de la m\^eme fa\c con le r\'esultat suivant : 
\begin{proposition} Soient $\alpha  :  N  \rightarrow B(H)$ et $\beta  : N^{\rm o} \rightarrow B(K)$  des repr\'esentations    unitales.
\begin{enumerate}
\item  Pour tout $\xi , \eta \in K$, nous avons  $q_{\beta, \alpha}  (L_\xi^\beta (L_\eta ^\beta)^* \otimes  1_H) = q_{\beta, \alpha} (\theta_{\xi, \eta } \otimes  1_H)q_{\beta, \alpha}.$
\item    Pour tout $\xi , \eta \in H$, nous avons  $q_{\beta, \alpha}  (1_K \otimes R_\xi^\alpha (R_\eta ^\alpha)^*) = q_{\beta, \alpha} ( 1_K \otimes \theta_{\xi, \eta })q_{\beta, \alpha}.$ 
 \end{enumerate}
\end{proposition}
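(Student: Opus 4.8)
Soient $\alpha : N \rightarrow B(H)$ et $\beta : N^{op} \rightarrow B(K)$ des représentations unitales. Alors pour tout $\xi, \eta \in K$, on a $q_{\beta, \alpha}(L_\xi^\beta (L_\eta^\beta)^* \otimes 1_H) = q_{\beta, \alpha}(\theta_{\xi, \eta} \otimes 1_H) q_{\beta, \alpha}$, et pour tout $\xi, \eta \in H$, on a $q_{\beta, \alpha}(1_K \otimes R_\xi^\alpha (R_\eta^\alpha)^*) = q_{\beta, \alpha}(1_K \otimes \theta_{\xi, \eta}) q_{\beta, \alpha}$.

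$\,$

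Le plan est de reproduire, dans le contexte où les rôles de $\alpha$ et $\beta$ sont échangés par rapport à la proposition \ref{proput}, exactement le calcul matriciel élémentaire déjà mené là-bas. Comme le texte l'indique explicitement (\og On démontre de la même façon le résultat suivant \fg), il n'y a pas de nouvelle idée conceptuelle : il s'agit de déplier les deux membres de chaque égalité contre une paire de vecteurs tests et de vérifier qu'on obtient la même somme. D'abord je fixerais, comme dans l'appendice, un s.u.m $(e_{ij}^{(l)})$ de $N = \oplus_{l=1}^k M_{n_l}$ provenant d'une base orthonormée de chaque ${\bf C}^{n_l}$, et j'utiliserais la formule explicite $q_{\beta, \alpha} = \sum_l \frac{1}{n_l} \sum_{i,j} \beta(e_{ji}^{(l)o}) \otimes \alpha(e_{ij}^{(l)})$ donnée par le lemme \ref{q}.

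$\,$

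Pour le premier point, je calculerais $\langle \zeta \otimes 1, q_{\beta, \alpha}(L_\xi^\beta (L_\eta^\beta)^* \otimes 1_H)(\zeta' \otimes 1)\rangle$ pour $\zeta, \zeta' \in K$ en insérant une résolution de l'identité de $H_\varepsilon$ au milieu de $(L_\xi^\beta)^* (L_\eta^\beta)$, en me rappelant que $L_\zeta^\beta(\Lambda_{\nu^o} x^o) = \beta(x^o)\zeta$, de sorte que $\langle \cdot, e_{uv}^{(l')}\xi_\varepsilon\rangle$ fait réapparaître les $\beta(e_{uv}^{(l')o})$ agissant sur $\xi, \eta$. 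Je développerais en parallèle $\langle \zeta \otimes 1, q_{\beta, \alpha}(\theta_{\xi, \eta} \otimes 1_H) q_{\beta, \alpha}(\zeta' \otimes 1)\rangle$, où les deux facteurs $q_{\beta, \alpha}$ fournissent chacun une sommation; après simplification par les relations du s.u.m $e_{vu}^{(l')o} e_{ji}^{(l)o} = \delta^{l'}_l \delta^u_j e_{vi}^{(l)o}$ et réindexation, les deux expressions coïncident terme à terme. Le second point s'obtient symétriquement en échangeant le rôle des deux facteurs du produit tensoriel et en remplaçant les $L_\xi^\beta$ par les $R_\xi^\alpha$, le projecteur pertinent étant toujours $q_{\beta, \alpha}$.

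$\,$

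Le seul point qui demande un peu d'attention — et c'est là que je concentrerais la vérification — est le bon appariement des indices lors de la réindexation : dans la proposition \ref{proput} le projecteur était $q_{\alpha, \beta}$ et les vecteurs bornés vivaient du côté $\alpha$, alors qu'ici le projecteur est $q_{\beta, \alpha}$ (avec les facteurs $\beta$ et $\alpha$ permutés dans sa définition) et les vecteurs tests du premier point vivent du côté $\beta$. Il faut donc s'assurer que l'argument de $\varepsilon$ étant une trace (et le poids de Markov étant invariant par $\sigma^\nu$) permet de faire glisser les indices comme dans le calcul modèle, sans introduire de facteur $F_{l,i}$ parasite : c'est précisément le choix de la trace de Markov non normalisée qui rend $\sigma_t^\nu$ trivial et donc la manipulation licite. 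Une fois ce parallélisme des indices correctement posé, le reste est une simple recopie des deux chaînes d'égalités \texttt{align} de la démonstration de \ref{proput}, et la conclusion suit.
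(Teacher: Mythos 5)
Votre démonstration est correcte et suit exactement la voie que l'article indique : celui-ci ne donne d'ailleurs aucune preuve pour cet énoncé et renvoie simplement au calcul matriciel de la proposition \ref{proput} (\og On démontre de la même fa\c con le résultat suivant \fg), calcul que vous reproduisez fidèlement en échangeant les rôles de $\alpha$ et $\beta$ et en utilisant la forme explicite de $q_{\beta,\alpha}$ donnée par le lemme \ref{q}. Votre remarque sur le rôle de la trace de Markov (qui trivialise le groupe modulaire et évite tout facteur $F_{l,i}$ parasite) identifie bien le seul point à surveiller.
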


\begin{proposition}\label{efin} Soient $A$ et $B$ deux C*-alg\`ebres et $f : A \rightarrow M(B)$ un *-morphisme. Supposons que pour une unit\'e approch\'ee $(u_\lambda)$ de $A$, on ait $f(u_\lambda)  \rightarrow e \in M(B)$ pour la topologie stricte. Alors nous avons :
\begin{enumerate}
\item $e\in {\rm Proj}\,(M(B))$ et  pour toute unit\'e approch\'ee $(v_\lambda)$ de $A$, on a $f(v_\lambda)  \rightarrow e \in M(B)$ pour la topologie stricte.
\item Le *-morphisme $f$ se prolonge de fa\c con unique en un *-morphisme $f  : M(A) \rightarrow M(B)$ strictement continu et 
v\'erifiant $f(1_A)  = e$.
\end{enumerate}
\end{proposition}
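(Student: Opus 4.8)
The plan is to prove part a) first and then to build the extension of part b) on top of it, the whole argument resting on the single identity $f(a)e=ef(a)=f(a)$ for $a\in A$, which neutralises the possible degeneracy of $f$. I would begin by recording the two topological facts used throughout: on $M(B)$ the adjoint is strictly continuous (since $x_i^\ast b=(b^\ast x_i)^\ast$ and the norm adjoint is continuous), and one-sided multiplication by a fixed multiplier is strictly continuous. To get part a), I would combine $f(u_\lambda)f(a)=f(u_\lambda a)\to f(a)$ in norm with the strict convergence $f(u_\lambda)\to e$ tested against $f(a)b\in B$ to obtain $ef(a)=f(a)=f(a)e$ for all $a\in A$. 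Specialising to $a=u_\lambda$ and passing to the strict limit in $ef(u_\lambda)=f(u_\lambda)$ gives $e^2=e$; since $(u_\lambda^\ast)$ is again an approximate unit with $f(u_\lambda^\ast)=f(u_\lambda)^\ast\to e^\ast$, the same computation gives $e^\ast f(u_\lambda)=f(u_\lambda)\to e$, whence $e^\ast e=e$ and, taking adjoints, $e=e^\ast$. Thus $e$ is a projection.

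The key consequence, $f(x)(1-e)=0$ for $x\in A$, is what yields the independence from the approximate unit. For a second approximate unit $(v_\mu)$ and any $c\in B$ I would write $f(v_\mu)c=f(v_\mu)ec$; since $ec=\lim_\lambda f(u_\lambda)c$ lies in $[f(A)B]$, and $f(v_\mu)d\to d$ for every $d\in[f(A)B]$ (check on generators $d=f(a)b$ via $f(v_\mu a)b\to f(a)b$, then extend using $\|f(v_\mu)\|\le 1$), this forces $f(v_\mu)c\to ec$. The right-handed statement is symmetric, giving $f(v_\mu)\to e$ strictly and completing a).

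For part b), strict density of $A$ in $M(A)$ together with $mu_\lambda\to m$ strictly (a bounded net) forces any strictly continuous extension to satisfy $\tilde f(m)=$ strict limit of $f(mu_\lambda)$, which settles uniqueness; so the issue is existence. Here again I would use $f(mu_\lambda)b=f(mu_\lambda)eb$ with $eb\in[f(A)B]$: on a generator $d=f(a)b'$ one has $f(mu_\lambda)d=f(mu_\lambda a)b'\to f(ma)b'$ because $mu_\lambda a\to ma$ in norm, and the uniform bound $\|f(mu_\lambda)\|\le\|m\|$ extends this to all of $[f(A)B]$. Hence $L_m(b):=\lim_\lambda f(mu_\lambda)b$ exists; the symmetric computation gives $R_m(b):=\lim_\lambda bf(mu_\lambda)$, and $b_1L_m(b_2)=R_m(b_1)b_2$ (immediate from continuity of the two multiplications) shows $(L_m,R_m)$ is a double centraliser, defining $\tilde f(m)\in M(B)$ with $f(mu_\lambda)\to\tilde f(m)$ strictly. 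I would then verify that $\tilde f$ is a $\ast$-homomorphism with $\tilde f|_A=f$ and $\tilde f(1_A)=e$ (the last two being the cases $m\in A$ and $m=1_A$): linearity is clear; the same computation with the unit on the left gives $\tilde f(m)=\lim_\lambda f(u_\lambda m)$ and shows $\tilde f$ acts on $[f(A)B]$ by $f(a)b'\mapsto f(ma)b'$, from which multiplicativity follows by applying $\tilde f(m_1)$ to $\tilde f(m_2)b=\lim_\mu f(m_2u_\mu)b\in[f(A)B]$ and reading off $f(m_1m_2u_\mu)b$; the $\ast$-relation comes from $f(m^\ast u_\lambda)=f(u_\lambda m)^\ast$ (taking $(u_\lambda)$ self-adjoint, legitimate by the independence in a)) and strict continuity of the adjoint. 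Strict continuity on bounded sets is proved exactly like existence: for a bounded net $m_i\to m$ strictly, $m_ia\to ma$ in norm gives $\tilde f(m_i)d\to\tilde f(m)d$ on generators $d\in f(A)B$, and the uniform bound propagates this to $d=eb$, i.e. to all $b\in B$.

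The only genuine obstacle is the degeneracy of $f$: an arbitrary $b\in B$ need not lie in $[f(A)B]$, where all the convergence statements are transparent. I would therefore emphasise the identity $f(x)(1-e)=0$ as the decisive device, since it collapses every expression $f(\cdot)b$ to $f(\cdot)eb$ with $eb\in[f(A)B]$; once this reduction is in place, each limit reduces to the behaviour on generators $f(a)b'$, which is controlled purely by norm-continuity of multiplication in $M(A)$.
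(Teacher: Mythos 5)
Your proof is correct and follows essentially the same route as the paper, whose entire argument is the one-line remark that $e$ is a projection and that $f$ defines a non-degenerate morphism $A\to\cL(eB)$: your identities $ef(a)=f(a)=f(a)e$ and $eB=[f(A)B]$ are exactly that non-degeneracy, and your double-centraliser construction is the standard extension theorem for non-degenerate morphisms written out by hand. In short, you have supplied in full detail precisely what the paper declares \emph{facile \`a voir}.
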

\begin{proof}
Il est facile de voir que $e$ est un projecteur et que $f$ d\'efinit un *-morphisme non d\'eg\'en\'er\'e $f  : A \rightarrow \cL(eB)$.
\end{proof}
\begin{notation}\label{d(1)}   Soient $A$ et $B$ deux C*-alg\`ebres, $f : A \rightarrow M(B)$ un *-morphisme et   $e\in {\rm Proj}\,(M(B))$. La notation 
$$f(1_A) = e$$
\noindent
signifie que pour une unit\'e approch\'ee $(u_\lambda)$ de $A$, on a $f(u_\lambda)  \rightarrow e \in M(B)$ pour la topologie stricte. On note alors $f  : M(A) \rightarrow M(B)$ le prolongement strictement continu de $f$ 
v\'erifiant $f(1_A)  = e$.
\end{notation}

\printindex

\end{document}